\newtheorem{theorem}{Theorem}[section]
\newtheorem{corollary}{Corollary}
\newtheorem{lemma}[theorem]{Lemma}
\newtheorem{proposition}{Proposition}
\newtheorem{MainTheorem}{Theorem}
\newtheorem{MainCorollary}[MainTheorem]{Corollary}
\newtheorem{MainConjecture}[MainTheorem]{Conjecture}
\theoremstyle{definition}
\newtheorem{definition}[theorem]{Definition}
\newtheorem{remark}{Remark}
\newtheorem{example}{Example}
\newcommand{\eps}{\varepsilon}
\renewcommand\theenumi{\@alph\c@enumi}
\def\@ls#1#2{\@mathmeasure\z@\displaystyle{#2}%
             \@mathmeasure\@ne\displaystyle{#1}%
             \box\@ne\box\z@%
}
\def\gtso#1{\mathrel{\@ls{_{_{#1}}}{>}}}
\def\geso#1{\mathrel{\@ls{_{_{#1}}}{\geq}}}
\def\leso#1{\mathrel{\le_{_{#1}}}}
\def\Sho{\mbox{\tiny\textup{Sh}}}
\newcommand{\NTL}[1][t]{\N^{\scriptscriptstyle \vee}_{#1}}
\newcommand{\tinf}[1][\empty]{\ifx#1\empty\else #1\cdot \fi2^{\infty}}
\DeclareMathOperator{\Shs}{S\mbox{\tiny\textup{sh}}}
\newcommand{\IN}{\ensuremath{\mathbb{N}}}\let\N\IN
\newcommand{\IZ}{\ensuremath{\mathbb{Z}}}\let\Z\IZ
\newcommand{\IR}{\ensuremath{\mathbb{R}}}\let\R\IR
\newcommand{\IQ}{\ensuremath{\mathbb{Q}}}\let\Q\IQ
\newcommand{\IC}{\ensuremath{\mathbb{C}}}\let\C\IC
\newcommand{\SI}{\ensuremath{\mathbb{S}^1}}
\def\Re{\operatorname{Re}}
\def\Im{\operatorname{Im}}
\newcommand{\ret}{r}
\DeclareMathOperator{\Card}{Card}
\DeclareMathOperator{\Int}{Int}
\DeclareMathOperator{\Bd}{Bd}
\DeclareMathOperator{\diam}{diam}
\DeclareMathOperator{\Per}{Per}
\DeclareMathOperator{\TPer}{Per^{\circ}}
\DeclareMathOperator{\Orb}{Orb}
\DeclareMathOperator{\LOrb}{\mathsf{L}Orb}
\DeclareMathOperator{\Rot}{Rot}
\newcommand{\RotR}{\ensuremath{\Rot_{_{\IR}}}}
\newcommand{\CA}{\mathcal{A}}
\newcommand{\CB}{\mathcal{B}}
\newcommand{\CC}{\mathcal{C}}
\newcommand{\CG}{\mathcal{G}}
\newcommand{\CI}{\mathcal{I}}
\newcommand{\Bo}{\mathring{B}}
\newcommand{\Lifts}{\ensuremath{\mathcal{L}(S)}}
\newcommand{\Li}[1][1]{\ensuremath{\mathcal{L}_{#1}(S)}}
\newcommand{\LL} {\mathcal{L}}
\newcommand{\Cstar}[1][3]{\mathcal{X}_{#1}}
\newcommand{\modi}{\ensuremath{\kern -0.55em\pmod{1}}}
\newcommand{\rhos}[1][F]{\rho_{_{#1}}}
\newcommand{\set}[2]{\ensuremath{\{#1 \,\colon #2\}}}
\newcommand{\maparrow}[1][F]{\xrightarrow[#1]{\hspace*{1.35em}}}
\newcommand{\signedcover}[2][+]{%
  \nolinebreak[4]%
  \xrightarrow[#2]{\hspace*{.25em}#1\hspace*{.1em}}%
  \nolinebreak[4]%
}
\newcommand{\dotsarrowto}{\longrightarrow\linebreak[0]\dotsm\linebreak[0]}
\newcommand{\arrowto}{\nolinebreak[4]\longrightarrow\nolinebreak[4]}
\def\CPath#1{#1\fletxa}
\def\fletxa>#1{\def\@parm{#1}\def\@dummy{\dots}%
               \ifx\@parm\@dummy\dotsarrowto\else%
               \arrowto #1\fi\@ifnextchar>{\fletxa}{}%
}
\newcommand{\evalat}[1]{\bigr\rvert_{#1}}
\def\@map#1#2[#3]{\mbox{$#1 \colon #2 \longrightarrow #3$}}
\def\map#1#2{\@ifnextchar [{\@map{#1}{#2}}{\@map{#1}{#2}[#2]}}
\def\@Bchull#1{\langle #1 \rangle}
\def\@chull#1[#2]{\ensuremath{\@Bchull{#1}_{_{#2}}}}
\def\chull#1{\@ifnextchar [{\@chull{#1}}{\ensuremath{\@Bchull{#1}}}}
\newcommand{\case@separation}{\par\par\medskip\par\par}
\newenvironment{case}[2][Case]{\case@separation
   \trivlist
       \item[\hskip\labelsep{\bfseries #1 #2:}]\begin{em}}{\end{em}
   \endtrivlist
}
\newcommand{\titlecase}[1]{\case@separation\noindent{\bfseries #1:\ }}
\newcommand{\mathtitlecase}[1]{\titlecase{$\mathbf{#1}$}}%
\title[On the set of periods of sigma maps] 
      {On the set of periods of sigma maps of degree 1}
\author[Llu\'{\i}s Alsed\`{a} and Sylvie Ruette]{}
\subjclass[2010]{Primary: 37E15, 37E25.}
\keywords{Rotation set, sets of periods, sigma maps,
          degree one, star maps, large orbits}
\thanks{The first author has been partially supported by the MINECO
grants number MTM2008-01486 and MTM2011-26995-C02-01.
Both authors thank kind invitations of the
\emph{Laboratoire de Math\'ematiques, Universit\'e Paris-Sud 11}
and
\emph{Departament de Matem\`{a}tiques, Universitat Aut\`{o}noma de Barcelona}
that made this paper possible.}
\begin{document}
\maketitle

\centerline{\scshape Llu\'{\i}s Alsed\`{a}}
\medskip
{\footnotesize
 \centerline{Departament de Matem\`{a}tiques,}
 \centerline{Edifici Cc,}
 \centerline{Universitat Aut\`{o}noma de Barcelona,}
 \centerline{08913 Cerdanyola del Vall\`es, Barcelona,}
 \centerline{Spain}
 \centerline{\texttt{alseda@mat.uab.cat}}
}

\medskip

\centerline{\scshape Sylvie Ruette}
\medskip
{\footnotesize
 \centerline{Laboratoire de Math\'ematiques,}
 \centerline{CNRS UMR 8628,}
 \centerline{B\^atiment 425,}
 \centerline{Universit\'e Paris-Sud 11,}
 \centerline{91405 Orsay cedex,}
 \centerline{France}
 \centerline{\texttt{sylvie.ruette@math.u-psud.fr}}
}

\bigskip

\begin{abstract}
We study the set of periods of degree 1 continuous maps from $\sigma$
into itself, where $\sigma$ denotes the space shaped like the letter $\sigma$
(i.e., a segment attached to a circle by one of its endpoints).
Since the maps under consideration have degree 1, the rotation theory can
be used.
We show that, when the interior of the rotation interval contains an
integer, then the set of periods (of periodic points of any rotation
number) is the set of all integers except maybe $1$ or $2$.
We exhibit degree 1 $\sigma$-maps $f$ whose set of periods is a combination
of the set of periods of a degree 1 circle map and the set of periods of
a $3$-star (that is, a space shaped like the letter $Y$).
Moreover, we study the set of periods forced by periodic orbits that do
not intersect the circuit of $\sigma$; in particular, when there exists
such a periodic orbit whose diameter (in the covering space) is at least $1$,
then there exist periodic points of all periods.
\end{abstract}

\section{Introduction}

In this paper we study the set of periods of continuous
maps from the space $\sigma$ to itself, where the space $\sigma$
consists of a circle with a segment attached to it at one of
the segment's endpoints. Our results continue the progression
of results which began with Sharkovskii's Theorem on the characterization
of the sets of periods of periodic points of continuous interval maps
\cite{SharOri,SharTrans}
and continued with the study of the periods of maps of the circle
\cite{BGMY, Block, Mis},
trees \cite{AJM1,AJM2,AJM3,AJM4,Bern,ALMY, BaldLli}
and other graphs \cite{LLl,LPR}.

A full characterization of the sets of periods for
continuous self maps of the graph $\sigma$ having the branching
fixed is given in \cite{LLl}. Our goal is to extend this
result to the general case. The most natural approach is to follow the
strategy used in the circle case which consists in dividing the
problem according to the degree of the map \cite{BGMY, Block, Mis}.
The cases considered for the circle are degree different from $\{-1,
0, 1\}$, and separately the cases of degree $0$, $-1$ and $1$.
A characterization of the set of periods of the class of continuous
maps from the space $\sigma$ to itself with degree different from
$\{-1, 0, 1\}$ can be found in \cite{Mal}. In this paper, we aim at
studying the set of periods of continuous $\sigma$-maps of degree 1.
Following again the strategy of the circle case, we shall work in the
covering space and we shall use rotation theory. This theory for graphs
with a single circuit was developed in \cite{AlsRue2008}; the
current paper is thus an application of the theory developed there.

We shall follow three main directions in studying the set of periods of
$\sigma$-maps. The first very natural one follows from the trivial
observation that the space $\sigma$ contains both a circle and a subset
homeomorphic to a $Y$ (also called a $3$-star). It is quite obvious
that there exist $\sigma$-maps of degree $1$ whose set of periods is equal
to the set of periods of any given degree $1$ circle map, as well as the set
of periods of any given $3$-star map. We shall show that there exist
$\sigma$-maps $f$ whose set of periods is any combination of both kinds
of sets, provided that $0$ is an endpoint of the rotation interval of $f$:
the whole rotation interval gives a set of periods as for circle maps
whereas the set of periods of a given $3$-star map appears with rotation
number $0$.

The second direction is the study of periodic orbits that do not
intersect the circuit of the space $\sigma$; this study is necessary
because the rotation interval does not capture well the behaviors of
such orbits. We shall show that the existence of such a periodic orbit
of period $n$ implies all periods less than $n$ for the Sharkovsky
ordering; this is quite natural because this ordering rules the sets
of periods of interval maps and the branch of $\sigma$ is an interval.
Moreover, we shall show that if, in the covering space, there exists a
periodic orbit living in the branches and with diameter greater than
or equal to $1$, then the set of periods contains necessarily all
integers.

The third direction focuses on the rotation number $0$.
For degree $1$ circle maps, the strategy is to characterize
the set of periods for a given rotation number $p/q$ in the interior
of the rotation interval, which comes down to do the same for the
rotation number $0$ for another map. Unfortunately, mimicking this
strategy fails for $\sigma$-maps because the set of periods of rotation
number $0$ can be complicated and we do not know how to describe it.
However, we shall characterize the set of periods (of any rotation number)
when $0$ in the interior of the rotation interval of a $\sigma$ map:
in this case, the set of periods is, either $\IN$, or $\IN\setminus\{1\}$,
or  $\IN\setminus\{2\}$.

Moreover, we shall stress some difficulties that appear when one tries to
follow the same strategy as for degree $1$ circle maps.

In the next section, we state and discuss the main results of the
paper, after introducing the necessary notation to formulate them.

\section{Definitions and statements of the main results}\label{sec:statements}
\subsection{Covering space, periodic (mod 1) points,
            rotation set}\label{ss:coveringS}
As it has been said, in this paper we want to study the set of periods of the
$\sigma$-maps. Given a map {\map{f}{X}},
we say that a point $x \in X$ is \emph{periodic of period $n$}
if $f^n(x) = x$ and $f^i(x) \ne x$ for all $i=1,2,\dots,n-1$.
Moreover, for every $x \in X$, the set
\[
   \Orb(x,f) := \set{f^{n}(x)}{ n \ge 0}
\]
is called the \emph{orbit of $x$}. Observe that if $x$ is periodic
with period $n$, then we have $\Card(\Orb(x,f)) = n$
(where $\Card(\cdot)$ denotes the cardinality of a finite set).
The set of periods of all periodic points of $f$ will be
denoted by $\TPer(f)$.

Following the strategy of the circle it is advisable to work
in the covering space  and we shall use the rotation theory developed in
\cite{AlsRue2008}.
We also shall consider periodic {\modi} points and orbits for liftings instead
of the true ones defined above.
The results obtained in this setting can be obviously pushed down
to the original map and space.

We start by introducing the framework to use the rotation theory
developed in \cite{AlsRue2008}.

We consider the universal covering of $\sigma$. More precisely, we take
the following realization of the covering space (see Figure~\ref{FigS}):
\[
 S = \IR \cup B,
\]
where
\[
B := \set{z \in \IC}{\Re(z) \in \IZ \text{ and }\Im(z) \in [0,1]},
\]
and $\Re(z)$ and $\Im(z)$ denote respectively the real and imaginary part
of a complex number $z$.
The set $B$ is called the \emph{set of branches of $S$}.
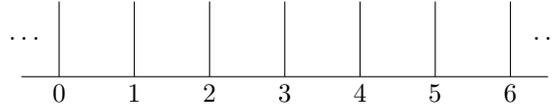
\begin{figure}[h]
\setlength{\unitlength}{1cm}
\begin{center}
\begin{picture}(7,2)(-0.5,-0.5)
\put(-0.5,0){\line(1,0){7}}
\multiput(0,0)(1,0){7}{\line(0,1){1}}
\put(0,-0.1){\makebox(0,0)[t]{0}}
\put(1,-0.1){\makebox(0,0)[t]{1}}
\put(2,-0.1){\makebox(0,0)[t]{2}}
\put(3,-0.1){\makebox(0,0)[t]{3}}
\put(4,-0.1){\makebox(0,0)[t]{4}}
\put(5,-0.1){\makebox(0,0)[t]{5}}
\put(6,-0.1){\makebox(0,0)[t]{6}}
\put(-0.2,0.5){\makebox(0,0)[r]{$\cdots$}}
\put(6.3,0.5){\makebox(0,0)[l]{$\cdots$}}
\end{picture}
\end{center}
\caption{The space $S$, universal covering of $\sigma$.}\label{FigS}
\end{figure}

Observe that $S \subset \C$ and that $\R$ actually means the copy of the
real line embedded in $\C$ as the real axis.
Also, the maps $z \mapsto z + n$ for $n \in \Z$
(since $S\subset \IC$, the operation $+$ is just the usual one in $\IC$)
are the covering (or deck) transformations. So, they leave $S$ invariant:
$S = S + \Z = \set{z+k}{z \in S \text{ and } k \in \Z}.$
Moreover, the real part function $\Re$ defines a retraction from $S$ to $\IR$.
That is, $\Re(z) = z$ for every $z \in \IR$ and, when $z \in S\setminus \IR$,
then $\Re(z)$ gives the integer in the base of the segment where $z$
lies.

For every $m \in \IZ$, we set
\begin{align*}
B_m   &:= \set{z \in S}{\Re(z) = m \text{ and }\Im(z) \in [0,1]}
          = S \cap \Re^{-1}(m), \text{ and}\\
\Bo_m &:= B_m \setminus \{m\}.
\end{align*}
Each of the sets $B_m$ is called \emph{a branch of $S$}.
Clearly,
$B = \cup_{m \in \IZ} B_m$, $B_m \cap \IR = \{m\}$ and
$\Bo_m \cap \IR = \emptyset$.
Each branch $B_m$ is endowed with a linear ordering $\le$ as follows.
If $x,y \in B_m$, we write $x < y$ if and only if $\Im(x) < \Im(y)$.

In what follows, $\Li[d]$ will denote the class of continuous
maps $F$ from $S$ into itself of degree $d \in \IZ$, that is,
$F(z+1)=F(z)+d$ for all $z \in S$.
We also set $\Lifts = \cup_{d \in \IZ} \Li[d]$.
Observe that $\Re \in \Li$ and thus, if $F\in \Li$, then
$\Re \circ F^n \in \Li$ for every $n \in \N$.

Let $F \in \Lifts$ and $z\in S$. The set
\[
   \set{F^{n}(z)+m}{ n \ge 0 \text{ and } m\in \IZ}
\]
is called the \emph{lifted orbit of $z$}, and denoted by
$\LOrb(z,F)$. The point $z$ is called \emph{periodic \modi}
if there exists $n \in \IN$ such that $F^n(z) \in z+\IZ$.
The \emph{period \modi} of $z$ is the least positive integer $n$
satisfying this property, that is,
$F^n(z) \in z+\IZ$ and
$F^i(z) \notin z+\IZ$ for all $1\leq i\leq n-1$.
When $z$ is periodic {\modi}, then
$\LOrb(z,F)$ is also called a \emph{lifted periodic orbit}.
It is not difficult to see that, for all $k \in \IZ$,
$\Card\left(\LOrb(z,F) \cap \Re^{-1}\bigl([k,k+1)\bigr)\right)$
coincides with the period {\modi} of $z$.
The set of all periods of the periodic {\modi} points of
$F \in \Lifts$ will be denoted by $\Per(F)$.

Wen talking about periodic points and periodic {\modi} points
we shall sometimes write \emph{true period} or
\emph{true periodic point} to emphasize that they are not {\modi}.

Let $\map{\pi}{S}[\sigma]$ be the standard projection from $S$ to $\sigma$,
that is, $\pi\evalat{\Re^{-1}([0,1))}$ is continuous onto and one-to-one
and $\pi(z) = \pi(z + k)$ for all $z \in S$ and all $k \in \Z$.
Clearly, for every $F\in \Lifts$, $\pi_{\star}F := \pi \circ F \circ \pi^{-1}$
is a well defined continuous self map of $\sigma$. Reciprocally, for every
continuous map $f$ from $\sigma$ into itself, there exists a lifting
$F\in\Lifts$ such that $\pi_{\star}F=f$, and this lifting is unique up to an
integer (that is, if $G$ is another lifting, there exists $k\in\IN$
such that $G=F+k$).
Moreover,
$\pi(\LOrb(z,F)) = \Orb(\pi(z),\pi_{\star}F),$ and
$z$ is a periodic {\modi} point of $F$ of period $n$ if and only
if $\pi(z)$ is a true periodic point of $\pi_{\star}F$ of (true) period $n.$
Consequently, $\Per(F) = \TPer(\pi_{\star}F)$ and \emph{characterizing the sets of
periods {\modi} of maps from $\Lifts$ is equivalent to characterizing
the sets of periods of continuous self maps of $\sigma$}.

This paper will deal with maps of degree $1$, for which rotation
numbers can be defined.
Next we recall the notion of rotation number in our setting and its
basic properties.

\begin{definition}
Let $F\in \Li$ and $z\in S$. We define the
\emph{rotation number of $z$} as
\[
   \rhos(z) :=  \lim_{n\to+\infty} \frac{\Re(F^n(z))-\Re(z)}{n}
\]
if the limit exists.
We also define the following \emph{rotation sets of $F$}:
\begin{align*}
\Rot(F)   &= \set{\rhos(z)}{z \in S},\\
\RotR(F)  &= \set{\rhos(z)}{z \in \R}.
\end{align*}
\end{definition}

For every $z \in S,$ $k\in\IZ$ and $n\in\IN$, it follows that
$\rhos(z+k)=\rhos(z),$
$\rhos[(F+k)](z)=\rhos(z)+k$ and
$\rhos[F^n](z)=n\rhos(z)$
(c.f \cite[Lemma~1.10]{AlsRue2008}). The second property implies that, if
$F$, $G$ are two liftings of the same continuous map from
$\sigma$ into itself, then their rotation sets differ from an integer
($\exists k\in\IZ$ such that $G=F+k$, and hence $\Rot(G)=\Rot(F)+k$).

Unfortunately, the set $\Rot(F)$ may not be connected as it
has been shown in \cite{AlsRue2008}. However, the set
$\RotR(F)$, which is a subset of $\Rot(F)$, has better properties.
Next result is \cite[Theorem~3.1]{AlsRue2008}.

\begin{theorem}\label{theo:RotR}
For every $F\in\Li,$ $\RotR(F)$ is a non empty compact interval.
Moreover, if $\alpha\in \RotR(F)$, then
there exists a point $x\in\IR$ such that $\rhos(x)=\alpha$ and
$F^n(x)\in\IR$ for infinitely many $n$.
If $p/q\in\RotR(F)$, then there exists a periodic {\modi} point
$x\in S$ with $\rhos(x)=p/q$.
\end{theorem}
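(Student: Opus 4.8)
The plan is to transpose to the comb space $S$ the proof that works for degree one circle maps (Misiurewicz; Block--Guckenheimer--Misiurewicz--Young), the new feature being that the orbit of a point of $\R$ may wander into the branches $B$ before it returns, so that wandering must be controlled. Two preliminary observations organize everything. First, for $x\in\R$ one has $\rhos(x)=\lim_n\frac1n\bigl(\Re(F^n(x))-x\bigr)$, and since $\Re\circ F^n\in\Li[1]$ for every $n$, the restriction $(\Re\circ F^n)\evalat{\R}$ is a degree one lift of a circle map; it is this \emph{sequence} of lifts, rather than the iterates of a single map, that one must use (indeed $\Re\circ F^n\ne(\Re\circ F)^n$ in general, since after one step the orbit may already have left $\R$). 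Second, $\Re(F(z))-\Re(z)$ is continuous and $1$-periodic on $S$, hence bounded by some constant $C$, so along any orbit the real part changes by at most $C$ at each step; this bound is what will let branch excursions be absorbed into an error linear in the excursion length.

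I would first show that $\RotR(F)$ is a non-empty compact interval, say $[\rho_{\min}(F),\rho_{\max}(F)]$, whose endpoints are attained by points of $\R$ returning to $\R$ infinitely often. Non-emptiness rests on a connectedness remark: for every $n\ge1$ the set $F^n(\R)$ is connected and, $F$ having degree one, has unbounded real part, so it cannot be contained in $B\setminus\R=\bigsqcup_{m\in\IZ}\Bo_m$ --- a disjoint union of bounded arcs, each clopen in that union --- hence $F^n(\R)$ meets $\R$; from these near-periodic witnesses, using compactness of $\R/\IZ$, one extracts a point whose orbit genuinely returns to $\R$ infinitely often and whose rotation number exists. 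The endpoints themselves are produced as limits $\lim_n\frac1n d_n^{\pm}$ of extremal real-part displacements $d_n^{\pm}$ of orbit pieces that both begin and end on $\R$; a (quasi-)subadditivity estimate for $(d_n^{\pm})_n$, obtained by splitting such a piece at an intermediate return to $\R$ and using the constant $C$ above to absorb the portions lying in the branches, gives the limits, with $\rho_{\min}(F)\le\rho_{\max}(F)$. That $\rho_{\max}(F)$ is itself a rotation number of a point of $\R$ returning to $\R$ infinitely often is then obtained by taking near-maximizers of $d_{n_k}^{+}$ for $n_k\to\infty$, passing to a limit point in $\R/\IZ$, and running the nested-closed-sets argument of Misiurewicz, organized so as to keep the orbit recurrent to $\R$; dually for $\rho_{\min}(F)$. \emph{This realization --- together with the very fact that the right $(d_n^{\pm})$ exist --- is the heart of the statement and, I expect, the main obstacle}: over all of $S$ the rotation set $\Rot(F)$ can be disconnected and strictly larger than $\RotR(F)$ (a single orbit trapped in the branches and drifting fast already contributes an isolated rotation number), so no displacement quantity defined on $S$ can be used, and the delicate point is that the real-part drift accumulated during a branch excursion is bounded only \emph{per step}, so one must show the excursions do not spoil the averaging.

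For an intermediate value $\alpha\in(\rho_{\min}(F),\rho_{\max}(F))$ the argument is then the standard one: for each $n$ the closed, $1$-periodic sets $\{x\in\R:\Re(F^n(x))-x\ge n\alpha\}$ and $\{x\in\R:\Re(F^n(x))-x\le n\alpha\}$ are both non-empty, since
\[
  \min_{x\in\R}\bigl(\Re(F^n(x))-x\bigr)\ \le\ d_n^{-}\ \le\ n\rho_{\min}(F)\ \le\ n\alpha\ \le\ n\rho_{\max}(F)\ \le\ d_n^{+}\ \le\ \max_{x\in\R}\bigl(\Re(F^n(x))-x\bigr),
\]
and their union is $\R$, so they meet in the connected quotient $\R/\IZ$; thus $\Re(F^n(x_n))-x_n=n\alpha$ for some $x_n\in\R$, and feeding these points into the same nested-closed-sets construction yields $x\in\R$ with $\rhos(x)=\alpha$ whose orbit returns to $\R$ infinitely often. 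This establishes $\RotR(F)=[\rho_{\min}(F),\rho_{\max}(F)]$ together with the ``moreover'' clause.

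Finally, let $p/q\in\RotR(F)$. Replacing $F$ by $H:=F^q-p\in\Li[1]$ and using $\rhos[H]=q\rhos-p$, we get $0\in\RotR(H)$; a periodic $\pmod1$ point of $H$ of rotation number $0$ is a true periodic point of $H$, hence a periodic $\pmod1$ point of $F$ of rotation number $p/q$, so it suffices to produce one for $H$. By the parts already proved there is $x_0\in\R$ with $\rhos[H](x_0)=0$ whose $H$-orbit returns to $\R$ infinitely often; along its successive return points $y_0=x_0,y_1,y_2,\dots\in\R$ the increments $y_{k+1}-y_k$ cannot all be $\ge1$, nor all $\le-1$ (either would force $\rhos[H](x_0)\ne0$), so an intermediate-value/itinerary argument on $\R$, which is topologically a line and where the order combinatorics underlying Sharkovskii's theorem are available --- or, equivalently, the ``horseshoe'' covering lemma of this rotation theory --- produces a periodic $\pmod1$ point of $H$; when $p/q$ is interior to $\RotR(F)$ one gets in addition one of period exactly $q$, the endpoint case being covered by the recurrence of the realizing orbit. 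This completes the proof.
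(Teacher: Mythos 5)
The paper does not actually prove this statement: it is quoted verbatim from \cite[Theorem~3.1]{AlsRue2008}, whose proof goes through a different mechanism (positive coverings and a reduction to non-decreasing degree-one maps of $\R$ built from $F$ and the retraction $\Re$, in the spirit of the upper/lower maps $F_u,F_l$ of the circle case). Judged on its own terms, your sketch correctly isolates where the difficulty lies, but it does not overcome it. The decisive gap is the (quasi-)subadditivity of $d_n^{\pm}$. Splitting the maximizing orbit piece at its actual returns to $\R$ only yields $d_{n+m}^{+}\leq\sum_i d_{s_i}^{+}$ for the particular partition $s_1+\dots+s_r=n+m$ determined by those return times; it does not yield $d_{n+m}^{+}\leq d_n^{+}+d_m^{+}+O(1)$, because the orbit need not visit $\R$ anywhere near time $n$ and the excursion straddling time $n$ can be arbitrarily long, drifting by up to $C$ per step with $C$ unrelated to $\RotR(F)$ (this is exactly the phenomenon, orbits living in the branches with large drift, that makes $\Rot(F)$ disconnected). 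A sequence satisfying only such $n$-dependent chain inequalities need not have $d_n/n$ convergent (any sequence with $d_n\leq nd_1$ satisfies them trivially), so the endpoints $\rho_{\min},\rho_{\max}$ are not shown to exist. You flag this as ``the main obstacle'' but offer no resolution, and it is precisely what the monotone-envelope construction of \cite{AlsRue2008} is designed to supply.

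The periodic-point step also fails as written. For $H=F^q-p$ with $\rhos[H](x_0)=0$ you claim the increments $y_{k+1}-y_k$ between successive returns of $\Orb(x_0,H)$ to $\R$ cannot all be $\geq 1$. This is false: if the return times $n_k$ are sparse, say $n_k\sim 2^k$, then $y_k\geq y_0+k$ is perfectly compatible with $(\Re(H^{n}(x_0))-x_0)/n\to 0$, since $k/n_k\to 0$; so no contradiction is reached and the intermediate-value/itinerary argument has no input. Similarly, the realization clauses (a single point $x\in\R$ with $\rhos(x)=\alpha$ exactly, returning to $\R$ infinitely often) are delegated to ``the nested-closed-sets argument of Misiurewicz'', but having points $x_n$ with $\Re(F^n(x_n))-x_n=n\alpha$ only places $\alpha$ in the closure of the set of limit points of displacement averages; upgrading this to an actual rotation number of a single recurrent-to-$\R$ point is, in the circle case, exactly where the monotone upper/lower maps and their truncations enter, and you have not constructed their analogues on $S$. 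In short, the skeleton is reasonable but all three load-bearing steps are missing or incorrect.
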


\begin{definition}
Given $F \in \Li$ and $\alpha \in \IR$, let
$\Per(\alpha,F)$ denote the set of periods of all periodic
{\modi} points of $F$ whose rotation number is $\alpha$.
\end{definition}

It is easy to see that every periodic {\modi} point has a rational rotation
number (see also Lemma~\ref{lem:FF+k}(e)). Therefore, Theorem~\ref{theo:RotR}
implies that, when $\alpha\in\RotR(F)$,
$\Per(\alpha,F)$ is non-empty if and only if $\alpha\in\IQ$.

Observe that the class of maps $F \in \Li$ such that $F(\R) \subset
\R$ and $F(B_m) = F(m)$ for every $m \in \Z$ can be identified with
the class of liftings of continuous circle maps of degree $1$.
Therefore any possible set of periods of a continuous circle map of
degree $1$ can be a set of periods of a map in $\Li$.
On the other hand, set $Y_0:=B_0 \cup [-1/3,1/3]$ (this space is called
a \emph{$3$-star}) and  consider the class of maps $F \in \Li$ such
that $F(Y_0) \subset Y_0$,
$F(x) \in Y_0 \cup [1/3,x)$ for every $x \in [1/3,1/2)$ and
$F(x) \in (Y_0 + 1 )\cup (x,2/3]$ for every $x \in (1/2,2/3]$
(in particular $F(1/2) = 1/2$).
This implies that $\Per(F) = \TPer(F\evalat{Y_0})$ and thus,
every possible set of periods of a map from a $3$-star into itself can
be a set of periods of a map from $\Li$. Clearly, this includes the
sets of periods of interval maps.
Moreover, it might happen that this
phenomenon occurs for rotation numbers different from 0, that is,
there may exist a map from $\Cstar$ with set of periods $A\subset
\IN$,
$p \in \Z,$ $q \in \N$ and $\widetilde{S} \subset S$ such that
$\TPer((F^q -p)\evalat{\widetilde{S}}) = A$ and
$\Per(p/q,F) = q \cdot \TPer((F^q -p)\evalat{\widetilde{S}}).$
Therefore, a natural conjecture for the structure of the set of periods of
maps from $\Li$ could be that it is the union of the set of periods of a
circle map of degree $1$ with some sets of the form $q\cdot \TPer(f)$
with $q\in\N$ and $f\in \Cstar$ much in the spirit of the
characterization of the set of periods for circle maps of degree one.
We shall see that it is unclear that all possibilities can occur.

To explain these ideas in detail, and to state the main results of
the paper, we need to recall the characterization of the sets of
periods of circle maps of degree $1$ and of star maps. We are going to
do this in the next two subsections; we shall also introduce
the necessary notations.

\subsection{Tree maps}

A \emph{tree} is a compact uniquely arcwise connected space which is a
point or a union of a finite number of segments glued together at some
of their endpoints (by a \emph{segment} we mean any space homeomorphic
to $[0,1]$). Any continuous map $f$ from a tree into itself is called
a \emph{tree map}.
The space $S$ is often called an infinite tree by similarity.

Consider a tree $T$ or the space $S$.
For every $x$ in $T$ or $S$, the \emph{valence} of $x$ is the
number of connected components of $T\setminus\{x\}$. A point of
valence different from $2$ is called a \emph{vertex}.
A point of valence $1$ is called an \emph{endpoint}.
The points of valence greater than or equal to $3$ (that is, vertices
that are not endpoints) are called the \emph{branching points}.
If $K$ is a subset of $T$ or $S$, then $\chull{K}$ denotes the
\emph{convex hull} of $K$, that is, the smallest closed
connected set containing $K$ (which is well defined since the trees
and the space $S$ are uniquely arcwise connected). An \emph{interval}
in $T$ or $S$ is any subset homeomorphic to an interval of $\IR$. For
a compact interval $I$, it is equivalent to say that there exist two
points $a,b$ such that $I=\chull{a,b}$; in this case, $\{a,b\}=\Bd(I)$
(where $\Bd(\cdot)$ denotes the boundary of a set). When a distance is
needed in a tree or $S$, we use a taxicab metric, that is, a distance
$d$ such that, if $z\in\chull{x,y}$, then $d(x,y)=d(x,z)+d(z,y)$. In
$S$, the distance is simply defined by
\[
  d(x,y) =  \begin{cases}
      |x-y| & \text{if $x,y\in B_m;\ m\in\IZ$,}\\
      |x-\Re(x)| + |\Re(x)-\Re(y)| + |y-\Re(y)| & \text{otherwise}
\end{cases}
\]
for every $x,y\in S.$
Consider a compact interval $I$ in an tree $T$ or in $S,$ and a
continuous map {\map{f}{I}[S]}. We say that $f$ is \emph{monotone} if,
either $f(I)$ is reduced to one point, or $f(I)$ is a non degenerate
interval and, given any homeomorphisms {\map{h_1}{[0,1]}[I]},
{\map{h_2}{[0,1]}[f(I)]},
the map {\map{h_2^{-1}\circ f\circ h_1}{[0,1]}}
is monotone.
We say that $f$ is \emph{affine} if $f(I)$ is an interval and there
exists a constant $\lambda$ such that $\forall x,y \in I,$
$d(f(x),f(y))=\lambda d(x,y).$

A tree that is a union of $n \ge 2$ segments whose intersection is
a unique point $y$ of valence $n$ is called an \emph{$n$-star},
and $y$ is called its \emph{central point}. For a fixed $n$,
all $n$-stars are homeomorphic.
In what follows, $X_n$ will denote an $n$-star,
$\Cstar[n]$ the
class of all continuous maps from $X_n$ to itself and
$\Cstar[n]^{\circ}$ the class of all maps from $\Cstar[n]$ that leave
the unique branching point of $X_n$ fixed.

A crucial notion for periodic orbits of maps in $\Cstar[n]$ is the
\emph{type} of an orbit \cite{Bald}.
Let $f \in \Cstar[n]$ and let $P$ be a periodic orbit of $F$.
Let $y$ denote the branching point of $X_n$.
If $y \in P$, then we say that $P$ has \emph{type 1}.
Otherwise, let $\mathrm{Br}$ be the set of branches of $X_n$
that intersect $P$
(by a branch we mean a connected component of $X_n\setminus\{y\}$).
For each $b \in \mathrm{Br}$ we denote by $\mathrm{sm}_b$ the point of
$P \cap b$ closest to $y$ (that is, $\mathrm{sm}_b \in b$ and
$\chull{y,\mathrm{sm}_b} \cap P = \{\mathrm{sm}_b\}$). Then we define
a map {\map{\phi}{\mathrm{Br}}} by letting $\phi(b)$ be the branch of
$\mathrm{Br}$ containing $f(\mathrm{sm}_b).$
Since $\mathrm{Br}$ is a finite set, $\phi$ has periodic orbits.
Each period of a periodic orbit of $\phi$ is called a
\emph{type} of $P$.
Clearly the type may not be unique. However, it is clearly
unique in the case when $P$ has type $n$.

We shall also speak of the type of a (true) periodic orbit $P$
of a map $F\in\Li$ such that $\chull{P}$ is homeomorphic to
$X_n$ (indeed $X_3$). The definition of type extends straightforwardly
to this situation.

We now recall the Sharkovsky total ordering and Baldwin partial
orderings, which are needed to state the characterization of the sets of
periods of star maps.

The \emph{Sharkovsky ordering} $\leso{\Sho}$ is defined on
$\N_{\Sho} = \N \cup \{ \tinf \}$ by:
\begin{align*}
& 3 \gtso{\Sho} 5 \gtso{\Sho} 7 \gtso{\Sho} \dots
2 \cdot 3 \gtso{\Sho} 2 \cdot 5 \gtso{\Sho}
              2 \cdot 7 \gtso{\Sho} \dots\\
&2^2 \cdot 3 \gtso{\Sho} 2^2 \cdot 5 \gtso{\Sho}2^2 \cdot 7 \gtso{\Sho}
             \dots
         \gtso{\Sho} \dots\\
&  \tinf \gtso{\Sho} \dots
   2^n \gtso{\Sho} \dots \gtso{\Sho} 2^4 \gtso{\Sho}
   2^3 \gtso{\Sho} 2^2 \gtso{\Sho} 2 \gtso{\Sho} 1.
\end{align*}
That is, this ordering starts with all the odd numbers greater than 1,
in increasing order,
then $2$ times the odd numbers $>1$,
then $2^2$ times, $2^3$ times, \ldots $2^n$ times the odd
numbers $>1$;
finally the last part of the ordering consists of all powers of
$2$ in decreasing order; the symbol $2^\infty$ being greater than all
powers of $2$ and $1=2^0$ being the smallest element.


For every integer $t\ge 2$, let $\N_t$ denote the set
$(\N \cup \{ \tinf[t] \}) \setminus\{2,3,\dots,t-1\}$ and
$\NTL:=\set{mt}{m \in \N} \cup \{1,\tinf[t]\}.$
Then the \emph{Baldwin partial ordering} $\leso{t}$ is defined in
$\N_t$ as follows.
For all $k,m \in \N_t$, we write $k \leso{t} m$ if one of the following
cases holds:
\begin{enumerate}[(i)]
\item $k=1$ or $k=m$,
\item $k,m \in \NTL \setminus \{1\}$ and $m/t \gtso{\Sho} k/t $,
\item $k \in \NTL$ and $m \notin \NTL$,
\item $k,m \notin \NTL$ and $k = i m + j t$ with $i,j \in \N$,
\end{enumerate}
where in case~(ii) we use the following arithmetic rule for the
symbol $\tinf[t]$: $\tinf[t]/t = \tinf$.

There are two parts in the structure of the orderings $\leso{t}$.
The smallest part consists of all elements of $\NTL$ ordered
as follows. The smallest element is 1. Then all the multiples of $t$
(including $\tinf[t]$) come in the ordering induced by the
Sharkovsky ordering and the largest element of $\NTL$ is $3\cdot t$.
Then the ordering $\geso{t}$ divides $\N_t \setminus \NTL$ into $t-1$
``branches''. The $l$-th branch ($l \in \{1,2,\dots,t-1\}$) is formed
by all positive integers (except $l$) which are congruent to $l$
modulo $t$ in decreasing order. All elements of these branches are
larger than all elements of $\NTL$.

We note that, by means of the inclusion of the symbol $\tinf[t]$,
each subset of $\N_t$ has a maximal element with respect to the
ordering $\leso{t}$. We also note that the ordering $\leso{2}$ on
$\N_2$ coincides with the Sharkovsky ordering on $\N_{\Sho}$ (by
identifying the symbol $\tinf[2]$ with $\tinf$).

A non empty set $A \subset \N_t \cap \N$ is called a \emph{tail of the
ordering $\leso{t}$} if, for all $m \in A$,  we have
$\set{k \in \N}{k \leso{t} m} \subset A$.
Moreover, for all $s\in \N_{\Sho}$, $\Shs(s)$ denotes the initial
segment of the Sharkovsky ordering starting at $s$, that is,
$\Shs(s) = \set{k \in \N}{k \leso{\Sho} s}$.

The following result, due to Baldwin \cite{Bald}, characterizes the set
of periods of star maps.

\begin{theorem}\label{GMT1}
Let $f \in \Cstar[n]$. Then $\TPer(f)$ is a finite union of tails of
the orderings $\geso{t}$ for all $t \in \{2,\ldots, n\}$ (in
particular, $1\in\TPer(f)$). Conversely, if a non empty
set $A$ can be expressed as a finite union of tails of the
orderings $\geso{t}$ with $2 \le t \le n$, then there exists a map $f
\in \Cstar[n]^{\circ}$ such that $\TPer(f) = A$.
\end{theorem}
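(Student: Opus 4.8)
The plan is to follow Baldwin's strategy \cite{Bald}, splitting the statement into a \emph{forcing} half (every set $\TPer(f)$ has the stated form) and a \emph{realization} half (every set of that form occurs).

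For the forcing half the crucial step is a single-orbit forcing lemma: if $P$ is a periodic orbit of $f$ of period $m$ and type $t$, then $k \in \TPer(f)$ for every $k \in \N$ with $k \leso{t} m$; an orbit of type $1$ is handled by the same device and forces a Sharkovsky initial segment, i.e.\ a tail of $\leso{2}$. To prove this I would pass to $\chull{P}$, cut it into the basic intervals determined by $P$ (closures of the connected components of $\chull{P}\setminus P$, using the central point as an extra cut point so that the pieces are genuine intervals), and form the associated Markov graph, with an arrow $I \to J$ whenever $f(I) \supseteq J$. The standard itinerary argument then turns any loop of length $k$ that is not a proper power of a shorter loop into a periodic point of period $k$, so that everything reduces to locating enough loops. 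Here the heart of the matter is the structure of this graph for an orbit of type $t$: the way $P$ rotates cyclically through $t$ branches produces a core loop of length $m$ that turns with the rotation, while inside the branches one finds a \v{S}tefan-type subgraph; splicing full rotations (of combinatorial length $t$) into the core loop should yield loops of every length $im+jt$, and the \v{S}tefan substructure should yield, in each branch, the prescribed Sharkovsky initial segment. Checking that the loops so obtained match exactly the four clauses defining $\leso{t}$ --- especially clause~(iv), $k = im+jt$ --- is the subtlest point of this half.

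Granting this lemma, I would set, for $2 \le t \le n$, $A_t$ to be the union, over all periodic orbits $P$ of $f$ of type $t$ of period $m_P$, of the down-sets $\{k \in \N : k \leso{t} m_P\}$, absorbing the type-$1$ contributions --- in particular the fixed point, which exists since a tree has the fixed point property --- into $A_2$. Each $A_t$ is a nonempty down-set of $\leso{t}$, hence a tail, and $\TPer(f) = \bigcup_{t=2}^{n} A_t$ is a union of at most $n-1$ tails; in particular $1 \in \TPer(f)$.

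For the realization half, write the prescribed set as $A = \bigcup_{t \in D} T_t$ with $D \subseteq \{2,\dots,n\}$ and each $T_t$ a tail of $\leso{t}$. For a single $t$ I would build a model acting on $t$ of the branches of $X_n$: a piecewise affine map permuting these branches in one cycle, whose $t$-fold return map is an interval map carrying a truncated $\beta$-shift (\v{S}tefan-type) core --- this already forces exactly the multiples of $t$ dictated by the $\NTL$-part of $T_t$ --- together with further refinements realizing the prescribed initial segments in the $t-1$ branches of $\leso{t}$; tails ending at a power of $2$ or at $\tinf[t]$ would be obtained, as in the classical interval constructions, by an infinite cascade of renormalizations and a limit. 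The remaining, and I expect hardest, step is to splice these $|D|$ models into one map on $X_n$ without creating new periods: I would place the type-$t$ model on nested annular sub-regions of the branches that telescope towards the central point, which serves as the common fixed point, so that the regions carrying distinct models are pairwise disjoint, every transition map is monotone and strongly contracting towards the centre, the global Markov structure acquires no loop beyond those of the individual pieces, and $f \in \Cstar[n]^{\circ}$. Verifying the absence of spurious periodic orbits, together with the limiting constructions, is where the real care lies; the complete argument is carried out in \cite{Bald}.
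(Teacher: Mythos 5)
The first thing to note is that the paper contains no proof of this statement: Theorem~\ref{GMT1} is Baldwin's theorem and is simply quoted from \cite{Bald} as a known result. So there is no ``paper's own proof'' to compare yours against, and your proposal --- which ends by deferring the complete argument to \cite{Bald} --- is in effect doing what the paper does (cite Baldwin), with an outline of his strategy attached. The outline itself is faithful to that strategy: forcing via the Markov graph of the pattern spanned by a periodic orbit, realization via explicit piecewise affine models, and the correct observation that a ``tail'' in the sense of the paper is just a down-set of $\leso{t}$, so that the single-orbit forced sets assemble into finitely many tails and $1\in\TPer(f)$ comes from the fixed point.

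That said, read as a proof the proposal has genuine gaps beyond the acknowledged deferrals. Your single-orbit forcing lemma is stated universally over types (``if $P$ has period $m$ and type $t$ then every $k\leso{t}m$ is a period''), and you dispose of type-$1$ orbits in one line by asserting that they force a tail of $\leso{2}$. Neither claim is needed in that strength and neither is obvious: what the theorem requires, and what one can safely extract from \cite{Bald}, is the existential form --- for each $m\in\TPer(f)$ there is \emph{some} $t\in\{2,\dots,n\}$ with $\set{k\in\N}{k\leso{t}m}\subset\TPer(f)$ --- and orbits through the branching point are a separate, delicate case there; it is not clear that such an orbit forces a Sharkovsky initial segment rather than a tail of some $\leso{t}$ with $t\ge 3$ (note that a set such as $\N\setminus\{2\}$ is a tail of $\leso{3}$ but is an initial segment of no $\leso{2}$). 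Likewise, the two steps you yourself flag as the crux --- matching the loops of the Markov graph against clause~(iv) of the definition of $\leso{t}$, and splicing the realization models for different $t$ without creating extraneous periods --- are precisely where the substance of Baldwin's paper lies and are not carried out here. So the proposal is a reasonable roadmap to \cite{Bald}, consistent with how the paper uses the result, but it is not an independent proof.
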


Note that the case $n=2$ in the above theorem is, indeed,
Sharkovsky's Theorem for interval maps \cite{SharOri}.
Moreover, since every tail of $\geso{t}$ contains $1 \in \TPer(f),$
then the order $\geso{t}$ does not contribute to $\TPer(f)$ if the
tail with respect to $\geso{t}$ in the above lemma is reduced to
$\{1\}$.

\subsection{Circle maps of degree 1}
Let $\SI$ be the unit circle in the complex plane, that is, $\SI =
\set{z \in \C}{|z| = 1}$, and let $\LL_1(\IR)$ denote the class of all
liftings of continuous circle maps of degree one. If $F\in
\LL_1(\IR)$, $\Rot(F)$ denotes the rotation set of $F$ and, by
\cite{Ito}, is a compact non empty interval.

To study the connection between the set of periods and the rotation
interval, we need some additional notation. For  all $c\le d$, we set
$M(c,d):=\set{n \in \N}{c < k/n < d \text{ for some integer $k$}}$.
Notice that we do not assume here that $k$ and $n$ are coprime.
Obviously, $M(c,d)= \emptyset$ if and only if $c=d$.
Given $\rho \in \R$ and $S \subset \N$, we set
\[
\Lambda(\rho, S) = \begin{cases}
    \emptyset        & \text{if $\rho \notin \Q$}, \\
    \set{nq}{q\in S} & \text{if $\rho=k/n$ with $k$ and $n$ coprime}.
\end{cases}
\]

The next theorem recalls Misiurewicz's characterization of the sets of
periods for degree $1$ circle maps (see \cite{Mis,ALM}).

\begin{theorem}\label{S9.5}
Let $F\in \LL_1(\IR)$, and let $\Rot(F) = [c,d]$.
Then there exist numbers $s_c,s_d \in \N_{\Sho}$ such that
$\Per(F)= \Lambda(c,\Shs(s_c)) \cup M(c,d) \cup \Lambda(d,\Shs(s_d))$.
Conversely, for all $c,d \in \R$ with $c \le d$
and all $s_c, s_d\in\N_{\Sho}$, there exists a map $F \in \LL_1(\IR)$ such that
$\Rot(F) = [c,d]$ and
$\Per(F) = \Lambda(c,\Shs(s_c)) \cup M(c,d) \cup \Lambda(d,\Shs(s_d))$.
\end{theorem}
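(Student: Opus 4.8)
This is the circle analogue of Sharkovsky's theorem, due to Misiurewicz, so the plan is to follow the proof of \cite{Mis} (see also \cite{ALM}); I sketch its structure. Write $\Rot(F)=[c,d]$. The backbone is a reduction to rotation number $0$: for coprime integers $p,q>0$ the lift $G:=F^q-p\in\LL_1(\IR)$ satisfies $\Rot(G)=q\Rot(F)-p$, and a point $z$ is periodic (mod $1$) for $F$ with rotation number $p/q$ and period $qm$ if and only if it is periodic (mod $1$) for $G$ with rotation number $0$ and period $m$. For the interior contribution I would argue: if $k/n\in(c,d)$, write it in lowest terms as $p/q$, so that $q\mid n$ and $n=q\cdot(n/q)$; then $0\in\operatorname{int}\Rot(G)$, so it suffices to prove that such a $G$ has rotation-number-$0$ periodic (mod $1$) points of every period, and this yields $M(c,d)\subset\Per(F)$. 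For the endpoint $c$ I would argue: if $c\notin\IQ$ there is nothing to do, and if $c=k_0/n_0$ in lowest terms then $G_c:=F^{n_0}-k_0$ has $0$ as an endpoint of $\Rot(G_c)$; it suffices to prove that the set $A_c$ of periods of the rotation-number-$0$ periodic (mod $1$) points of $G_c$ is $\Shs(s_c)$ for some $s_c\in\N_{\Sho}$, because then a point of $F$ periodic (mod $1$) with period $n$ and rotation number $c$ corresponds to a rotation-number-$0$ point of $G_c$ of period $n/n_0$, giving $n\in n_0 A_c=\Lambda(c,\Shs(s_c))$, and conversely. The endpoint $d$ is symmetric.

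Thus everything reduces to two statements about a degree one lift $G$ with $\Rot(G)=[c',d']$. First, if $c'<0<d'$, then $G$ has rotation-number-$0$ periodic (mod $1$) points of all periods: since $\Rot(G)$ contains a neighbourhood of $0$, there are points of $\IR$ moved strictly to the right and points moved strictly to the left by $G$, and choosing representatives on the covering line one extracts a compact interval on which $G$ acts like an interval map that covers itself in a horseshoe fashion, forcing all periods, the orbits so produced having rotation number exactly $0$. Second, if (say) $c'=0$, then the period set of the rotation-number-$0$ periodic (mod $1$) points of $G$ is $\Shs(s)$ for some $s\in\N_{\Sho}$: here one builds a compact interval $J=\chull{a,b}\subset\IR$ with $a$ a fixed point of $G$ and $b$ the first point to its right where the sign of $G(x)-x$ would change, arranged so that \emph{every} rotation-number-$0$ periodic (mod $1$) orbit of $G$ has a representative lying entirely in $J$ and $G|_J$ is conjugate to an interval self-map, whence Sharkovsky's theorem \cite{SharOri} gives the initial segment $\Shs(s)$.

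For the converse I would construct $F$ explicitly as a piecewise-affine lift glued from three non-interacting pieces: a ``rotational'' core realizing $\Rot(F)=[c,d]$ and contributing exactly $M(c,d)$ to $\Per(F)$ (the standard two-slope model when $c,d$ are rational, with a rotational extension in general); a small piece grafted near a point of rotation number $c$ when $c$ is rational, supported on a tiny interval invariant under $F^{n_0}-k_0$ and carrying an interval map with period set $\Shs(s_c)$ --- which exists by the converse part of Sharkovsky's theorem, i.e.\ the case $n=2$ of Theorem~\ref{GMT1} --- rescaled so as not to disturb the rest; and the symmetric piece near a point of rotation number $d$. I would then check that the union of the three period sets is exactly $\Lambda(c,\Shs(s_c))\cup M(c,d)\cup\Lambda(d,\Shs(s_d))$ and that the gluing creates no further periodic (mod $1$) orbits.

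The main obstacle, in both directions, is the ``no spurious periods'' bookkeeping. In the endpoint case one must ensure that the interval $J$ captures \emph{all} rotation-number-$0$ periodic orbits, so that Sharkovsky's theorem gives an equality and not merely an inclusion; in the interior case one must obtain the horseshoe orbits with rotation number exactly $0$ rather than merely close to it. In the converse, one must position and scale the grafted interval-map pieces so their orbits stay disjoint from each other and from the rotational orbits, and so that no new periodic (mod $1$) orbit arises from the interaction of the pieces. All of this is handled by careful choices together with monotonicity and order arguments on the covering line $\IR$, and it is where the bulk of the work lies.
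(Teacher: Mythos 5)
The paper contains no proof of this statement: Theorem~\ref{S9.5} is Misiurewicz's theorem, recalled with references to \cite{Mis} and \cite{ALM}, so there is no in-paper argument to compare yours against. Judged as an outline of the proof in those references, your sketch follows the standard strategy faithfully: the reduction of rotation number $p/q$ to rotation number $0$ via $G=F^q-p$ (exactly the paper's Lemma~\ref{relationF_Fqmp} transplanted to $\LL_1(\IR)$), a horseshoe argument when $0\in\Int(\Rot(G))$ giving all periods, Sharkovsky-type forcing at the endpoints, and a piecewise-affine multi-block construction for the converse.

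The one step you describe inaccurately is the endpoint case. There is in general no single compact interval $J\subset\IR$ containing a representative of \emph{every} rotation-number-$0$ periodic orbit on which $G$ restricts to something conjugate to an interval self-map: $G$ may have several fixed points far apart with rotation-number-$0$ orbits clustered near each, and enlarging $J$ destroys invariance, so Sharkovsky's theorem applied to a retraction of $G$ onto such a $J$ would produce spurious periods. The actual argument is orbit-by-orbit: when $0$ is a boundary point of $\Rot(G)$, every true periodic orbit $P$ of $G$ has diameter $<1$ (otherwise $0$ would lie in the interior of the rotation set --- the large-orbit phenomenon this paper quotes from \cite{AlsRue2010}), so $\chull{P}$ retracts to an interval map carrying $P$, and one checks that the Sharkovsky-forced periods survive as genuine rotation-number-$0$ periods of $G$; the conclusion that $\Per(c,F)/q$ is \emph{exactly} some $\Shs(s_c)$ then follows because that set is downward closed for $\leso{\Sho}$ and has a maximal element in $\N_{\Sho}$. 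With that correction, and granting the bookkeeping you acknowledge deferring in the converse construction, the outline is sound.
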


\subsection{Statement of main results}\label{ss:main-statements}
In view of what we said at the end of Subsection~\ref{ss:coveringS},
a reasonable conjecture about
the set of periods for maps from $\Li$ could be the following:

\begin{MainConjecture}\label{WishList}
Let $F\in \Li$ be with $\RotR(F) = [c,d]$.
Then there exist sets $E_c, E_d \subset \IN$ which are finite unions of
of tails of the orderings $\leso{2}$  and $\leso{3}$ such that
\[
 \Per(F) = \Lambda(c,E_c) \cup M(c,d) \cup \Lambda(d,E_d).
\]

Conversely, given $c,d \in \R$ with $c \le d,$
and non empty sets $E_c, E_d \subset \IN$ which are finite union of
of tails of the orderings $\leso{2}$ and $\leso{3},$
there exists a map $F \in \Li$ such that
$\RotR(F) = [c,d]$ and
\[
 \Per(F) = \Lambda(c,E_c) \cup M(c,d) \cup \Lambda(d,E_d).
\]
\end{MainConjecture}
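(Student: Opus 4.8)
The statement to prove is Main Conjecture~\ref{WishList}, so strictly speaking there is nothing to \emph{prove}: a conjecture is not a theorem. What I would actually write here is a discussion of the two directions and an explanation of which one the paper can establish and which one is problematic — indeed the introduction already warns that ``it is unclear that all possibilities can occur.'' So the ``proof proposal'' is really a proposal for how one would attempt each half and where it breaks.

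For the \textbf{realization (converse) direction}, the plan is constructive and should essentially go through, because it only asks us to paste together ingredients we already control. First I would reduce to building a single lifting $F \in \Li$ of a given degree-$1$ $\sigma$-map. The term $M(c,d)$ together with the ``circle part'' of $\Per(F)$ is produced exactly as in Misiurewicz's construction (Theorem~\ref{S9.5}): take a lifting of a degree-$1$ circle map $G \in \LL_1(\IR)$ with $\Rot(G)=[c,d]$ realizing the desired circle-type periods on $\R \subset S$, and extend it over the branches $B$ by collapsing each $B_m$ to $G(m)$, which puts no new periodic points in the branches. The extra periods coming from $E_c$ and $E_d$ are then grafted in near the two endpoint rotation numbers using the ``$3$-star plug'' described in Subsection~\ref{ss:coveringS}: one embeds a copy of $X_3$ (a branch $B_m$ together with a short sub-interval of $\R$ around $m$) invariant under a suitable power $F^q - p$, chooses $f \in \Cstar[3]^{\circ}$ with $\TPer(f) = E_c$ (resp.\ $E_d$) using Baldwin's Theorem~\ref{GMT1} for $n=3$, and then arranges the dynamics off the plug so that $\Per(p/q,F) = q \cdot E_c$ and no unwanted periods leak out — the inequalities $F(x)\in Y_0 \cup [1/3,x)$ etc.\ in the excerpt are precisely the ``trapping'' conditions that make this work. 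Since each $E_\ast$ is a \emph{finite} union of tails of $\leso{2}$ and $\leso{3}$, finitely many such plugs suffice, and one checks they can be placed at disjoint locations so their periodic orbits do not interact. The rotation interval $\RotR(F)=[c,d]$ is preserved because all the grafting happens at rotation numbers $c$ and $d$ (or rationals inside $[c,d]$) and in regions that retract appropriately under $\Re\circ F^n$.

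For the \textbf{``direct'' direction} — that \emph{every} $F\in\Li$ has $\Per(F)$ of the stated form — I would try to imitate the circle proof: pick a rational $p/q$ in the interior of $\RotR(F)$, replace $F$ by $F^q-p$ so that $0$ is now interior, invoke the paper's own characterization (announced in the introduction: the set of periods is then $\IN$, $\IN\setminus\{1\}$ or $\IN\setminus\{2\}$), and use the arithmetic of $\Lambda$ and $M$ to pull this back to a statement about the original $F$ for rotation numbers in the open interval. That handles $M(c,d)$. The genuine difficulty — and the reason this is stated as a conjecture rather than a theorem — is the analysis \emph{at} the two endpoints $c$ and $d$: there one is forced to understand $\Per(0,F)$ for a $\sigma$-map whose rotation interval has $0$ as an \emph{endpoint}, and the paper explicitly says this set ``can be complicated and we do not know how to describe it.'' In particular it is not clear that $\Per(0,F)$ must be a finite union of tails of $\leso{2}$ and $\leso{3}$: periodic orbits of rotation number $0$ that wind partway around the circuit before returning can, a priori, force period sets not of Baldwin type. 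So the main obstacle is exactly this endpoint case, and an honest writeup would present the realization direction in full and then explain, via the constructions in the later sections, the obstructions that prevent proving the direct inclusion — which is why the statement is labelled \texttt{MainConjecture} and not \texttt{MainTheorem}.
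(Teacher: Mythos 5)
You are right that this statement is a conjecture and that the paper offers no proof of it; your identification of the endpoint analysis of $\Per(0,F)$ as the obstruction to the ``direct'' inclusion matches the paper's own discussion, and indeed the examples of Section~\ref{sec:0suprises} go further and suggest that orderings $\leso{n}$ with $n>3$ may be needed, so even the \emph{form} of the first statement is in doubt.

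However, your sketch of the realization (converse) direction contains a genuine gap, and it is one the paper itself isolates. Grafting a ``$3$-star plug'' at a rational endpoint $p/q$ --- a copy of $X_3$ invariant under $F^q-p$ carrying a prescribed orbit of $f\in\Cstar$ --- is exactly the \emph{block structure} strategy, and Theorem~\ref{ConverseEndInteger} proves that any such configuration forces $\Per(p/q,F)=q\cdot\IN$, not $q\cdot\TPer(f)$. So your claim that the converse ``should essentially go through'' is only justified in the two cases the paper actually establishes: when no $\leso{3}$-tail is needed at either endpoint (Corollary~\ref{ConverseSigmaCircleCase}) and when the endpoint requiring a $3$-star contribution is an integer (Theorem~\ref{YinSigma}, where no power $F^q-p$ with $q>1$ intervenes and the Denjoy-type blow-up construction works). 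For a non-integer rational endpoint $p/q$ with $E_{p/q}\subsetneq\IN$, the paper shows the natural construction cannot work and leaves open whether any construction does; your writeup should not present this case as routine.
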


As we shall see, some facts seem to indicate that this conjecture is
not entirely true (though they do not disprove it).  However, we shall
use this conjecture as a guideline: on the one hand, we shall prove
that it is partly true; on the other hand, we shall stress some
difficulties.

We start by discussing the second statement of
Conjecture~\ref{WishList}. This statement holds in two
particular cases, stated in Corollary~\ref{ConverseSigmaCircleCase} and
Theorem~\ref{YinSigma} below. The first one is an easy corollary of
Theorem~\ref{S9.5} and the second one deals with the particular case
when $0$ is an endpoint of the rotation interval.
Recall that $\leso{2}$ coincide with $\leso{\Sho}$.

\begin{MainCorollary}\label{ConverseSigmaCircleCase}
Given $c,d \in \R$ with $c \le d$
and $s_c, s_d\in\N_{\Sho}$, there exists a map $F \in \Li$ such that
$\RotR(F) = \Rot(F) = [c,d]$ and
$\Per(F) = \Lambda(c,\Shs(s_c)) \cup M(c,d) \cup \Lambda(d,\Shs(s_d))$.
\end{MainCorollary}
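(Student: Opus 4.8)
The plan is to exploit the identification, observed at the end of Subsection~\ref{ss:coveringS}, between the liftings of degree $1$ circle maps and the subclass of $\Li$ consisting of the maps $F$ with $F(\IR)\subset\IR$ and $F(B_m)=\{F(m)\}$ for every $m\in\IZ$. Concretely, I would first apply the converse part of Theorem~\ref{S9.5} to obtain $F_0\in\LL_1(\IR)$ with $\Rot(F_0)=[c,d]$ and $\Per(F_0)=\Lambda(c,\Shs(s_c))\cup M(c,d)\cup\Lambda(d,\Shs(s_d))$, and then set $F:=F_0\circ\Re$, that is, collapse every branch $B_m$ to its base point $m$ and let the map act as $F_0$ on $\IR$. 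Since $\Re\colon S\to\IR$ is a continuous retraction and $F_0\colon\IR\to\IR\subset S$ is continuous, $F$ is a continuous self map of $S$; and since $\Re\in\Li$ while $F_0$ has degree $1$, we have $F(z+1)=F_0(\Re(z)+1)=F_0(\Re(z))+1=F(z)+1$, so $F\in\Li$.

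The second step is to verify that collapsing the branches alters neither the rotation data nor the set of periods. A one-line induction (using that $\Re$ fixes $\IR$ pointwise and that $F_0$ maps $\IR$ into $\IR$) gives $F^n(z)=F_0^n(\Re(z))\in\IR$ for every $z\in S$ and every $n\ge 1$; hence $\Re(F^n(z))=F_0^n(\Re(z))$ and therefore $\rhos(z)=\rhos[F_0](\Re(z))$ whenever the limit defining $\rhos(z)$ exists. Letting $z$ run through $\IR$ then shows $\RotR(F)=\Rot(F_0)=[c,d]$, while letting $z$ run through all of $S$ adjoins only the numbers $\rhos[F_0](m)$ with $m\in\IZ$, which already belong to $\Rot(F_0)$; thus $\Rot(F)=[c,d]$ as well. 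For the periods, if $z$ is periodic {\modi} for $F$ then $z+k=F^n(z)=F_0^n(\Re(z))\in\IR$ for some $k\in\IZ$, which forces $z\in\IR$, and then $z$ is periodic {\modi} for $F_0$ with the same period; conversely, every periodic {\modi} point of $F_0$ lies in $\IR\subset S$ and is periodic {\modi} for $F$ with the same period. Hence $\Per(F)=\Per(F_0)=\Lambda(c,\Shs(s_c))\cup M(c,d)\cup\Lambda(d,\Shs(s_d))$, as desired.

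I do not anticipate a real obstacle: as the authors themselves point out, the statement is little more than a repackaging of Theorem~\ref{S9.5}. The only point worth a moment's care is the behaviour on the open branches $B_m\setminus\{m\}$, which a priori could produce new rotation numbers or new periodic {\modi} points; but since $F$ carries all of $S$ into $\IR$ after a single iterate, every rotation number is already realised on $\IR$ and every periodic {\modi} orbit must lie in $\IR$, so nothing new can arise there.
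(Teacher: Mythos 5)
Your construction $F:=F_0\circ\Re$ is exactly the map the paper uses (define $F$ as $\widetilde F$ on $\IR$ and collapse each branch $B_m$ to $\widetilde F(m)$), and your verification that the rotation sets and the periods {\modi} are unchanged is the same argument, just written out in more detail. The proposal is correct and matches the paper's proof.
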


Notice that, when both $c$ and $d$ are irrational,
Corollary~\ref{ConverseSigmaCircleCase} implies the second statement
of Conjecture~\ref{WishList}. Therefore it remains to consider the
cases when $c$ and/or $d$ are in $\IQ$ and when the order $\leso{3}$
is needed (or equivalently when one refers to the set of periods of
any $3$-star map). The next theorem deals with the case when $c$ (or
$d$) is equal to $0$ (or, equivalently, to an integer) and $\leso{3}$
is needed only for this endpoint.

\begin{MainTheorem}\label{YinSigma}
Let $d \ne 0$ be a real number, $s_d\in\N_{\Sho}$ and $f
\in \Cstar$. Then there exists a map $F \in \Li$ such that
$\RotR(F)=\Rot(F)$ is the closed interval with endpoints $0$ and $d$
(i.e., $[c,d]$ or $[d,c]$),
$\Per(0,F) = \TPer(f)$ and
$\Per(F) = \TPer(f) \cup M(0,d) \cup \Lambda(d,\Shs(s_d))$.
\end{MainTheorem}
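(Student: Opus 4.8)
The plan is to build the map $F$ explicitly by gluing together two independent mechanisms living on complementary parts of the covering space $S$: a \emph{circle mechanism} realizing the rotation interval $[c,d]$ (with the prescribed Sharkovsky tail $\Shs(s_d)$ at the nonzero endpoint $d$) on the real line $\R$, and a \emph{$3$-star mechanism} realizing $\TPer(f)$ with rotation number $0$, placed on the branches $B_m$. First I would recall the construction hinted at in Subsection~\ref{ss:coveringS}: a map $G \in \LL_1(\IR)$ given by Theorem~\ref{S9.5} with $\Rot(G)=[c,d]$ (say $c=0$; the case $d<0$ is symmetric) and $\Per(G) = \Lambda(0,\{1\}) \cup M(0,d) \cup \Lambda(d,\Shs(s_d))$, taking $s_c$ minimal so that $\Lambda(0,\Shs(s_c))=\{1\}$ contributes nothing beyond a fixed point. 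Since $c=0$ is an endpoint, $G$ has a fixed point; by a preliminary conjugation I would arrange that $0\in\R$ is fixed by $G$ and that $G$ maps a neighbourhood $[-1/3,1/3]$ of $0$ into $\R$. On the branch $B_0$, I would install (an affine model of) the $3$-star map $f\in\Cstar$: identify $X_3$ with $Y_0 = B_0\cup[-1/3,1/3]$ as in the excerpt, let $F\evalat{Y_0}$ be conjugate to $f$ with the central point $0$ fixed, and — crucially — arrange that $F(Y_0)\subset Y_0$ and that points of $[1/3,1/2)$ are pushed back toward $Y_0$ (resp.\ $[1/2,2/3]$ toward $Y_0+1$) as in the displayed conditions, with $F(1/2)=1/2$. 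Finally extend $F$ over the remaining branches $B_m$, $m\neq 0$, equivariantly by $F(z+k)=F(z)+k$, so that $F\in\Li$; on each branch $B_m$, $F$ collapses the branch appropriately so that no new periodic (mod $1$) orbits are created there.

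The second step is to verify $\Per(0,F)=\TPer(f)$. The inclusion $\TPer(f)\subset\Per(0,F)$ is immediate: every periodic orbit of $f$ lives in $Y_0$, hence lifts to a genuine periodic (mod $1$) orbit of $F$ contained in $\Re^{-1}([0,1))$ with $\Re$-displacement $0$, i.e.\ rotation number $0$. For the reverse inclusion I would argue that any periodic (mod $1$) point $x$ of $F$ with $\rhos(x)=0$ must have its lifted orbit trapped: the conditions on $[1/3,1/2)$ and on $(1/2,2/3]$ are exactly the "funnelling" conditions that prevent an orbit of rotation number $0$ from straddling a fixed point of the form $1/2+k$; combined with the fact that $G$ on $\R$ has rotation number $0$ orbits only at the fixed point, one concludes that such an orbit lies in a single translate $Y_0+k$ and hence corresponds to a periodic orbit of $f$. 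This uses the basic rotation-number identities $\rhos(z+k)=\rhos(z)$ and $\rhos[F^n](z)=n\rhos(z)$ cited after the definition, plus Theorem~\ref{theo:RotR} to control orbits meeting $\R$.

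The third step is $\Per(F) = \TPer(f)\cup M(0,d)\cup\Lambda(d,\Shs(s_d))$ together with $\RotR(F)=\Rot(F)=[0,d]$. That $\RotR(F)=[0,d]$ follows because $F\evalat{\R}=G$ and $\Rot(G)=[0,d]$ (Theorem~\ref{theo:RotR} gives compact interval; the branch dynamics cannot enlarge it since branches are eventually mapped into $\R$ or into $Y_0+k$ whose $\Re$-image is a point). For the periods: periodic (mod $1$) points of nonzero rotation number never enter the branches (their $\Re$-displacement per period is a nonzero integer, incompatible with being trapped in a $Y_0+k$), so those periods are governed entirely by $G$, giving $M(0,d)\cup\Lambda(d,\Shs(s_d))$ by Theorem~\ref{S9.5}; periodic points of rotation number $0$ give exactly $\TPer(f)$ by Step~2; and there are no periodic (mod $1$) points of rotation number $0$ living outside the branches other than the shared fixed point $0$, by the choice $s_c$ minimal and the funnelling.

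The main obstacle I anticipate is the interface between the two mechanisms near the branching points $0+k$ and near the auxiliary fixed points $1/2+k$: one must choose the circle model $G$ and the way the branches are attached so that (a) gluing $F\evalat{Y_0}\cong f$ to $G\evalat{[1/3,1/2]}$ produces a \emph{continuous} map of degree $1$, (b) the combinatorics of the $3$-star orbits of $f$ are not disturbed by the presence of the real line through $0$ (so that $\chull{P}$ really is an $X_3$ for each such $P$ and its \emph{type} is as intended), and (c) no \emph{unwanted} periodic (mod $1$) orbits of rotation number $0$ are created that would meet both a branch and the real line. Controlling (c) is where the precise "$F(x)\in Y_0\cup[1/3,x)$ for $x\in[1/3,1/2)$" conditions do the work, and I expect the bulk of the verification to consist of checking that these conditions are preserved under iteration and forbid any orbit from escaping a single translate $Y_0+k$; this is the heart of the argument and the step most deserving of careful bookkeeping.
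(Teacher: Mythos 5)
Your high-level decomposition (a degree-one circle mechanism on $\R$ carrying $M(0,d)\cup\Lambda(d,\Shs(s_d))$, plus a $3$-star mechanism at the branching points carrying $\TPer(f)$ with rotation number $0$) is the same as the paper's, but the construction you propose for gluing them together has a genuine gap, and the specific device you invoke cannot work. The "funnelling'' conditions from Subsection~\ref{ss:coveringS} --- $F(Y_0)\subset Y_0$, $F(x)\in Y_0\cup[1/3,x)$ on $[1/3,1/2)$, $F(x)\in (Y_0+1)\cup(x,2/3]$ on $(1/2,2/3]$, $F(1/2)=1/2$ --- force \emph{every} point of $\R$ to be eventually absorbed into some $Y_0+k$ or to converge to $1/2+k$, hence to have rotation number $0$. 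Since $\R$ is the union of the translates of $[-1/3,1/3]\cup[1/3,2/3]$, imposing these conditions yields $\Rot(F)=\{0\}$ (this is exactly the degenerate case noted in the Remark after the theorem), which contradicts the requirement $\Rot(F)=[0,d]$ with $d\neq 0$. Conversely, if you keep $G$'s dynamics on most of $\R$ so that orbits of positive rotation number survive, those orbits must repeatedly traverse fundamental domains, so the funnelling conditions must fail somewhere in every translate of $[1/3,2/3]$, and your Step~2 argument for $\Per(0,F)\subset\TPer(f)$ collapses. You flag the interface as the "main obstacle'' but do not resolve it, and it is not a matter of bookkeeping: it is the core difficulty.

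The missing idea is a Denjoy-type blow-up. Since $\Rot(G)=[0,d]$ with $d\neq 0$ and $G(0)=0$, one necessarily has $G^{-1}(\Z)\supsetneq\Z$, and a direct gluing is discontinuous at every point $x$ with $G(x)\in\Z$, $x\notin\Z$: the left and right limits of $F$ at such a point would be the two distinct feet $G(x)\pm a$ of the $3$-star $Y^a_{G(x)}$. The paper's proof therefore blows up the entire countable invariant set $Z=\bigcup_{n\ge0}G^{-n}(\Z)$: a monotone semiconjugacy $\map{\varphi}{S}[\R]$ is built with $\varphi^{-1}(m)=Y^a_m$ for $m\in\Z$ and $\varphi^{-1}(y)$ a nondegenerate interval for every $y\in Z\setminus\Z$, and $F$ is defined as a lift of $G$ through $\varphi$ (affine on each blown-up interval), with $f$ installed on $Y^b_0\subset Y^a_0$ and an expansive collar on $[-a,-b]\cup[b,a]$. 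The identities $\Per(0,F)=\TPer(f)$ and $\Per(\alpha,F)=\Per(\alpha,G)$ for $\alpha\neq 0$ then follow from $G\circ\varphi=\varphi\circ F$ together with the fact that $\varphi^{-1}(y)$ is a single point whenever $\rhos[G](y)\neq 0$. To repair your proposal you would need to supply this blow-up (or an equivalent device) and abandon the funnelling conditions entirely.
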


A natural strategy to prove the second statement of
Conjecture~\ref{WishList} in the general case (i.e. when no endpoint
of the rotation interval is an integer) is to construct examples of
maps $F\in\Li$ with a \emph{block structure} over maps $f\in \Cstar$
in such a way that $p/q$ is an endpoint of the rotation interval
$\RotR(F)$ and $\Per(p/q,F) = q\cdot\TPer(f)$.
The next result shows that this is not possible.
Hence, if the second statement of Conjecture~\ref{WishList} holds,
the examples must be built by using some more complicated behavior
of the points of the orbit in $\R$ and on the branches than a block
structure.

Let $F \in \Li$ and let $P$ be a lifted periodic orbit of $F$ with
period $nq$ and rotation number $p/q$.
For every $x \in P$ and $i=0,1,\dots,q-1$, we set
\[
P_i(x):=\{ F^i(x), G(F^i(x)), G^2(F^i(x)), \dots, G^{n-1}(F^i(x)) \},
\]
where $G := F^q -p$.
By Lemma~\ref{blocksareperiodic}, every $P_i(x)$
is a (true) periodic orbit of $G$ of period $n$.

\begin{MainTheorem}\label{ConverseEndInteger}
Let $F \in \Li$ and let $P$ be a lifted periodic orbit of $F$ with
period $nq$ and rotation number $p/q$.
Assume that there exists $x \in P$ such that $\chull{P_0(x)}$ is
homeomorphic to a 3-star and
$\chull{P_1(x)} \subset [n,n+1] \subset \R$ for some $n \in \Z$.
Assume also that
$P_0(x)$ is a periodic orbit of type 3 of $G:=F^q - p,$
$F^i(m) \in \chull{P_i(x)}$ for $i=0,1,\dots,q-1$
and $G(m) = m$, where $m \in \Z \cap \chull{P_0(x)}$
denotes the branching point of $\chull{P_0(x)}$.
Then $\Per(p/q,F) = q\cdot\N$.
\end{MainTheorem}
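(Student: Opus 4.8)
The plan is to transfer the statement to the auxiliary map $G:=F^q-p$, reduce it to the assertion that $G$ has true periodic points of every period, and then extract that from the way a type-$3$ orbit is folded onto the interval $\chull{P_1(x)}$.

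\emph{Reduction to $G$.} Since $F$ has degree one, $F\circ G=F^{q+1}-p=G\circ F$. Consequently, if $z$ is a true periodic point of $G$ of period $k$ then $F^{qk}(z)=z+kp$, so $\rhos(z)=p/q$; and if the period {\modi} of $z$ for $F$ were $qk'$ with $k'<k$, then $F^{qk'}(z)-z\in\Z$ together with $\rhos(z)=p/q$ would force $F^{qk'}(z)=z+k'p$, i.e. $G^{k'}(z)=z$, contradicting the minimality of $k$. The same computation shows, conversely, that every periodic {\modi} point of $F$ with rotation number $p/q$ is a true periodic point of $G$, of period exactly $1/q$ of its period {\modi}. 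Hence $\Per(p/q,F)=q\cdot\TPer(G)$; since moreover $\Per(p/q,F)\subseteq q\cdot\N$ always holds (a periodic {\modi} point of rotation number $p/q$ has period a multiple of $q$, cf.\ Lemma~\ref{lem:FF+k}(e)), the theorem is equivalent to the assertion $\TPer(G)=\N$ (note that $G(m)=m$ gives $1\in\TPer(G)$).

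\emph{Structure near the orbit.} Write $X_i:=\chull{P_i(x)}$; as a continuous image of a connected set contains the convex hull of its image, $F(X_i)\supseteq X_{i+1}$ (with the usual $+p$ twist at $i=q-1$). By hypothesis $X_0$ is a $3$-star with branching point $m$, $G(m)=m$, and $X_1$ is an interval inside $\R$. Put $c:=F(m)$; then $G(c)=F(G(m))=c$ by the commutation, and $c\in X_1$ by hypothesis while $c\notin P_1(x)$ because $P_1(x)$ is a $G$-periodic orbit of period $\ge3$ (it is the image of $P_0(x)$ under the conjugacy $F$, and $P_0(x)$ has type~$3$). Hence $c$ lies strictly inside the interval $X_1$, so $P_1(x)$ has points on both sides of $c$. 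Writing $b_1,b_2,b_3$ for the branches of $X_0$ at $m$, $\mathrm{sm}_j$ for the point of $P_0(x)\cap b_j$ closest to $m$, and $L_j:=\chull{m,\mathrm{sm}_j}$, the type-$3$ condition gives, after relabelling, the covering chain $G(L_1)\supseteq L_2$, $G(L_2)\supseteq L_3$, $G(L_3)\supseteq L_1$ (each $G(L_j)$ is connected and contains both $m=G(m)$ and $G(\mathrm{sm}_j)\in b_{j+1}$, hence contains the shortest leg $L_{j+1}$), and dually the three blocks $A_j:=F(P_0(x)\cap b_j)$ of $P_1(x)$ satisfy $G(A_1)=A_2$, $G(A_2)=A_3$, $G(A_3)=A_1$ inside $X_1$.

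\emph{All periods for $G$, and the obstacle.} The type-$3$ chain alone supplies a covering loop of length $3$ and, with $m$, a loop of length $1$; these force only $\{1,3\}$, so the rotational symmetry of the $3$-star has to be broken by $X_1$. A first instalment: since $P_1(x)$ is a periodic orbit of period $n\ge3$ whose convex hull $X_1$ is an interval, the usual Sharkovsky forcing for $P_1(x)$ — carried out for $G$ through covering relations among the basic intervals cut out by $P_1(x)$, all of which lie inside $X_1$ — yields true periodic points of $G$ of every period in $\Shs(n)$; in particular of period $2$, and of all of $\N$ when $n=3$. For the periods outside $\Shs(n)$ one uses the fold: letting $R_j:=\chull{m,e_j}$ be the full arm of $X_0$ in $b_j$ ($e_j$ its outermost $P_0(x)$-point), the sets $F(R_j)\cap X_1$ are three subintervals of $X_1$, each containing the interior fixed point $c$, whose union is $X_1$; analysing them — if some $F(R_j)\supseteq X_1$ one gets at once $G(R_j)=F^{q-1}(F(R_j))-p\supseteq F^{q-1}(X_1)-p\supseteq\chull{P_0(x)}=X_0$, using $F^{q-1}(P_1(x))-p=P_0(x)$; otherwise two of the three arcs overlap on one side of $c$ and one combines this with the block chain $A_1\to A_2\to A_3\to A_1$ — one produces a subinterval $J$ of $X_0\cup X_1$ with $G(J)\supseteq J$ together with a covering loop of length $2$ based at $J$. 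Then, for each $k\ge1$, the non-repetitive loop $(J,\dots,J,K)$ of length $k$ ($k-1$ copies of $J$ followed by the other vertex $K$ of the length-$2$ loop, and $(J)$ for $k=1$) produces, via the standard horseshoe lemma for covering relations — the intervals being essentially disjoint, which rules out period collapse in the few small cases — a true periodic point of $G$ of period exactly $k$. Hence $\TPer(G)=\N$, and $\Per(p/q,F)=q\cdot\TPer(G)=q\cdot\N$. The main obstacle is precisely the italicised analysis of the arcs $F(R_j)\cap X_1$: pigeonhole gives ``two on the same side of $c$'' but not the nesting one would like, so one must genuinely control how $F^{q-1}(\cdot)-p$ treats their non-overlapping parts — equivalently, one has to read off the combinatorial rotation number (of the form $\ell/3$) of $P_1(x)$ about the fixed point $c$ and invoke the forcing theory for interval maps carrying both a fixed point and an orbit winding around it. This is the step in which the hypotheses that $P_0(x)$ has type $3$ and that $F^i(m)\in\chull{P_i(x)}$ are actually consumed.
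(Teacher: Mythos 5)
Your reduction to $G:=F^q-p$ via Lemma~\ref{relationF_Fqmp} is correct and agrees with the paper, and your guiding idea --- that the type-3 rotation on the star must become incompatible with the fact that the next block lies in a real interval --- is the right one. But the proof has a genuine gap exactly where the content of the theorem lies: the horseshoe for $G$ is never actually constructed. The analysis of the arcs $F(R_j)\cap X_1$ is only asserted (``one produces a subinterval $J$ of $X_0\cup X_1$ with $G(J)\supseteq J$ together with a covering loop of length $2$''), and you yourself label this ``the main obstacle'' and note that it is the step in which the hypotheses (type $3$, $F^i(m)\in\chull{P_i(x)}$) are consumed; so the decisive step is left undone. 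In addition, one structural claim you rely on is false: the type of $P_0(x)$ only constrains the images of the points of $P_0(x)\cap b_j$ \emph{closest} to the branching point, so type $3$ does not imply that $G$ permutes the blocks $A_j=F(P_0(x)\cap b_j)$ cyclically (already a period-$4$ type-$3$ orbit with two points on one branch violates $G(A_1)=A_2$). The ``first instalment'' (Sharkovsky forcing for the $G$-orbit $P_1(x)$ inside $X_1$) is correct but cannot reach the periods outside $\Shs(n)$, which is precisely what is at stake.

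For comparison, the paper closes exactly this gap by a combinatorial transfer rather than by analysing $F(R_j)\cap X_1$: after changing the lifting so that the blocks have an increasing block structure (Lemma~\ref{sepblocks}) and normalizing $m=0$, the type-$3$ loop $I_1\to I_2\to I_3\to I_1$ of $G$-coverings among the three legs of $\chull{P_0}$ is lifted through $F$-paths of length $q$ (see \eqref{eq:GFpath}), so that each leg reaches the next one (shifted by $p$) through a $P_1\cup\{F(0)\}$-basic interval $J_i\subset\chull{P_1}\subset[n,n+1]$. If two of $J_1,J_2,J_3$ coincide, one reads off directly $G(L)\supset L\cup J_3$ and $G(J_3)\supset L$; if they are pairwise distinct, they form a $G$-covering $3$-cycle of intervals with pairwise disjoint interiors on the real line, and a \v{S}tefan-type choice of preimage points yields $K,L$ with $G(K)\supset L$ and $G(L)\supset K\cup L$. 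In either case Proposition~\ref{prop:SemiHorseshoe} gives all true periods for $G$, and Lemma~\ref{relationF_Fqmp} converts this into $\Per(p/q,F)=q\cdot\IN$. This is precisely the control of how $F^{q-1}(\cdot)-p$ treats the non-overlapping parts that your sketch postpones; if you want to complete your argument, the lifting-of-loops device (together with the block-structure normalization, which is what guarantees that an $F$-path of length $q$ starting in $\chull{P_0}$ passes through basic intervals of the successive blocks) is the missing tool.
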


Next we study the first statement of  Conjecture~\ref{WishList}.
It turns out that there are two completely different types of
lifted orbits according to the way that they force the existence
of other periods.
Namely, the lifted periodic orbits contained in $B$ (viewed at $\sigma$
level, this means that these periodic orbits do not intersect the
circuit of $\sigma$) or the ``rotational
orbits'' that visit the ground $\R$ of our space $S$.
We start by studying the periods forced by the lifted periodic orbits
contained in $B$.
We also consider the special case of large orbits (i.e., orbits of
large diameter) and show that any
orbit of this kind implies periodic {\modi} points of all periods.
To do this, we have to introduce some notation.

\begin{definition}
Let $F \in \Lifts$ and let $P$ be a lifted periodic orbit of $F$. We
say that $P$ \emph{lives in the branches} when $P \subset B$. Observe
that, since $P$ is a lifted orbit, for every $m \in \IZ$, $B_m \cap P
= (B_0 \cap P) + m$.
\end{definition}

The following result holds for any degree. It extends
\cite[Proposition~5.1]{LLl} (which deals
with $\sigma$ maps fixing the branching point of $\sigma$) to all
$\sigma$ maps.

\begin{MainTheorem}\label{TheoremSharkovskiiintheBranches}
Let $F \in \Lifts$ and let $P$ be a lifted periodic orbit of $F$ of
period $p$ that lives in the branches.
Then $\Per(F) \supset \Shs(p)$.
Moreover, for every $d \in \IZ$ and every $p \in \N_{\Sho}$,
there exists a map $F_p \in \Li[d]$ such that $\Per(F_p) = \Shs(p)$.
\end{MainTheorem}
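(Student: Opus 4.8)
The plan is to reduce the first assertion to Sharkovsky's theorem for interval maps by extracting a suitable invariant interval. Since $P$ lives in the branches, $P\subset B=\cup_{m\in\IZ}\Bo_m\cup\IZ$, and because $P$ is a lifted orbit we have $B_m\cap P=(B_0\cap P)+m$ for all $m$. First I would consider the finite set $Q:=B_0\cap P$, which has cardinality equal to the period $p$ (it meets each fibre $\Re^{-1}([k,k+1))$ in exactly $p$ points, all lying in $\Bo_0\cup\{0\}$). The branch $B_0$ is an interval with the linear order $\le$ given by imaginary part, and $\chull{Q}\subset B_0$ is a compact subinterval. The key observation is that, although $F$ need not map $B_0$ into $B_0$, the retraction $\ret:=\Re$ and the projection-type maps allow us to build an interval map conjugate (or semiconjugate) to the first-return-type dynamics of $P$. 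Concretely, I would define $g:=\ret_0\circ F$ restricted to $\chull{Q}$, where $\ret_0$ is the natural retraction of $S$ onto $B_0$ (send $z$ to the point of $B_0$ closest to $0$ along the arc from $z$; points with $\Re(z)\ne 0$ go to $0$). Since every point of $P$ is of the form $w+m$ with $w\in Q$, and $F(w+m)=F(w)+dm$ lies in some branch $B_j$ (or on $\IR$), the point $\ret_0(F(w))$ recovers the ``$B_0$-shadow'' of the image; one checks that $g$ permutes $Q$ as a $p$-cycle, hence $g$ is a continuous interval map (after affine identification of $\chull{Q}$ with a real interval) having a periodic orbit of period $p$. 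Sharkovsky's theorem (Theorem~\ref{GMT1} with $n=2$) then gives that $g$, and hence — pushing the periodic points of $g$ back up to genuine periodic $\modi$ points of $F$ inside $B$ — the map $F$ has periodic $\modi$ points of every period in $\Shs(p)$. The delicate point, and what I expect to be the main obstacle, is verifying that the periodic points of $g$ really do lift to honest periodic $\modi$ points of $F$ of the same period: a fixed point of $g^k$ is a point $x\in\chull{Q}\subset B_0$ with $\ret_0(F^k(x))=x$, which is weaker than $F^k(x)\in x+\IZ$. To fix this one argues that along the itinerary of $x$ the points stay in the branches (using that $P$ does, and a graph-theoretic covering/horseshoe argument à la \cite{LLl,ALM} that only invokes branches already visited by $P$), so that the retraction is "invisible" on the relevant orbit and the $g$-periodic point is literally $F$-periodic $\modi$; alternatively one cites the loop/covering machinery of \cite{LLl} directly, since \cite[Proposition~5.1]{LLl} already handles the branching-fixed case and the only new feature here is that the branching point of $\sigma$ need not be fixed, which is precisely neutralised by passing to the retraction $\ret_0$ and working fibrewise.

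For the converse (realisation) part, I would build $F_p$ explicitly. Start from a continuous interval map $h$ on $[0,1]$ with $\TPer(h)=\Shs(p)$, which exists by Sharkovsky's theorem (Theorem~\ref{GMT1}, $n=2$), chosen moreover so that $h(0)=h(1)=0$ (this is harmless: any such initial segment is realised by a map of $[0,1]$ with a fixed endpoint). Identify $[0,1]$ with the branch $B_0$ via $t\mapsto ti$ and, more generally, with $B_m$ via $t\mapsto m+ti$. Define $F_p$ on $B_m$ by $F_p(m+ti)=dm+h(t)i\in B_{dm}$, and define $F_p$ on $\IR$ by $F_p(x)=d\lfloor x\rfloor$ on $[\lfloor x\rfloor,\lfloor x\rfloor+1]$... — more carefully, choose $F_p|_{\IR}$ to be any degree-$d$ map of $\R$ that agrees with the branch formula at integers (i.e. $F_p(m)=dm$) and has no periodic $\modi$ point other than the fixed $\modi$ points it is forced to have; e.g. $F_p(x)=dx$ works and gives $\RotR(F_p)=\{d\}$, so it contributes only period $1$. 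One checks $F_p\in\Li[d]$ (the relation $F_p(z+1)=F_p(z)+d$ holds branch-by-branch and on $\R$), that $F_p$ is continuous (the branch and ground definitions match at each integer $m$ since $h(0)=0$), and that $F_p(B)\subset B$, so every periodic $\modi$ orbit of $F_p$ either lives in the branches or lies in $\R$. Orbits in $\R$ have rotation number $d$ and period $1$; orbits in the branches project, via $\ret_0$ again or simply by reading off imaginary parts, to periodic orbits of $h$ — and conversely, since $h(0)=0$, periodic orbits of $h$ lift into $B_0$. Hence $\Per(F_p)=\TPer(h)\cup\{1\}=\Shs(p)$ (note $1\in\Shs(p)$ always). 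The only nuisance here is bookkeeping the degree-$d$ case so that the ground $\R$ contributes nothing beyond period $1$; choosing the affine model $x\mapsto dx$ makes this transparent, and one should remark that for $d=1$ this is simply the identity-rotation-number situation while for $|d|\neq 1$ there are no nonconstant periodic $\modi$ points on $\R$ at all.

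One subtlety worth flagging in the write-up: in the forward direction the period $p$ could be $\tinf[2]=2^\infty$, in which case $\Shs(p)$ is the set of all powers of $2$; the interval-map reduction handles this uniformly because $\TPer(g)\supset\Shs(p)$ is exactly the content of Sharkovsky's theorem for every $p\in\N_{\Sho}$, including $p=2^\infty$. Similarly, in the converse the realisation of $\Shs(2^\infty)$ as $\TPer(h)$ for an interval map $h$ with a fixed endpoint is classical, so no extra argument is needed. The heart of the matter is thus entirely the branch-retraction lemma promoting $g$-periodicity to $F$-periodicity $\modi$; everything else is Sharkovsky's theorem plus routine verification of degree, continuity, and invariance of $B$.
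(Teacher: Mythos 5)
Your overall strategy for the first assertion --- transport the dynamics of $P$ to a continuous self-map of an interval inside $B_0$ and invoke Sharkovsky's theorem --- is exactly the paper's, but the map you build is the wrong one. The retraction $\ret_0$ of $S$ onto $B_0$ sends \emph{every} point outside $B_0$ to the single point $0$, so $g(w)=\ret_0(F(w))=0$ for every $w\in Q$ with $F(w)\notin B_0$. Since a lifted orbit living in the branches can (and in the interesting cases does) jump between different branches $B_m$, your $g$ does not permute $Q$ as a $p$-cycle and the reduction collapses at the first step. The ``$B_0$-shadow'' you want is not the retraction but the degree-zero map $F_0(x):=F(x)-\Re(F(x))$, which carries $F(w)\in B_m$ to the corresponding point of $B_0$ by a deck transformation. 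The paper then composes $F_0$ with a retraction onto $\chull{P\cap B_0}$ (inside $B_0$) to get an interval self-map $\psi$ having $P\cap B_0$ as a period-$p$ orbit, and --- this is the resolution of the ``delicate point'' you flag but do not settle --- observes that any new $\psi$-orbit $Q'$ of period $q\ne p$ is disjoint from $P\cap B_0$ and from $\psi^{-1}(P\cap B_0)$, hence $\psi=F_0$ on $Q'$ and $Q'\subset\Bo_0$; Lemma~\ref{F0powern} then yields $F^n(x)-\psi^n(x)\in\IZ$ along $Q'$, and since two points of $B_0$ differing by an integer are equal, the $F$-period {\modi} of $x\in Q'$ is exactly $q$. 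Your alternative of citing \cite{LLl} does not close the gap either, since that result assumes the branching point fixed, which is precisely the hypothesis being removed here.

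The realization half contains a concrete error: taking $F_p(x)=dx$ on $\IR$ for $|d|\ge 2$ reproduces the standard expanding degree-$d$ circle map, which has periodic {\modi} points of \emph{every} period (for $d=2$ the point $1/3$ satisfies $F_p(1/3)=2/3\notin 1/3+\IZ$ and $F_p^2(1/3)=4/3\in 1/3+\IZ$, so it has period {\modi} $2$, and similar points give all periods), while for $d=-1$ every $x\notin\tfrac12\IZ$ has period {\modi} $2$. Hence $\Per(F_p)\supsetneq\Shs(p)$ in general (already for $p=1$). Your justification --- that $\RotR(F_p)=\{d\}$ so the ground contributes only period $1$ --- is not available: rotation numbers are only defined for degree $1$, and the assertion about periods is simply false. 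The paper's construction instead maps $[0,1]$ onto the arc $\chull{F_p(0),F_p(0)+d}$, so that all of $[0,1]$ except a middle subinterval is sent \emph{into the branches} $B_0\cup B_d$ (where the dynamics is governed by $f_p$) and only that middle piece is mapped onto $[0,d]$; taming the contribution of $\IR$ for $|d|\ne 1$ is the genuinely delicate part of the converse and cannot be dismissed as bookkeeping.
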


\begin{definition}
Let $F \in \Lifts$ and let $Q$ be a (true) periodic orbit of $F$.
We say that $Q$ is a \emph{large orbit} if $\diam(\Re(Q)) \ge 1$,
where $\diam(\cdot)$ denotes the diameter of a set.
\end{definition}

If $F \in \Lifts$ and if $Q$ is a true periodic orbit of
$F$, then $Q + \IZ$ is a lifted periodic orbit of $F$ of period
$\Card(Q)$. Clearly, $Q \subset B$ if and only if $Q + \IZ \subset B$.
Therefore we shall also say that $Q$ \emph{lives in the branches} whenever $Q
\subset B$.
Moreover, when $F$ is of degree $1$, true periodic orbits correspond
to lifted periodic orbits of rotation number $0$.
Observe that a periodic orbit $Q$ living in the branches is large if and only
if $Q$ intersects two different branches.

In the case of large orbits living in the branches and degree $1$ maps,
we obtain the next result, much stronger than
Theorem~\ref{TheoremSharkovskiiintheBranches}

\begin{MainTheorem}\label{LargeOrbitsintheBranches}
Let $F \in \Li$ and let $Q$ be a large orbit of $F$ such that $Q$
lives in the branches. Then $\Per(F) = \IN$.
\end{MainTheorem}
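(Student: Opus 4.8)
The plan is to reduce the statement to the interval-map case by exploiting the fact that a large orbit in the branches, when viewed through the retraction $\Re$, yields a nontrivial periodic-like configuration of length $\geq 1$ on the real line. Let me think about what we have: $F\in\Li$, $Q$ a true periodic orbit living in $B$ with $\diam(\Re(Q))\geq 1$. Since $Q$ lives in the branches and is a true periodic orbit of a degree-$1$ map, $P:=Q+\Z$ is a lifted periodic orbit of rotation number $0$, and for every $m\in\Z$ we have $B_m\cap P=(B_0\cap P)+m$. The diameter condition says $Q$ meets at least two distinct branches $B_{m_1}$ and $B_{m_2}$ with $|m_1-m_2|\geq 1$; by applying a deck transformation we may assume $Q$ meets $B_0$ and $B_k$ for some $k\geq 1$, and by minimality of $k$ among branches met we can even arrange that $Q$ meets $B_0$ and $B_1$.

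First I would push the orbit down to the real line via $\Re$. The map $\Re\circ F\in\Li$ restricts to a continuous self-map of $\R$, i.e.\ (a lift of) a degree-$1$ circle map, call it $G_0:=\Re\circ F|_\R$ — but more usefully, I want to look at how $F$ moves the ``feet'' of the branches. For each branch $B_m$ met by $P$, let $a_m:=m\in\R$ be its foot. The key observation is that $F$ maps $B_m$ into $S$, and since $P\subset B$, the images of points of $P\cap B_m$ lie in branches again; composing with $\Re$ gives a well-defined action. The heart of the argument should be: build from $Q$ a periodic orbit (or a covering relation / horseshoe) for the interval map obtained by combining $\Re\circ F$ with the vertical structure of the branches, whose ``spread'' is at least $1$, hence after taking an appropriate power or translate we obtain a loop of intervals covering an interval of length $\geq 1$, which forces all periods.

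More concretely, here is the mechanism I expect to work. Order the branches met by $P$ in $[0,1)$ as feet $0=a_{i_0}<\cdots$; lift to all of $\R$ using $B_m\cap P=(B_0\cap P)+m$. Consider the points of $P$ of maximal and minimal height within their branches, and track a point $x\in Q\cap B_0$ and the point $x'\in Q$ in branch $B_1$. Using the $\Z$-periodicity, $x'-1\in B_0\cap P$. Now follow the $F$-orbit of $x$: at each step either the image stays in the branches (and we record which branch via $\Re$) or — and here is the point — because $Q$ is a single periodic orbit hitting both $B_0$ and $B_1$, iterating $F$ must carry a point of $B_0$ to a point of $B_1$ and a point of $B_1$ back toward $B_0$. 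Projecting by $\Re$ and using continuity of $\Re\circ F^n$ along the arc (interval) $\chull{Q}$ inside $B$, which is a tree, I would extract a subinterval $J\subset\R$ (a sub-arc of some branch, or a union of such mapped down) and an iterate $F^n$ such that $\Re(F^n(J))\supset J+1$ or $\Re\circ F^n$ exhibits $J$-covers-$[a,a+1]$ behavior. Standard covering-relation arguments (loops of intervals, as in the interval / circle degree-$1$ case, cf.\ Theorem~\ref{S9.5} and \cite{ALM}) then give, for every $n\in\N$, a periodic (mod 1) point of period $n$, so $\Per(F)=\N$; the inclusion $\Per(F)\subseteq\N$ is trivial.

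The main obstacle, I expect, is the bookkeeping needed to produce the covering relation rigorously in the tree $S$ rather than on an interval: a branch is an interval but $F$ can map it into a different branch or across the real axis, and one must control monotonicity pieces of $F$ restricted to sub-arcs and how $\Re$ interacts with them, so that the ``diameter $\geq 1$'' hypothesis genuinely translates into an interval that covers a translate of itself. A clean way to handle this is to replace $F$ by the auxiliary map on $\R$ given by $x\mapsto \Re(F(\xi(x)))$ where $\xi$ is a suitable section picking, for each foot $m$, the extreme point of $P\cap B_m$; one checks this auxiliary map is in $\LL_1(\IR)$, that $Q$ induces a periodic orbit for it of the same period, and that this orbit is ``large'' in Misiurewicz's sense (its displacement is $\geq 1$, forcing $1$ strictly inside the rotation interval, hence $M$ of that interval is all of $\N$). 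Then Theorem~\ref{S9.5} finishes it. Verifying that the auxiliary map is genuinely continuous and degree $1$, and that ``large orbit in the branches'' forces an interior integer in its rotation interval, is the technical crux; everything after that is a citation.
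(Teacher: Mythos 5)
Your proposal has a genuine gap, and it is located exactly where you yourself flag the ``technical crux.'' The auxiliary map $x\mapsto \Re(F(\xi(x)))$, with $\xi$ a section picking one extreme point of $P\cap B_m$ per foot $m$, is not semiconjugate to $F$ in any useful sense: $F$ permutes the points of $Q$, which sit at various heights in various branches, and $F(\xi(m))$ need not be $\xi$ of anything, so periodic orbits of the auxiliary circle map do not lift to periodic $\modi$ points of $F$. Without such a correspondence, citing Theorem~\ref{S9.5} for the auxiliary map says nothing about $\Per(F)$. Moreover, even if you could show that some rotation interval attached to $F$ contains an integer in its interior, that only yields $\IN$, $\IN\setminus\{1\}$ or $\IN\setminus\{2\}$ (Theorem~\ref{theo:0inInterior}), not the full conclusion $\Per(F)=\IN$; and the paper's Example~\ref{ex:0inintRot-1} shows that largeness of an orbit by itself does not force all periods. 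Finally, projecting everything to $\IR$ via $\Re$ discards precisely the data the proof needs, namely the heights of the orbit points \emph{within} a single branch.

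The paper's argument keeps that vertical data. It works with $F_0(x):=F(x)-\Re(F(x))$ and $P_0:=P\cap B_0$, which is a genuine periodic orbit of $F_0$ in $B_0$ of the same period, and then runs a recursive ``reachable set'' argument ($A_0=\{\min P_0\}$, $A_{i+1}=\set{z\in P_0}{z\le \max F_0(A_i)}$) to produce two points $x<y\le F_0(x)$ in $B_0$ whose images lie in \emph{different} branches; the largeness of $Q$ is used only to show that $\Re(F(\cdot))$ is not constant on $P_0$. That crossing configuration feeds into Lemma~\ref{twoarrowscrossingLargeOrbits}, which builds a horseshoe out of an interval in $B_0$ and an interval reaching into $\IR$, and which handles period $2$ by a separate bespoke construction (your proposal never addresses period $2$, which is the delicate period here). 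If you want to salvage your plan, you would need to replace the auxiliary circle map by this kind of within-branch covering argument; the reduction to a degree-one circle map does not go through.
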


\begin{remark}
Large orbits contained in $\IR$ work as in the circle case by using
$\Re \circ F$. More precisely, if $F\in\Li$ has a large orbit contained in
$\R$, then so does the map $\Re \circ F$.
Thus, by \cite[Theorem~2.2]{AlsRue2010},
there exists $n \in \N$ such that
\[
 \left[-\tfrac{1}{n}, \tfrac{1}{n} \right] \subset
      \Rot(\Re \circ F).
\]
In the proof of \cite[Theorem~4.17]{AlsRue2008}, it is shown that,
if $0\in\Int{\Rot(\Re \circ F)}$, then $F$ has a positive horseshoe
and $\Per(0,F)=\IN$.
Consequently, $\Per(F) \supset \Per(0, F) = \N$.
\end{remark}

The set of periods of maps from $\Li$ having a large orbit that
intersects both $\R$ and the branches remain unknown.
Example~\ref{ex:0inintRot-1} shows that the existence of a large orbit
does not ensure that $\Per(F)=\IN$.

Next we study the orbits forced by the existence of lifted periodic
orbits that intersect $\R.$ We obtain the following theorem, which is
the main result of this paper.

\begin{MainTheorem}\label{theo:0inInterior}
Let $F\in\Li$. If $\Int(\RotR(F)) \cap \Z \ne \emptyset$, then
$\Per(F)$ is equal to,
either $\IN$, or $\IN\setminus\{1\}$, or $\IN\setminus\{2\}$.
Moreover, there exist maps $F_0, F_1, F_2 \in \Li$
with $0 \in \Int(\RotR(F_i))$ for $i=0,1,2$ such that
$\Per(F_0) = \IN$,
$\Per(F_1) = \IN\setminus\{1\}$ and
$\Per(F_2) = \IN\setminus\{2\}$.
\end{MainTheorem}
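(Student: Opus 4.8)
I would prove the ``either/or'' dichotomy first and then build the three examples. Since $\RotR(F+k)=\RotR(F)+k$ and $\Per(F+k)=\Per(F)$ for every $k\in\IZ$, the first step is to replace $F$ by $F-k$ with $k\in\Int(\RotR(F))\cap\IZ$, so that from now on $0\in\Int(\RotR(F))$; write $\RotR(F)=[c,d]$ with $c<0<d$. The goal is then to show that (i) $\IN\setminus\{1,2\}\subset\Per(F)$ and (ii) $\{1,2\}\cap\Per(F)\ne\emptyset$; taken together, (i) and (ii) say precisely that $\Per(F)$ is one of $\IN$, $\IN\setminus\{1\}$, $\IN\setminus\{2\}$.

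\textbf{Step 1: every $n\ge 3$ is a period.} For $n>\max(1/d,1/|c|)$ the rational $1/n$ (or $-1/n$), already in lowest terms, lies in $\Int\RotR(F)$, so by Theorem~\ref{theo:RotR} there is a periodic (mod $1$) point of $F$ of period exactly $n$; hence all sufficiently large $n$ belong to $\Per(F)$. The remaining finitely many values $3\le n\le\max(1/d,1/|c|)$ are the delicate ones, and here I would exploit that $0$ lies in the \emph{interior} of $\RotR(F)$: picking $z^-,z^+\in\IR$ with $\rhos(z^-)<0<\rhos(z^+)$, together with a point $x\in\IR$ with $\rhos(x)=0$ returning to $\IR$ infinitely often (Theorem~\ref{theo:RotR}), one extracts from the corresponding orbit segments a finite Markov-graph (``horseshoe'') model for a suitable iterate $F^m$ on a subtree of $S$, with a loop reading positive displacement and a loop reading negative displacement. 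Counting admissible periodic itineraries in this graph, exactly as in the rotation arguments of \cite{AlsRue2008} (compare the proof of \cite[Theorem~4.17]{AlsRue2008} and the Remark following Theorem~\ref{LargeOrbitsintheBranches}), yields periodic (mod $1$) points of all periods $\ge 3$; it will be important that the ``turning'' between the two loops can take place on a branch and therefore need not produce a point of period $1$ or $2$.

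\textbf{Step 2 and the examples.} To see that $1,2$ cannot both be missing, suppose $1\notin\Per(F)$, i.e.\ $F$ has no periodic (mod $1$) point of period $1$. Passing to the degree-one circle map $g:=\Re\circ F|_{\IR}$ and using that $0\in\RotR(F)$ still forces a fixed point $x_0\in\IR$ of $g$ (a short argument via a point realizing rotation number $0$ that returns to $\IR$), one observes that $x_0\notin\IZ$ would give $g(x_0)=x_0=F(x_0)$, a fixed point of $F$, a contradiction; hence $x_0=m\in\IZ$ and $F(m)\in\Bo_m$. As $F$ has no fixed point on the branch $B_m$, the arc $F(B_m)$ must leave $B_m$ through its only boundary point $m$, and following one more iterate (using $g(m)=m$, so $F$ returns near $m$ and back onto $B_m$) one produces $y$ with $F^2(y)=y\ne F(y)$, i.e.\ $2\in\Per(F)$. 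This proves the dichotomy. For the converse one constructs explicit maps: $F_0$ is the lift of a degree-one circle map with $0\in\Int\Rot$ (all periods, by Theorem~\ref{S9.5}) with every branch collapsed to its base, so $\Per(F_0)=\IN$; $F_1$ keeps a circle-type map on $\IR$ supplying all large rotational periods but reroutes the rotation-$0$ behavior through $B_0$ so that $F(z)\ne z$ for all $z\in S$ while $B_0$ and a neighborhood of $0$ carry a tree map with a period-$2$ orbit and set of periods $\IN\setminus\{1\}$; $F_2$ instead retains a genuine fixed point on $B_0$, arranges period $3$ to occur only with a nonzero rotation number (so that no Sharkovsky-type forcing drags in period $2$, which in turn forbids an honest period-$3$ orbit) and no periodic orbit of period $2$ anywhere, the periods $\ge 3$ being supplied as in Step 1 and the small ones through the branch map, in the spirit of the constructions behind Theorems~\ref{YinSigma} and~\ref{GMT1}.

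\textbf{Main obstacle.} The hard part is Step 1 together with the first half of Step 2: one must make the loop/horseshoe model sharp enough to capture \emph{all} periods $\ge 3$ even when $\RotR(F)$ is a very short interval around $0$ (so that $\pm1/3$ need not belong to it), while controlling exactly \emph{when} the rotation-number-$0$ behavior can be realized \emph{without} producing a fixed point — this is precisely the phenomenon that singles out the periods $1$ and $2$ — and, on the converse side, realizing each of the three possibilities while simultaneously controlling the rotational periods and the small periods coming from the branches.
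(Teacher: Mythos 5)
Your high-level decomposition (all periods $\ge 3$ belong to $\Per(F)$, plus at least one of $1,2$) is logically equivalent to the paper's conclusion, but both halves of the argument have genuine gaps. The more serious one is Step~2. Its starting point --- that $0\in\RotR(F)$ forces the degree-one lift $g:=\Re\circ F\evalat{\IR}$ to have a fixed point --- is false: $\RotR(F)$ is computed from $\Re(F^n(x))$, not from $g^n(x)$, and an orbit can realize rotation number $0$ by travelling up and down the branches while $\Re(F(x))<x$ holds for \emph{every} $x\in\IR$. Example~\ref{ex:0inintRot-1} is exactly such a map: $0\in\Int(\RotR(F))$, $\Per(F)=\IN\setminus\{1\}$, yet one checks directly that $\Re(F(x))<x$ on all of $\IR$, so $g$ has no fixed point. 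Worse, in the configuration you derive ($g(m)=m$ with $F(m)\in\Bo_m$), the retraction $\ret_{B_m}\circ F\evalat{B_m}$ is a continuous self-map of the arc $B_m$; its fixed point $y$ cannot satisfy $F(y)\notin B_m$ (that would force $y=m$, while $F(m)\in B_m$), hence $F(y)=y$ and $1\in\Per(F)$ after all. So the only scenario in which you manufacture a period-$2$ point is vacuous, and the genuine case --- $g$ with no fixed point at all --- is untouched. In the paper the membership of $1$ or $2$ is never proved separately; it comes bundled with the covering constructions, each of which yields $\IN\setminus\{1\}$ or $\IN\setminus\{2\}$ outright.

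Step~1 is also incomplete on both ends. For large $n$, Theorem~\ref{theo:RotR} only supplies a periodic {\modi} point of rotation number $1/n$, whose period is a priori a multiple of $n$; getting period exactly $q$ from an interior rational $p/q$ is delicate for $\sigma$-maps (Example~\ref{ex:Per(0,F)} shows that the naive analogue already fails for $p/q=0/1$), so even this part cannot be waved through. For the remaining $n$, the sentence ``one extracts a finite Markov-graph model \dots\ yielding all periods $\ge 3$'' \emph{is} the theorem: the paper's proof must build the monotone escape sets $\overline M,\underline M$ from the orbits of $x_a,x_b$, locate adjacent points $z,t$ with $\Re(F(t))\le\Re(z)<\Re(t)\le\Re(F(z))$, and run a long case analysis (gap $\ge 1$; both points in $\IR$; one point in a branch, with further subcases) in which the signed-covering machinery of Lemma~\ref{lem:+-loop} is needed precisely because the naive loops can collapse to periods $1$ or $2$. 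Asserting that the ``turning can take place on a branch and therefore need not produce a point of period $1$ or $2$'' states the desired conclusion rather than proving it. Likewise, $F_1$ and $F_2$ require explicit Markov constructions with verified rotation intervals and period sets, as in Examples~\ref{ex:0inintRot-1} and~\ref{ex:0inintRot-2}; the verbal descriptions given do not establish $\Per(F_1)=\IN\setminus\{1\}$ or $\Per(F_2)=\IN\setminus\{2\}$.
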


The paper is organized as follows.
In Section~\ref{sec:covering}, we state some relations about periodic
points of different liftings, we recall the notions of covering and
positive covering and give some of their properties, which are key tools
for finding periodic points.
In Section~\ref{sec:Y}, we prove
Corollary~\ref{ConverseSigmaCircleCase} and
Theorems~\ref{YinSigma} and \ref{ConverseEndInteger}.
In Section~\ref{WeAreInTheBranches}, we prove
Theorems~\ref{TheoremSharkovskiiintheBranches} and
\ref{LargeOrbitsintheBranches}.
Section~\ref{sec:0inIntRotR}, devoted to
Theorem~\ref{theo:0inInterior}, starts with the construction of examples,
then states some more technical lemmas about the set of periods and finally
gives the proof of Theorem~\ref{theo:0inInterior}.
In the last section, we stress some difficulties in the characterization
of the set of periods: a first example shows that, in
Theorem~\ref{theo:0inInterior}, one cannot replace $\Per(F)$ by $\Per(0,F)$
(i.e., periods {\modi} by true periods), which is an obstacle to apply
to $\sigma$ maps the same method as for circle maps; two other examples
show that orderings $\leso{n}$ with $n>3$ may be needed to characterize
$\Per(0,F)$, which might let us think that, in the first statement of
Conjecture~\ref{WishList}, considering orderings $\leso{2}$ and
$\leso{3}$ may not be sufficient.

\section{Coverings and periodic points}\label{sec:covering}
\subsection{Relations between periodic points of \emph{F} and of \emph{F+k}}

Next easy lemma summarizes some basic properties of liftings; in particular,
periodic {\modi} points do not depend on the choice of the lifting of a
given $\sigma$-map.

\begin{lemma}\label{lem:FF+k}
Let $F\in\Li[d]$. The following statements hold for all $k,m\in\IZ$ and
all $n\ge 0$:
\begin{enumerate}
\item $F^n(x+m)=F^n(x)+md^n$; in particular, if $d=1$ then $F^n(x+m)=F^n(x)+m$,
\item $(F+k)^n(x)=F^n(x)+k(1+d+\cdots+d^{n-1})$; in particular, if $d=1$ then
$(F+k)^n(x)=F^n(x)+kn$ and $\rhos[F+k](x)=\rhos(x)+k$,
\item If $F'\in\Li[d']$, then $F'\circ F\in\Li[dd']$,
\item A point $x$ is periodic {\modi} of period
$n$ for $F$ if and only if $x+m$ is periodic {\modi} of period
$n$ for $F+k$. This implies in particular that $\Per(F)=\Per(F+k)$,
\item if $d=1$ and $F^n(x)=x+m$, then $\rhos(x)=m/n$; thus all
periodic {\modi} points have rational rotation numbers.
\end{enumerate}
\end{lemma}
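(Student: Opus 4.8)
The plan is to prove all five statements by elementary inductions on $n$, relying repeatedly on the single observation that the degree condition $F(z+1)=F(z)+d$ iterates to $F(z+j)=F(z)+jd$ for every $j\in\Z$ (an immediate induction on $|j|$), together with the fact that $\Re(z+j)=\Re(z)+j$ for $j\in\Z$. Throughout, the only thing one must watch is that various quantities appearing as shifts are integers, so that the iterated degree condition may be applied.

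First I would prove (a) by induction on $n$: the case $n=0$ is trivial, and for the inductive step $F^{n+1}(x+m)=F\bigl(F^n(x)+md^n\bigr)=F^{n+1}(x)+md^n\cdot d=F^{n+1}(x)+md^{n+1}$, using $md^n\in\Z$. Statement (b) is an entirely analogous induction: writing $c_n:=1+d+\dots+d^{n-1}$ (the empty sum, $0$, when $n=0$), one has $c_{n+1}=1+dc_n$, and the inductive step reads $(F+k)^{n+1}(x)=(F+k)\bigl(F^n(x)+kc_n\bigr)=F^{n+1}(x)+kc_nd+k=F^{n+1}(x)+kc_{n+1}$, using $kc_n\in\Z$. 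The ``in particular'' clauses for $d=1$ are then immediate, since $d^n=1$ and $c_n=n$; and plugging $(F+k)^n(x)=F^n(x)+kn$ into the definition of $\rhos$ and using $\Re(F^n(x)+kn)=\Re(F^n(x))+kn$ shows that the extra term contributes exactly $k$ to the limit, giving $\rhos[F+k](x)=\rhos(x)+k$.

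Statement (c) is a one-line direct check: $(F'\circ F)(z+1)=F'(F(z)+d)=F'(F(z))+dd'=(F'\circ F)(z)+dd'$, using the iterated degree-$d'$ condition on $F'$. For (d) I would combine (a) and (b): for $0\le i\le n$,
\[
(F+k)^i(x+m)=F^i(x)+md^i+kc_i,
\]
and since $md^i+kc_i-m\in\Z$, we get $(F+k)^i(x+m)\in(x+m)+\Z$ if and only if $F^i(x)\in x+\Z$. Hence $x$ is periodic {\modi} of period $n$ for $F$ exactly when $x+m$ is periodic {\modi} of period $n$ for $F+k$; applying this with $F$ replaced by $F+k$, $k$ by $-k$ and $m$ by $-m$ gives the converse, and $\Per(F)=\Per(F+k)$ follows by taking $m=0$. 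Finally, for (e), assuming $d=1$ and $F^n(x)=x+m$, iterating (a) yields $F^{jn}(x)=x+jm$ for all $j\ge0$, so for $N=jn+r$ with $0\le r<n$ we get $F^N(x)=F^r(x)+jm$ and thus $\Re(F^N(x))-\Re(x)=\bigl(\Re(F^r(x))-\Re(x)\bigr)+jm$; dividing by $N$ and letting $N\to\infty$, the bracketed term is bounded while $jm/N\to m/n$, so the limit defining $\rhos(x)$ exists and equals $m/n\in\Q$.

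There is no genuine obstacle: the lemma is pure bookkeeping about liftings. The only mildly delicate points are keeping track of which shifts are integers so that the iterated degree identity applies, and, in (e), handling the remainder $r$ in $N=jn+r$ so as to conclude existence of the full limit rather than merely of the subsequential limit along $N=jn$.
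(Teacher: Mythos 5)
Your proof is correct and takes essentially the same route as the paper: the identity $(F+k)^i(x+m)=F^i(x)+md^i+k\sum_{j=0}^{i-1}d^j$ that you assemble from (a) and (b) is exactly how the paper proves (d). For parts (a), (b), (c) and (e) the paper simply cites \cite{AlsRue2008} (Lemma~1.6 and Remark~1.14(ii) there), and your elementary inductions and the division-with-remainder argument for the limit in (e) are a correct direct verification of those borrowed facts.
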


\begin{proof}
Statements (a), (b) and (c) are \cite[Lemma~1.6]{AlsRue2008}
(see also \cite[Lemma~1.10(b)]{AlsRue2008}), and
(e) is \cite[Remark~1.14(ii)]{AlsRue2008}.

We set $G:=F+k$. By (a) and (b),
\[
   \forall x\in S,
   \forall i\in \IN,
   G^i(x+m) = F^i(x) + m d^i + k\sum_{j=0}^{i-1} d^j.
\]
Therefore $F^i(x)- x\in\IZ$
if and only if $G^i(x+m)- (x+m)\in\IZ$, which proves (d).
\end{proof}

The next lemma is implicitly contained in
\cite[Theorem~3.11]{AlsRue2008}. It is a tool to relate the
periods and rotation numbers of lifted periodic orbits with the
periods of true orbits of appropriate powers of the map.

\begin{lemma}\label{relationF_Fqmp}
Let $F\in\Li$, $p \in \Z$ and $q \in \N$ be such that $p,q$ are
relatively prime.
Then $x$ is a periodic {\modi} point of $F$ of period $mq$ and
rotation number $p/q$ if and only if $x$ is a (true) periodic
point of $F^q -p$ of period $m$.
\end{lemma}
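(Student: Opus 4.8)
The plan is to prove Lemma~\ref{relationF_Fqmp} by a direct computation based on Lemma~\ref{lem:FF+k}, exploiting the fact that iterates of $F^q-p$ differ from iterates of $F$ by an explicit integer shift (because $F$ has degree $1$). First I would set $G := F^q - p$; by Lemma~\ref{lem:FF+k}(c) we have $F^q \in \Li[1]$, and by Lemma~\ref{lem:FF+k}(b) applied with $d=1$ we get $G^m(x) = (F^q - p)^m(x) = (F^q)^m(x) - pm = F^{qm}(x) - pm$ for every $x \in S$ and every $m \ge 0$. This identity is the engine of the whole argument: it turns a statement about periodic (mod $1$) points of $F$ with a prescribed rotation number into a statement about true periodic points of $G$.

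Next I would establish the two implications. For the forward direction, suppose $x$ is periodic (mod $1$) for $F$ of period $mq$ with rotation number $p/q$. By Lemma~\ref{lem:FF+k}(e), since the period (mod $1$) is $mq$ and the rotation number is $p/q$, there is an integer $\ell$ with $F^{mq}(x) = x + \ell$ and $p/q = \ell/(mq)$, so $\ell = pm$ (here I use that $p,q$ are coprime to identify $\ell$ exactly). Hence $G^m(x) = F^{qm}(x) - pm = x$, so $x$ is a true periodic point of $G$; it remains to check the period of $x$ under $G$ is exactly $m$. If $G^i(x) = x$ for some $1 \le i < m$, then $F^{qi}(x) = x + pi \in x + \IZ$, so $x$ would be periodic (mod $1$) for $F$ of period at most $qi < qm$, contradicting the minimality of the period $mq$. (One should also note that the period (mod $1$) of $x$ under $F$ must actually be a multiple of $q$ when the rotation number is $p/q$ with $p,q$ coprime — this follows from Lemma~\ref{lem:FF+k}(e): if $F^k(x) = x+j$ then $p/q = j/k$, so $q \mid k$ — which justifies writing the period as $mq$ in the first place.) For the converse, suppose $x$ is a true periodic point of $G$ of period $m$, i.e. $G^m(x) = x$ and $G^i(x) \ne x$ for $1 \le i < m$. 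Then $F^{qm}(x) = G^m(x) + pm = x + pm \in x + \IZ$, so $x$ is periodic (mod $1$) for $F$; by Lemma~\ref{lem:FF+k}(e) its rotation number is $pm/(qm) = p/q$. Finally I must verify the period (mod $1$) is exactly $mq$: if $F^k(x) = x + j$ for some $1 \le k < mq$, then (by the divisibility remark above, since the rotation number is $p/q$) $q \mid k$, say $k = qi$ with $1 \le i < m$, and then $G^i(x) = F^{qi}(x) - pi = x + j - pi$; comparing rotation numbers forces $j = pi$, so $G^i(x) = x$, contradicting that $m$ is the period of $x$ under $G$.

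The only real subtlety — and the step I would flag as needing care rather than being a genuine obstacle — is the bookkeeping that forces the period (mod $1$) of a rotation-number-$p/q$ point to be a multiple of $q$, and correspondingly that an integer shift of the form $F^k(x) = x+j$ with rotation number $p/q$ must have $q \mid k$ and $j = pk/q$. Both follow cleanly from Lemma~\ref{lem:FF+k}(e) together with the coprimality of $p$ and $q$, but they must be invoked at the right places so that ``period $mq$'' on the $F$-side matches ``period $m$'' on the $G$-side in both directions. Once that is handled, everything else is the routine shift identity $G^m = F^{qm} - pm$ and comparison of the integer parts.
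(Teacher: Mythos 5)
Your proof is correct and follows essentially the same route as the paper's: both rest on the shift identity $G^m(x) = F^{qm}(x) - pm$ obtained from Lemma~\ref{lem:FF+k}(b,c), together with Lemma~\ref{lem:FF+k}(e) to pin down the integer translations via the rotation number. The only cosmetic difference is in the converse, where the paper argues through the divisor structure of the period (mod $1$) whereas you handle an arbitrary $k < mq$ directly; both reduce to the same contradiction $G^i(x) = x$ with $1 \le i < m$.
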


\begin{proof}
Set $G:= F^q-p$.
Assume first that $x$ is a period {\modi} point of $F$ of period $mq$
and rotation number $p/q$.
From the definition of periodic {\modi} point, we have
$F^{mq}(x) = x+k$ for some $k\in \Z$.
Then $p/q = \rhos(x) = k/(mq)$ by Lemma~\ref{lem:FF+k}(e). Hence $k= mp$.

By Lemma~\ref{lem:FF+k}(b), $G^j(x) = F^{qj}(x) -jp$ for every $j \ge 0$.
Consequently, $G^m(x) = F^{qm}(x) - mp = x + k - mp = x$
and $x$ is a true periodic point of $G$ of period a divisor of $m$.
Now we have to prove that
$G^j(x) \ne x$ for $j=1,2,\dots,m-1$.
Assume on the contrary that
$G^d(x) = x$ for some $d\in \{1,2,\dots,m-1\}$.
From above, we have $x = G^d(x) = F^{qd}(x) - dp$.
Hence $F^{qd}(x) - x \in \Z;$ a contradiction with the fact that $x$
is a periodic {\modi} point of $F$ of period $mq$. We deduce that $x$ is of
period $m$ for $G$.

Assume now that $x$ is a (true) periodic point of $G$ of period $m$.
From above, $x = G^m(x) = F^{qm}(x) - mp$.
Thus, $F^{qm}(x) = x + mp$, $\rhos(x)=\tfrac{p}{q}$
and the period {\modi} of $x$ for $F$ is an integer $d$ that divides $qm$.
Let $l\in\IN$ and $a\in\IZ$ be such that $d=\tfrac{mq}{l}$
and $F^d(x)=x+a$. To end the proof, we have to show that $d=qm$, that is,
$l=1$. Assume  on the contrary that $l > 1$.
Then, by Lemma~\ref{lem:FF+k}(b),
\[
 x + mp = F^{mq}(x) = F^{ld}(x) = x + la = x + \frac{mq}{d} a.
\]
Consequently, $a = d\tfrac{p}{q} \in \Z$.
Thus $d$ must be a multiple of $q$ because $p,q$ are coprime.
Write $d = bq$. Since  $d=\tfrac{mq}{l}$, we obtain
 $b = \tfrac{m}{l} < m$. But, on the other hand,
$F^d(x) = x + a$ can be written as $F^{bq}(x) = x + bp$,
which is equivalent to
$
x = (F^{bq}-bp)(x) = G^b(x).
$
This contradicts the fact that $x$ is a periodic point of
$G$ of period $m$. We deduce that the period {\modi} of $x$ for $F$ is
$mq$.
\end{proof}

The following technical lemma will be useful to relate true periodic
orbits of maps from $\Lifts$ wit lifted periodic orbits.

\begin{lemma}\label{PeriodsAndPeriodsmodiAreFriends}
Let $F \in \Lifts$, $x \in  S$ and $m,k \in \IZ$. Set
$G := F+k$ and $\widetilde{x} := x + m$.
\begin{enumerate}
\item If $\widetilde{x}$ is a true periodic point of $G$ of period
$q$, then $x$ is a periodic {\modi} point of $F$ of period~$q$.
In particular, for $k=m=0$, it states that a true periodic point of
$F$ is also a periodic {\modi} point of $F$ of the same period.

\item If $x$ is a periodic {\modi} point of $F$ of period $q$ and
$
\diam(\Orb(\widetilde{x},G)) < 1,
$
then $\widetilde{x}$ is a true periodic point of $G$ of period $q$.
\end{enumerate}
\end{lemma}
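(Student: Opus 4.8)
The two statements are about translating between true periodicity of $G=F+k$ at the translated point $\widetilde x=x+m$ and periodicity $\pmod 1$ of $F$ at $x$. The plan is to handle (a) first, since it is essentially bookkeeping with Lemma~\ref{lem:FF+k}, and then (b), where the diameter hypothesis does the real work.

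\emph{Statement (a).} Assume $G^q(\widetilde x)=\widetilde x$. I would first use Lemma~\ref{lem:FF+k}(d) (the ``$\Per(F)=\Per(F+k)$'' part, applied at the point level): $\widetilde x=x+m$ is periodic $\pmod 1$ of period~$q$ for $G$ if and only if $x$ is periodic $\pmod 1$ of period~$q$ for $F$. So it only remains to observe that a \emph{true} periodic point of $G$ of period~$q$ is in particular a periodic $\pmod 1$ point of $G$ of period~$q$: indeed $G^q(\widetilde x)=\widetilde x=\widetilde x+0\in\widetilde x+\IZ$, giving $q$ as a period $\pmod 1$, and no smaller $i$ can work $\pmod 1$ since $G^i(\widetilde x)\in\widetilde x+\IZ$ together with $1\le i<q$ would, after replaying the argument of Lemma~\ref{lem:FF+k}(d) and Lemma~\ref{lem:FF+k}(e), force a true period of $G$ smaller than $q$ (note $\rhos[G](\widetilde x)=0$ because $G^q(\widetilde x)=\widetilde x$, so any congruence $G^i(\widetilde x)\in\widetilde x+\IZ$ must in fact be an equality $G^i(\widetilde x)=\widetilde x$). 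The special case $k=m=0$ is then immediate. This part is routine.

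\emph{Statement (b).} Now $x$ is periodic $\pmod 1$ of period~$q$ for $F$ and $\diam(\Orb(\widetilde x,G))<1$. By part (a)'s translation, $\widetilde x$ is periodic $\pmod 1$ of period~$q$ for $G$, so $G^q(\widetilde x)=\widetilde x+r$ for some $r\in\IZ$, and no $1\le i<q$ gives $G^i(\widetilde x)\in\widetilde x+\IZ$. The key claim is $r=0$. Consider the finite set $\Orb(\widetilde x,G)=\{\widetilde x,G(\widetilde x),\dots,G^{q-1}(\widetilde x)\}$ (it has exactly $q$ elements by the period-$\pmod 1$ property, since the points $G^i(\widetilde x)$ for $0\le i<q$ lie in distinct classes mod~$1$ hence are distinct). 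Applying the retraction $\Re$, the set $\Re(\Orb(\widetilde x,G))\subset\IR$ has diameter $<1$, hence is contained in some half-open interval $[t,t+1)$. But $\Re(G^q(\widetilde x))=\Re(\widetilde x)+r$, and both $\Re(G^q(\widetilde x))=\Re(\widetilde x)$-translate lies in that same window $[t,t+1)$ — wait, more carefully: $G^q(\widetilde x)=\widetilde x+r$ belongs to the orbit only if $r=0$; in general $G^q(\widetilde x)$ \emph{is} $\widetilde x+r$, and I must compare $\Re(\widetilde x+r)=\Re(\widetilde x)+r$ with the fact that $G^q(\widetilde x)=G^q(\widetilde x)$ is also $G(G^{q-1}(\widetilde x))$. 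The clean way: $G^q(\widetilde x)=\widetilde x+r$ and $\widetilde x\in\Orb(\widetilde x,G)$, so $\Re(\widetilde x+r)$ need not be in the window, but instead note that the whole lifted orbit $\Orb(\widetilde x,G)+\IZ$ decomposes into translates; since the orbit $\Orb(\widetilde x,G)$ already contains $q$ points in distinct classes mod~$1$, it meets every class exactly once on its period, and the value $G^q(\widetilde x)$ must be the unique orbit point congruent to $\widetilde x$, namely $\widetilde x$ itself — unless the translation by $r$ moves it out, i.e. unless $r\ne 0$, in which case $\widetilde x+r\notin\Orb(\widetilde x,G)$, contradicting $G^q(\widetilde x)\in\Orb(\widetilde x,G)$. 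Hence $r=0$, so $G^q(\widetilde x)=\widetilde x$: $\widetilde x$ is a true periodic point of $G$. Finally the true period of $\widetilde x$ for $G$ equals $q$: it divides $q$ since $G^q(\widetilde x)=\widetilde x$, and if $G^i(\widetilde x)=\widetilde x$ with $1\le i<q$ that would give $G^i(\widetilde x)\in\widetilde x+\IZ$, contradicting that $q$ is the period $\pmod 1$.

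\emph{Main obstacle.} The only delicate point is the argument that $r=0$ in part~(b): one must use that the diameter bound forces all $q$ orbit points (which are pairwise incongruent mod~$1$, hence ``spread out'' by at least the spacing of distinct integer shifts would require) to sit inside a single fundamental window under $\Re$, and then that $G^q(\widetilde x)=\widetilde x+r$ lands back in the orbit only when $r=0$. Everything else is direct application of Lemma~\ref{lem:FF+k} and the definitions. I would write part~(b) by first establishing $\Card(\Orb(\widetilde x,G))=q$ and $\Orb(\widetilde x,G)\subset\Re^{-1}([t,t+1))$ for suitable $t$, then deducing $G^q(\widetilde x)=\widetilde x$ from $G^q(\widetilde x)\in\Orb(\widetilde x,G)\cap(\widetilde x+\IZ)=\{\widetilde x\}$.
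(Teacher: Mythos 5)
Your part (b) ends up in the right place: the substance of the paper's own argument is exactly that $\diam(\Orb(\widetilde x,G))<1$ forces $\Orb(\widetilde x,G)\cap(\widetilde x+\IZ)=\{\widetilde x\}$ (two points of $S$ differing by a nonzero integer are at distance at least $1$), so the congruence $G^q(\widetilde x)-\widetilde x\in\IZ$ coming from the period \modi{} becomes an equality, and no smaller iterate can return even \modi. Be careful, though, with the intermediate counting argument you flirt with: you cannot start from ``$\Orb(\widetilde x,G)$ is the finite set $\{\widetilde x,\dots,G^{q-1}(\widetilde x)\}$ and meets each class mod $1$ once'', because finiteness of the forward orbit is precisely what is in question when $r\ne 0$ (for degree $1$ the orbit would then be infinite). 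The clean justification is the metric one you give at the very end, not the class-counting one.

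Part (a) has a genuine gap. The lemma is stated for $F\in\Lifts$, i.e.\ for \emph{arbitrary} degree $d$, and it is applied in the paper to maps of degree $d\ne 1$ (e.g.\ in the second half of Theorem~\ref{TheoremSharkovskiiintheBranches}). Your argument that a true periodic point of period $q$ cannot have period \modi{} $i<q$ rests on $\rhos[G](\widetilde x)=0$ together with Lemma~\ref{lem:FF+k}(e); but rotation numbers, and statement (e) of that lemma, are only available for degree $1$. For general $d$ the implication ``$G^i(\widetilde x)\in\widetilde x+\IZ$ and $G^q(\widetilde x)=\widetilde x$ $\Rightarrow$ $G^i(\widetilde x)=\widetilde x$'' needs a different justification. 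The paper argues as follows: let $p$ be the period \modi{} of $\widetilde x$ for $G$, so $p\mid q$, say $q=ap$ and $G^p(\widetilde x)=\widetilde x+n$; iterating with Lemma~\ref{lem:FF+k}(a,c) gives $G^q(\widetilde x)=\widetilde x+n\sum_{i=0}^{a-1}d^{pi}$, and since this equals $\widetilde x$ one concludes $n=0$, hence $G^p(\widetilde x)=\widetilde x$ and $p=q$. (Even this requires checking that $\sum_{i=0}^{a-1}d^{pi}\ne 0$, which is where the degree actually enters.) So you should replace the rotation-number step by this explicit degree-$d$ computation; as written, your proof of (a) only covers $\Li$, not $\Lifts$. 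The reduction of (a) to the ``true period $=$ period \modi'' claim via Lemma~\ref{lem:FF+k}(d) is the same in your proposal and in the paper.
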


\begin{proof}
Let $d$ denote the degree of $F$.
Suppose that $\widetilde x$ is a periodic point of $G$ of period $q$. Then
$\widetilde x$ is periodic {\modi} of period $p$ for $G$ with $p$ a divisor
of $q$. Let $n\in\IZ$ and $a\in\IN$ be such that
$G^p(\widetilde x)=\widetilde x+n$ and $q=ap$.
According to Lemma~\ref{lem:FF+k}(a,c), the map $G^p$ is of degree
$d^p$ and
\[
   G^q(\widetilde x)=
   G^{ap}(\widetilde x)=
   \widetilde{x} + n\sum_{i=0}^{a-1} d^{pi}.
\]
This equality is possible only if $n=0$. Thus $G^p(\widetilde x)=\widetilde
x$, which implies that $p=q$. Then (a) follows from Lemma~\ref{lem:FF+k}(d).

Let $x$ be a periodic {\modi} point of $F$ of period $q$. Then
$\widetilde x=x+m$ is periodic {\modi} of period $q$ for $G$ by
Lemma~\ref{lem:FF+k}(d). If
$\diam(\Orb(\widetilde{x},G)) < 1$, the fact that
$G^n(\widetilde{x}) - \widetilde{x} \in \IZ$ is equivalent to
$G^n(\widetilde{x}) = \widetilde{x}$. This implies that $\widetilde x$
is actually a true periodic point of period $q$ for $G$.
\end{proof}

\subsection{Coverings and periods}

\begin{definition}
Let $F \in \Lifts$ and let $I, J$ be compact non-degenerate
subintervals of $S$.
We say that $I$ \emph{$F$-covers} $J$ if there exists a subinterval
$I' \subset I$ such that $F(I') = J$.
If $I_1,\ldots, I_k$ are compact non-degenerate intervals, the
\emph{$F$-graph} of $I_1,\ldots, I_k$ is the directed graph
whose vertices are  $I_1,\ldots, I_k$ and there is an arrow
from $I_i$ to $I_j$ in the graph if and only if $I_i$
$F$-covers $I_j$.
Then we write $I_i \arrowto I_j$
(or $I_i \maparrow I_j$ if the map needs to be specified)
to mean that $I_i$ $F$-covers $I_j$.
A \emph{path of coverings of length $n$} is a sequence
\[
  J_0\maparrow[F_0]J_1\maparrow[F_1]\cdots \maparrow[F_{n-1}]J_n,
\]
where $J_0,\ldots, J_n$ are compact non-degenerate intervals and
{\map{F_i}{J_i}[S]} are continuous maps (generally of the form
$F^{n_i}-p_i$) for all $0\le i\le n-1$.
Such a path is called a \emph{loop} if $J_n=J_0$.
If all the maps $F_i$ are equal to $F$ and $J_0,\ldots, J_n\in
\{I_1,\ldots, I_k\}$, we speak about paths (resp. loops) in the
$F$-graph of $I_1,\ldots, I_k$.

Consider two paths of the form
\begin{gather*}
\CA=J_0\maparrow[F_0]J_1\maparrow[F_1]\cdots \maparrow[F_{n-1}]J_n,\\
\CB= J_n\maparrow[F_n]J_{n+1}\maparrow[F_{n+1}]\cdots
     \maparrow[F_{n+m-1}]J_{n+m}.
\end{gather*}
Then $\CA\CB$ will denote the concatenation of these two paths,
that is,
\[
\CA\CB = J_0\maparrow[F_0]J_1\maparrow[F_1] \cdots
   \maparrow[F_{n-1}]J_n \maparrow[F_n]\cdots
   \maparrow[F_{n+m-1}]J_{n+m}.
\]
If $J_n=J_0$, it is possible to concatenate $\CA$ with itself and, for
every $n\in\IN$, $\CA^n$ will denote the concatenation of $\CA$ with
itself $n$ times.
\end{definition}

When considering an $F$-graph, the intervals
are often defined from a finite collection of points.

\begin{definition}\label{def:basic-int}
Let $P$ be a finite subset of $S$. A
\emph{$P$-basic interval} is any set $\chull{a,b}$, where
$a,b$ are two distinct points in $P$ such that $\chull{a,b}\cap
\chull{P}=\{a,b\}$. Observe that, if $P$ contains all the branching points
$\IZ\cap\chull{P}$, then the $P$-basic intervals are
equal to the closure of the connected components of $\chull{P}\setminus P$.
\end{definition}

\begin{remark}
If $\Int(I)$ and $\Int(J)$ contain no branching point, the fact that
$F(I) \supset J$ implies $I \arrowto J.$
In what follows, we shall only use coverings with intervals
containing no branching point in their interior.
\end{remark}

The next result is the key property for finding periodic points with
coverings.
It is \cite[Lemma~1.2.7]{ALM} generalized to intervals in $S$.

\begin{proposition}\label{prop:covering}
Let $I_0,I_1,\ldots, I_n$ be compact subintervals of $S$ with $I_n=I_0$
and, for every $0\le i\le n-1$, let {\map{F_i}{I_i}[S]} be a
continuous map such that $I_i$ $F_i$-covers $I_{i+1}$. Then there
exist points $x_i\in I_i$, $i=0,\ldots, n$, such that
$F_i(x_i)=x_{i+1}$ for all $0\le i\le n-1$ and $x_n=x_0$.
In particular,
\begin{itemize}
\item if $F_i=F$ for all $0\le i\le n-1$ (that is,
$I_0 \arrowto I_1 \arrowto \cdots \arrowto I_{n-1} \arrowto I_0$
is a loop in the $F$-graph of $I_1,\ldots, I_{n-1}$),
then $F^n(x_0)=x_0$;

\item if $F_i=F+k_i$ with $k_i\in\IZ$ for
all $0\le i\le n-1$, then $F^n(x_0)\in x_0+\IZ$.
\end{itemize}
\end{proposition}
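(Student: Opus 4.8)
The plan is to adapt to the branched space $S$ the classical loop argument for interval maps, namely \cite[Lemma~1.2.7]{ALM}. The only genuinely new feature is that the maps $F_i$ take values in $S$ rather than in $\R$, and this will be absorbed by two elementary facts about $S$: every compact non-degenerate subinterval of $S$ is an arc, i.e.\ homeomorphic to $[0,1]$; and, since $S$ is uniquely arcwise connected, every such arc $J$ is a retract of $S$, via a retraction $r_J\colon S\to J$.

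First I would isolate a \emph{pull-back lemma}: if $A,B$ are compact non-degenerate subintervals of $S$ and $G\colon A\to S$ is continuous with $G(A)\supset B$, then there is a compact non-degenerate subinterval $A'\subset A$ with $G(A')=B$. To prove it, write $B=\chull{c,d}$, pick $\alpha,\beta\in A$ with $G(\alpha)=c$ and $G(\beta)=d$, and replace $A$ by the subarc $\chull{\alpha,\beta}$, on which $G$ still covers $B$ and carries the endpoints to those of $B$. Parametrizing $\chull{\alpha,\beta}$ and $B$ by $[0,1]$ and precomposing the parametrization of $B$ with $r_B$, one is reduced to a continuous function $h\colon[0,1]\to[0,1]$ with $h(0)=0$ and $h(1)=1$; the usual minimal-choice recipe --- take the largest parameter at which $h=0$, then the smallest larger parameter at which $h=1$ --- produces a subinterval on which $h$ attains exactly $[0,1]$, and its minimality, via the intermediate value theorem, forces $G$ itself not to leave $B$ over the corresponding subarc; that subarc is $A'$.

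Granting this, I would build by downward induction compact non-degenerate subintervals $Q_n:=I_0$ and $Q_i\subset I_i$ for $i=n-1,\dots,0$ with $F_i(Q_i)=Q_{i+1}$: indeed $I_i$ $F_i$-covers $I_{i+1}$, so $F_i(I_i)\supset I_{i+1}\supset Q_{i+1}$ and the pull-back lemma, applied to $F_i$ on $I_i$, produces such a $Q_i$. Then $g:=F_{n-1}\circ\cdots\circ F_0$ is well defined and continuous on $Q_0$ and satisfies $g(Q_0)=Q_n=I_0\supset Q_0$. To extract a fixed point of $g$ in $Q_0$, I would iterate the pull-back lemma once more: with $R_0:=Q_0$, choose compact non-degenerate $R_{k+1}\subset R_k$ with $g(R_{k+1})=R_k$ for all $k\ge0$ (possible because $g(R_k)\supset R_k$ at each step), and set $R_\infty:=\bigcap_{k\ge0}R_k$. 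This $R_\infty$ is a point or a subarc of $Q_0$, and $g(R_\infty)\subset\bigcap_{k\ge0}g(R_{k+1})=R_\infty$, so $g$ restricts to a continuous self-map of $R_\infty$ and hence fixes some $x_0\in R_\infty$ (by the intermediate value theorem if $R_\infty$ is non-degenerate). Putting $x_{i+1}:=F_i(x_i)$ for $i=0,\dots,n-1$ then gives $x_i\in Q_i\subset I_i$ by induction and $x_n=g(x_0)=x_0$, as required.

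For the two particular assertions I would simply compute $g$: if all $F_i=F$ then $g=F^n$, so $F^n(x_0)=x_0$; if $F_i=F+k_i$ with $k_i\in\IZ$ and $F$ has degree $d$, then iterating $(F+a)\circ(F^j+c)=F^{j+1}+(cd+a)$ (which uses $F(z+m)=F(z)+md$ for $m\in\IZ$) shows $g=F^n+c$ for some $c\in\IZ$, whence $F^n(x_0)=x_0-c\in x_0+\IZ$. I expect the only delicate point of the whole argument to be the pull-back lemma --- specifically, securing the equality $G(A')=B$ rather than merely $G(A')\supset B$ in a space with branch points: the interval proof rests on the linear order of $[0,1]$ and the intermediate value theorem, so one must verify that the same minimal-choice construction, read through the retraction $r_B$, really prevents the image from escaping $B$, whether past the endpoints of $B$ or off a branch at an interior integer point of $B$. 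Everything else is routine.
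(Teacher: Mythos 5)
Your overall architecture --- pull back subintervals along the chain, compose, extract a fixed point by a nested intersection, then compute the composition in the two special cases --- is exactly the classical argument of \cite[Lemma~1.2.7]{ALM} that the paper invokes without further proof, and both the fixed-point extraction and the degree computation $g=F^n+c$ with $c\in\IZ$ are correct. The problem is your pull-back lemma, which is the one genuinely new ingredient here and which is \emph{false} as you state it. Take $B=[-\tfrac12,\tfrac12]$ (so that the branching point $0$ lies in the interior of $B$), $A=[0,1]$, and let $G$ parametrize injectively the path in $S$ from $-\tfrac12$ up to $\max B_0$ and back down to $\tfrac12$. Then $G(A)=[-\tfrac12,\tfrac12]\cup B_0\supset B$, but the only preimages of $-\tfrac12$ and $\tfrac12$ are the endpoints of $A$, so the only candidate for $A'$ is $A$ itself, and $G(A)\neq B$. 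This is precisely the escape ``off a branch at an interior integer point of $B$'' that you flag at the end as the one thing left to verify: it cannot be verified, because under the hypothesis $G(A)\supset B$ alone the conclusion fails. (This is also why the paper's definition of ``$I$ $F$-covers $J$'' demands a subinterval mapped \emph{exactly} onto $J$, and why the remark following Definition~\ref{def:basic-int} only asserts that $F(I)\supset J$ implies $I\arrowto J$ when the interiors contain no branching point.)

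The repair is short, but you must make it, because as written your induction applies the false lemma: you pull $Q_{i+1}$ back through the mere containment $F_i(I_i)\supset Q_{i+1}$. Instead, use the covering hypothesis in its exact form: there is a subinterval $I_i'\subset I_i$ with $F_i(I_i')=I_{i+1}$. Since $I_{i+1}$ is an arc and $F_i(I_i')$ lies inside it, no retraction is needed: parametrize $I_i'$ and $I_{i+1}$ by $[0,1]$ and the classical minimal-choice argument produces, for any prescribed compact subarc $Q_{i+1}\subset I_{i+1}$, a subinterval $Q_i\subset I_i'$ with $F_i(Q_i)=Q_{i+1}$. The same remark applies to your nested sequence $R_k$: at each stage $g(R_{k+1})=R_k$ is an equality onto an arc, so the restricted pull-back applies there too. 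With the lemma restated under the exact-covering hypothesis (rather than image containment), the rest of your proof goes through.
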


The next lemma shows that, under certain hypotheses (that is, in
presence of ``semi horseshoes''), we have periodic points of all
periods. It is a generalization of \cite[Proposition~1.2.9]{ALM} and
its proof is a variant of the proof of that result. However, we
include it for clarity.

\begin{proposition}\label{prop:SemiHorseshoe}
Let $F \in \Lifts$ and assume that there exist two compact non-degenerate
subintervals $K$ and $L$ of $S$ such that $K$ and $L$ do not contain
branching points in their interior, $\Int(K) \cap \Int(L) = \emptyset$ and
$F(K) \supset L$ and  $F(L) \supset K \cup L$. Then, for every $n \in
\IN,$ the map $F$ has a periodic orbit of period $n$ contained in
$K \cup L$.
\end{proposition}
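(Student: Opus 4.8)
The plan is to build, for each $n\in\IN$, a loop in an $F$-graph built from $K$ and $L$ that realizes a periodic orbit of period exactly $n$, and then invoke Proposition~\ref{prop:covering}. Since $K$ and $L$ have no branching points in their interiors and $\Int(K)\cap\Int(L)=\emptyset$, the covering relations $K\arrowto L$, $L\arrowto K$ and $L\arrowto L$ all hold (using the remark preceding Proposition~\ref{prop:covering}), and the two intervals behave like the two ``laps'' of a horseshoe for an interval map. First I would treat $n=1$: from $L\arrowto L$ and Proposition~\ref{prop:covering} we get a fixed point of $F$ in $L$, which has period $1$. For $n\ge 2$, consider the loop
\[
 L\arrowto L\arrowto\cdots\arrowto L\arrowto K\arrowto L,
\]
with $n-2$ initial copies of $L$ (so the vertex sequence is $L,\dots,L,K,L$, of length $n$, returning to $L$). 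By Proposition~\ref{prop:covering} there is a point $x_0\in L$ with $F^n(x_0)=x_0$, and $x_i\in L$ for $0\le i\le n-2$, $x_{n-1}\in K$. This orbit is contained in $K\cup L$, as required.

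The remaining, and genuinely delicate, point is to guarantee that the period of $x_0$ is \emph{exactly} $n$ and not a proper divisor. Here I would use the standard trick from the proof of \cite[Proposition~1.2.9]{ALM}: the orbit passes through $K$ exactly once in the chosen loop (only the vertex $x_{n-1}$ lies in $K$), so if the true period were a proper divisor $d$ of $n$, the orbit would meet $K$ a number of times divisible by $n/d\ge 2$, contradicting the fact that the loop forces it into $K$ at exactly one of the $n$ time-steps — unless the orbit lies in $K\cap L$. Since $\Int(K)\cap\Int(L)=\emptyset$, the set $K\cap L$ is contained in the (at most two) common endpoints, so the only way this argument can fail is if $x_0$ is one of those boundary points and happens to be fixed. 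To handle that degenerate case I would, exactly as in \cite{ALM}, choose the subintervals $I'\subset I$ realizing the coverings with a little care, or replace $K$ and $L$ by slightly smaller subintervals $K'\subset K$, $L'\subset L$ with $F(K')\supset L'$, $F(L')\supset K'\cup L'$ and $K'\cap L'=\emptyset$, so that the shared-endpoint obstruction disappears; one can always do this by shrinking away from the common boundary while keeping the covering relations, because the coverings are given by strict inclusions of images.

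I expect the main obstacle to be precisely this primality-of-period bookkeeping — i.e. verifying that the loop $L^{\,n-2}KL$ cannot be traversed by an orbit of smaller period. The construction of the loop and the appeal to Proposition~\ref{prop:covering} are routine; the care needed is in the combinatorial argument that counts visits to $K$ and in dispatching the boundary-point exception, both of which follow the template of \cite[Proposition~1.2.9]{ALM} with only cosmetic changes to account for intervals living in $S$ rather than in $\IR$ (the hypothesis that $K$ and $L$ contain no branching points in their interiors is exactly what makes these cosmetic). Finally I would note that all periodic orbits produced lie in $K\cup L$ by construction, completing the proof.
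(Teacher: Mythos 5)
Your overall strategy --- build a length-$n$ loop from the coverings $L\arrowto L$, $L\arrowto K$, $K\arrowto L$, apply Proposition~\ref{prop:covering}, and rule out a proper-divisor period by observing that such an orbit would have to meet $K\cap L$ --- is the same template as the paper's proof, and you correctly identify where the difficulty lies; but your handling of that difficulty has a genuine gap. First, the exceptional case is not only ``$x_0$ is a common endpoint that happens to be fixed'': if the period $d$ of $x_0$ is a proper divisor of $n$, your counting only shows that $F^{d-1}(x_0)$ is the common point $e$ of $K$ and $L$, which can occur whenever $e$ is periodic of any period $d\mid n$, $d<n$, with orbit contained in $L$. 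More seriously, the proposed repair --- shrink to disjoint subintervals $K'\subset K$, $L'\subset L$ still satisfying $F(K')\supset L'$ and $F(L')\supset K'\cup L'$ --- is not always possible: the hypotheses give $F(K)\supset L$, not a strict (neighborhood) inclusion, so the justification ``the coverings are given by strict inclusions of images'' is unfounded. Concretely, take $K=[0,1]$ and $L=[1,2]$ in $\IR\subset S$, with $F(x)=x+1$ on $K$ and $F(x)=4-2x$ on $L$ (extended in any way to a map of $\Lifts$). Then $F(K)=L$ and $F(L)=K\cup L$, but the only point of $K\cup L$ sent to $\max L=2$ is the shared endpoint $1$, and a short computation with $K'=[a,b]$, $L'=[c,d]$ shows the covering constraints force $K'=K$ and $L'=L$; no disjoint shrinking exists. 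So the degenerate case cannot be dispatched this way.

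What the paper actually does --- and what your vague ``choose the subintervals realizing the coverings with a little care'' would have to become --- is substantively different: it sets $J:=\chull{K\cup L}$ and chooses $L'\subset L$ and $K'\subset K$ with $F(L')\supset J$, $F(\Bd(L'))=\Bd(J)$, $F(K')=L'$ and $F(\Bd(K'))=\Bd(L')$, runs the loop $K'\arrowto L'\arrowto\cdots\arrowto L'\arrowto K'$, and excludes a smaller period by a boundary-tracking induction: a coincidence $F^i(x)=x$ forces $x\in\Bd(L')$, then successive iterates are pushed onto $\Bd(J)$, ending in a contradiction with $x\in\Bd(L')$. This boundary bookkeeping is the real content of the proof (the intervals need not become disjoint at any stage), so as written your argument is incomplete at exactly the step you flagged as the main obstacle.
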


\begin{proof}
By assumption, $K \arrowto L$ and $L \arrowto K, L$.
Since $K,L$ contain no branching point in their interior,
the set $J := \chull{K \cup L}$ is an interval (which may contain branching
points).
By continuity of $F$, there exist subintervals
$L' \subset L$ and $K' \subset K$ such that
$F(L') \supset J$, $F(\Bd(L')) = \Bd(J)$, $F(K') = L'$ and
$F(\Bd(K')) = \Bd(L')$.
Therefore, for every $n\in\IN$, there is a loop
\[
  \CPath{K'}>{L'}>{L'}>{\dots}>{L'}>{K'}
\]
of length $n$ in the $F$-graph of $K', L'$
(if $n = 1$, the loop we take is $L' \arrowto L'$).
By Proposition~\ref{prop:covering}, $F$ has a periodic point $x \in K'$
such that $F^i(x) \in L'$ for $i = 1,2,\dots,n-1$ and $F^n(x) = x$
(if $n = 1$, $F(x)=x\in L'$).
To prove that $x$ has period $n$, we have to show that
$F^i(x) \neq x$ for all $i = 1,2,\dots,n-1$.

Suppose now that $F^i(x) = x$ for some
$i \in \{1,2,\dots,n-1\}$  (in particular $n > 1$).
Then $x=F^i(x)$ belongs to $K'\cap L'$, and hence
\begin{equation}\label{eq:xinL}
x\in \Bd(L').
\end{equation}
Consequently, $F(x) = F^{i+1}(x) \in \Bd(J)$.
If $i+1 \le n-1$, then $F(x) = F^{i+1}(x)$ also belongs to $L'$ and,
hence, it is the unique point in $\Bd(L') \cap \Bd(J)$ and, again,
$F^2(x) = F^{i+2}(x) \in \Bd(J)$.
Iterating this argument, we see that
$F^l(x) = F^{i+l}(x) \in \Bd(J)$ for all $l=0,1,\dots, n-i$.
Then $x = F^n(x) = F^{n-i}(x) \in K' \cap \Bd(J)$, which implies
that $x$ is the endpoint of $J$ that does not belong to $L'$.
But this contradicts \eqref{eq:xinL}. We conclude that the period of $x$
is equal to $n$.
\end{proof}

The next lemma is similar to the previous one, except that the
coverings are {\modi}.

\begin{lemma}\label{lem:SemiHorseshoe-mod1}
Let $F\in\Li$. Let $I,J$ be two non empty compact intervals in $S$
such that $\Int(I),\Int(J)$ are disjoint and contain
no branching point. Suppose
that there exist $k_1,k_2,k_3\in\IZ$ such that
\[
 I \maparrow[F-k_1] I, \quad
 I \maparrow[F-k_2] J, \quad
 J \maparrow[F-k_3] I.
\]
Suppose in addition that
\begin{itemize}
\item either $I,J$ are disjoint {\modi}
(that is, $(I+\IZ) \cap (J+\IZ)=\emptyset$),

\item or $k_3=k_1$.
\end{itemize}
Then $\Per(F)=\IN$.
\end{lemma}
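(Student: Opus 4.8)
The plan is to reduce the {\modi} coverings to a genuine loop of coverings (no shifts) by concatenating the given arrows in the right pattern, apply Proposition~\ref{prop:covering} to produce a periodic {\modi} point, and then argue its period is exactly the prescribed $n$ using the same boundary-tracking trick as in the proof of Proposition~\ref{prop:SemiHorseshoe}. Concretely, fix $n\in\IN$. Using $I\maparrow[F-k_1]I$ $(n-1)$ times followed by $I\maparrow[F-k_2]J\maparrow[F-k_3]I$ gives a loop
\[
 I\maparrow[F-k_1]I\maparrow[F-k_1]\cdots\maparrow[F-k_1]I
   \maparrow[F-k_2]J\maparrow[F-k_3]I
\]
of length $n+1$; more useful is to note that $I\maparrow[F-k_1]I$ alone is a loop of length $1$, and $I\maparrow[F-k_2]J\maparrow[F-k_3]I$ is a loop of length $2$, so for every $n\ge 1$ we can form a loop of length $n$ by taking $I\maparrow[F-k_1]I$ iterated $n-2$ times and then $I\maparrow[F-k_2]J\maparrow[F-k_3]I$ (and for $n=1$ just the loop $I\arrowto I$, for $n=2$ just the two-step loop through $J$). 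Since all the maps along the loop are of the form $F+k_i$ with $k_i\in\IZ$, Proposition~\ref{prop:covering} yields a point $x_0\in I$ with $F^n(x_0)\in x_0+\IZ$, i.e.\ a periodic {\modi} point whose period {\modi} $d$ divides $n$. As in Proposition~\ref{prop:SemiHorseshoe}, one first passes to subintervals $I'\subset I$, $J'\subset J$ with $(F-k_1)(I')=J:=\chull{I\cup J}$... — more precisely, one builds the covering subintervals so that the boundary of each is mapped onto the boundary of the target, so that the itinerary is forced once a coincidence occurs.

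Next I would rule out $d<n$. Suppose $F^d(x_0)\in x_0+\IZ$ with $1\le d\le n-1$. Following the proof of Proposition~\ref{prop:SemiHorseshoe}, the point $x_0$ must then lie in $\Bd$ of the relevant subinterval and its forward orbit under the chosen itinerary is pinned to boundary points, eventually contradicting membership in $I'\setminus J'$ (or $J'\setminus I'$). The extra subtlety compared with Proposition~\ref{prop:SemiHorseshoe} is that here the maps along the loop are $F-k_i$ rather than $F$, so $F^d(x_0)=x_0$ need not hold — only $F^d(x_0)\equiv x_0\pmod 1$. This is exactly where the two alternative hypotheses enter. If $I$ and $J$ are disjoint {\modi}, then $(I+\IZ)\cap(J+\IZ)=\emptyset$ forces the short loop through $J$ to contribute an unavoidable shift that is incompatible with a shorter period, because the $I$-part and the $J$-part of the itinerary cannot be confused modulo $1$; concretely, a periodic {\modi} point of period $d<n$ realized along our loop would have to reuse the arrows, but the number of times the itinerary visits $J$ (mod $\IZ$) is rigid. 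If instead $k_3=k_1$, then along the loop $I\maparrow[F-k_2]J\maparrow[F-k_1]I$ the composite shift matches the shift $-k_1$ coming from the self-loop $I\maparrow[F-k_1]I$, so the whole length-$n$ loop has a well-defined total shift that is $n$ times the ``per-step'' shift in a way that makes $F^n(x_0)=x_0+n\cdot(\text{shift})$ behave like the degree-$1$ case; one then replaces $F$ by $F-k_1$ (still in $\Li$ by Lemma~\ref{lem:FF+k}(b)) and the loop becomes a genuine $F'$-loop with $J$ visited via $F'-(k_2-k_1)$ and back via $F'$, reducing to Proposition~\ref{prop:SemiHorseshoe} applied to $K=J$-type and $L=I$-type intervals after checking the covering hypotheses $F'(I)\supset I\cup J$... wait, one checks $F'(I)=(F-k_1)(I)\supset I$ and, via the two-step loop, $I$ $F'$-covers $J$ which $F'$-covers $I$; setting $L:=I'$ and $K:=J'$ one has $F'(L)\supset L$ and $F'(L)\supset K$, $F'(K)\supset L$, which is slightly weaker than the hypothesis of Proposition~\ref{prop:SemiHorseshoe} but the same proof goes through since the essential inclusions $F'(L)\supset K\cup L$ and $F'(K)\supset L$ do hold here.

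Thus in the case $k_3=k_1$ the result is essentially a restatement of Proposition~\ref{prop:SemiHorseshoe} after the translation $F\mapsto F-k_1$, and no new work is needed beyond verifying the hypotheses and invoking Lemma~\ref{PeriodsAndPeriodsmodiAreFriends}(a) (a true periodic point is periodic {\modi} of the same period) to conclude $\Per(F-k_1)\supset\IN$, hence $\Per(F)=\Per(F-k_1)=\IN$ by Lemma~\ref{lem:FF+k}(d). In the case where $I,J$ are disjoint {\modi}, the argument is instead direct: the periodic {\modi} point $x_0$ produced by Proposition~\ref{prop:covering} has period {\modi} dividing $n$, and disjointness {\modi} of $I$ and $J$ means that no point of $I$ is congruent {\modi} to a point of $J$, so the boundary-tracking argument of Proposition~\ref{prop:SemiHorseshoe}, carried out with $\equiv\pmod 1$ in place of equality, still reaches a contradiction if the period were $d<n$ — the point would have to be simultaneously (congruent to) the $I$-only endpoint of $\chull{I\cup J}$ and to a point of $J'$, impossible mod $1$.

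The main obstacle is the second bullet's bookkeeping: making the boundary-tracking contradiction in Proposition~\ref{prop:SemiHorseshoe} survive the passage from equality to congruence {\modi}, and checking that the two hypotheses (disjoint {\modi}, or $k_3=k_1$) are each exactly what is needed to prevent a shorter {\modi}-period from sneaking in. I expect the cleanest writeup to handle the $k_3=k_1$ case by explicit reduction to Proposition~\ref{prop:SemiHorseshoe} via $F\mapsto F-k_1$, and the disjoint-{\modi} case by repeating the Proposition~\ref{prop:SemiHorseshoe} proof verbatim with ``$\in\cdot+\IZ$'' replacing ``$=$'' and using disjointness {\modi} at the single step where two a priori distinct points are identified.
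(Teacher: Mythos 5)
Your loop construction and the appeal to Proposition~\ref{prop:covering} are essentially the paper's (the paper bases its length-$n$ loop at $J$, namely $J \maparrow[F-k_3] I \maparrow[F-k_1]\cdots\maparrow[F-k_1] I \maparrow[F-k_2] J$, which makes the first alternative immediate: the resulting point $x\in J$ satisfies $F^i(x)\in I+\IZ$ for $1\le i\le n-1$, so $F^i(x)-x\notin\IZ$ and no boundary-tracking is needed — your version of that case is correct in substance but over-engineered). The genuine problem is your treatment of the second alternative. The reduction of the case $k_3=k_1$ to Proposition~\ref{prop:SemiHorseshoe} via the translation $F\mapsto F':=F-k_1$ works only when $k_2=k_1$ as well: after translating, the three arrows become $I\maparrow[F'] I$, $I\maparrow[F'-(k_2-k_1)] J$ and $J\maparrow[F'] I$, so when $k_2\neq k_1$ the arrow into $J$ still carries the nonzero shift $k_2-k_1$. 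Your assertion that $I$ $F'$-covers $J$ is then false ($I$ $F'$-covers $J+(k_2-k_1)$), there is no genuine horseshoe for $F'$, and the boundary-tracking argument of Proposition~\ref{prop:SemiHorseshoe} — which rests on exact equalities such as $F^n(x)=x$ — does not transfer. Since in this alternative $I$ and $J$ need not be disjoint {\modi}, nothing in your write-up excludes a shorter period {\modi} here.

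The paper closes the sub-case $k_3=k_1\neq k_2$ by a rotation-number computation instead: its loop produces $x$ with $F^n(x)=x+k_3+(n-2)k_1+k_2$, hence $\rhos(x)=k_1+(k_2-k_1)/n\neq k_1$, whereas a period {\modi} $d<n$ realized along the prescribed itinerary would give $F^d(x)=x+k_3+(d-1)k_1$ and therefore $\rhos(x)=k_1$; contradiction. Only when $k_1=k_2=k_3$ does the paper do what you propose, i.e.\ apply Proposition~\ref{prop:SemiHorseshoe} to $G:=F-k_1$ and conclude via Lemma~\ref{lem:FF+k}(d). You need to supply an argument of this kind (or some substitute) for $k_3=k_1\neq k_2$; as written, that case is a gap.
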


\begin{proof}
We fix $n\in\IN$.
For $n = 1$,  we consider the loop $I \maparrow[F-k_1] I$,
and there exists a fixed {\modi} point in $I$ by
Proposition~\ref{prop:covering}.
For $n\ge 2$, we consider the loop of length~$n$
\[
  J \maparrow[F-k_3] I \maparrow[F-k_1] I
    \maparrow[F-k_1] \cdots
    \maparrow[F-k_1] I \maparrow[F-k_2] J.
\]
By Proposition~\ref{prop:covering}, $F$ has a periodic {\modi} point $x\in J$
such that $F^n(x)=x+k_3+(n-2)k_1+k_2$ and
$F^i(x)\in I+k_3+(i-1)k_1$ for all $1\le i\le n-1$.
Let $d$ denote the period {\modi} of $x$.

If $I,J$ are disjoint {\modi}, then
$F^i(x)-x\notin \IZ$ for all $1\le i\le n-1$, and thus $d=n$.

Suppose now that $k_3=k_1\neq k_2$. Then
\[
\rhos(x) =
  \frac{k_3+(n-2)k_1+k_2}{n} =
  k_1+\frac{k_2-k_1}{n}.
\]
If $d<n$, then $F^d(x)=x+k_3+(d-1)k_1$ and hence
\[
\rhos(x)=\frac{k_3+(d-1)k_1}{d}=k_1.
\]
But this is impossible because $\frac{k_2-k_1}{n}\neq 0$. We deduce that,
if $k_3=k_1\neq k_2$, then $d=n$.

Finally, if $k_1=k_2=k_3$, then Proposition~\ref{prop:SemiHorseshoe} applies
to the map $G:=F-k_1$ and $\Per(G)=\IN$. Thus $\Per(F)=\IN$
by Lemma~\ref{lem:FF+k}(d). This concludes the proof.
\end{proof}

\subsection{Positive coverings}
The notion of positive covering for subintervals of $\IR$ was
introduced in \cite{AlsRue2008}. It can be extended to all
subintervals on which a retraction can be defined.
This is in particular the case of all intervals which have an
infinite tree as the ambient space.

If $I \subset S$ is an interval, it can be endowed with two
opposite linear orders; we denote them by $<_{_I}$ and $>_{_I}$.
When $I\subset \IR$, we choose $<_{_I}$ so that it
coincide with the order $<$ in $\IR$; when $I\subset B$, we choose
$<_{_I}$ so that $x<_{_I} y\Leftrightarrow \Im(x)<\Im(y)$. In the other
cases, $<_{_I}$ is chosen arbitrarily. The notations $\le_{_I}$ and $\ge_{_I}$
are defined consistently.

\begin{definition}\label{def:positivecover}
Let $F \in \Lifts$ and let $I,J$ be compact non-degenerate subintervals
of $S$, endowed with orders $<_{_I}, <_{_J}$.
We say that $(I, <_{_I})$ \emph{positively} (resp. \emph{negatively})
\emph{$F$-covers $(J,<_{_J})$} and we write
$(I, <_{_I}) \signedcover{F} (J,<_{_J})$
(resp. $(I, <_{_I}) \signedcover[-]{F} (J,<_{_J})$)
if there exist $x,y\in I$ such that $x \le_{_I} y$,
$F(x)=\min J$ and $F(y)=\max J$
(resp. $F(x)=\max J$ and $F(y)=\min J$).
When there is no ambiguity on the orders (or no need to precise them),
we simply write
$I \signedcover{F} J$ or $I \signedcover[-]{F} J$.
\end{definition}

We remark that the notion of positive or negative covering does not
imply (unlike the usual notion of $F$-covering) that there exists a
closed subinterval of $I' \subset I$ such that $F(I') = J$. However, it
does for the retracted map.

We recall that the retraction {\map{\ret_{_I}}{S}[I]} is defined as
follows:
\[
 \ret_{_I}(x) = \begin{cases}
           x   & \text{if $x \in I$}\\
           c_x & \text{if $x \notin I$,}
          \end{cases}
\]
where $c_x$ is the only point in $I$ such that
$\chull{c_x, x} \cap I = \{c_x\}$ (it exists since $S$ is uniquely
arcwise connected).

\begin{remark}
$(I, <_{_I})$ positively (resp. negatively) $F$-covers $(J,<_{_J})$
if and only if there exist $x,y\in I$, $x \le_{_I} y$, such that
$\ret_{_J} \circ F(x)=\min J$ and $\ret_{_J} \circ F(y)=\max J$
(resp. $\ret_{_J} \circ F(x)=\max J$ and $\ret_{_J} \circ F(y)=\min J$).
Moreover, if $I$ positively or negatively $F$-covers $J$, then there
exists a closed subinterval $I' \subset I$ such that
$\ret_{_J}(F(I')) = J$ and $F(\Bd(I'))=\Bd(J)$.
\end{remark}

If $\eps,\eps'\in\{+,-\}$, the product $\eps\eps'\in\{+,-\}$ denotes
the usual product of signs, and $-\eps$ denotes the opposite sign.

\begin{definition}
A \emph{loop of signed coverings of length $k$} is a sequence
\[
(I_0, <_{_0}) \signedcover[\eps_1]{F_1}
(I_1, <_{_1}) \signedcover[\eps_2]{F_2} \cdots
(I_{k-1}, <_{_{k-1}}) \signedcover[\eps_k]{F_k} (I_0, <_{_0}),
\]
where $(I_0,<_{_0}),(I_1, <_{_1}),\dots,(I_{k-1}, <_{_{k-1}})$ are
compact non-degenerate intervals of $S$ endowed with an order,
$\eps_i\in \{+,-\}$ and {\map{F_i}{I_i}[S]} are continuous maps
(usually of the form $F^{n_i}-p_i$) for all $1\le i\le k$.
The  \emph{sign} of the loop is defined to be the product
$\eps_1\eps_2\cdots\eps_k$.
The loop is said \emph{positive} (resp. \emph{negative}) depending on
its sign. We shall use the same notations for concatenations of paths
of signed coverings as for coverings. It is clear that the sign of the
concatenation is the product of the signs of the paths involved.
\end{definition}

The next lemma studies the dependence of the sign of a loop of
signed coverings on the chosen orderings.

\begin{lemma}\label{lem:make-coverings-positive}
Let
\[
(I_0, <_{_0}) \signedcover[\eps_1]{F_1}
(I_1, <_{_1})\signedcover[\eps_2]{F_2} \cdots
(I_{k-1}, <_{_{k-1}})\signedcover[\eps_k]{F_k}(I_0, <_{_0}),
\]
be a loop of signed coverings of sign $\eps$.
\begin{enumerate}
\item For every $0\le i\le k-1$, let
$\widetilde{<_{_i}}\in\{<_{i},>_{i}\}.$
Then, there exist
$\eps_1',\ldots,\eps_k'\in \{+,-\}$ such that
\[
(I_0, \widetilde{<_{_0}})\signedcover[\eps_1']{F_1}
(I_1, \widetilde{<_{_1}}) \signedcover[\eps_2']{F_2} \cdots
(I_{k-1}, \widetilde{<_{_{k-1}}})\signedcover[\eps_k']{F_k}
(I_0, \widetilde{<_{_0}}),
\]
and the sign of this loop is equal to $\eps$.
Consequently, the sign of a loop is independent of the orders.

\item For every $1\le i\le k-1$, there exists
$\widetilde{<_{_i}}\in\{<_{_i},>_{_i}\}$ such that
\[
(I_0, <_{_0})\signedcover{F_1}
(I_1, \widetilde{<_{_1}})\signedcover{F_2}
\cdots \signedcover{F_{k-1}}(I_{k-1},
\widetilde{<_{_{k-1}}})\signedcover[\eps]{F_k} (I_0, <_{_0}).
\]
\end{enumerate}
\end{lemma}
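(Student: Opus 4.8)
The plan is to reduce both statements to one elementary observation about how the sign of a single signed covering reacts to reversing the order on one of its two intervals. Concretely: if $(I,<_{_I})\signedcover[\delta]{G}(J,<_{_J})$, then replacing $<_{_I}$ by $>_{_I}$ (leaving $<_{_J}$ untouched) turns this covering into one of sign $-\delta$, and likewise replacing $<_{_J}$ by $>_{_J}$ (leaving $<_{_I}$ untouched) turns it into one of sign $-\delta$. Both facts are immediate from Definition~\ref{def:positivecover}: reversing $<_{_J}$ interchanges $\min J$ and $\max J$, so a covering realized by witnesses $x\le_{_I}y$ with $G(x)=\min J,\ G(y)=\max J$ becomes one realized by the same $x\le_{_I}y$ with $G(x)=\max J,\ G(y)=\min J$; and reversing $<_{_I}$ lets us take the new witness pair $(y,x)$, which satisfies $y\le_{>_{_I}}x$, so again a positive covering becomes a negative one and conversely. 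The only point requiring a moment's care here is the \emph{source} case, where one must remember that the two witnessing points $x,y$ may be exchanged.

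For part (a), observe that in a loop $(I_0,<_{_0})\signedcover[\eps_1]{F_1}\cdots\signedcover[\eps_k]{F_k}(I_0,<_{_0})$ each interval occurs as the source of exactly one arrow and as the target of exactly one arrow. By the observation above, reversing the order on one interval $I_i$ changes the sign of precisely those two arrows, hence multiplies the sign of the loop by $(-1)^2=1$. An arbitrary change of orders $<_{_i}\mapsto\widetilde{<_{_i}}$ is obtained by reversing the order on some of the intervals $I_0,\dots,I_{k-1}$, one at a time; applying the previous step once per reversal shows that the resulting loop still has sign $\eps$. In particular the sign of a loop is independent of the chosen orders, and for each collection $(\widetilde{<_{_i}})$ one indeed gets new signs $\eps_1',\dots,\eps_k'$ with $\eps_1'\cdots\eps_k'=\eps$.

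For part (b), keep $\widetilde{<_{_0}}:=<_{_0}$ on $I_0$ and choose $\widetilde{<_{_1}},\dots,\widetilde{<_{_{k-1}}}$ greedily, processing the intervals in the order $I_1,I_2,\dots,I_{k-1}$. Suppose $\widetilde{<_{_0}},\dots,\widetilde{<_{_{i-1}}}$ have been chosen; the arrow $(I_{i-1},\widetilde{<_{_{i-1}}})\signedcover{F_i}(I_i,<_{_i})$ has a definite sign, and we set $\widetilde{<_{_i}}:=<_{_i}$ if that sign is $+$ and $\widetilde{<_{_i}}:=>_{_i}$ otherwise, so that arrow $i$ becomes positive. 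By the basic observation this choice may flip the sign of arrow $i+1$, but it affects neither arrows $1,\dots,i-1$ nor arrow $i$ (whose source and target orders are now fixed); hence after processing all of $i=1,\dots,k-1$ the arrows $1,\dots,k-1$ are all positive. The new orders give a loop of the asserted shape, and by part (a) its sign is still $\eps$; since its first $k-1$ arrows are positive, its last arrow has sign $\eps$, as required. The argument is purely combinatorial once the sign-flip observation is in place, so there is no real obstacle; the two places to be careful are the source-order case of that observation and, in (b), processing the intervals in an order in which a choice is never undone by a later one.
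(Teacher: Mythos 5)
Your proof is correct and follows essentially the same route as the paper: the single-arrow sign-flip observation is the paper's equation for reversing an order in a two-arrow sequence, part (a) is proved by reversing one order at a time and noting that exactly two arrows change sign, and part (b) uses the same greedy/inductive adjustment of $\widetilde{<_{_1}},\dots,\widetilde{<_{_{k-1}}}$ followed by an appeal to (a). No gaps.
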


\begin{proof}
Consider a sequence of two signed coverings
$
(I,<_{_I})\signedcover[\eps]{F}
(J,<_{_J})\signedcover[\eps']{G}
(K, <_{_K}).
$
If we reverse the order on $J$, it is clear from the definition that
we reverse the signs of both coverings. That is,
\begin{equation}\label{eq:reverse-covering}
(I,<_{_I}) \signedcover[-\eps]{F}
(J,>_{_J}) \signedcover[-\eps']{G}
(K, <_{_K}).
\end{equation}
To prove (a), it is sufficient to show that reversing any
order gives a new loop of signed coverings with the same sign.
If $1 \le i \le k-1$, according to \eqref{eq:reverse-covering},
changing $<_{_i}$ into $>_{_i}$ changes $\eps_{i-1}$ and $\eps_i$ into
$-\eps_{i-1}$ and $-\eps_i$ respectively. Changing $<_{_0}$ into
$>_{_0}$ changes $\eps_1$ and $\eps_k$ into $-\eps_1$ and $-\eps_k$
respectively. In both cases, we obtain a new loop of signed coverings
with the same sign.

To prove (b), we define inductively $\widetilde{<_{_i}}$ for
$i=1,\ldots k-1$.

Let $i \in \{1,\ldots, k-1\}$ and suppose that
$\widetilde{<_{_1}},\dots, \widetilde{<_{i-1}}$
have already been chosen such that
\[
   (I_0, <_{_0}) \signedcover{F_1}
   (I_1, \widetilde{<_{_1}}) \signedcover{F_2}
   \cdots \signedcover{F_{i-1}}
   (I_{i-1}, \widetilde{<_{i-1}}) \signedcover[\eps_i']{F_i}
   (I_i, <_{_i}) \signedcover[\eps_{i+1}']{F_{i+1}}
   \cdots \signedcover[\eps_k']{F_k}
   (I_0, <_{_0}),
\]
for some $\eps_i',\ldots,\eps_k'\in\{+,-\}$.
If $\eps_i'=+$,
let $\widetilde{<_{_i}}$ be equal to $<_{_i}$
and $\eps_{i+1}'':=\eps_{i+1}'$.
Otherwise,
let $\widetilde{<_{_i}}$ be equal to $>_{_i}$
and $\eps_{i+1}'':=-\eps_{i+1}'$.
According to \eqref{eq:reverse-covering}, we obtain
\begin{multline*}
   (I_0, <_{_0}) \signedcover{F_1}
   \cdots \signedcover{F_{i-1}}
   (I_{i-1}, \widetilde{<_{i-1}}) \signedcover{F_i}
   (I_i, \widetilde{<_{_i}}) \signedcover[\eps_{i+1}'']{F_{i+1}}\\
   (I_i, <_{i+1}) \signedcover[\eps_{i+2}']{F_{i+2}}
   \cdots \signedcover[\eps_k']{F_k}
   (I_0, <_{_0}).
\end{multline*}
Then, when all orderings
$\widetilde{<_{_1}},\ldots, \widetilde{<_{_{k-1}}}$
are defined, we obtain
\[
   (I_0, <_{_0}) \signedcover{F_1}
   (I_1, \widetilde{<_{_1}}) \signedcover{F_2}
   \cdots \signedcover{F_{k-1}}
   (I_{k-1}, \widetilde{<_{_{k-1}}}) \signedcover[\eps']{F_k}
   (I_0, <_{_0})
\]
for some $\eps'\in\{+,-\}$. The sign of this loop is $\eps'$, which is
equal to $\eps$ according to~(a).
\end{proof}

The next result is the analogous of Proposition~\ref{prop:covering} for
signed coverings.

\begin{proposition}\label{prop:signedcover}
Let $F \in \Li$ and let
$(I_0,<_{_0}), (I_1,<_{_1}), \dots, (I_{k-1},<_{_{k-1}})$
be compact non degenerate intervals of $S$ endowed with an order such
that
\[
   (I_0,<_{_0}) \signedcover[\eps_1]{F^{n_1}-p_1}
   (I_1,<_{_1}) \signedcover[\eps_2]{F^{n_2}-p_2}
   \cdots \signedcover[\eps_{k-1}]{F^{n_{k-1}}-p_{k-1}}
   (I_{k-1},<_{_{k-1}}) \signedcover[\eps_k]{F^{n_k}-p_k}
   (I_0,<_{_0})
\]
is a positive loop of signed coverings, where $n_i\in\IN$ and $p_i\in\IZ$.
For every $i\in\{1,2,\ldots, k\}$, set $m_i:=\sum_{j=1}^i n_j$ and
$\widehat{p_i}:=\sum_{j=1}^i p_j$.
Then there exists $x_0 \in I_0$ such that
$F^{m_k}(x_0) = x_0 + \widehat{p_k}$ and
$F^{m_i}(x_0) \in I_i + \widehat{p_i}$ for all $1\le i\le k-1$.
\end{proposition}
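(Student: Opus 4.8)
The plan is to reduce the statement to Proposition~\ref{prop:covering} by two preliminary reductions on the loop, and then to unwind the resulting combinatorics using that $F$ has degree~$1$. Throughout, write $G_i:=F^{n_i}-p_i$ and set $I_k:=I_0$. First I would apply Lemma~\ref{lem:make-coverings-positive}(b) to the given loop: since its sign equals $\eps_1\cdots\eps_k=+$, part~(b) supplies orders $\widetilde{<_i}$ on $I_i$ for $1\le i\le k-1$ so that
\[
(I_0,<_0)\signedcover{G_1}(I_1,\widetilde{<_1})\signedcover{G_2}\cdots\signedcover{G_{k-1}}(I_{k-1},\widetilde{<_{k-1}})\signedcover[+]{G_k}(I_0,<_0)
\]
is a loop in which \emph{every} covering is positive. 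Hence I may and do assume $\eps_i=+$ for all $i$.

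Next I would convert each positive covering into an ordinary one. For each $i$, the remark following Definition~\ref{def:positivecover} applied to $I_{i-1}\signedcover{G_i}I_i$ gives a compact subinterval $I_{i-1}'\subset I_{i-1}$ with $\ret_{I_i}(G_i(I_{i-1}'))=I_i$ and $G_i(\Bd I_{i-1}')=\Bd I_i$. Then $G_i(I_{i-1}')$ is a subcontinuum of $S$ containing the two endpoints of the arc $I_i$; as $S$ is uniquely arcwise connected, it must contain the whole arc, so $G_i(I_{i-1}')\supset I_i$ and therefore $I_{i-1}$ $G_i$-covers $I_i$ in the ordinary sense. I expect this to be the main obstacle: the definition of a positive covering records only two points with prescribed images, not a subinterval mapped onto $I_i$, and the text explicitly warns that a positive covering need not by itself yield a subinterval of $I_{i-1}$ carried by $G_i$ onto $I_i$ — that is guaranteed only after retracting to $I_i$. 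If the connectedness argument above is judged too slick, the safe alternative is to keep the retracted maps and imitate the proof of Proposition~\ref{prop:covering} directly: pull $I_0$ back successively through $G_k|_{I_{k-1}'},G_{k-1}|_{I_{k-2}'},\dots,G_1|_{I_0'}$, using at each step that $G_i(I_{i-1}')\supset I_i$ contains the subinterval currently being pulled back, to obtain a nested chain of subintervals along which $G_k\circ\cdots\circ G_1$ maps onto $I_0$, and take a fixed point inside the innermost one.

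With the coverings ordinary, I would apply Proposition~\ref{prop:covering} to the loop $I_0\maparrow[G_1]I_1\maparrow[G_2]\cdots\maparrow[G_k]I_0$, obtaining points $x_i\in I_i$ ($0\le i\le k-1$) with $G_i(x_{i-1})=x_i$ for $1\le i\le k-1$ and $G_k(x_{k-1})=x_0$. Finally I would unwind: since $F$ has degree~$1$, Lemma~\ref{lem:FF+k}(a) gives $F^n(y+m)=F^n(y)+m$ for every $m\in\IZ$, so an easy induction on $i$ yields $(G_i\circ\cdots\circ G_1)(x_0)=F^{m_i}(x_0)-\widehat{p_i}$. Hence $x_i=F^{m_i}(x_0)-\widehat{p_i}\in I_i$, that is $F^{m_i}(x_0)\in I_i+\widehat{p_i}$ for $1\le i\le k-1$, while $x_0=x_k=F^{m_k}(x_0)-\widehat{p_k}$ gives $F^{m_k}(x_0)=x_0+\widehat{p_k}$. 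This is precisely the claim; the only arithmetic involved — the tracking of the partial sums $m_i$ and $\widehat{p_i}$ — is routine once degree~$1$ has been used to pass the integer translations through the iterates of $F$.
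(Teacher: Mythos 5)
Your first step --- invoking Lemma~\ref{lem:make-coverings-positive}(b) to replace the given loop by one in which every individual covering is positive --- is exactly the paper's first step. The gap is in what you do next. From $\ret_{I_i}(G_i(I_{i-1}'))=I_i$ and $G_i(\Bd I_{i-1}')=\Bd I_i$ you correctly deduce $G_i(I_{i-1}')\supset I_i$ by unique arcwise connectedness, but the further inference ``therefore $I_{i-1}$ $G_i$-covers $I_i$ in the ordinary sense'' is precisely the implication that the remark following Definition~\ref{def:basic-int} restricts to intervals with no branching point in their interior (the paper even adds there that \emph{ordinary} coverings will only ever be used in that situation). If $\Int(I_i)$ contains a branching point $b$, the image of $I_{i-1}'$ may reach from $\min I_i$ to $\max I_i$ only by detouring through a branch hanging off $b$ outside $I_i$, and then no subinterval is mapped by $G_i$ \emph{onto} $I_i$, so the hypothesis of Proposition~\ref{prop:covering} is not available. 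This is not a hypothetical worry: Proposition~\ref{prop:signedcover} is applied, via Lemma~\ref{lem:+-loop}, to intervals such as $L=\chull{-1,x}$ with $x\in\Bo_0$ in Case~2 of Lemma~\ref{twoarrowscrossingLargeOrbits}, and to $J=[t',t]$ in Lemma~\ref{lem:t'+1}, both of which contain the branching point $0$ in their interior. So your reduction fails in exactly the cases the proposition exists to handle.

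The fallback does not close the gap either: if you run the nested pullback with the retracted maps $\ret_{I_i}\circ G_i$, you obtain a point $x_0$ whose \emph{retracted} itinerary is correct, whereas the statement asserts that the genuine iterates $F^{m_i}(x_0)$ lie in $I_i+\widehat{p_i}$; since $\ret_{I_i}(G_i(w))\in I_i$ does not imply $G_i(w)\in I_i$ (the image may sit on a branch that retracts into $I_i$), an additional argument is needed to pass from the retracted orbit to the true one. Supplying that argument is exactly the content of \cite[Proposition~2.3]{AlsRue2008}, which is what the paper's proof delegates to after the same first reduction. Your final bookkeeping is fine: by Lemma~\ref{lem:FF+k}(a,b), degree one pushes the integer translations through the iterates, so $(G_i\circ\cdots\circ G_1)(x_0)=F^{m_i}(x_0)-\widehat{p_i}$ and the stated conclusion follows once a point $x_0$ with the correct \emph{unretracted} itinerary has been produced; that production is the missing piece.
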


\begin{proof}
According to Lemma~\ref{lem:make-coverings-positive},
for every $1\le i\le k-1$, there exists
$\widetilde{<_{_i}} \in \{<_{_i},>_{_i}\}$ such that
\[
   (I_0,<_{_0}) \signedcover{F^{n_1}-p_1}
   (I_1,\widetilde{<_{_1}}) \signedcover{F^{n_2}-p_2}
   \cdots \signedcover{F^{n_{k-1}}-p_{k-1}}
   (I_{k-1},\widetilde{<_{_{k-1}}}) \signedcover{F^{n_k}-p_k}
   (I_0,<_{_0}).
\]
Thus we can consider a loop in which all coverings are positive. In this
case, we have the same situation as
\cite[Proposition 2.3]{AlsRue2008} except that
\cite[Proposition 2.3]{AlsRue2008} is stated for subintervals of
$\IR$.
Actually this assumption plays no role (except simplifying the
notations), and the proof in our context works exactly the same by
using the map $F$ composed with appropriate retractions.
\end{proof}

The next result is analogous to Lemma~\ref{lem:SemiHorseshoe-mod1}
(indeed to a particular case of Lemma~\ref{lem:SemiHorseshoe-mod1})
with the semi horseshoe being made of positive coverings.

\begin{corollary}\label{cory:+horseshoeFF-1}
Let $F\in \Li$ and let $I \subset S$ be a compact interval such that
$(I+n)_{n\in\IZ}$ are pairwise disjoint.
If $I \signedcover{F} I$ and  $I \signedcover{F} I+k$
for some $k \in \IZ\setminus\{0\}$,
then $\Per(F) = \IN$.
\end{corollary}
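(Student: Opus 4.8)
The plan is to reduce Corollary~\ref{cory:+horseshoeFF-1} to the already-proved Lemma~\ref{lem:SemiHorseshoe-mod1} by packaging the two hypothesized positive coverings into a loop of signed coverings that Proposition~\ref{prop:signedcover} can handle, or alternatively by invoking Lemma~\ref{lem:SemiHorseshoe-mod1} directly with an appropriate choice of intervals and shifts. First I would set $J := I + k$; since the translates $(I+n)_{n\in\IZ}$ are pairwise disjoint, $I$ and $J$ have disjoint interiors and, moreover, $I$ and $J$ are disjoint modulo~$1$ — indeed $(I+\IZ)\cap(J+\IZ) = (I+\IZ)\cap(I+k+\IZ) = I+\IZ$, but what we actually need is that no translate of $I$ meets a \emph{distinct} translate of $J$; here the point is simply that the hypothesis $I \signedcover{F} I$ and $I \signedcover{F} I+k$ will be rephrased as coverings among $I$ and $J$. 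I would instead keep everything in terms of $I$ and use the shift bookkeeping of Proposition~\ref{prop:signedcover}.

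The cleanest route: fix $n\in\IN$ and consider, for $n\ge 2$, the loop of signed coverings of length $n$
\[
 (I,<_{_I}) \signedcover{F} (I,<_{_I}) \signedcover{F} \cdots \signedcover{F} (I,<_{_I}) \signedcover{F-k} (I,<_{_I}),
\]
where the first $n-1$ arrows are the positive covering $I \signedcover{F} I$ and the last arrow comes from $I \signedcover{F} I+k$, which is exactly $(I,<_{_I}) \signedcover{F-k} (I,<_{_I})$ after translating the target back by $-k$. This is a positive loop (all signs are $+$), so Proposition~\ref{prop:signedcover} applies with $n_i = 1$ for all $i$, $p_i = 0$ for $i < n$ and $p_n = k$; it yields a point $x_0 \in I$ with $F^{j}(x_0) \in I$ for $1 \le j \le n-1$ and $F^{n}(x_0) = x_0 + k$. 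For $n = 1$ I would just use the loop $(I,<_{_I})\signedcover{F} (I,<_{_I})$ to get a fixed point in $I$ (this is the $k=0$, length-one case and gives $F(x_0) = x_0$). The remaining task is to check that such an $x_0$ really has period {\modi} exactly $n$, not a proper divisor: this is where the pairwise disjointness of the translates $(I+m)_{m\in\IZ}$ is used. If $d \mid n$, $d < n$, were the period {\modi} of $x_0$, then $F^{d}(x_0) = x_0 + m$ for some $m\in\IZ$; since $1 \le d \le n-1$ we have $F^{d}(x_0) \in I$, so $x_0 + m \in I \cap (I + m)$, forcing $m = 0$ by disjointness, and hence $F^{d}(x_0) = x_0$. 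But then iterating, $x_0 = F^{n}(x_0) = F^{d\cdot(n/d)}(x_0) = x_0$ while also $F^{n}(x_0) = x_0 + k$ with $k\ne 0$ — a contradiction. So the period {\modi} of $x_0$ is $n$, and since $n$ was arbitrary, $\Per(F) = \IN$.

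Alternatively, and perhaps more in keeping with the surrounding exposition, I would simply note that the hypotheses are a special case of Lemma~\ref{lem:SemiHorseshoe-mod1}: take the intervals $I$ and $I$ (not $I$ and $J$) with shifts $k_1 = 0$ (from $I \signedcover{F} I$, which gives $I \xrightarrow{F-0} I$ once we pass to the retracted map, using the remark that a positive covering yields a genuine covering for the retracted map), $k_2 = -k$ and $k_3 = 0$ (from $I \signedcover{F} I+k$, i.e. $I \xrightarrow{F-k} I$ after retracting and translating), so that $k_3 = k_1 = 0$, and apply the second bullet alternative of Lemma~\ref{lem:SemiHorseshoe-mod1}; the conclusion $\Per(F) = \IN$ is immediate. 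The only subtlety is justifying that positive coverings can be replaced by ordinary coverings of the retracted map $\ret_{_I}\circ F$ — this is precisely the content of the Remark following Definition~\ref{def:positivecover}, and since $\ret_{_I}\circ F \in \Lifts$ shares the relevant periodic {\modi} points with $F$ on $I$ when the orbit stays in $I$, no information is lost.

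The main obstacle I anticipate is not any deep difficulty but rather getting the translation bookkeeping exactly right, especially the claim that "$I\signedcover{F} I+k$" is the same as "$(I,<_{_I})\signedcover{F-k}(I,<_{_I})$": this requires checking that translating the \emph{target} interval by $-k$ and replacing $F$ by $F - k$ preserves the positivity of the covering and the chosen orders — which it does, since $F-k$ maps $x$ to $F(x) - k$ and $\min(I+k) - k = \min I$, $\max(I+k) - k = \max I$ for the natural order on branch intervals or real intervals. Once that identification is in place, everything else is a direct citation of Proposition~\ref{prop:signedcover} (or Lemma~\ref{lem:SemiHorseshoe-mod1}) plus the short disjointness argument for the period {\modi}, so the proof should be only a few lines.
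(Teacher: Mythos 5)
Your primary argument is exactly the paper's proof: form the length-$n$ positive loop consisting of $n-1$ copies of $I \signedcover{F} I$ followed by $I \signedcover{F-k} I$, apply Proposition~\ref{prop:signedcover}, and use the pairwise disjointness of the translates of $I$ to rule out a smaller period {\modi} (the paper phrases this last step via $\rhos(x)=k/n\neq 0$, you phrase it via $F^{n}(x_0)=x_0+k\neq x_0$; these are the same fact). The alternative route through Lemma~\ref{lem:SemiHorseshoe-mod1} is unnecessary and, as sketched, does not meet that lemma's hypotheses (it requires two intervals with disjoint interiors and genuine coverings rather than positive ones, and $\ret_{_I}\circ F$ is not in $\Lifts$), so the first argument is the one to keep.
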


\begin{proof}
Fix $n\in\IN$.
We consider the following loop of positive coverings of length $n$:
\[
I \signedcover{F} I \signedcover{F} I \cdots \signedcover{F} I
\signedcover{F} I+k.
\]
By Proposition~\ref{prop:signedcover}, there exists a point $x\in I$
such that $F^n(x) = x+k$ and $F^i(x) \in I$ for all $1 \le i \le n-1$.
In particular, $\rhos(x) = k/n \neq 0$.
Suppose that $F^i(x) \in x+\IZ$ for some $i \in \{1,2,\dots,n-1\}$.
Both $x$ and $F^i(x)$ belong to $I$, and thus $F^i(x) = x$
because $(I+n)_{n\in\IZ}$ are pairwise disjoint. But this implies that
$\rhos(x) = 0$, which is a contradiction.
Therefore the period $\modi$ of $x$ is equal to $n$.
Finally, $\Per(F)=\IN$.
\end{proof}

The next lemma is a technical result in the spirit of the previous
one. It shows that, when certain signed loops are available,
the set of periods contains $\IN\setminus\{2\}.$

\begin{lemma}\label{lem:+-loop}
Let $F \in \Li$. Let $K, L\subset S$ be two compact intervals in $S$
and let $e\in S$ be such that
$(K+\IZ) \cap (L+\IZ) \subset \{e\}+\IZ$ and
$F(e) \notin L+\IZ$.
Suppose that there exist $k_1,k_2,k_3,k_4\in\IZ$ such that
\[
L \signedcover{F-k_1} L,\quad L \signedcover{F-k_2} K,\quad
K \signedcover[-]{F-k_3} L,\quad K \signedcover[-]{F-k_4} K.
\]
Then $\Per(F)\supset \IN\setminus\{2\}.$
\end{lemma}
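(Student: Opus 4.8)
The strategy is to build, for each integer $n\ge 3$, a loop of signed coverings whose sign is $+$, whose associated displacement $\widehat{p}$ is nonzero, and whose length is $n$; then to invoke Proposition~\ref{prop:signedcover} to get a periodic {\modi} point of rotation number $\widehat{p}/n\neq 0$, and finally to check that this point has period exactly $n$ (not a proper divisor). The key structural feature available here is that we have a ``positive sub-horseshoe'' on $L$ (namely $L\signedcover{F-k_1}L$ and $L\signedcover{F-k_2}K$, $K\signedcover[-]{F-k_3}L$) together with the negative self-covering $K\signedcover[-]{F-k_4}K$. To produce loops of \emph{every} length $\ge 3$ we would cycle through $L$ some number of times and insert exactly one excursion $L\to K\to L$ (or a longer stay in $K$ via the negative self-loop $K\to K$), arranging the number of negative coverings used to be even so that the total sign is $+$. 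Concretely, for $n\ge 3$ a natural candidate loop is
\[
L \signedcover{F-k_1} L \signedcover{F-k_1}\cdots\signedcover{F-k_1} L
 \signedcover{F-k_2} K \signedcover[-]{F-k_3} L,
\]
of length $n$ with $n-2$ traversals of $L\to L$; this loop has exactly one negative covering, so its sign is $-$. To fix parity one uses instead a loop that visits $K$ twice or uses the negative self-loop at $K$ an extra time: for instance
\[
L \signedcover{F-k_1}\cdots\signedcover{F-k_1} L
 \signedcover{F-k_2} K \signedcover[-]{F-k_4} K \signedcover[-]{F-k_3} L,
\]
which has two negative coverings, hence sign $+$, and length $n$ (with $n-3$ traversals $L\to L$, valid for $n\ge 3$). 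By Lemma~\ref{lem:make-coverings-positive}(a) the sign is independent of the chosen orders, so we are free to normalize.

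\textbf{Getting the rotation number nonzero and the period exact.}
Applying Proposition~\ref{prop:signedcover} to such a positive loop produces $x_0$ with $F^{m_k}(x_0)=x_0+\widehat{p_k}$, where here $m_k=n$ (all $n_i=1$) and $\widehat{p_k}=k_1(n-3)+k_2+k_4+k_3$ for the second loop above (an analogous linear combination for the first). If this combination happens to vanish, we are in the situation of a true periodic point of rotation number $0$, which is not what we want; to avoid this I would, exactly as in the proof of Lemma~\ref{lem:SemiHorseshoe-mod1}, either use the hypothesis that the relevant $k_i$ differ (so that $\widehat{p_k}/n$ cannot equal an integer arising from a shorter sub-loop) or, if all of $k_1,k_2,k_3,k_4$ coincide, reduce to Proposition~\ref{prop:SemiHorseshoe} applied to $G:=F-k_1$ on $K\cup L$ — noting that the sign conditions force $F-k_1$ to have a genuine horseshoe $L\to L$, $L\to K$, $K\to L$, $K\to K$ on $K$ and $L$, which meet only at $e$ — and conclude $\Per(G)\supset\IN$, whence $\Per(F)\supset\IN\setminus\{2\}$ by Lemma~\ref{lem:FF+k}(d). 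In the generic (non-coincident) case, the argument that $x_0$ has period exactly $n$ runs as in Corollary~\ref{cory:+horseshoeFF-1} and Lemma~\ref{lem:SemiHorseshoe-mod1}: if $F^i(x_0)\in x_0+\IZ$ for some $1\le i\le n-1$, then the intermediate points lie in $K+\IZ$ or $L+\IZ$, and the hypothesis $(K+\IZ)\cap(L+\IZ)\subset\{e\}+\IZ$ together with $F(e)\notin L+\IZ$ pins $x_0$ down to be the translate of $e$; but then $F(x_0)\notin L+\IZ$, contradicting that the iterates $F^j(x_0)$ for $1\le j\le n-1$ were forced into $L+\IZ$ (for the loops chosen, the first step after $x_0$ lands in $L$). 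Hence the period {\modi} is $n$.

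\textbf{Where the difficulty lies.}
The routine parts are the arithmetic of $\widehat{p_k}$ and the bookkeeping of signs; Lemma~\ref{lem:make-coverings-positive} handles the orders for free. The genuinely delicate point is the exact-period argument in the degenerate-displacement corner cases, i.e.\ exactly mirroring the case analysis in Lemma~\ref{lem:SemiHorseshoe-mod1}: one must be careful that when the $k_i$ are not all equal the rotation number $\widehat{p_k}/n$ genuinely differs from any rotation number a proper sub-loop could realize, and when they are all equal that the four signed coverings really do assemble into an honest (unsigned) horseshoe for $G=F-k_1$ on two intervals whose interiors are disjoint and which share only $e$, so that Proposition~\ref{prop:SemiHorseshoe} applies verbatim. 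I also expect a minor subtlety in ruling out period $1$: the element $n=2$ is genuinely excluded because a length-$2$ positive loop here would need either two positive or two negative coverings among the available arrows, and the only positive self-covering is $L\to L$ while the only negative self-covering is $K\to K$ — neither of which, combined with the cross arrows, yields a length-$2$ \emph{positive} loop avoiding the obstruction at $e$; so no period-$2$ point is produced, consistent with the statement $\Per(F)\supset\IN\setminus\{2\}$ (and no claim about $2$ is made). Once $n=1$ (the fixed {\modi} point from $L\signedcover{F-k_1}L$) and all $n\ge 3$ are covered, the proof is complete.
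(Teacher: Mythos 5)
Your choice of loop is essentially the paper's (the paper uses $(L \signedcover{F-k_2} K \signedcover[-]{F-k_4} K \signedcover[-]{F-k_3} L)(L \signedcover{F-k_1} L)^{n-3}$, i.e.\ the $K$-excursion placed first), and the sign bookkeeping via Lemma~\ref{lem:make-coverings-positive} is fine, as is the fixed point from $L\signedcover{F-k_1}L$. The gap is in the exact-period argument, which is the whole point of the lemma. First, the detour through rotation numbers is both unnecessary and unworkable: nothing in the hypotheses prevents $\widehat{p_k}=0$ or prevents a proper sub-loop from having the same rotation number (e.g.\ $k_1=k_4=0$, $k_2=1$, $k_3=-1$ gives $\widehat{p_k}=0$), and the lemma assumes no distinctness of the $k_i$; the paper's proof never uses rotation numbers here. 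Second, your fallback when all $k_i$ coincide, via Proposition~\ref{prop:SemiHorseshoe}, is not available: that proposition requires $K$ and $L$ to contain no branching point in their interiors, which is \emph{not} a hypothesis of Lemma~\ref{lem:+-loop}, and in the places where the lemma is actually applied (Case~2 of Lemma~\ref{twoarrowscrossingLargeOrbits}, Lemma~\ref{lem:yinB}) one of the two intervals does contain the branching point in its interior.

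Third, the combinatorial argument you sketch pins the wrong point. With your ordering of the loop (the $K$-excursion at the end), the itinerary is $F^i(x_0)\in L+\IZ$ for $0\le i\le n-3$ and $F^{n-2}(x_0),F^{n-1}(x_0)\in K+\IZ$; if the period {\modi} is $p\ge 3$ with $p\mid n$, no multiple of $p$ lies in $\{n-2,n-1\}$, so you cannot conclude that $x_0$ itself lies in $e+\IZ$, and the claimed contradiction with ``the first step after $x_0$ lands in $L$'' evaporates. The correct move (and the reason the paper orders the loop with the excursion first) is to pin an \emph{iterate}: since $1\le p\le n-2$, one has $F^2(x)\in K+\IZ$, $F^{2+p}(x)\in L+\IZ$ and $F^{2+p}(x)-F^2(x)\in\IZ$, whence $F^2(x)\in e+\IZ$ by the intersection hypothesis, and then $F^3(x)\in F(e)+\IZ$ contradicts $F^3(x)\in L+\IZ$ because $F(e)\notin L+\IZ$. (One can salvage your ordering by comparing $F^{n-2}(x_0)$, resp.\ $F^{n-1}(x_0)$, with $F^{n-2-p}(x_0)$, resp.\ $F^{n-1-p}(x_0)$, but that is not the argument you wrote.) So the case split you propose should be discarded and replaced by this single uniform argument, valid for all values of the $k_i$.
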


\begin{proof}
According to Proposition~\ref{prop:signedcover} applied to
the loop $L\signedcover{F-k_1} L$,
there exists a fixed point {\modi} of
$F$ in $L$. Hence $1 \in \Per(F)$.

We now fix $n \ge 3$ and  we consider the following
positive loop of length $n$:
\[
  (L \signedcover{F-k_2} K \signedcover[-]{F-k_4} K \signedcover[-]{F-k_3} L)
  (L \signedcover{F-k_1} L)^{n-3}.
\]
By Proposition~\ref{prop:signedcover},
there exists a point $x \in L$ such that
$F(x) \in K+\IZ$,
$F^2(x) \in K + \IZ$,
$F^i(x) \in L + \IZ$ for all $3\le i\le n$ and
$F^n(x) -x\in\IZ$.
Thus $x$ is a periodic {\modi} point for $F$ and its period $p$ divides $n$.
It remains to prove that the period {\modi} of $x$ is exactly $n$.
Suppose on the contrary that $p < n$.
Then $1 \le p \le n-2$ because $p$ divides $n \ge 3$.
Thus $F^2(x)\in K+\IZ$, $F^{2+p}(x)\in L+\IZ$ and $F^{2+p}(x)-F^2(x)\in\IZ$.
By assumption, this is possible only if  $F^2(x)\in e+ \IZ$.
This leads to a contradiction because $F^3(x)\in L+\IZ$ whereas
$F(e)\notin L+\IZ$. This proves that $p = n$, and hence, $\forall n\ge 3$,
$n\in \Per(F)$.
Consequently, $\Per(F)\supset \IN\setminus\{2\}.$
\end{proof}

\section{Sets of periods of 3-star and degree 1 circle maps occur for
degree 1 sigma maps}\label{sec:Y}

Misiurewicz's Theorem~\ref{S9.5} gives a characterization of
the sets of periods of circle maps of degree $1$. It is very easy to build a
map in $\Li$ whose set of periods {\modi} is equal to the set of
periods of a given degree $1$ circle maps. This leads to the following result
(see Section~\ref{sec:statements} for the notations).

\setcounter{MainTheorem}{1}
\begin{MainCorollary}
Given $c,d \in \R$ with $c \le d$
and $s_c, s_d\in\N_{\Sho}$, there exists a map $F \in \Li$ such that
$\RotR(F) = \Rot(F)=[c,d]$ and
$\Per(F) = \Lambda(c,\Shs(s_c)) \cup M(c,d) \cup
\Lambda(d,\Shs(s_d))$.
\end{MainCorollary}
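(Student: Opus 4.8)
The plan is to reduce the statement to Misiurewicz's Theorem~\ref{S9.5} via the trivial embedding of a degree~$1$ circle lifting into $S$ described in Subsection~\ref{ss:coveringS}, obtained by collapsing each branch onto its base point. Concretely, I would apply the ``conversely'' part of Theorem~\ref{S9.5} to get $\widehat{F}\in\LL_1(\IR)$ with $\Rot(\widehat{F})=[c,d]$ and $\Per(\widehat{F})=\Lambda(c,\Shs(s_c)) \cup M(c,d) \cup \Lambda(d,\Shs(s_d))$, and then define
\[
 F(z) := \begin{cases}
   \widehat{F}(z) & \text{if } z\in\IR,\\
   \widehat{F}(m) & \text{if } z\in B_m \text{ for some } m\in\IZ.
 \end{cases}
\]
Then $F\in\Li$: continuity at a branching point $m$ is clear since $F(m)=\widehat{F}(m)$; on $\IR$ the equality $F(z+1)=F(z)+1$ is the degree~$1$ property of $\widehat{F}$; and on the branches $F(B_{m+1})=\{\widehat{F}(m+1)\}=\{\widehat{F}(m)+1\}=F(B_m)+1$. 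By construction $F(\IR)\subset\IR$, $F|_{\IR}=\widehat{F}$, and $F(B_m)=F(m)$, so $F$ is precisely of the form identified in Subsection~\ref{ss:coveringS} with a lifting of a degree~$1$ circle map.

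Next I would compute the rotation sets. For $x\in\IR$ the sequences defining $\rho_F(x)$ and $\rho_{\widehat{F}}(x)$ are literally equal, so $\RotR(F)=\Rot(\widehat{F})=[c,d]$. For $z\in\Bo_m$ we have $F(z)=\widehat{F}(m)\in\IR$, hence the orbit of $z$ lies in $\IR$ from the first iterate on and $\rho_F(z)=\rho_{\widehat{F}}(m)$ whenever the latter limit exists; in particular $z$ contributes no value lying outside $[c,d]$. Therefore $\Rot(F)=\RotR(F)=[c,d]$.

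It remains to determine $\Per(F)$. If $z$ were a periodic {\modi} point of $F$ with $z\in\Bo_m$, then $F^n(z)\in\IR$ for all $n\ge 1$ while $z+\IZ\subset B\setminus\IR$, making $F^n(z)\in z+\IZ$ impossible; so every periodic {\modi} point of $F$ lies in $\IR$. Since $F|_{\IR}=\widehat{F}$, for a point of $\IR$ the property of being periodic {\modi} and the period {\modi} itself are the same for $F$ as for $\widehat{F}$; hence $\Per(F)=\Per(\widehat{F})=\Lambda(c,\Shs(s_c)) \cup M(c,d) \cup \Lambda(d,\Shs(s_d))$, as desired. The argument is elementary and I do not expect a genuine obstacle; the only step needing a little care is the identity $\Rot(F)=\RotR(F)$, i.e.\ checking that the collapsed branches introduce no spurious rotation numbers, which is immediate because any point of $\Bo_m$ enters $\IR$ after a single iterate.
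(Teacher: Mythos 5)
Your proposal is correct and follows essentially the same route as the paper: apply the converse part of Theorem~\ref{S9.5} to obtain $\widetilde{F}\in\LL_1(\IR)$ and extend it to $S$ by collapsing each branch $B_m$ to $\widetilde{F}(m)$, so that all periodic (mod 1) points lie in $\IR$ and the rotation sets coincide with $\Rot(\widetilde{F})$. The only difference is that you spell out the continuity, rotation-set, and period computations that the paper dismisses as "clear."
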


\begin{proof}
By Theorem~\ref{S9.5}, there exists a map $\widetilde{F} \in \LL_1(\IR)$
such that
$\Rot(\widetilde{F}) = [c,d]$ and
$\Per(\widetilde{F}) =
   \Lambda(c,\Shs(s_c)) \cup M(c,d) \cup \Lambda(d,\Shs(s_d))$.
Then we define $F \in \Li$ by
\[
F(x) = \begin{cases}
        \widetilde{F}(x) & \text{if $x \in \R$},\\
        \widetilde{F}(m) & \text{if $x \in B_m$}.
       \end{cases}
\]
Clearly, $F$ is continuous, $\Rot(F)=\RotR(F) = \Rot(\widetilde{F})$ and
every periodic {\modi} point of $F$ is contained in $\R$. Hence,
$\Per(F) = \Per(\widetilde{F})$.
This ends the proof of the corollary.
\end{proof}

It is also easy to build a  map in $\Li$ whose set of periods is
equal to the set of periods of a given $3$-star map.
This construction can be done in such a way that the rotation interval is
any interval of the form $[0,d]$ or $[d,0]$. The set of periods {\modi} is then
a combination of a set of periods of a $3$-star map and a set of periods of
a degree $1$ circle map, as stated in the next result.

\begin{MainTheorem}
Let $d \ne 0$ be a real number, $s_d\in\N_{\Sho}$ and $f \in \Cstar$.
Then there exists a map $F \in \Li$ such that $\RotR(F)=\Rot(F)$ is
the closed interval with endpoints $0$ and $d$ (i.e., $[c,d]$ or
$[d,c]$), $\Per(0,F) = \TPer(f)$ and $\Per(F) = \TPer(f) \cup M(0,d)
\cup \Lambda(d,\Shs(s_d))$.
\end{MainTheorem}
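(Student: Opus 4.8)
The plan is to glue a faithful copy of $f$, placed on a small $3$-star around the branching point $0$, to a degree $1$ circle map realizing the rotation interval $[0,d]$, arranged so that the circle dynamics lives on $\IR$ and ``jumps over'' small neighbourhoods of the integers, while the $3$-star region traps everything that enters it and carries rotation number $0$. First I would reduce to $d>0$: the involution $z\mapsto-\overline z$ of $S$ reverses the orientation of $\IR$, fixes each branch $B_m$ setwise and fixes each $3$-star $B_0\cup[-\eps,\eps]$ setwise; conjugating by it sends a map in $\Li$ to a map in $\Li$ with the same sets of periods and with rotation sets multiplied by $-1$, and sends $f$ to another map in $\Cstar$, so the case $d<0$ follows from the case $d>0$.

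Fix $0<\eps_1<\eps_2<1/3$. By the converse direction of Theorem~\ref{S9.5} with $c=0$ and $s_c=1$ (so $\Lambda(0,\Shs(1))=\{1\}$) take $\widetilde F\in\LL_1(\IR)$ with $\Rot(\widetilde F)=[0,d]$, $\Per(\widetilde F)=\{1\}\cup M(0,d)\cup\Lambda(d,\Shs(s_d))$ and $\Per(0,\widetilde F)=\{1\}$; moreover, inspecting that construction, one may take $\widetilde F$ equal to the constant $m$ on $[m-\eps_2,m+\eps_2]$ for every $m\in\IZ$ (use as ``lower map'' the degree $1$ circle map that is $\equiv m$ on $[m-\eps_2,m+\eps_2]$ and increases monotonically to $m+1$ in between: it has rotation number $0$ and stays below the usual upper map of rotation number $d$, so the rotation interval is unchanged, and the flat zones only produce points eventually fixed at an integer). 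Then every periodic orbit of $\widetilde F$ of period $>1$, and every orbit of nonzero rotation number, avoids $\bigcup_m[m-\eps_2,m+\eps_2]$, since an orbit meeting a flat zone is forced through the fixed integer $m$. Now set $Y_0:=B_0\cup[-\eps_1,\eps_1]$, a $3$-star with central point $0$, fix a homeomorphism \map{h}{X_3}[Y_0] sending the central point of $X_3$ to $0$, and define $F\in\Li$ by: $F|_{Y_0}:=h\circ f\circ h^{-1}$ (so $F(Y_0)\subset Y_0$); on $[\eps_1,\eps_2]$ and on $[-\eps_2,-\eps_1]$, $F$ is monotone with $F(\pm\eps_1)=h(f(h^{-1}(\pm\eps_1)))\in Y_0$ and $F(\pm\eps_2)=0$ (so these ``funnels'' map into $Y_0$); $F:=\widetilde F$ on $[\eps_2,1-\eps_2]$; and $F(z+1)=F(z)+1$. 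The prescriptions match at $\pm\eps_1$, at $\pm\eps_2$ (where $\widetilde F\equiv0$) and at $1-\eps_2$ (where $\widetilde F(1-\eps_2)=\widetilde F(-\eps_2)+1=1$), so $F$ is well defined, continuous and of degree $1$. Put $\widetilde Y:=Y_0\cup[\eps_1,\eps_2]\cup[-\eps_2,-\eps_1]=B_0\cup[-\eps_2,\eps_2]$; then $F(\widetilde Y+m)\subset Y_0+m$ for every $m$, so $\widetilde Y+\IZ$ and each $\widetilde Y+m$ are forward invariant, and $S\setminus(\widetilde Y+\IZ)=\bigcup_m(\eps_2+m,1-\eps_2+m)\subset\IR$, where $F$ coincides with $\widetilde F$.

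For the verification, let $P$ be a periodic {\modi} orbit of $F$. If $P$ meets $\widetilde Y+\IZ$, then forward invariance together with periodicity forces $P\subset\widetilde Y+m$ for a single $m$, hence $P=F(P)\subset Y_0+m$; since $Y_0+m$ contains no two points differing by a nonzero integer, $P$ is a true periodic orbit, of period in $\TPer(f)$ (as $F|_{Y_0+m}$ is conjugate to $f$) and rotation number $0$; conversely each periodic orbit of $f$ yields such a $P$. If $P$ does not meet $\widetilde Y+\IZ$, its whole forward orbit stays in $S\setminus(\widetilde Y+\IZ)$, where $F=\widetilde F$, so $P$ is a periodic {\modi} orbit of $\widetilde F$, of period in $\Per(\widetilde F)$ and rotation number in $\Rot(\widetilde F)=[0,d]$. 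Combining the two cases, and using that $1\in\TPer(f)$ (Theorem~\ref{GMT1}), that $\Per(0,\widetilde F)=\{1\}$, and that every period-$>1$ orbit of $\widetilde F$ avoids $\bigcup_m[m-\eps_2,m+\eps_2]$ and hence persists verbatim as a periodic {\modi} orbit of $F$ of the same period, one obtains $\Per(0,F)=\TPer(f)$ and $\Per(F)=\TPer(f)\cup M(0,d)\cup\Lambda(d,\Shs(s_d))$. Finally $\RotR(F)=\Rot(F)=[0,d]$: by the dichotomy every rotation number of $F$ lies in $\{0\}\cup\Rot(\widetilde F)=[0,d]$; the fixed point $0$ gives $0\in\RotR(F)$; a point of $\widetilde F$ with rotation number $d$ avoids $\bigcup_m[m-\eps_2,m+\eps_2]$, hence is also a point of $F$ with rotation number $d$, so $d\in\RotR(F)$; and $\RotR(F)$ is a compact interval by Theorem~\ref{theo:RotR}.

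I expect the main obstacle to be the first step: producing a Misiurewicz circle map with the prescribed rotation interval and set of periods that is \emph{in addition constant on a neighbourhood of every integer}. This flat spot is exactly what lets the funnels be glued continuously to the circle part at $\pm\eps_2$ and, more importantly, what makes $\widetilde Y+\IZ$ a genuine trap isolating the two regimes---the $3$-star contributing rotation number $0$ and periods $\TPer(f)$, the circle part contributing $[0,d]$ and periods $M(0,d)\cup\Lambda(d,\Shs(s_d))$---so that no hybrid periodic orbit can appear. Checking that the flat-spot variant of the construction behind Theorem~\ref{S9.5} alters neither $\Rot$ nor $\Per$ is the only non-routine point, and it reduces to the remarks already made: the modified lower map has rotation number $0$, stays below the usual upper map of rotation number $d$, and its flat zones add only eventually-fixed points.
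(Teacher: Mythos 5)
Your overall architecture (reduce to $d>0$; trap a conjugate copy of $f$ on the $3$-star $Y_0=B_0\cup[-\eps_1,\eps_1]$ via funnels; let the circle dynamics run on $\IR$ away from the integers; then split periodic {\modi} orbits according to whether they enter $\widetilde Y+\IZ$) is sound, and the second half of your argument — the dichotomy, the computation of $\Per(F)$, $\Per(0,F)$ and of $\RotR(F)=\Rot(F)=[0,d]$ — is correct \emph{conditional on the first step}. The genuine gap is precisely that first step: the existence of $\widetilde F\in\LL_1(\IR)$ with $\Rot(\widetilde F)=[0,d]$, $\Per(\widetilde F)=\{1\}\cup M(0,d)\cup\Lambda(d,\Shs(s_d))$, $\Per(0,\widetilde F)=\{1\}$, and $\widetilde F\equiv m$ on $[m-\eps_2,m+\eps_2]$ for every $m\in\IZ$. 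Theorem~\ref{S9.5} gives none of the last two properties: it controls only the total set of periods, so $\Per(0,\widetilde F)=\{1\}$ is an extra feature of the particular construction (a rotation-number-$0$ orbit of period $k>1$ with $k\in M(0,d)$ would not contradict the stated period set), and flatness at the integers is a modification of that construction. Your one-line justification (``swap in a flat lower map; it has rotation number $0$ and stays below the upper map, so the rotation interval is unchanged, and the flat zones only add eventually-fixed points'') does not establish what is needed: the endpoint contributions are not determined by the rotation interval, so you must check that after the modification (i) $d$ is still attained and $\Per(d,\widetilde F)$ is still exactly $\Lambda(d,\Shs(s_d))$ — the rotation-number-$d$ periodic orbits realizing this set may be infinitely many (e.g.\ $s_d=2^\infty$) and may accumulate on $\IZ$, so no single $\eps_2$ obviously spares them, and flattening can only destroy such orbits, never create them — and (ii) no rotation-number-$0$ periodic orbit of period $>1$ exists \emph{outside} the flat zones; your remark only addresses orbits that enter them. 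Point (ii) is exactly the property the paper must extract from the proof of \cite[Theorem~3.10.1]{ALM} (namely $\rho_{_G}(x)\ne 0$ for all $x\notin\bigcup_n G^{-n}(\IZ)$, together with $G(0)=0$), and it has to be re-verified for any modified construction.

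For contrast, the paper resolves this very difficulty without asking the circle map to be flat: it keeps the map $G$ of \cite[Theorem~3.10.1]{ALM} untouched (using only the visible features $G(0)=0$, affineness on $[0,u]$ and $[v,1]$, $\rho_{_G}\equiv d$ on $[u,v]$, $\rho_{_G}\neq 0$ off $\bigcup_n G^{-n}(\IZ)$) and performs a Denjoy-like blow-up of the countable invariant set $\bigcup_n G^{-n}(\IZ)$, inserting the $3$-stars $Y^a_m$ at the integers and defining $F$ semiconjugate to $G$; periods and rotation numbers are then transferred through the semiconjugacy $\varphi$. If you wanted to repair your proposal, the most economical rigorous route to your flat $\widetilde F$ is essentially that same blow-up carried out inside the circle (blow each point of $\bigcup_n G^{-n}(\IZ)$ into an interval, with the intervals over the integers of length $2\eps_2$ and the new map constant there), followed by the same semiconjugacy bookkeeping — i.e.\ your approach does not avoid the paper's technique, it postpones it; the alternative, re-deriving a constrained version of \cite[Theorem~3.10.1]{ALM} with flat spots, is not the routine check your sketch suggests. (A minor slip elsewhere: the involution $z\mapsto-\overline z$ sends $B_m$ to $B_{-m}$ rather than fixing each branch, but the reduction to $d>0$ it provides is still valid.)
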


\begin{proof}
We shall only consider the case $d > 0$. The case $d$ negative is
analogous.

From Theorem~\ref{S9.5}, it follows that there
exists a map $G \in \LL_1(\IR)$ such that
$\Rot(G) = [0,d]$ and
$\Per(G) = \{1\} \cup M(c,d) \cup \Lambda(d,\Shs(s_d))$.
Moreover, from the proof of Theorem~\ref{S9.5}  (see
\cite[Theorem~3.10.1]{ALM}), the map $G$ is constructed in such a way that
$G(0) = 0$,
there exist $u \le 1/2 \le v$ such that $G\evalat{[0,u]}$ and
$G\evalat{[v,1]}$ are affine and
$\rhos[G](x) = d$ for every $x \in [u,v]$, and
$\rhos[G](x) \ne 0$ for every $x \in \R \setminus \bigcup_{n\ge 0}G^{-n}(\Z)$.

To prove the theorem, we shall construct a map $F \in \Li$ such that
$\RotR(F) = \Rot(F)=\Rot(G) = [0,d],$
$\Per(0,F) = \TPer(f)$ and
$\Per(F) = \Per(0,F) \cup \Per(G)$.

Let $0 < b < a < 1/2$.
For every $m \in \Z$, let
$Y^a_m$ (resp. $Y^b_m$) denote the set $[m-a, m+a] \cup B_m$
(resp. $[m-b, m+b] \cup B_m$).
Observe that
$Y^a_m \cap Y^a_j = \emptyset$ whenever $m \ne j$,
$Y^b_m \subset Y^a_m$, and the set
$Y^a_m \setminus Y^b_m$ has two connected components:
$[m-a,m-b)$ and $(m+b,m+a]$.
Moreover, the sets $Y^a_m$ and $Y^b_m$ are homeomorphic to $X_3.$
Let $\beta_0$ denote a homeomorphism from $Y^b_0$ to $X_3$.

Set $Z:=\bigcup_{i=0}^{\infty} G^{-i}(\Z)$.
Since $G(m) = m$ for every $m \in \Z$, both sets $Z$ and $\R \setminus Z$ are
$G$-invariant and $\IZ\subset Z$.
Moreover, $\rhos[G](x) = 0$ for all $x\in Z$.
Thus, $Z \cap \left(G([u,v]) + \Z\right) = \emptyset$
and
\begin{equation}\label{eq:tt}
     Z \subset ([0,u) \cup (v,1]) + \Z
\end{equation}
because $d \ne 0.$
Since $G|_{[0,u]}$ and $G|_{[v,1]}$ are affine, this implies
that every point in $Z$ has finitely many preimages and,
hence, $Z$ is countable. Moreover, since $G$ has
degree one (Lemma~\ref{lem:FF+k}(a)), $Z + \Z = Z$.
Therefore, there exists a continuous map
$\map{\varphi}{S}[\R]$ such that
$\varphi(x+1) = \varphi(x) +1$ for all $x\in S$,
$\varphi^{-1}(m)=Y_m^a$ for every $m \in \Z,$
$\varphi\evalat{\R}$ is non-decreasing,
$\varphi^{-1}(x)$ is a point for every $x \notin Z$ and
$\varphi^{-1}(x)$ is a non-degenerate interval for every $x \in Z\setminus\Z$.
The idea is similar to Denjoy's construction: under the action of
$\varphi^{-1}$, every integer $m$ is blown up into the 3-star $Y^a_m$, then
the preimages of $m$ under $G$ are blown up too, in order to be able to
define a map $\map{F}{S}$ which is a semiconjugacy of $G$.

Now we define our map $F$ as follows:
\mathtitlecase{F\evalat{Y^a_m}}
we set
$F\evalat{Y^b_0} = \beta^{-1}_0 \circ f \circ \beta_0$,
$F(a) = a,$ $F(-a) = -a$ and we define
$F\evalat{[-a,-b]}$ and $F\evalat{[b,a]}$ affinely in such a way that
$F\evalat{Y^a_0}$ is continuous.
Then, for every $m\in\IZ$ and $x\in Y^a_m,$
we set $F(x):=F(x-m)+m.$
In particular, $F(Y^a_m) \subset Y^a_m$ for every $m \in \Z.$
\mathtitlecase{F\evalat{\varphi^{-1}(Z \setminus G^{-1}(\Z))}}
For every $y \in Z \setminus G^{-1}(\Z)$, the sets
$\varphi^{-1}(y)$ and $\varphi^{-1}(G(y))$ are intervals because
$y$ and $G(y)$ belong to $Z \setminus \Z$.
Moreover, by \eqref{eq:tt}, the map $G$ is, either increasing, or
decreasing at $y$.
We define $F\evalat{\varphi^{-1}(y)}$ to be the unique affine map
from $\varphi^{-1}(y)$ onto $\varphi^{-1}(G(y))$
which is increasing (respectively decreasing) when
$G$ is increasing (respectively decreasing) at $y$.
In particular $F(\Bd(\varphi^{-1}(y))) = \Bd(\varphi^{-1}(G(y))).$
\mathtitlecase{F\evalat{\varphi^{-1}(G^{-1}(\Z)\setminus \Z)}}
For every $y \in G^{-1}(\Z)\setminus \Z$, it follows that
$y\in Z\setminus \Z$ and $G(y) \in \Z$.
We define $F\evalat{\varphi^{-1}(y)}$ to be the unique
affine map from $\varphi^{-1}(y)$ onto $[G(y)-a, G(y)+a]$
which is increasing (respectively decreasing) when
$G$ is increasing (respectively decreasing) at $y$.
In this case we have
$F(\Bd(\varphi^{-1}(y))) = \{G(y)-a, G(y)+a\}.$
\mathtitlecase{F\evalat{\varphi^{-1}(\IR\setminus Z)}}
For every $y\in \IR\setminus Z$, $G(y) \notin Z$ and
$\varphi^{-1}(y)$ and $\varphi^{-1}(G(y))$
are single points.
We set $F(\varphi^{-1}(y))=\varphi^{-1}(G(y))$.

Observe that, by definition, $F$ is continuous in every connected
component of $\varphi^{-1}(Z)$. To see that $F$ it is globally
continuous, notice that,
for every $y \in Z$, $F(z)$ has one-sided limits as
$z \in \varphi^{-1}(\R \setminus Z)$
tends to the endpoints of $\varphi^{-1}(y)$, and
these limits are equal to the endpoints of $\varphi^{-1}(G(y))$.
Consequently, $F$ is continuous.
Moreover, from the definition of $F$ and the fact that
$\varphi(x+1) = \varphi(x) +1,$ $F$ has degree 1.
Hence, $F \in \Li$. Furthermore, the fact that $F(Y^a_m) \subset Y^a_m$
implies that $\forall m\in\IZ$, $\forall x\in Y^a_m$, $\rhos(x)=\rhos(m)=0$,
and hence $\Rot(F)=\RotR(F)$.

On the other hand, from the definition of $F$, it follows that $F$ is
semiconjugate with $G$ through $\varphi$,
that is, $G \circ \varphi = \varphi \circ F$. Hence,
\begin{equation}\label{FGsemiconj}
G^n \circ \varphi = \varphi \circ F^n
\quad
\text{for every $n \in \N.$}
\end{equation}
From \eqref{FGsemiconj}, it follows that
$\rhos(x) = \rhos[G](\varphi(x))$ for all $x\in S$.
Consequently, $\Rot(F)=\RotR(F) = \Rot(G) = [0,d]$ and
\begin{equation}\label{eq:rhoG0}
\rhos(x) = 0\quad\text{if and only if}\quad \exists\, i\ge 0, m\in\IZ
\text{ such that } F^i(x)\in Y^a_m,
\end{equation}
i.e., $F^i(x-m)\in Y^a_0$.
Thus, $\Per(0,F) = \TPer(F\evalat{Y^a_0})$.

Now we are going to prove that
$\TPer(F\evalat{Y^a_0}) = \TPer(f)$,
which implies $\Per(0,F) = \TPer(f).$
By definition, $\TPer(F\evalat{Y^b_0}) = \TPer(f) \ni 1$ (recall
that a star map always has a fixed point by Theorem~\ref{GMT1}).
So, we only have to prove that all periodic points of $F$ in
$[-a,-b] \cup [b,a]$ are fixed points.
Recall that we defined $F$ so that
$F(a) = a,$ $F(-a) = -a,$ $F(b),F(-b) \in Y^b_0$
and
$F\evalat{[-a,-b]}$ and $F\evalat{[b,a]}$ are affine.
Thus, either $F\evalat{[-a,-b]}$ is the identity map, or it is
expansive; and the same holds for $F\evalat{[b,a]}.$
Hence, the only periodic points of $F$ in $[-a,-b] \cup [b,a]$
are fixed points.

To end the proof of the theorem, we have to show that
$\Per(F) = \Per(0,F) \cup \Per(G)$.
Since $G(0) = 0$ and
$\rhos[G](x) \ne 0$ for every $x \in \R \setminus Z$,
it follows that
$
\Per(G) = \{1\} \cup
   \left(\bigcup_{\alpha \in (0,d]} \Per(\alpha,G) \right).
$
Consequently,
\[
\Per(0,F) \cup \Per(G) = \Per(0,F) \cup
   \left(\bigcup_{\alpha \in (0,d]} \Per(\alpha,G) \right)
\]
because $1 \in \Per(0,F).$
On the other hand, by definition,
$
\Per(F) = \Per(0,F) \cup
    \left(\bigcup_{\alpha \in (0,d]} \Per(\alpha,F)\right).
$
Therefore, we only have to show that
$\Per(\alpha,F) = \Per(\alpha,G)$ for every $\alpha \in (0,d]$.

Fix $\alpha \in (0,d]$ and let $x\in S$ be such that
$\rhos(x) = \alpha$.
Then $\rhos[G](\varphi(x)) = \rhos(x)$ by \eqref{FGsemiconj}.
We are going to prove that $x$ is a
periodic {\modi} point of $F$ of period $n$
if and only if $\varphi(x)$ is a
periodic {\modi} point of $G$ of period $n$.

Assume first that $x$ periodic {\modi} point of period $n$ for $F$, that
is, $F^n(x) = x + k$ for some $k \in \Z$
and
$F^j(x) - x \notin \Z$ for all $j = 1,2,\dots, n-1$.
From \eqref{FGsemiconj}, it follows that
\[
G^n(\varphi(x)) = \varphi(F^n(x)) = \varphi(x+k) = \varphi(x) + k.
\]
Therefore, $\varphi(x)$ is a periodic point {\modi} of $G$ with period, either
$n$, or a divisor of $n$. To see that $x$ has indeed $G$-period
{\modi} $n$, suppose by way of contradiction that
there exists  $j \in \{1,2,\dots,n-1\}$ such that
$G^j(\varphi(x)) = \varphi(x) + l$  for some $l \in \Z$. Then
$\varphi(F^j(x)) = G^j(\varphi(x)) = \varphi(x+l).$
Note that the fact that
$
\rhos[G](\varphi(x+l)) = \rhos[G](\varphi(x)) =
       \alpha \ne 0
$
implies that $\varphi(F^j(x)) = \varphi(x+l) \notin Z$ by \eqref{eq:rhoG0}.
Consequently, since $\varphi^{-1}(y)$ is a point for
every $y \notin Z$, $F^j(x) = x+l;$ a contradiction.

Now assume that $G^n(\varphi(x)) = \varphi(x) + k$ for some $k \in \Z$
and
$G^j(\varphi(x)) - \varphi(x) \notin \Z$ for all $j\in\{ 1,2,\dots, n-1\}$.
From \eqref{FGsemiconj}, it follows that
$
\varphi(F^n(x)) = G^n(\varphi(x)) = \varphi(x+k).
$
As above, $\rhos[G](\varphi(x)) = \alpha \ne 0$ implies that
$\varphi(F^n(x)) = \varphi(x+k) \notin Z$ and thus
$F^n(x) = x+k$. If there exists $j \in \{1,2,\dots,n-1\}$ such that
$F^j(x) \in x + \IZ$, then
$G^j(\varphi(x)) = \varphi(F^j(x)) \in \varphi(x) +\IZ$;
a contradiction. Thus $x$ is periodic {\modi} of period $n$ for $F$.
\end{proof}

\begin{remark}
Theorem~\ref{YinSigma} gives a map with a non-degenerate rotation interval.
It is even easier to obtain a degenerate interval (take $G={\rm Id}$ in
the proof), which shows that, for every $f
\in \Cstar$, there exists a map $F \in \Li$ such that
$\Rot(F)=\RotR(F)=\{0\}$ and $\Per(0,F) = \Per(F)=\TPer(f)$.
\end{remark}

One may wonder if Theorem~\ref{YinSigma} can be generalized in order
to obtain a map $F\in \Li$ such that $\RotR(F)= [c,d]$ and
$\Per(c,F)=q\cdot\TPer(f)$ for any $f\in \Cstar$ and any rational
number $c=p/q$ with $p,q$ relatively prime. As we said in
Subsection~\ref{ss:main-statements}, the natural strategy is to use a
block structure. The next result shows that this strategy fails.

\begin{MainTheorem}
Let $F \in \Li$ and let $P$ be a lifted periodic orbit of $F$ with
period $nq$ and rotation number $p/q$.
Assume that there exists $x \in P$ such that $\chull{P_0(x)}$ is
homeomorphic to a 3-star and
$\chull{P_1(x)} \subset [n,n+1] \subset \R$ for some $n \in \Z$.
Assume also that
$P_0(x)$ is a periodic orbit of type 3 of $G:=F^q - p,$
$F^i(m) \in \chull{P_i(x)}$ for $i=0,1,\dots,q-1$
and $G(m) = m$, where $m \in \Z \cap \chull{P_0(x)}$
denotes the branching point of $\chull{P_0(x)}$.
Then $\Per(p/q,F) = q\cdot\N$.
\end{MainTheorem}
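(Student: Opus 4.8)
The plan is to convert the statement into a forcing property for the return map $G:=F^q-p$ and then to manufacture a semi-horseshoe for $G$ out of the hypotheses. First, since $p,q$ are coprime, Lemma~\ref{relationF_Fqmp} says that $x$ is a periodic {\modi} point of $F$ of period $kq$ and rotation number $p/q$ if and only if $x$ is a true periodic point of $G$ of period $k$; hence $\Per(p/q,F)=q\cdot\TPer(G)$, and as $\TPer(G)\subseteq\IN$ trivially, it suffices to show that $G$ has a true periodic point of every period $k\ge 1$. Period $1$ is immediate ($G(m)=m$). Before attacking the rest I would record the structure forced on $G$ and $F$: since both have degree $1$ they commute ($F\circ G=F^{q+1}-p=G\circ F$), so $F(P_i(x))=P_{i+1}(x)$ for $0\le i\le q-2$ and $F(P_{q-1}(x))=P_0(x)+p$, and passing to convex hulls yields $F(\chull{P_i(x)})\supseteq\chull{P_{i+1}(x)}$ for $i<q-1$, $F(\chull{P_{q-1}(x)})\supseteq\chull{P_0(x)}+p$, and $G(\chull{P_i(x)})\supseteq\chull{P_i(x)}$ for all $i$; in particular $G(J)\supseteq J$, where $J:=\chull{P_1(x)}\subseteq\IR$ is a genuine interval with no branching point in its interior.

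Next I would set up two local covering pictures and observe that neither suffices alone. On the three-star $\chull{P_0(x)}$ (branching point $m$; two branches in $\IR$, one in $B_m$): since $P_0(x)$ has type $3$, $G$ permutes the three branches cyclically, so, letting $u_1,u_2,u_3$ be the points of $P_0(x)$ closest to $m$ on these branches and $I_j:=\chull{m,u_j}$ (enlarging the point set by the integers met by $\chull{P_0(x)}$ so that the $I_j$ are basic intervals with branching-point-free interiors), one gets, using $G(m)=m$ and the fact that a subcontinuum of $S$ contains the arc joining any two of its points, a length-$3$ loop $I_1\maparrow[G]I_2\maparrow[G]I_3\maparrow[G]I_1$. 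On the interval $J$: $P_1(x)$ is a period-$n$ orbit of $G$ with $G(J)\supseteq J$, so the $G$-graph of the $P_1(x)$-basic intervals (all in $J$, all branching-point-free) carries loops forcing every period in $\Shs(n)$. But by Theorem~\ref{GMT1} a type-$3$ orbit of period $n$ forces only the $\leso{3}$-tail $\{k:k\leso{3}n\}$, and $\Shs(n)$ is also a proper subset of $\IN$ when $n>3$, so the conclusion must come from the way the two pictures are coupled by the rotation.

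The crucial observation is that $F$ carries $m$, a point of valence $3$, into $J\subseteq\IR$, where points have valence $\le 2$: hence $F$ must \emph{fold} $\chull{P_0(x)}$ over $J$, two of the three branches at $m$ being sent to the same side of $F(m)$. Pushing this fold once around the block cycle — via $F(\chull{P_0(x)})\supseteq J$, the chain $J\maparrow[F]\chull{P_2(x)}\maparrow[F]\cdots\maparrow[F]\chull{P_{q-1}(x)}\maparrow[F]\chull{P_0(x)}+p$, and the itinerary hypothesis $F^i(m)\in\chull{P_i(x)}$ to track where the fold point goes — together with the three-star relations $I_1\maparrow[G]I_2\maparrow[G]I_3\maparrow[G]I_1$ to separate and close up the two sheets, should produce intervals $K,L\subseteq\chull{P_0(x)}$ with disjoint interiors, no branching point inside, $G(K)\supseteq L$ and $G(L)\supseteq K\cup L$: a semi-horseshoe for $G$. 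Then Proposition~\ref{prop:SemiHorseshoe} gives true periodic points of $G$ of all periods, so $\TPer(G)=\IN$ and $\Per(p/q,F)=q\cdot\IN$. One could instead splice the $J$-loops with the three-star loops into a single loop in the $F$-graph and invoke Proposition~\ref{prop:signedcover}, paying off the non-degeneracy — period exactly $kq$, rotation number exactly $p/q$ — by the usual endpoint argument of Proposition~\ref{prop:SemiHorseshoe}.

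The main obstacle is making "fold and transport" rigorous. We know $F$ only through the inclusions above: its restrictions to the branches of $m$ need not be monotone, it may be locally constant, and it may re-enter $B_m$; so one must argue with retracted maps and with the innermost-point intervals $I_j$ rather than with whole branches, and must check that the two sheets of the fold, after being pushed around the $q$-cycle, land in intervals that are genuinely interior-disjoint and branching-point-free. This is exactly where the type-$3$ hypothesis, the hypothesis that $\chull{P_1(x)}$ lies in one unit interval of $\IR$, and the itinerary hypothesis $F^i(m)\in\chull{P_i(x)}$ are all consumed; a minor nuisance is the degenerate position $F(m)\in\IZ$, which must be excluded or handled separately.
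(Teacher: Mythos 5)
Your overall framing is the same as the paper's (reduce to $G:=F^q-p$ via Lemma~\ref{relationF_Fqmp}, exploit the type-3 loop at the branching point together with the fact that the next block is an interval of $\R$, and end with Proposition~\ref{prop:SemiHorseshoe}), and your valence-drop intuition is the right heuristic. But the decisive step is missing: the semi-horseshoe is never constructed, only announced (``should produce intervals $K,L$''), and your sketch of how to get it points in a direction that does not quite work. The paper's mechanism is concrete: after replacing $F$ by $F+\ell$ (using $\Per(p/q,F)=\Per((p+q\ell)/q,F+\ell)$ and Lemma~\ref{sepblocks}) so that the blocks $P_0(x),\dots,P_{q-1}(x)$ have an \emph{increasing block structure}, one takes the three $P_0(x)\cup\{m\}$-basic intervals $I_1,I_2,I_3$ at the branching point, uses the type-3 hypothesis and $G(m)=m$ to get the loop $I_1\maparrow[G]I_2\maparrow[G]I_3\maparrow[G]I_1$, and then lifts each $G$-arrow to an $F$-path of length $q$ through the blocks (this is where $F^i(m)\in\chull{P_i(x)}$ and the block separation are consumed: they guarantee that the $i$-th interval of such a path is a $P_i(x)\cup\{F^i(m)\}$-basic interval). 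The second intervals $J_1,J_2,J_3$ of these three paths lie in $\chull{P_1(x)}\subset[n,n+1]\subset\R$, and the whole proof then rests on a dichotomy you do not identify: either two of the $J_i$ coincide, which directly yields $G(L)\supset L\cup J_3$ and $G(J_3)\supset L$ inside $\chull{P_1(x)}$; or they are pairwise distinct, hence linearly ordered on the line, and a Stefan-type choice of preimage points inside the cyclic covering $J_1\maparrow[G]J_2\maparrow[G]J_3\maparrow[G]J_1$ produces $K,L\subset\chull{P_1(x)}$ with $G(K)\supset L$ and $G(L)\supset K\cup L$. In both cases Proposition~\ref{prop:SemiHorseshoe} and Lemma~\ref{relationF_Fqmp} finish.

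Two specific points where your plan, as written, would fail or stall. First, you aim to place the horseshoe inside $\chull{P_0(x)}$; but that set contains the branching point $m$, so the hypothesis of Proposition~\ref{prop:SemiHorseshoe} (no branching point in the interiors of $K$ and $L$) is exactly what you cannot guarantee there — the assumption $\chull{P_1(x)}\subset[n,n+1]$ exists precisely so that the horseshoe can be located in the real block, where this issue disappears. Second, the ``fold at $m$ in one application of $F$, two branches sent to the same side of $F(m)$'' picture is not what drives the argument and need not hold in any usable form ($F$ on the branches is only known through covering relations); what forces the folding is the global constraint that three intervals cyclically $G$-covered and transported into a line must either collide or be linearly ordered. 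Relatedly, you never address how to know that the intervals appearing along an $F$-path lie in the intended blocks; without first separating the blocks by changing the lifting, the transport step is not even well defined. So the proposal is in the right spirit, but the core construction — the coincide-versus-ordered dichotomy in $\chull{P_1(x)}$, prepared by the block-structure normalization — is absent, and this is the actual content of the theorem.
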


Recall that, when $P$ and $G$ are as in
Theorem~\ref{ConverseEndInteger},
\[
  P_i(x) := \{ F^i(x), G(F^i(x)), G^2(F^i(x)), \dots,
                G^{n-1}(F^i(x)) \}
\]
for every $x \in P$ and $i=0,1,\dots,q-1.$
To simplify the notation, in what follows we shall set
$P_q(x) := P_0(x) + p$.

Before proving Theorem~\ref{ConverseEndInteger}, we are going to
develop the tools needed in its proof.

\begin{lemma}\label{blocksareperiodic}
For all $x\in P$ and all $0\le i\le q-1$,
$P_i(x)$ is a true periodic orbit of $G$ of period~$n$.
In particular, $P_i(x) = \set{G^s(F^i(x))}{s \ge 0}.$
\end{lemma}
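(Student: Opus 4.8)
The plan is to unwind the definitions and use the relation between $F$ and $G := F^q - p$ together with the fact that $P$ is a lifted periodic orbit of $F$ of period $nq$ and rotation number $p/q$. First I would record that, by Lemma~\ref{lem:FF+k}(b), $G^s(y) = F^{qs}(y) - sp$ for every $y\in S$ and $s\ge 0$, so that the $G$-orbit of $F^i(x)$ is exactly $\set{F^{qs+i}(x) - sp}{s\ge 0}$. Since $x\in P$ is periodic {\modi} of period $nq$ and rotation number $p/q$, Lemma~\ref{relationF_Fqmp} (applied after reducing $p/q$ to lowest terms, or directly via Lemma~\ref{lem:FF+k}(e) as in its proof) tells us that $F^{nq}(x) = x + np$, hence $G^n(F^i(x)) = F^{nq+i}(x) - np = F^i(F^{nq}(x)) - np = F^i(x+np) - np = F^i(x)$, using Lemma~\ref{lem:FF+k}(a) in the last step. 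So $F^i(x)$ is a periodic point of $G$ whose period divides $n$.

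Next I would show the period is exactly $n$. Suppose $G^s(F^i(x)) = F^i(x)$ for some $1\le s\le n-1$; then $F^{qs+i}(x) - sp = F^i(x)$, i.e. $F^{qs}(F^i(x)) \in F^i(x) + \IZ$ (in fact equal to $F^i(x)+sp$), which by Lemma~\ref{lem:FF+k}(a) forces $F^{qs}(x) \in x + \IZ$ — but $1\le qs\le q(n-1) < nq$, contradicting that the period {\modi} of $x$ is $nq$. Hence $P_i(x)$ is a true periodic orbit of $G$ of period exactly $n$, and the description $P_i(x) = \set{G^s(F^i(x))}{s\ge 0}$ is then just the definition of the orbit of a periodic point together with the listing of the first $n$ iterates in the statement of the lemma.

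I do not expect a serious obstacle here; the only point requiring a little care is making sure the defining set $P_i(x) = \{F^i(x), G(F^i(x)),\dots,G^{n-1}(F^i(x))\}$ really has cardinality $n$ (no earlier coincidence among these iterates), which is exactly what the ``period exactly $n$'' argument above delivers, and confirming that one may freely commute $F^i$ with $F^{nq}$ and with the integer translations, which is immediate from Lemma~\ref{lem:FF+k}(a) since $F$ has degree $1$.
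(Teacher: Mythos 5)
Your overall route is sound and, at bottom, the same as the paper's: the paper simply observes that $F^i(x)\in P$ is itself periodic {\modi} of period $nq$ and rotation number $p/q$, and then invokes Lemma~\ref{relationF_Fqmp} (with $p,q$ coprime, as in the statement of Theorem~\ref{ConverseEndInteger}); you instead re-derive the content of that lemma inline, which is fine and costs only a few lines.

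There is, however, one step whose justification as written does not work. From $G^s(F^i(x))=F^i(x)$ you correctly get $F^{qs+i}(x)=F^i(x)+sp$, i.e.\ $F^i(F^{qs}(x))=F^i(x)+sp$, but you then claim that Lemma~\ref{lem:FF+k}(a) ``forces $F^{qs}(x)\in x+\IZ$''. That lemma only gives equivariance under integer translations; it does not let you cancel $F^i$, which is in general far from injective, so the implication is not justified as stated. The repair is short and uses tools you already have in hand: apply $F^{nq-i}$ to both sides of $F^{qs+i}(x)=F^i(x)+sp$ to get $F^{nq+qs}(x)=F^{nq}(x)+sp=x+np+sp$, while on the other hand $F^{nq+qs}(x)=F^{qs}(F^{nq}(x))=F^{qs}(x+np)=F^{qs}(x)+np$ by Lemma~\ref{lem:FF+k}(a); hence $F^{qs}(x)=x+sp\in x+\IZ$ with $1\le qs<nq$, contradicting that the period {\modi} of $x$ is $nq$. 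Equivalently, you could first prove that every point of the lifted orbit $P$ has the same period {\modi} $nq$ as $x$ (this is exactly the fact the paper's one-line proof relies on when it says $F^i(x)$ is periodic {\modi} of period $nq$), and then the contradiction is with the period of $F^i(x)$ rather than of $x$. With that step filled in, the rest of your argument (the identity $G^n(F^i(x))=F^i(x)$, the exact period $n$, and the description $P_i(x)=\set{G^s(F^i(x))}{s\ge 0}$) is correct.
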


\begin{proof}
Since the point $F^i(x)$ belongs to $P$,
it is periodic {\modi} of period $nq$ and rotation number
$p/q$ for $F$. Then the result follows from Lemma~\ref{relationF_Fqmp}.
\end{proof}

\begin{definition}
We say that $P$ has an \emph{increasing block structure} whenever,
for some $x \in P,$
\[
\max \Re(P_i(x)) < \min \Re(P_{i+1}(x))\qquad\forall i\in\{0,1,\dots,q-1\}
\]
(when $i=q-1$ this amounts to $\max \Re(P_{q-1}(x))<\min \Re(P_0(x))+p$).
\end{definition}

By the next lemma, the fact that a lifted periodic orbit has an
increasing block structure is independent on the point $x$ chosen to
build the blocks.
So, the notion of \emph{increasing block structure} is well defined.

\begin{lemma}
For every $z \in P$
there exist $k \in \Z$ and $j \in \{0,1,\dots,q-1\}$
such that $z \in P_j(x) + k,$
$P_i(z) = P_{i+j}(x)+k$ for all $0\le i \le q-1-j$ and
$P_i(z) = P_{i+j-q}(x)+k+p$ for all $q-j \le i \le q.$
\end{lemma}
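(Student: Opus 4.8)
The plan is to reduce everything to the fixed point $x$ used to build the blocks $P_0(x),\dots,P_{q-1}(x)$ and then to plain Euclidean division. Since the lifted orbit $P$ equals $\LOrb(x,F)$, I would first write $z=F^s(x)+l$ for some $s\ge 0$ and $l\in\IZ$, and divide $s=qt+j$ with $t\ge 0$ and $0\le j\le q-1$. The one elementary fact used throughout is that $G:=F^q-p$ has degree $1$, so that $F^{qa}(w)=G^a(w)+ap$ for every $w\in S$ and $a\ge 0$ (this is Lemma~\ref{lem:FF+k}(a,b) applied to $F^q$, and is exactly the identity already exploited in the proof of Lemma~\ref{relationF_Fqmp}), together with $G^u(w+m)=G^u(w)+m$ for $m\in\IZ$.

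Granting this, the first assertion is immediate: with $k:=tp+l$ one gets $z=F^{qt}(F^j(x))+l=G^t(F^j(x))+tp+l=G^t(F^j(x))+k$, and since Lemma~\ref{blocksareperiodic} identifies $P_j(x)$ with $\set{G^v(F^j(x))}{v\ge 0}$, this says $z\in P_j(x)+k$. For the block identities I would fix $i\in\{0,\dots,q-1\}$, use Lemma~\ref{lem:FF+k}(a) to write $F^i(z)=F^{s+i}(x)+l$, divide $s+i=qa+b$ with $0\le b\le q-1$, and rewrite $F^i(z)=G^a(F^b(x))+ap+l$. Applying Lemma~\ref{blocksareperiodic} to the point $z\in P$ then gives $P_i(z)=\set{G^u(F^i(z))}{u\ge 0}=\set{G^{u+a}(F^b(x))}{u\ge 0}+ap+l$, and since $P_b(x)$ is a $G$-periodic orbit of period $n$ the extra power $G^a$ is absorbed, so $\set{G^{u+a}(F^b(x))}{u\ge 0}=P_b(x)$ and $P_i(z)=P_b(x)+ap+l$. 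Finally I would read off $(a,b)$ from $s+i=qt+(j+i)$: as $0\le j+i\le 2q-1$, either $j+i\le q-1$, so $(a,b)=(t,i+j)$ and $ap+l=k$, whence $P_i(z)=P_{i+j}(x)+k$; or $j+i\ge q$, so $(a,b)=(t+1,i+j-q)$ and $ap+l=k+p$, whence $P_i(z)=P_{i+j-q}(x)+k+p$. For $i=q$ I would not repeat the computation but simply combine the convention $P_q(z):=P_0(z)+p$ with the $i=0$ case to obtain $P_q(z)=P_j(x)+k+p=P_{q+j-q}(x)+k+p$, matching the claimed formula.

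I do not expect a genuine obstacle here; the proof is a bookkeeping exercise. The only point needing attention is the wrap-around $j+i\ge q$, which is precisely where the extra translation by $p$ appears (it forces $a=t+1$ instead of $a=t$) and where one must check $0\le i+j-q\le q-1$ so that $P_{i+j-q}(x)$ is one of the defined blocks. I would also remark in passing that the two ranges $\{0,\dots,q-1-j\}$ and $\{q-j,\dots,q\}$ are disjoint and together exhaust $\{0,\dots,q\}$, which is what makes the statement well posed, and that replacing the base point $x$ by $z$ is legitimate because Lemma~\ref{blocksareperiodic} applies to every point of $P$.
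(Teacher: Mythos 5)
Your proof is correct and follows essentially the same route as the paper: write $z=F^{s}(x)+l$, use the identity $F^{qa}=G^{a}+ap$ together with Lemma~\ref{blocksareperiodic}, and split according to whether the exponent wraps past a multiple of $q$, with the extra translation by $p$ appearing exactly in the wrap-around case. The only cosmetic difference is that you perform the Euclidean division on $s+i$ for each $i$, whereas the paper divides once and rewrites $G^{r+s}(F^{i+j}(x))=G^{r+s+1}(F^{i+j-q}(x))+p$ in the second case; these are the same computation.
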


\begin{proof}
By definition, for every $z \in P$ there exist
$k_1 \in \Z$ and $j_1 \in \N$
such that $z = F^{j_1}(x) + k_1.$
On the other hand, by Lemma~\ref{lem:FF+k}(b),
$
 G^n(x) = F^{nq}(x) - np,
$
for every $x \in S$ and $n \ge 0.$

We can write $j_1 = rq + j$ with $r \ge 0$ and $0 \le j < q.$
Hence, by Lemma~\ref{blocksareperiodic},
\[
z = F^{rq + j}(x) + k_1
  = F^{rq}(F^j(x)) + k_1
  = G^{r}(F^j(x)) + k
\in P_j(x) + k,
\]
where $k = k_1 + rp.$ This proves the first statement of the lemma.

By Lemma~\ref{blocksareperiodic},
$
P_i(z) = \set{G^s(F^i(z))}{s \ge 0}.
$
From above and Lemma~\ref{lem:FF+k}(a),
\[
 G^s(F^i(z)) = G^s(F^i(G^{r}(F^{j}(x)) + k)) = G^{r+s}(F^{i+j}(x)) + k
\]
for every $i,s \in \N.$
Consequently,
$
 P_i(z) = \set{G^{r+s}(F^{i+j}(x))}{s \ge 0} + k.
$
If $0\le i \le q-1-j,$ by Lemma~\ref{blocksareperiodic},
$
P_{i+j}(x) = \set{G^s(F^{i+j}(x))}{s \ge 0}
           = \set{G^{r+s}(F^{i+j}(x))}{s \ge 0},
$
which proves the second statement of the lemma.
In particular, $P_q(z) = P_0(z) + p = P_j(x) + k + p.$

If $q-j \le i < q$ then,
$
G^{r+s}(F^{i+j}(x)) = G^{r+s+1}(F^{i+j-q}(x)) + p
$
with $i+j-q \ge 0.$ Hence, as above, $P_i(z) = P_{i+j-q}(x) + k + p.$
\end{proof}

We are going to show that every lifted periodic orbit with
period $nq$ and rotation number $p/q$ will have an increasing block
structure by changing the lifting and the number $p$, if necessary.
To this end, we want to look at the lifted orbit $P$ under the
action of $\overline{F} := F + \ell$ with $\ell \in \Z$. By
Lemma~\ref{lem:FF+k}(b,d), the $\overline{F}-$rotation number
of $P$ is $\tfrac{p}{q} + \ell = \tfrac{p + q\ell}{q}$ while the
$\overline{F}-$period is still $nq$.
So, by using $\overline{F}$ instead of $F$, we can define
\[
\overline{P}_i(x) := \{
  \overline{F}^i(x),
  \overline{G}(\overline{F}^i(x)),
  \overline{G}^2(\overline{F}^i(x)), \dots,
  \overline{G}^{n-1}(\overline{F}^i(x))
\}
\]
for all $i\in\{0,1,\dots,q-1\}$,
where $\overline{G} :=\overline{F}^q - (p + q\ell)$.
We also set
$\overline{P}_q(x) := \overline{P}_0(x) + (p + q\ell).$

\begin{lemma}\label{sepblocks}
The following statements hold:
\begin{enumerate}
 \item $\overline{G} = G$.
 \item For every $i\in\{0,1,\dots,q\}$,
       $\overline{P}_i(x) = P_i(x) + i\ell$.
 \item Assume that
       $\ell > \max \Re(P_i(x)) - \min \Re(P_{i+1}(x))$
       for all $i\in\{0,1,\dots,q-1\}$. Then, the orbit $\overline{P}$ under
       $\overline{F}$ has an increasing block structure, that is,
       $\max \Re(\overline{P}_i(x)) < \min \Re(\overline{P}_{i+1}(x))$
       for all $i\in\{0,1,\dots,q-1\}$.
\end{enumerate}
\end{lemma}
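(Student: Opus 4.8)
The plan is to establish the three parts in order, each reducing to the degree-one translation identities of Lemma~\ref{lem:FF+k}(a,b).

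For part~(a), I would first note that since $F$ has degree $1$, so does $F^q$, and hence Lemma~\ref{lem:FF+k}(b) gives $\overline{F}^q = (F+\ell)^q = F^q + q\ell$. Subtracting the constant $p+q\ell$ yields $\overline{G} = \overline{F}^q - (p+q\ell) = F^q + q\ell - p - q\ell = F^q - p = G$, as claimed.

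For part~(b), I would write $\overline{P}_i(x) = \set{\overline{G}^s(\overline{F}^i(x))}{0\le s\le n-1}$, replace $\overline{G}$ by $G$ using~(a), and use Lemma~\ref{lem:FF+k}(b) to get $\overline{F}^i(x) = F^i(x) + i\ell$. Since $G = F^q - p$ has degree $1$, Lemma~\ref{lem:FF+k}(a) then gives $G^s\bigl(F^i(x)+i\ell\bigr) = G^s(F^i(x)) + i\ell$ for every $s$, so $\overline{P}_i(x) = P_i(x) + i\ell$ for all $i\in\{0,1,\dots,q-1\}$. The case $i=q$ follows from the convention $\overline{P}_q(x) := \overline{P}_0(x) + (p+q\ell)$ together with $\overline{P}_0(x) = P_0(x)$ and $P_q(x) = P_0(x) + p$, which give $\overline{P}_q(x) = P_q(x) + q\ell$.

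For part~(c), I would use that $\Re$ commutes with integer translations, so that from~(b) one gets $\max\Re(\overline{P}_i(x)) = \max\Re(P_i(x)) + i\ell$ and $\min\Re(\overline{P}_{i+1}(x)) = \min\Re(P_{i+1}(x)) + (i+1)\ell$ for $i\in\{0,\dots,q-1\}$ (for $i=q-1$ interpreting $\overline{P}_q(x) = P_q(x)+q\ell$ with $P_q(x) = P_0(x)+p$). After cancelling $i\ell$, the desired inequality $\max\Re(\overline{P}_i(x)) < \min\Re(\overline{P}_{i+1}(x))$ is equivalent to $\max\Re(P_i(x)) - \min\Re(P_{i+1}(x)) < \ell$, which is precisely the hypothesis; checking that the $i=q-1$ instance matches the boundary form of the hypothesis $\ell > \max\Re(P_{q-1}(x)) - \min\Re(P_q(x))$ is the only slightly delicate point. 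There is no genuine obstacle here: the whole lemma is bookkeeping, and the only things needing care are the wrap-around index $i=q$ and the invocation of the degree-one hypothesis so that every translation constant adds correctly.
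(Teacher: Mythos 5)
Your proposal is correct and follows essentially the same route as the paper: part (a) by expanding $(F+\ell)^q = F^q + q\ell$ via Lemma~\ref{lem:FF+k}(b), part (b) by combining this with the degree-one identity $G^s(F^i(x)+i\ell)=G^s(F^i(x))+i\ell$ and handling $i=q$ through the defining convention, and part (c) by cancelling the translations. No gaps.
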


\begin{proof}
For every $i \ge 0$, we have
\[
\overline{F}^i = (F+\ell)^i = F^i + i\ell
\]
by Lemma~\ref{lem:FF+k}(a-b). Hence,
\[
\overline{G} :=
  \overline{F}^q - (p+q\ell) =
  F^q + q\ell - (p+q\ell) =
  G,
\]
and (a) holds.

For all $i,j\ge 0$, we have
\[
\overline{G}^j(\overline{F}^i(x)) =
G^j(F^i(x) + i\ell) =
G^j(F^i(x)) + i\ell.
\]
This gives (b)  for $i=0,1,\dots,q-1$. The fact that
$\overline{P}_q(x) = P_q(x) + q\ell$ follows from (b) for $i=0$ and from the
definition of these two sets.

Suppose that $\ell$ satisfies the assumption of (c).
From (b) and the choice of $\ell$, we have
\[
\min \Re(\overline{P}_{i+1}(x)) - \max \Re(\overline{P}_i(x)) =
\min \Re(P_{i+1}(x)) - \max \Re(P_i(x)) + \ell > 0
\]
for every $i \in \{0,1,\dots,q-1\}.$ Hence (c) holds.
\end{proof}

\begin{proof}[Proof of Theorem~\ref{ConverseEndInteger}]
It is not difficult to show that,
for every $\ell \in \Z$,
$\Per(p/q,F) = \Per((p+q\ell)/q,F+\ell).$
Consequently, by changing the lifting and the number $p$, if
necessary, we may assume that $P$ has an increasing block structure by
Lemma~\ref{sepblocks}.
Moreover, by replacing the point $x$ by $x-m$, we may also assume that the
branching point of $\chull{P_0(x)}$ is 0 (that is, $m=0$).
To simplify the notation, we shall omit the
dependence from $x$ of the blocks $P_i(x)$ in what follows.

Let $I_1,I_2,I_3$ denote the three $P_0 \cup \{0\}$-basic intervals
in $\chull{P_0}$ that have an endpoint equal to $0$ and let $\CG$ be
the directed graph with vertices $I_1,I_2,I_3$ such that there is an
arrow $I_i \arrowto I_j$ if and only if $\chull{G(\partial
I_i)}\supset I_j$
(notice that arrows in $\CG$ are $G$-coverings and $\CG$ is a
subgraph of the $G$-graph of $\{I_1,I_2,I_3\}$).
Since $P_0$ is a periodic orbit of type 3 of $G$ and $G(0) = 0$,
we can label the intervals $I_1,I_2,I_3$ so that
\begin{equation}\label{eq:Gtype3}
I_1 \maparrow[G] I_2 \maparrow[G]  I_3 \maparrow[G] I_1\quad
    \text{is a loop in }\CG.
\end{equation}
Let $\CI$  be the collection of $P_i\cup\{F^i(0)\}$-basic
intervals for all $0\le i\le q$ (recall that $F^i(0)\in\chull{P_i}$ by
assumption, and thus the elements of $\CI$ are intervals in
$\bigcup_{i=1}^q \chull{P_i}$).
We are going to relate paths in the $F$-graph of $\CI$ with coverings for $G$.
Observe that, if
$
\alpha = J_0 \maparrow J_1 \maparrow \dots\maparrow J_q
$
is a path in the $F$-graph of $\CI$ with $J_0\subset \chull{P_0}$
then,
since the blocks $P_i$ have an increasing block structure,
$J_i$ is a basic interval of $P_i\cup\{F^i(0)\}$ for all
$i\in\{0,1,\dots,q\}$.
Moreover, the fact that $\alpha$ is a path for $F$
implies $J_0\maparrow[G]J_q-p$.
Reciprocally, if $J_0\maparrow[G]J_q$ is an arrow in $\CG$, then
\begin{equation}\label{eq:GFpath}
\exists\, J_1,\ldots J_{q-1}\in \CI,\
J_0 \maparrow J_1 \maparrow \dots\maparrow J_q+p.
\end{equation}
Let us prove \eqref{eq:GFpath}.
We have $F^i(\partial J_0)\subset P_i\cup\{F^i(0)\}$ for all
$1\le i\le q$ because $\partial J_0\subset P_0\cup\{0\}$. Then an
induction on $i=1,\ldots, q$ shows that, for all
$P_i\cup\{F^i(0)\}$-basic intervals $J\subset \chull{F^i(\partial J_0)}$,
there exists a path
\begin{equation}\label{eq:Fpath-induction}
J_0\maparrow J_1^J \maparrow \dots \maparrow J_{i-1}^J\maparrow J
\end{equation}
where $J_j^J$ are $P_j\cup\{F^j(0)\}$-basic intervals for all $1\le j\le i-1$.
The fact that $J_0\maparrow[G]J_q$ is an arrow in $\CG$ means that
$\chull{G(\partial J_0)}\supset J_q$, that is, $\chull{F^q(\partial J_0)}
\supset J_q+p$. Therefore \eqref{eq:GFpath} is given by
\eqref{eq:Fpath-induction} for $i=q$ and $J=J_q+p$.


Combining \eqref{eq:Gtype3} and \eqref{eq:GFpath}, we see
that there exist three pairwise different paths
\begin{align*}
\alpha_1 &= I_1 \maparrow J_1 \maparrow \dots\maparrow I_2 + p\\
\alpha_2 &= I_2 \maparrow J_2 \maparrow \dots\maparrow I_3 + p\\
\alpha_3 &= I_3 \maparrow J_3 \maparrow \dots\maparrow I_1 + p
\end{align*}
in the $F$-graph of $\CI$, of length $q$.

Now we consider two cases:

\begin{case}{1} Two of the intervals $J_i$ coincide. \end{case}
By relabeling, if necessary, we may assume that $J_1 = J_2$. Denote
the interval $J_1 = J_2$ by $L$ and consider the following three
loops:
\begin{align*}
\overline{\alpha}_1 &=
       L \maparrow \dots\maparrow I_2+p \maparrow L + p,\\
\overline{\alpha}_2 &=
       L \maparrow \dots\maparrow I_3+p \maparrow J_3 + p,\\
\overline{\alpha}_3 &=
       J_3 \maparrow \dots\maparrow I_1+p \maparrow L + p\ .
\end{align*}
Then
\[
 G(L)   \supset L \cup J_3\quad \text{and}\quad
 G(J_3) \supset L.
\]
By assumption, $\chull{P_1}$ is included in $[n,n+1]$.
Thus $\Int(L)$ and $\Int(J_3)$ do not contain branching points
since $L\cup J_3\subset \chull{P_1}$.
Then the theorem holds by Proposition~\ref{prop:SemiHorseshoe} and
Lemma~\ref{relationF_Fqmp}.

\begin{case}{2} The intervals $J_i$ are pairwise different. \end{case}
In this case, we have the following loop:
\[
 J_1 \maparrow[G] J_2 \maparrow[G] J_3 \maparrow[G] J_1.
\]
By assumption, $\chull{P_1}$ is an interval in $\IR$.
Moreover, $J_1,J_2,J_3$ are included in  $\chull{P_1}$ and have pairwise
disjoint interiors. Thus,
by relabeling if necessary, we can assume that the intervals
$J_1, J_2,J_3$ are ordered as:
\begin{gather*}
\text{either }J_1\le J_2\le J_3,\\
\text{or }
J_1\ge J_2\ge J_3,
\end{gather*}
with the convention that $J_i\le J_j$ if $\max J_i\le \min J_j$.
Both cases being similar, we assume that we are in the first one,
that is,
\[
\max J_1 \le \min J_2 < \max J_2 \le \min J_3.
\]
Then,
\begin{itemize}
 \item since $J_1 \maparrow[G] J_2,$
       there exists $x_1 \in J_1$ such that $G(x_1) = \min J_2;$
 \item since $J_2 \maparrow[G] J_3,$
       there exists $x_2 \in J_2$ such that $G(x_2) = \max J_3$ and
 \item since $J_3 \maparrow[G] J_1,$
       there exists $x_3 \in J_3$ such that $G(x_3) = \min J_1.$
\end{itemize}
Now we set $K = [x_1,x_2]$ and $L= [x_2, x_3]$.
By continuity of $G$,
\begin{align*}
 G(K) &\supset [\min J_2, \max J_3] \supset [x_2, x_3] = L,
       \ \text{and}\\
 G(L) &\supset [\min J_1, \max J_3] \supset [x_1, x_3] = K \cup L,
\end{align*}
and the theorem holds by Proposition~\ref{prop:SemiHorseshoe} and
Lemma~\ref{relationF_Fqmp}, as in Case~1.
\end{proof}

\section{Orbits in the branches}\label{WeAreInTheBranches}
The aim of this section is to prove
Theorems~\ref{TheoremSharkovskiiintheBranches} and
\ref{LargeOrbitsintheBranches}, which
deal with the periods forced by the lifted periodic orbits
contained in $B$.

\subsection{Situations that imply periodic points of all
periods}

This subsection is devoted to two technical lemmas that
characterize simple situations where $\Per(F) = \IN$
in terms of images of distinguished points.
They will also be used in Section~\ref{sec:0inIntRotR}.

Given $F \in \Lifts$ and $x \in S$ we define the map $F_0$ by
\begin{equation}\label{eq:F0}
 F_0(x) := F(x) - \Re(F(x)).
\end{equation}
To understand the map $F_0$, observe first that $F_0(x) = 0$ whenever
$F(x) \in \IR$.
Moreover, for every $x \in S$ it follows that
$F(x) \in B$ if and only if $\Re(F(x)) \in \IZ$
(more precisely, $F(x) \in B_m$ if and only if $\Re(F(x)) = m$).
Thus, $F_0$ is a continuous map from the whole $S$ to $B_0$.
From Lemma~\ref{lem:FF+k}(a), we deduce that
$F_0(x+k) = F_0(x)$ for all $x \in S$ and all $k \in \IZ$
(that is, $F_0 \in \Li[0]$).

Recall that, if $x,y$ are in the same branch $B_m$, then
$x<y$ means $\Im(x)<\Im(y)$; the other notations related to the order
in $B_m$ are defined consistently.

\begin{lemma}\label{twoarrowscrossing}
Let $F \in \Li$. Let $x,y \in B_0$ and $m\in\IZ $ be such that
$F(x)\in B_m$, $x < y \le F_0(x)$
and $F(y) \notin \Bo_m$.
Assume additionally that $F(0) \notin (x+m, \max B_m]$.
Then $\Per(F) = \IN$.
\end{lemma}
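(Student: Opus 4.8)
The plan is to exhibit a ``semi-horseshoe {\modi}'' inside the branch $B_0$ and then invoke Lemma~\ref{lem:SemiHorseshoe-mod1}. Put $m:=\Re(F(x))$, so that $F(x)\in B_m$, and set $I_1:=\chull{0,x}$ and $I_2:=\chull{x,y}$, two compact intervals contained in $B_0$. First I would dispose of degeneracies. The hypotheses force $x\ne 0$: if $\Im(x)=0$ then $F_0(x)=F(x)-m$ has imaginary part $\Im(F(x))$, so $x<y\le F_0(x)$ forces $F(x)\in\Bo_m$, contradicting $F(0)=F(x)\notin(x+m,\max B_m]=\Bo_m$. Hence $I_1$ is non-degenerate, and $I_2$ is non-degenerate since $x<y$. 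Moreover $\Int(I_1)$ and $\Int(I_2)$ are disjoint and contain no branching point, since the unique branching point of $S$ in $B_0$ is $0$, an endpoint of $I_1$.

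The heart of the matter is to establish the three coverings
\[
 I_2\maparrow[F-m] I_2,\qquad I_2\maparrow[F-m] I_1,\qquad I_1\maparrow[F-m] I_2 .
\]
By the remark following Definition~\ref{def:basic-int}, since all the interiors involved are free of branching points, it suffices to prove $F(I_2)\supset(I_1\cup I_2)+m=\chull{m,\,y+m}$ and $F(I_1)\supset I_2+m=\chull{x+m,\,y+m}$. As $S$ is uniquely arcwise connected and $F(I_1),F(I_2)$ are path-connected, one has $F(I_2)\supset\chull{F(x),F(y)}$ and $F(I_1)\supset\chull{F(0),F(x)}$, so it remains to locate these two arcs. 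The relation $x<y\le F_0(x)$ together with $F(x)\in B_m$ says precisely that $F(x)\in\Bo_m$ lies at height $\ge\Im(y)$, i.e.\ $F(x)\ge y+m$ in $B_m$. Since $F(y)\notin\Bo_m$, either $F(y)=m$ or $F(y)$ lies in a component of $S\setminus\{m\}$ other than $\Bo_m$; in both cases $m\in\chull{F(x),F(y)}$, hence $\chull{F(x),F(y)}\supset\chull{m,F(x)}\supset\chull{m,\,y+m}$. For the third covering, the hypothesis $F(0)\notin(x+m,\max B_m]$ is what is used: either $F(0)\in\chull{m,x+m}\subset B_m$, so that $\chull{F(0),F(x)}$ is the sub-segment of $B_m$ from $F(0)$ to $F(x)$ and contains $\chull{x+m,F(x)}$; or $F(0)\notin B_m$, so again $m\in\chull{F(0),F(x)}$ and $\chull{F(0),F(x)}\supset\chull{m,F(x)}$. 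Either way $\chull{F(0),F(x)}\supset\chull{x+m,\,y+m}$.

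Finally, Lemma~\ref{lem:SemiHorseshoe-mod1} applies with $I=I_2$, $J=I_1$ and $k_1=k_2=k_3=m$: the two intervals have disjoint, branching-point-free interiors, and the second alternative of the lemma, namely $k_3=k_1$, holds trivially; therefore $\Per(F)=\IN$, as claimed. (For $m=0$ this is just Proposition~\ref{prop:SemiHorseshoe} for $F$ itself, which is the special case already handled inside the proof of Lemma~\ref{lem:SemiHorseshoe-mod1}.) The one step that calls for genuine care is the localisation of the arcs $\chull{F(x),F(y)}$ and $\chull{F(0),F(x)}$ in $S$: everything hinges on reading off, from the height and branch hypotheses, that the vertex $m$ separates the relevant pairs of points; once that is done, the coverings and hence the conclusion follow by routine bookkeeping with the orders on the branches and the degree-one identity $F(z+k)=F(z)+k$.
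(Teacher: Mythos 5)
Your proof is correct and follows essentially the same route as the paper's: the same two intervals $[0,x]$ and $[x,y]$ in $B_0$, the same three coverings derived from the locations of $F(x)$, $F(y)$ and $F(0)$ relative to the vertex $m$, and the same semi-horseshoe conclusion (the paper invokes Proposition~\ref{prop:SemiHorseshoe} for $F-m$ directly rather than the $k_1=k_2=k_3$ case of Lemma~\ref{lem:SemiHorseshoe-mod1}, which, as you note, amounts to the same thing).
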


\begin{proof}
First of all, observe that the assumptions $x < y \le F_0(x)$ and the
definition of $F_0$ imply that $F(x) \ge y+m > x+m$.
Hence,
$F(0) \notin (x+m, \max B_m]$ implies $F(0) \ne F(x)$, and thus,
$x \ne 0$.

Consider
$K = [x,y]$ and
$L = [0,x]$, which are closed non-degenerate intervals in $B_0$.
We have
\begin{align*}
 F(K) & \supset \chull{F(x), F(y)}
        \supset \chull{F(x), m} \quad
        \text{because $F(x)\in B_m$ and $F(y) \notin \Bo_m$,} \\
      & \supset (K+m) \cup (L+m) \quad
        \text{because $F(x) \ge y+m > x+m \ge m$.}
\end{align*}
Moreover, since $F(0) \notin (x+m, \max B_m]$ and $y\le F_0(x)$,
\[
 F(L) \supset \chull{F(0), F(x)} \supset K+m.
\]
By Proposition~\ref{prop:SemiHorseshoe}, the map $F-m$
has periodic points of all periods in $K \cup L \subset B_0$.
Therefore,
$\Per(F) = \IN$ by Lemma~\ref{PeriodsAndPeriodsmodiAreFriends}.
This ends the proof of the lemma.
\end{proof}

\begin{lemma}\label{twoarrowscrossingLargeOrbits}
Let $F \in \Li$. Let $x,y \in B_0$ and $m\in\IZ$ be such that $F(x)\in B_m$,
$x < y \le F_0(x)$ and
$|\Re(F(x)) - \Re(F(y))| \ge 1$. Then $\Per(F) = \IN$.
\end{lemma}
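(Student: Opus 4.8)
The plan is to produce, inside a subset of $S$ of diameter at most $1$, a semi‑horseshoe in the sense of Proposition~\ref{prop:SemiHorseshoe} and then conclude with Lemma~\ref{PeriodsAndPeriodsmodiAreFriends}(a), exactly as in the proof of Lemma~\ref{twoarrowscrossing}. In fact, since $|\Re(F(x))-\Re(F(y))|\ge 1$ forces $F(y)\notin\Bo_m$, the present hypotheses contain those of Lemma~\ref{twoarrowscrossing} except for the extra assumption there on the image of the branching point; so the real work is to handle the case where that assumption fails, using instead that the arc $\chull{F(x),F(y)}$ is ``long''. First, two harmless reductions. As $\Per(F)=\Per(F-m)$ by Lemma~\ref{lem:FF+k}(d), we may assume $m=0$, i.e.\ $F(x)\in B_0$ with $\Im(F(x))\ge\Im(y)$. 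Conjugating by the reflection $z\mapsto-\overline z$ of $S$ (a homeomorphism which fixes $B_0$ pointwise, sends $B_k$ to $B_{-k}$, conjugates $\Li$ into itself and preserves $\Per$), we may also assume $d:=\Re(F(y))\ge 1$.

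Put $\widehat K:=\chull{0,y}=\{it:0\le t\le\Im(y)\}\subset B_0$ (a compact interval whose unique branching point $0$ is an endpoint) and $J:=[0,1]\subset\R$. Since $F(y)\in B_d$ with $d\ne 0$, the arc $\chull{F(x),F(y)}\subset F(\widehat K)$ must pass through $0$, hence contains $\chull{0,F(x)}\supset\widehat K$ (using $\Im(F(x))\ge\Im(y)$) and the $\R$‑segment $[0,d]\supset J$ (using $d\ge 1$); thus $F(\widehat K)\supset\widehat K\cup J$ always. If $F(0)\notin(x,\max B_0]$, then all hypotheses of Lemma~\ref{twoarrowscrossing} hold (namely $F(x)\in B_0$, $x<y\le F_0(x)$, $F(y)\notin\Bo_0$ because $F(y)\in B_d$, and $F(0)\notin(x,\max B_0]$), so $\Per(F)=\IN$ and we are done. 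So assume $F(0)\in B_0$ with $\Im(F(0))>\Im(x)$. By Lemma~\ref{lem:FF+k}(a), $F(1)=F(0)+1\in B_1$, so $\chull{F(0),F(1)}\subset F(J)$ contains $\chull{0,F(0)}\cup J$; in particular $F(J)\supset J$, and $F(J)\supset\chull{0,F(0)}$.

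If moreover $\Im(F(0))\ge\Im(y)$, then $\chull{0,F(0)}\supset\widehat K$, so $F(J)\supset\widehat K\cup J$; together with $F(\widehat K)\supset\widehat K\cup J$ this gives a semi‑horseshoe for $F$ on the pair $\widehat K,J$ (disjoint interiors, no branching point inside), so Proposition~\ref{prop:SemiHorseshoe} produces true periodic orbits of $F$ of every period and Lemma~\ref{PeriodsAndPeriodsmodiAreFriends}(a) yields $\Per(F)=\IN$. The remaining case, $\Im(x)<\Im(F(0))<\Im(y)$, is the heart of the matter: now $\chull{0,F(0)}$ contains $\chull{0,x}$ but not all of $\widehat K$, so $F(J)$ need not cover $\widehat K$. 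One iterate further, however, $F^{2}(J)\supset F(J)\supset\chull{0,F(0)}$ and, since $x\in\chull{0,F(0)}$ (because $\Im(F(0))>\Im(x)$), $F^2(J)\supset F(\chull{0,F(0)})\supset\chull{F(0),F(x)}$; as $\Im(F(0))<\Im(F(x))$ these two arcs glue to $\chull{0,F(x)}\supset\widehat K$. Hence $F(\widehat K)\supset\widehat K\cup J$ and $F^{2}(J)\supset\widehat K\cup J$, i.e.\ one has a semi‑horseshoe in which the return edge $J\to\widehat K$ is realised by $F^{2}$; feeding loops of the shape $\widehat K\maparrow\cdots\maparrow\widehat K\maparrow J\maparrow[F^2]\widehat K$ into Proposition~\ref{prop:covering} gives periodic points of every period, since $\widehat K\cap J=\{0\}$.

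The step I expect to be the main obstacle is precisely the last one: checking that the periods so obtained are \emph{exactly} $n$, and in particular getting the small periods (the length‑$2$ loop is not immediately available because in this case $F(J)\not\supset\widehat K$). This will require the same boundary bookkeeping as in the proof of Proposition~\ref{prop:SemiHorseshoe} (replacing $\widehat K,J$ by sub‑intervals whose boundaries map to boundaries), and possibly, for the period‑$2$ point, exploiting that $F^{2}(\widehat K)\supset\widehat K\cup J$ as well (so that $\widehat K,J$ form a full horseshoe for $F^{2}$) together with the translates $\widehat K+\IZ$, $J+\IZ$ and Lemma~\ref{lem:SemiHorseshoe-mod1}; one must verify none of the produced orbits degenerates onto the common point $0$.
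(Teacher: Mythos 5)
Your reductions are fine: passing to $m=0$ via Lemma~\ref{lem:FF+k}(d), flipping to $\Re(F(y))\ge 1$ by conjugating with $z\mapsto-\overline{z}$, invoking Lemma~\ref{twoarrowscrossing} when $F(0)\notin(x,\max B_0]$, and your case $\Im(F(0))\ge\Im(y)$ is a correct horseshoe argument that essentially reproduces the paper's first case. The gap is the remaining case $x<F(0)<y$, which is exactly where the paper has to work, and your sketch does not close it. For $n\ge 3$, your loops through the edge from $J$ to $\widehat K$ realized by $F^2$ carry no control on the intermediate iterate; since $\widehat K\cap J=\{0\}$ and $F(0)$ lies in the interior of $\widehat K$, the boundary bookkeeping of Proposition~\ref{prop:SemiHorseshoe} does not transfer, and you cannot, without a genuinely new argument, exclude that the point produced by Proposition~\ref{prop:covering} has a proper divisor of $n$ as its period or that its orbit runs through $0$. (The paper sidesteps this by using signed coverings: with $K=[x,y]$ and $L=\chull{x,1}$ in your mirrored orientation one gets a negative/positive configuration, and Lemma~\ref{lem:+-loop} yields $\Per(F)\supset\IN\setminus\{2\}$, the needed separation coming from $(K+\IZ)\cap(L+\IZ)=\{x\}+\IZ$ and $F(x)\notin L+\IZ$.) More seriously, you have no argument at all for $2\in\Per(F)$, and the fallbacks you name cannot give it: a full horseshoe for $F^2$ on $\widehat K,J$ only yields points with $F^2(z)=z$, which may perfectly well be fixed points of $F$ (nothing in the covering data forces $F(z)\notin z+\IZ$), and Lemma~\ref{lem:SemiHorseshoe-mod1} requires a one-iterate covering of the form $F-k$ from $J$ back to the branch interval, which is precisely what is missing in this case.

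Indeed, in the paper period $2$ in this configuration is not a routine check but a separate pointwise construction: one chooses maximal points $x_1<x_2<\alpha<y$ in $[x,y]$ with $G(x_1)=y$, $G(x_2)=x_1$, $G(\alpha)=0$ (where $G=F-m$), splits according to whether $G(0)\le\alpha$ or $\alpha<G(0)<y$ (the latter needing two further auxiliary points $\beta$ and $u$), and uses the maximality of $x_2$ and $\alpha$ to produce $z$ with $G^2(z)=z$ and $G(z)-z\notin\IZ$. This uses strictly finer information than the inclusions $F(\widehat K)\supset\widehat K\cup J$, $F(J)\supset J\cup\chull{0,F(0)}$ and $F^2(J)\supset\widehat K\cup J$ that your argument extracts; period $2$ does not follow from those covering relations alone. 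So the ``main obstacle'' you flag is a real one, and the proposal as written does not prove the lemma in the case $x<F(0)<y$.
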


\begin{remark}\label{RelatingTheTwoLemmas}
Lemma~\ref{twoarrowscrossingLargeOrbits} is a particular case of
Lemma~\ref{twoarrowscrossing} whenever $F(0)$ is not in a wrong
place, i.e. $F(0) \notin (x+m, \max B_m]$.
\end{remark}

\begin{proof}[Proof of Lemma~\ref{twoarrowscrossingLargeOrbits}]
We can assume additionally that
$F(0) \in B_m$ and $F(0) > x+m$, otherwise Lemma~\ref{twoarrowscrossing}
gives the conclusion (see Remark~\ref{RelatingTheTwoLemmas}).
We shall also assume that $\Re(F(y)) \le m-1$;
the case $\Re(F(y))\ge m+1$ follows in a similar way.

We set $G:=F-m$. Then the three points $x,y, G(x)=F_0(x)$ are in
$B_0$ and $G(x) \ge y > x.$
According to Lemma~\ref{lem:FF+k}(d), $\Per(F)=\Per(G)$, and thus we need
to show that $\Per(G)=\IN$.
We consider two cases.
\begin{case}{1} $G(0) \ge y$. \end{case}
The proof of this case is similar to that of
Lemma~\ref{twoarrowscrossing} by taking
$K = [x,y]$ and $L = [-1,0]$.
Since $\Re(G(y)) \le -1$ we have
\[
 G(K) \supset \chull{G(x), G(y)}
      \supset \chull{G(x), -1}
      \supset K \cup L.
\]
Moreover, since $G(0) \ge y$, we have $G(-1) \in B_{-1}$. Hence,
\[
 G(L) \supset \chull{G(0), G(-1)}
      \supset [x,y] \cup [-1,0]
      = K \cup L.
\]
By Proposition~\ref{prop:SemiHorseshoe}, the map
$G$ has periodic points of all periods in $K \cup L$.

\begin{case}{2} $x < G(0) < y$. \end{case}
In this case, we set $K = [x,y ]$ and $L = \chull{-1,x}$, and we endow
the interval $L$ with the order $<_L$ such that $-1=\min L$.
Observe that $0\neq x$ because $G(0)<y\le G(x)$, and thus $L$
contains the branching point $0$ in its interior.

As in the previous case,
\begin{align*}
 G(K) & \supset K \cup L,\\
 G(L) & \supset \chull{G(-1),G(x)} \supset \chull{-1,G(x)}
        \supset K \cup L.
\end{align*}

However, observe that the covering is negative in the first case and
positive in the second.
In other words, we have
$K \signedcover[-]{G} K,L$ and
$L \signedcover{G} K,L$.
Moreover, $(K+\IZ)\cap (L+\IZ)=\{x\}+\IZ$, and $G(x)\notin L+\IZ$.
Thus Lemma~\ref{lem:+-loop} applies and gives
$\Per(G)\supset \IN\setminus\{2\}.$
So, we have to prove that $2 \in \Per(G)$.
To this end, we shall consider several subcases and
several loops.

Since $G(x) \ge y$ and $\Re(G(y)) \le -1$, there exist points
$x \le x_1 < x_2 < \alpha < y$ in $B_0$ such that
$G(x_1) = y$, $G(x_2) = x_1$ and $G(\alpha) = 0$.
Moreover, we can take $x_2$ and $\alpha$ so that
\begin{align*}
    x_2 &= \max \set{t \in [x_1, y]}{G(t) = x_1}, \text{ and}\\
 \alpha &= \max \set{t \in [x_2, y]}{G(t) = 0} = \max \set{t \in
[x_2,y]}{G(t) \in B_0}.
\end{align*}
Now we consider two subcases.

\begin{case}[Subcase]{2.1} $x < G(0) \le \alpha $. \end{case}
We look at the interval $[x_2,\alpha]$. Observe that, by
Lemma~\ref{lem:FF+k}(a),
\begin{align*}
    G^2(x_2) &= G(x_1) = y > x_2 \text{ and}\\
 G^2(\alpha) &= G(0) \le \alpha.
\end{align*}
Hence, $G^2([x_2,\alpha])\supset [x_2+\alpha]$ and,
since $G^2$ is continuous and there is no branching point
in $[x_2,\alpha]$, there exists a point $z \in (x_2, \alpha]$ such
that $G^2(z) = z$.
From the definition of $x_2$, it follows that
$G([x_2,\alpha]) \cap [x_2, \alpha] = \emptyset$.
Therefore,
$(G(z) + \IZ) \cap [x_2,\alpha] = \emptyset$ and, consequently,
$G(z) - z \notin \IZ$.
Thus, $z$ is periodic {\modi} point of period $2$.

\begin{case}[Subcase]{2.2} $\alpha < G(0) < y$ \end{case}
In this subcase, we need a couple of additional points.
Since $G(0) \in \Bo_0$, it follows that $G(-1) \in \Bo_{-1}$ and,
hence, there exists a point $\beta \in (-1,0)$ such that $G(\beta) =0$.
Using again that $G(\alpha) = 0$ and $\Re(G(y)) \le -1$, we see that
there exists a point $\alpha < u < y$ such that $G(u) = \beta$.
Now we look at the interval $[\alpha,u]$. We have
\begin{align*}
 G^2(\alpha) &= G(0) > \alpha  \text{ and}\\
      G^2(u) &= G(\beta) = 0 < u.
\end{align*}
Hence, there exists a point $z \in (\alpha,u) \subset B_0$ such that
$G^2(z) = z$.
From the definition of $\alpha$, it follows that
$G((\alpha,u)) \cap \Bo_0 = \emptyset$.
So, as in the previous case,
$G(z) - z \notin \IZ$ and $z$ is a periodic {\modi} point of period $2$.
This ends the proof of the lemma.
\end{proof}

\subsection{Proofs of
Theorem~\ref{TheoremSharkovskiiintheBranches}
and Theorem~\ref{LargeOrbitsintheBranches}}

The next lemma relates the maps $F$ and $F_0$ in the situation that
interests us.

\begin{lemma}\label{F0powern}
Let $F\in\Li[d]$. Then the following statements hold:
\begin{enumerate}
\item Assume that there exists $x \in \Bo_0$ and $n \in \IN$ such
that $F_0^i(x) \in \Bo_0$ for all $0\le i\le n$. Then $F^i(x)
\in \cup_{m\in\IZ} \Bo_m$ for all $0\le i\le n$.

\item Assume that there exists $x \in B$ and $n \in \IN$ such that
$F^i(x) \in B$ for all $0\le i\le n$. Then
\[
  F^n(x) = F_0^n(x) + \sum_{k=0}^{n-1} d^k \Re(F(F_0^{n-1-k}(x))) \in
           F_0^n(x) + \IZ.
\]
\end{enumerate}
\end{lemma}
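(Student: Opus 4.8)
The plan is to prove both parts by induction on the number of iterates, using throughout two elementary observations about $F_0$. First, since $\Re(F(y))\in\IR$, the point $F_0(y)=F(y)-\Re(F(y))$ has the same imaginary part as $F(y)$; hence $F_0(y)\in\Bo_0$ if and only if $\Im(F(y))>0$, i.e. if and only if $F(y)\in\bigcup_{m\in\IZ}\Bo_m$. Second, if a point $z\in B$ and a point $w\in B_0$ differ by a \emph{real} number, then that difference equals $\Re(z)-\Re(w)=\Re(z)\in\IZ$; so a real-valued correction between a point of $B$ and a point of $B_0$ is automatically an integer.

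For part~(a), I would prove simultaneously, by induction on $i\in\{0,1,\dots,n\}$, the two statements $F^i(x)\in\bigcup_{m\in\IZ}\Bo_m$ and $F^i(x)-F_0^i(x)\in\IZ$. The base case $i=0$ is immediate from $x\in\Bo_0$. For the step, write $F^i(x)=F_0^i(x)+k_i$ with $k_i\in\IZ$ (second induction hypothesis); since $k_i\in\IZ$, Lemma~\ref{lem:FF+k}(a) gives $F^{i+1}(x)=F(F_0^i(x))+k_id=F_0^{i+1}(x)+\Re(F(F_0^i(x)))+k_id$. The last two summands are real, so $\Im(F^{i+1}(x))=\Im(F_0^{i+1}(x))>0$ because $F_0^{i+1}(x)\in\Bo_0$ by hypothesis; hence $F^{i+1}(x)\in\bigcup_{m\in\IZ}\Bo_m$. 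Then $\Re(F^{i+1}(x))\in\IZ$ while $\Re(F_0^{i+1}(x))=0$, so the real correction $\Re(F(F_0^i(x)))+k_id$ is in fact an integer, which closes the induction and proves~(a).

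For part~(b), I would induct on $n$, the case $n=1$ being just the definition $F_0(x)=F(x)-\Re(F(x))$. For $n\ge 2$, apply the formula already known for $n-1$ to the same point $x$ (legitimate since $F^i(x)\in B$ for $0\le i\le n-1$): the finite sum it produces is real and, by the second observation above, equals $\Re(F^{n-1}(x))$, hence is an integer $r$, so $F^{n-1}(x)=F_0^{n-1}(x)+r$. Applying $F$ and Lemma~\ref{lem:FF+k}(a) once more yields $F^n(x)=F(F_0^{n-1}(x))+rd=F_0^n(x)+\Re(F(F_0^{n-1}(x)))+d\,r$; substituting the value of $r$ and reindexing the resulting sum gives exactly $F^n(x)=F_0^n(x)+\sum_{k=0}^{n-1}d^k\Re(F(F_0^{n-1-k}(x)))$. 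Finally this whole sum is real and equals $\Re(F^n(x))$, which lies in $\IZ$ since $F^n(x)\in B$; this is the asserted $F^n(x)\in F_0^n(x)+\IZ$.

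Both arguments are computational rather than conceptual; the only points demanding a little attention are the bookkeeping of the powers of $d$ when pushing $F$ past integer translations via Lemma~\ref{lem:FF+k}(a), and the repeated use of the fact that a real difference between a point of $B$ and a point of $B_0$ must be an integer — this is precisely what makes the hypothesis ``$F_0^i(x)\in\Bo_0$'' in~(a) and ``$F^i(x)\in B$'' in~(b) bite. I do not expect any genuine obstacle.
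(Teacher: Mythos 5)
Your proof is correct and follows essentially the same route as the paper, whose own proof of this lemma is a one-liner (``observe that $F(x)\in\IR$ implies $F_0(x)=0\notin\Bo_0$'' for (a), and ``iterative use of Lemma~\ref{lem:FF+k}(a) and the definition of $F_0$'' for (b)); you have simply carried out that iteration explicitly, with the inductions and the bookkeeping of the integer corrections spelled out. No gaps.
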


\begin{proof}
Observe that
if $F(x) \in \IR$ then $F_0(x) = 0 \notin \Bo_0$.
Thus (a) holds.
Statement (b) follows from the iterative use of
Lemma~\ref{lem:FF+k}(a) and the definition of $F_0$.
\end{proof}

Given a lifted periodic orbit $P$ that lives in the branches (that is,
$P\subset B$), we set
\begin{equation}\label{eq:P0}
   P_0 := P \cap B_0 = \set{x - \Re(x)}{x \in P}.
\end{equation}

\begin{remark}\label{P0almostperiodic}
From the definitions of $F_0$ and $P_0$, we deduce that
$F_0(P_0) \subset P_0$ and the cardinality of $P_0$ coincides with the
$F$-period of $P$.
\end{remark}

The next lemma summarizes the relation between $P$, $P_0$ and $F_0$.
Its proof is omitted since it follows easily from Lemma~\ref{F0powern}
and Remark~\ref{P0almostperiodic}.

\begin{lemma}\label{P0isperiodic}
Let $F \in \Lifts$ and let $P$ be a lifted periodic orbit of $F$ that
lives in the branches.
Then $P_0$ is a periodic orbit of $F_0$ and the $F_0$-period of $P_0$
coincides with the $F$-period of $P$.
\end{lemma}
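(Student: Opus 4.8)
The plan is to pick a base point inside $P_0$ and carry the periodicity information of $P$ over to $F_0$ by means of Lemma~\ref{F0powern}(b), using Remark~\ref{P0almostperiodic} to supply the invariance $F_0(P_0)\subset P_0$ and the cardinality of $P_0$. Let $p$ denote the $F$-period of $P$ and fix any $z\in P_0$. Since $z$ lies in the lifted periodic orbit $P$, it is periodic {\modi} of period $p$; and since $P$ lives in the branches and is forward invariant, $F^i(z)\in P\subset B$ for every $i\ge 0$. By Remark~\ref{P0almostperiodic} we already have $F_0(P_0)\subset P_0$ and $\Card(P_0)=p$. Hence it suffices to prove that $z$ is periodic of period exactly $p$ for $F_0$: its orbit under $F_0$ will then be contained in $P_0$, will consist of exactly $p$ points since $z$ has $F_0$-period $p$, and hence, as $\Card(P_0)=p$, will coincide with $P_0$, giving at the same time that the $F_0$-period of $P_0$ is $p$.

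For the key step I would apply Lemma~\ref{F0powern}(b): because every iterate $F^i(z)$ stays in $B$, the congruence $F^n(z)\in F_0^n(z)+\IZ$ holds for all $n\in\IN$ without any restriction. In particular, $F^p(z)\in z+\IZ$ yields $F_0^p(z)\in z+\IZ$, and for $1\le i\le p-1$ the relation $F^i(z)\notin z+\IZ$ yields $F_0^i(z)\notin z+\IZ$. Now $z\in P_0\subset B_0$ and, since $F_0(P_0)\subset P_0$, also $F_0^i(z)\in B_0$ for every $i$; as every point of $B_0$ has real part $0$, two points of $B_0$ that differ by an integer must be equal. Therefore $F_0^p(z)=z$ while $F_0^i(z)\neq z$ for $1\le i\le p-1$, which is precisely the statement that $z$ has $F_0$-period $p$. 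Together with the cardinality count above this completes the argument.

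I do not anticipate a real obstacle; the only things to be careful about are (i) noting that $P\subset B$ combined with the forward invariance of a lifted orbit makes the hypothesis of Lemma~\ref{F0powern}(b) automatic for all $n$, so the congruence $F^n(z)\equiv F_0^n(z)\pmod{\IZ}$ is available unconditionally, and (ii) the routine bookkeeping that a point of a lifted periodic orbit of $F$-period $p$ is periodic {\modi} of period $p$ and that $\Card(P_0)=p$, which are exactly the contents of Subsection~\ref{ss:coveringS} and Remark~\ref{P0almostperiodic}. Everything else reduces to the elementary fact that $B_0$ meets each $\IZ$-coset in at most one point, and the argument is insensitive to the degree of $F$, so it applies to all $F\in\Lifts$.
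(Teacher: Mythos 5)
Your proof is correct, and it follows exactly the route the paper intends for this omitted proof: transfer the periodicity {\modi} of a point $z\in P_0\subset P$ to $F_0$ via the congruence $F^n(z)\in F_0^n(z)+\IZ$ from Lemma~\ref{F0powern}(b), use that $B_0$ meets each $\IZ$-coset in at most one point, and conclude with the invariance and cardinality statements of Remark~\ref{P0almostperiodic}. No gaps; the verification that the hypotheses of Lemma~\ref{F0powern}(b) hold for all $n$ (forward invariance of $P\subset B$) is exactly the point to check, and you check it.
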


We are ready to prove
Theorems~\ref{TheoremSharkovskiiintheBranches} and
\ref{LargeOrbitsintheBranches}. We recall their statement before the proof.

\begin{MainTheorem}
Let $F \in \Lifts$ and let $P$ be a lifted periodic orbit of $F$ of
period $p$ that lives in the branches.
Then $\Per(F) \supset \Shs(p)$.
Moreover, for every $d \in \IZ$ and every $p \in \N_{\Sho}$,
there exists a map $F_p \in \Li[d]$ such that $\Per(F_p) = \Shs(p)$.
\end{MainTheorem}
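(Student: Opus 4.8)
The plan is to reduce the forcing statement to Sharkovsky's Theorem applied to the interval self-map $g:=F_0|_{B_0}$ of the branch $B_0$. Since $F_0$ takes its values in $B_0$ and $B_0$ is homeomorphic to a compact interval, $g$ is an interval self-map; by Lemma~\ref{P0isperiodic}, $P_0=P\cap B_0$ is a periodic orbit of $g$ of period $p$. The standard Sharkovsky--Stefan machinery (loops in the $g$-graph of the $P_0$-basic intervals, fed into Proposition~\ref{prop:covering}) then produces, for each $q\in\Shs(p)$, a $g$-periodic point $x$ of period $q$ whose whole $g$-orbit lies in $\chull{P_0}$, because every $P_0$-basic interval is contained in $\chull{P_0}$; for $q=1$ one instead takes a fixed point of $g$ in $\chull{P_0}$, which exists since $g(\min P_0)\ge\min P_0$ and $g(\max P_0)\le\max P_0$, and for $q=p$ one takes $x\in P_0$ itself.

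To push such a point back to $F$ the key observation is that the branching point $0$ is the \emph{minimum} of $B_0$, so $0\in\chull{P_0}$ exactly when $0\in P_0$. If $0\notin P_0$, then each $x$ above satisfies $F_0^i(x)=g^i(x)\in\chull{P_0}\subset\Bo_0$ for all $i$, so Lemma~\ref{F0powern}(a) gives $F^i(x)\in\bigcup_m\Bo_m\subset B$ for all $i$, and then Lemma~\ref{F0powern}(b) gives $F^q(x)=F_0^q(x)+k=x+k$ for some $k\in\IZ$; moreover $F^i(x)\in x+\IZ$ with $0<i<q$ would force, again by (b) and the fact that $B_0\cap\IZ=\{0\}$, that $F_0^i(x)=x$, contradicting the $g$-period of $x$. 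Hence $x$ is periodic {\modi} of period $q$, and $\Shs(p)\subset\Per(F)$ in this case.

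If instead $0\in P_0$, then $0$ is $g$-periodic of period $p$. For $q=p$ we use that $0$ lies on the lifted orbit $P\subset B$, which is $F$-invariant, so $F^i(0)\in B$ for all $i$ and Lemma~\ref{F0powern}(b) applies directly to $0$, showing $0$ is periodic {\modi} of period $p$. For $q\in\Shs(p)$ with $q\ne p$, the point $x$ above has period $q$ and $g$-orbit inside $\chull{P_0}$; if $0$ lay on that orbit it would be $g$-periodic of period $q\ne p$, which is impossible, so the orbit of $x$ avoids $0$ and the lifting argument of the previous paragraph applies verbatim. This case analysis at the branching point is the only genuine subtlety in the forcing part — the point being that the branching point cannot "contaminate" the orbits extracted from $P_0$ — while everything else is the classical Sharkovsky argument for $g$ together with bookkeeping through Lemma~\ref{F0powern}.

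For the converse, fix (by the case $n=2$ of Theorem~\ref{GMT1}, which is Sharkovsky's realization theorem) a continuous map $g_p\colon[0,1]\to[0,1]$ with $\Per(g_p)=\Shs(p)$; after a conjugacy we may assume $g_p(0)=0$. Identify $[0,1]$ with $B_0$ and define $F_p$ by $F_p|_{B_0}:=g_p$, $F_p|_{B_m}(z):=g_p(z-m)+md$ for $m\ne 0$, and $F_p|_{\IR}$ an appropriate degree-$d$ self-map of $\IR$ fixing $0$ and having no periodic {\modi} point other than $0$ (for instance the identity when $d=1$, and the constant map $0$ when $d=0$). Then $F_p\in\Li[d]$, $F_p(B_m)\subset B_{md}$, and since a periodic {\modi} orbit of $F_p$ meeting $\bigcup_m\Bo_m$ corresponds, after translation to $B_0$, to a periodic orbit of $g_p$ of the same period, while the only periodic {\modi} point of $F_p$ outside $\bigcup_m\Bo_m$ that survives is the fixed point $0$, one obtains $\Per(F_p)=\{1\}\cup\Per(g_p)=\Shs(p)$.
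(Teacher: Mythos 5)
Your forcing argument is sound and is essentially the paper's own: the paper likewise reduces to Sharkovsky's theorem for an interval map built from $F_0$ on the branch $B_0$ (namely $\psi=\ret_{\chull{P_0}}\circ F_0\evalat{\chull{P_0}}$) and lifts the resulting orbits back through Lemma~\ref{F0powern}. The only difference is cosmetic: the paper retracts onto $\chull{P_0}$, so every periodic orbit of $\psi$ automatically lies in $\chull{P_0}$ and one then checks $\psi=F_0$ on the new orbits, whereas you use $F_0\evalat{B_0}$ directly and invoke the (standard, slightly stronger) fact that the Sharkovsky forcing machinery produces orbits inside $\chull{P_0}$. Your case analysis at the branching point is correct.

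The realization direction, however, has two genuine gaps. First, ``after a conjugacy we may assume $g_p(0)=0$'' is false as stated: a conjugating homeomorphism of $[0,1]$ permutes the endpoints, so $g_p$ can be conjugated to fix $0$ only if it already fixes an endpoint, and the maps provided by the case $n=2$ of Theorem~\ref{GMT1} (or by Stefan's construction) have their fixed point in the interior. This is repairable --- since $1\in\Shs(p)$ one may graft an extra fixed endpoint onto $g_p$ by an expanding affine extension without creating new periods --- but it requires an argument, not a conjugacy. Second, and more seriously, for general $d$ you merely assert the existence of ``an appropriate degree-$d$ self-map of $\IR$ fixing $0$ with no periodic {\modi} point other than $0$'', giving constructions only for $d\in\{0,1\}$. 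For other degrees this is not innocuous: $x\mapsto dx$ with $d=2$ has periodic {\modi} points of every period (e.g.\ $1/3\mapsto 2/3\mapsto 1/3+1$), and for $d=-1$ the natural candidate $x\mapsto -x$ has period-$2$ {\modi} points at every $x\notin\tfrac12\IZ$; since $2\notin\Shs(1)$, such points already ruin the case $p=1$. The paper's construction avoids this problem entirely by \emph{not} making $[0,1]$ map into $\IR$: it sends $[0,1]$ onto the arc $\chull{F_p(0),F_p(0)+d}$, which passes through the branches $B_0$ and $B_d$, so that only a middle subinterval lands in $\IR$, where the map is expansive affine onto $[0,d]$ and any surviving periodic {\modi} points are forced to be fixed {\modi}. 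Your construction needs an analogous device, or else a proof that the required real piece exists for every degree $d$.
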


\begin{proof}
Since $\chull{P_0}$ is a compact interval included in $B_0$, the
retraction on $\chull{P_0}$ is the continuous map
$\map{\ret_{\chull{P_0}}}{S}[\chull{P_0}]$ defined by:
\[
 \ret_{\chull{P_0}}(x) = \begin{cases}
           x   & \text{if $x \in \chull{P_0}$}\\
           \max P_0 & \text{if }x \in B_0 \text{ and } x\ge \max P_0,\\
           \min P_0 &\text{otherwise}.
          \end{cases}
\]
We define
$\psi:=\ret_{\chull{P_0}}\circ F_0\evalat{\chull{P_0}}$.
Then $\map{\psi}{\chull{P_0}}$ is a continuous interval map such that
$\psi\evalat{P_0} = F_0\evalat{P_0}$ and
\begin{equation}\label{SpecialCondition}
  \psi(z) = F_0(z) \text{ for every }
      z \in \chull{P_0}\setminus \psi^{-1}(\{\min P_0,\max P_0\}).
\end{equation}

By Lemma~\ref{P0isperiodic}, $P_0$ is a periodic orbit of $\psi$ of
period $p$. Fix $q\in\Shs(p)$ with $q\neq p$.
By Sharkovsky's theorem on the interval (see
\cite{SharOri, SharTrans} or Theorem~\ref{GMT1} for $n=2$),
there exists a periodic orbit $Q \subset
\chull{P_0}$ of $\psi$ of period $q$.
We have to show that $F$ has a lifted periodic orbit of period $q$.

Notice that
$Q \cap P_0 = \emptyset$ and $Q \cap \psi^{-1}(P_0) = \emptyset$
since both are periodic orbits of $\psi$ of different period.
Therefore, $Q \subset \Bo_0$ and $\psi\evalat{Q} = F_0\evalat{Q}$ by
\eqref{SpecialCondition}. Let $d$ denote the degree of $F$.
Then, by Lemma~\ref{F0powern},
\begin{equation}\label{SpecialConditionpsi}
\forall x\in Q,\ \forall n\in\IN,\ F^n(x) =
\psi^n(x) + \sum_{k=0}^{n-1} d^k \Re(F(\psi^{n-1-k}(x)))\in \psi^n(x)+\IZ.
\end{equation}

To prove that $F$ has a periodic {\modi} point of period $q$, we take
any $x \in Q$ and we prove that
$F^k(x) - x \notin \IZ$ for $k=1,2,\dots,q-1$ and
$F^q(x) -x \in \IZ$.
This last statement follows trivially from \eqref{SpecialConditionpsi}
because $\psi^q(x) = x$.
Assume that
$F^k(x) = x + l$ for some $k \in \{1,2,\dots,q-1\}$ and some $l \in \IZ$.
Then, again from \eqref{SpecialConditionpsi},
$\psi^k(x)  = x + \widetilde{l}$ for some $\widetilde{l} \in \IZ$.
Since both $x$ and $\psi^k(x)$ belong to
$Q \subset \chull{P_0} \subset B_0$, it follows that
$\widetilde{l} = 0$ and, hence, $\psi^k(x)  = x$; a contradiction.
Consequently, $F^k(x) - x \notin \IZ$ for $k=1,2,\dots,q-1$.

The proof of the second part is easy. Fix $p\in\N_{\Sho}$.
By \cite{Stefan} (see also
\cite{ALM}), there exists a map $f_p \in \CC^0([0,1])$  such that the
set of periods of $f_p$ is precisely $\Shs(p)$.
Now we define the map $F_p \in \Li[d]$ as follows. First we define
$F_p$ on $B_0$ by setting
\[
   \forall x\in[0,1],\ F_p(x\iota) := f_p(x)\iota,
\]
where $\iota$ denotes the square root of
$-1$. Notice that this formula defines $F_p(0)$.
Then we define $F_p$ such that it maps the interval $[0,1]$ onto
$\chull{F_p(0), F_p(0) + d}$ in an expansive (affine) way. With this
we have defined $F_p$ in the set of all $x\in S$ such that $\Re(x) \in
[0,1)$. Finally, we extend $F_p$ to the whole $S$ by the formula
$F_p(x) = F_p(x-\lfloor\Re(x)\rfloor) + d\lfloor\Re(x)\rfloor$, where
$\lfloor\cdot\rfloor$ denotes the integer part function. Clearly, the
map $F_p$ is continuous and has degree $d$. Moreover, each periodic
orbit of $f_p$ corresponds to a periodic orbit of $F_p\evalat{B_0}$.
Hence, $\Per(F_p) \supset \Shs(p)$.
To end the proof of the theorem we have to show that, indeed,
both sets coincide.

To see this, we note that $F_p(B) \subset B$ because
$F_p(B_0) \subset B_0$. We claim that $F_p$ has no periodic {\modi}
points in $S\setminus B = \IR\setminus \IZ$ other that fixed {\modi}
points. Indeed, when  $d = 0,$ $F_p(\IR) = F_p(0) \in B_0$ and there
are no periodic {\modi} points in $\IR\setminus \IZ$. When $d \ne 0$,
there exist points $0 \le x_1 < x_2 \le 1$ such that $F_p([0,x_1])
\subset B_0,$ $F_p([x_2,1]) \subset B_d$ and $F_p([x_1,x_2]) = [0,d]$.
Therefore, there are no periodic {\modi} points in
$[0,x_1] \cup [x_2,1]$ other than, perhaps, $0$ and $1$
(which are already contained in B);
and the only periodic {\modi} points in $(x_1,x_2)$ are fixed
{\modi} points because $F_p\evalat{(x_2,x_2)}$
is expansive.
This proves the claim. Since $F_p$ has already fixed {\modi} points
in $B$, there are no new periods of $F_p$ in $S \setminus B$.

Now we are going to show that, if $x \in B$ is a periodic {\modi} point of
period $q$, then $q \in \Shs(p)$.
Clearly, $\widetilde{x} := x - \Re(x) \in B_0$ and
$F_p^n(\widetilde{x}) \in B_0$ for every $n \ge 0$.
Then, by Lemma~\ref{PeriodsAndPeriodsmodiAreFriends},
$\widetilde{x}$ is a periodic point of $F_p$ of period $q$ whose
orbit is contained in $B_0$. Therefore, $q$ is a period of the
original map $f_p$ and, thus, $\Per(F_p) = \Shs(p)$.
This ends the proof of the theorem.
\end{proof}

\begin{MainTheorem}
Let $F \in \Li$ and let $Q$ be a large orbit of $F$ such that $Q$
lives in the branches. Then $\Per(F) = \IN$.
\end{MainTheorem}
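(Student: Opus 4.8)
The plan is to project the orbit down to the branch $B_0$, where it becomes a periodic orbit of the map $F_0$, and to extract from largeness a purely combinatorial configuration that feeds directly into Lemma~\ref{twoarrowscrossingLargeOrbits}.

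First I would make the standard reductions. Since $Q$ is large, $\Re(Q)\subset\IZ$ has at least two elements, so $p:=\Card(Q)\ge 2$ and $Q$ meets at least two branches. Set $P:=Q+\IZ$, a lifted periodic orbit of $F$ of period $p$ living in the branches, and $P_0:=P\cap B_0=\set{q-\Re(q)}{q\in Q}$; by Lemma~\ref{P0isperiodic}, $P_0$ is a periodic orbit of $F_0$ of period $p\ge 2$, so $F_0$ maps the finite set $P_0\subset B_0$ bijectively onto itself and $P_0$ is a single $F_0$-orbit. For $z\in P_0$ put $\rho(z):=\Re(F(z))\in\IZ$; this is well defined since $F(z)\in P\subset B$, and then $F(z)\in B_{\rho(z)}$. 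Writing $z=q-\Re(q)$ with $q\in Q$ and using that $F$ has degree $1$, we get $\rho(z)=\Re(F(q))-\Re(q)$, so $\sum_{z\in P_0}\rho(z)=0$ because $F$ permutes $Q$; since $\rho\equiv 0$ would force $Q$ into a single branch, contradicting largeness, the function $\rho$ is \emph{non-constant} on $P_0$.

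The heart of the argument is the combinatorial claim: there exist $x,y\in P_0$ with $x<y\le F_0(x)$ and $\rho(x)\ne\rho(y)$. I would prove it by forming the ``crossing graph'' $\CG$ on the vertex set $P_0$, putting an undirected edge between $x$ and $y$ whenever $x<y\le F_0(x)$, and showing that $\CG$ is connected. Enumerate $P_0=\{p_1<p_2<\dots<p_p\}$; it suffices to connect consecutive points $p_k$ and $p_{k+1}$. For $k\in\{1,\dots,p-1\}$ the lower set $A_k:=\{p_1,\dots,p_k\}$ cannot be $F_0$-invariant, since it would then be a proper non-empty invariant subset of the single $F_0$-orbit $P_0$; hence some $v\in A_k$ has $F_0(v)\notin A_k$, i.e. $F_0(v)\ge p_{k+1}$, and then $v\le p_k<p_{k+1}\le F_0(v)$. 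This produces edges between $v$ and $p_k$ and between $v$ and $p_{k+1}$ (or, when $v=p_k$, a direct edge between $p_k$ and $p_{k+1}$), so $p_k$ and $p_{k+1}$ lie in the same component. Thus $\CG$ is connected, and since $\rho$ is non-constant, some edge of $\CG$ joins two points with distinct $\rho$-values.

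Finally, for such a pair $x,y$ (with $x<y\le F_0(x)$ and $\rho(x)\ne\rho(y)$) we have $x,y\in B_0$, $F(x)\in B_{\rho(x)}$, $F(y)\in B_{\rho(y)}$, and $|\Re(F(x))-\Re(F(y))|=|\rho(x)-\rho(y)|\ge 1$ because $\rho(x)$ and $\rho(y)$ are distinct integers. Lemma~\ref{twoarrowscrossingLargeOrbits} then yields $\Per(F)=\IN$. I expect the connectivity of the crossing graph to be the only non-routine step; note in particular that this route never invokes the more delicate Lemma~\ref{twoarrowscrossing}, because largeness already guarantees a crossing whose two endpoints land in different branches under $F$.
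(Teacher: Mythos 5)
Your proposal is correct and follows essentially the same route as the paper: project the orbit to $P_0\subset B_0$ via $F_0$ (Lemma~\ref{P0isperiodic}), show that $\Re\circ F$ is non-constant on $P_0$, extract a pair $x<y\le F_0(x)$ with $\Re(F(x))\ne\Re(F(y))$, and conclude by Lemma~\ref{twoarrowscrossingLargeOrbits}. The only differences are in the packaging of the two combinatorial sub-steps: the paper proves non-constancy by iterating $F^q$ and using Lemma~\ref{F0powern} rather than your telescoping-sum argument, and it finds the crossing pair via a recursively defined increasing chain of lower sets $A_i$ rather than via connectivity of your crossing graph --- both arguments resting on the same observation that a proper non-empty lower subset of the single $F_0$-orbit $P_0$ cannot be $F_0$-invariant.
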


\begin{proof}
Let $P = Q + \IZ \subset B$ be the lifted orbit corresponding to $Q$
and set $q:=\Card(Q)$.
Recall that $F_0$ and $P_0$ are defined by \eqref{eq:F0} and \eqref{eq:P0}.
By Lemma~\ref{P0isperiodic}, $P_0$ is a periodic orbit of $F_0$ of
period $q$.
We are going to show, by a recursive argument, that there exist $x,y \in P_0$
such that $x < y \le F_0(x)$ and
$\Re(F(x)) \ne \Re(F(y))$. Then the theorem follows from
Lemma~\ref{twoarrowscrossingLargeOrbits}.

We set $A_0 := \{\min P_0\}$ and, for all $i \ge 0$, we define
\[
 A_{i+1} := \set{z \in P_0}{z \le \max F_0(A_i)}.
\]
It follows from this definition that, if
$\max F_0(A_i)\le \max A_i$, then $F_0(A_i)\subset A_i$, which implies
that $A_i=P_0$ because $A_i$ is included  in $P_0$, which is a periodic
orbit of $F_0$. Therefore, either
$A_i \varsubsetneq A_{i+1}$ (when $\max F_0(A_i)> \max A_i$),
or $A_i=P_0$. Clearly, $A_{i+1}=P_0$ whenever $A_i=P_0$.
This implies that
\begin{equation}\label{eq:Ai}
\forall i\ge 0,\ A_i\subset A_{i+1}\quad\text{and}\quad
\forall i\ge q-1,\ A_i = P_0.
\end{equation}
On the other hand, the function $\Re(F(\cdot))$ is not constant on
$P_0$. To prove this, assume that there exists $m\in \IZ$ such that
\begin{equation}\label{eq:FP0m}
\Re(F(P_0)) = \{m\}.
\end{equation}
Choose $z \in P_0$ and let $s \in \IN$ be such that
$z + s \in Q$. Then, since $Q$ is a true periodic orbit of $F$ and
$P_0$ is a periodic orbit of $F_0$, both of period $q$, we have
$F^q (z+s) = z+s$ and $F_0^q (z) = z$.
Lemma~\ref{F0powern}(b) implies that $F^q(z)=F_0^q(z)+qm$ (note that
$\forall k, \Re \circ F \circ F_0^{q-1-k} = m$ by \eqref{eq:FP0m}).
We then have
\begin{align*}
z+s &= F^q (z + s) = F^q (z) + s\quad\text{by Lemma~\ref{lem:FF+k}(a)}\\
    &= F_0^q (z) + q m + s\\
    &= z + q m + s.
\end{align*}
Hence, $m = 0$ and, consequently, $\forall n\ge 0$,
$F^n(z+s) = F^n(z) + s = F_0^n(z) + s$, again by Lemma~\ref{F0powern}(b)
and \eqref{eq:FP0m}. So,
\begin{align*}
 Q &= \set{F^n(z+s)}{n=0,1,\dots,q - 1} \\
   &= \set{F_0^n(z)}{n=0,1,\dots,q - 1} + s \\
   &= P_0 + s \subset B_s.
\end{align*}
This contradicts the fact that $Q$ is a large orbit and, hence,
the function $\Re(F(\cdot))$ is not constant on $P_0$.
Using this fact and \eqref{eq:Ai}, we see that
there exists $1 \le k \le q-1$ such that the function $\Re(F(\cdot))$
is constant on $A_{k-1}$ (and hence its value is $\Re(F(\min P_0))$)
but there exists $y \in A_k \setminus A_{k-1}$ such that
$\Re(F(y)) \ne \Re(F(\min P_0))$.
By definition, $y \le \max F_0(A_{k-1})$.
Let $x \in A_{k-1}$ be such that $F_0(x) = \max F_0(A_{k-1})$.
Then, since $y \notin A_{k-1}$, we have
$x < y \le \max F_0(A_{k-1}) = F_0(x)$.
Moreover, $\Re(F(\min P_0)) = \Re(F(x))$ because $x\in A_{k-1}$,
and thus we have $\Re(F(y)) \ne  \Re(F(x))$.
This ends the proof of the theorem.
\end{proof}

\section{Periods (mod 1) when 0 is in the interior of the rotation
interval}\label{sec:0inIntRotR}

This section is devoted to prove the next theorem.

\begin{MainTheorem}
Let $F\in\Li$. If $\Int(\RotR(F)) \cap \Z \ne \emptyset$, then
$\Per(F)$ is equal to,
either $\IN$, or $\IN\setminus\{1\}$, or $\IN\setminus\{2\}$.
Moreover, there exist maps $F_0, F_1, F_2 \in \Li$
with $0 \in \Int(\RotR(F_i))$ for $i=0,1,2$ such that
$\Per(F_0) = \IN$,
$\Per(F_1) = \IN\setminus\{1\}$ and
$\Per(F_2) = \IN\setminus\{2\}$.
\end{MainTheorem}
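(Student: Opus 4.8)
The plan is to reduce to the case $0\in\Int(\RotR(F))$ and then treat the two assertions of the theorem separately. Since $\Per(F)=\Per(F+k)$ and $\RotR(F+k)=\RotR(F)+k$ by Lemma~\ref{lem:FF+k}(b,d), replacing $F$ by $F-k$ for a suitable $k\in\Z$ we may assume $0\in\Int(\RotR(F))$, and by Theorem~\ref{theo:RotR} we write $\RotR(F)=[c,d]$ with $c<0<d$. It is worth keeping in mind that for genuine degree~$1$ circle maps $0\in\Int(\Rot)$ already forces \emph{all} periods; the content of the theorem is that in the $\sigma$ setting the branches can obstruct the small periods $1$ and $2$, but nothing else.

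For the classification I would first show $\{n\in\IN:n\ge 3\}\subset\Per(F)$. Because $0$ lies in the interior of $\RotR(F)$, the rotation theory of \cite{AlsRue2008} furnishes a loop of (signed) coverings of compact intervals of $S$ along which the accumulated integer shift takes both a positive and a negative value; combined with the point $x_0\in\R$ of rotation number $0$ returning to $\R$ infinitely often (Theorem~\ref{theo:RotR}), one extracts a short sub-configuration through one or two intervals $K,L$, chosen with no branching point in their interiors, that matches the hypotheses of one of Lemma~\ref{lem:SemiHorseshoe-mod1}, Corollary~\ref{cory:+horseshoeFF-1} or Lemma~\ref{lem:+-loop}. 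The first two immediately give $\Per(F)=\IN$; Lemma~\ref{lem:+-loop} gives $\Per(F)\supset\IN\setminus\{2\}$ (in particular $1\in\Per(F)$ and every $n\ge 3$); and a degenerate sub-case, in which the expected self-covering $L\maparrow[F-k_1]L$ is not available, still yields every $n\ge 3$ but is exactly the situation in which $1$ may be missing. In all cases $\{n\ge 3\}\subset\Per(F)$.

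It then remains to prove that $1\in\Per(F)$ or $2\in\Per(F)$, so that $\Per(F)\cap\{1,2\}\ne\emptyset$ and hence $\Per(F)$ equals $\IN$, $\IN\setminus\{1\}$ or $\IN\setminus\{2\}$. I would argue by contradiction: if $1\notin\Per(F)$ then $F$ has no fixed {\modi} point, which rules out a self-covering $L\maparrow[F-k_1]L$ and pins down the signs of the two cross-coverings between $K$ and $L$; the resulting length-$2$ positive loop $K\to L\to K$ then produces, via Proposition~\ref{prop:signedcover} together with the period-bookkeeping of the proof of Lemma~\ref{lem:+-loop}, a periodic {\modi} point of period exactly $2$. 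Symmetrically, if $2\notin\Per(F)$ one recovers $1\in\Per(F)$. This completes the classification.

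For the realizations, $F_0$ is obtained by collapsing each branch $B_m$ to $m$ over a degree~$1$ circle map lift $\widetilde F$ with $0\in\Int(\Rot(\widetilde F))$, exactly as in the proof of Corollary~\ref{ConverseSigmaCircleCase}: then $\RotR(F_0)=\Rot(\widetilde F)$, all periodic {\modi} points lie in $\R$, and $\Per(F_0)=\Per(\widetilde F)=\IN$. For $F_1$ and $F_2$ one keeps $0$ interior to $\RotR$ by installing periodic {\modi} orbits of small rotation numbers of both signs, but routes the dynamics through the branches so as to destroy every would-be fixed {\modi} point (for $F_1$), resp.\ every would-be periodic {\modi} orbit of period $2$ (for $F_2$), while the large-period orbits survive through the horseshoe part; concretely one blows up a neighbourhood of the rotation-$0$ periodic orbit of a circle map into $3$-stars in the spirit of Theorem~\ref{YinSigma} and chooses the map on the stars to have no fixed point (resp.\ no periodic point of period $2$), and then checks, as in Section~\ref{sec:Y}, that no forbidden period is created and that $0$ stays interior to $\RotR$. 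The main obstacle is the extraction of a clean covering loop in the classification: the rotational periodic {\modi} orbits coming from $\RotR(F)$ may live entirely in the branches—where the real-line arguments available for circle maps fail and where $\Re\circ F$ need not have $0$ in the interior of its rotation set—so the intervals involved must be located carefully with respect to the branching points, and the degenerate configurations (precisely the ones responsible for a missing period $1$) require a dedicated analysis.
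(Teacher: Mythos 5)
Your overall plan matches the paper's in outline (reduce to $0\in\Int(\RotR(F))$, then force horseshoe-type covering configurations and invoke Proposition~\ref{prop:SemiHorseshoe}, Lemma~\ref{lem:SemiHorseshoe-mod1}, Corollary~\ref{cory:+horseshoeFF-1} or Lemma~\ref{lem:+-loop}), but the central step is asserted rather than proved. There is no citable result saying that $0\in\Int(\RotR(F))$ ``furnishes a loop of signed coverings along which the accumulated shift takes both signs'' from which a clean two-interval sub-configuration with no interior branching points can be extracted; establishing exactly this is the entire content of the paper's argument. The paper does it by taking points $x_a,x_b\in\R$ with $\rhos(x_a)<0<\rhos(x_b)$, building the record sets $\overline M$ and $\underline M$ along their orbits, producing the adjacent points $z,t,z',t'$ satisfying \eqref{eq:all-inequalities}, and then running a long case analysis on their positions (Cases (A)--(D), with Case (C) split into (C1)--(C3) and further subcases, supported by Lemmas~\ref{twoarrowscrossing}, \ref{twoarrowscrossingLargeOrbits}, \ref{lem:allperiods-1}, \ref{lem:F(R)-left}, \ref{lem:PotImp:ToutVaBien}, \ref{lem:super}, \ref{lem:yinB}, \ref{lem:3M}, \ref{lem:t'+1}). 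You acknowledge this as ``the main obstacle'' but do not resolve it, so the classification part of your proof is a plan, not a proof. Your auxiliary step (``if $1\notin\Per(F)$ then the two cross-coverings between $K$ and $L$ form a positive length-$2$ loop'') is also unjustified: nothing forces the product of the two signs to be $+$, and a negative length-$2$ loop gives nothing via Proposition~\ref{prop:signedcover}. Note that the paper never needs this dichotomy, because each case of its analysis directly yields $\Per(F)\supset\IN$, $\IN\setminus\{1\}$ or $\IN\setminus\{2\}$.

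The realization part also has a concrete flaw. For $F_0$ the collapse of the branches over a circle lift is fine and is what the paper does. But for $F_1$ you propose to blow up a rotation-$0$ orbit into invariant $3$-stars and ``choose the map on the stars to have no fixed point'': this is impossible, since every continuous self-map of a tree (in particular of a $3$-star) has a fixed point (see Theorem~\ref{GMT1}, which guarantees $1\in\TPer(f)$ for every $f\in\Cstar$). To kill period $1$ one must make the branch non-invariant; the paper's Examples~\ref{ex:0inintRot-1} and \ref{ex:0inintRot-2} are explicit piecewise-affine Markov maps (with $F(\max B_0)$ landing back in $\R$) whose sets of periods and rotation intervals are computed from their Markov graphs, not star-block constructions in the spirit of Theorem~\ref{YinSigma}.
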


In the first subsection, we construct the maps $F_0$, $F_1$ and $F_2$
from the statement of Theorem~\ref{theo:0inInterior}.
Then, in Subsection~\ref{subsec:OIntRot}, we prove two lemmas, both
giving conditions to obtain $\Per(F)\supset \N \setminus \{1\}$.
Finally we prove the first statement of
Theorem~\ref{theo:0inInterior} in the last and biggest subsection.

\subsection{Construction of examples}

We give below two examples of maps with $0\in\Int(\RotR(F))$ and
$\Per(F)=\IN\setminus\{1\}$ (resp. $\Per(F)=\IN\setminus\{2\}$).
The case $0\in\Int(\RotR(F))$ and $\Per(F)=\IN$ is trivially obtained
from a lifting of a circle map with this property
(just extend the map to $S$ by collapsing $B_0$ to $F(0)$ under the
action of $F$); see e.g. \cite[Section 3.10]{ALM} for such circle
maps.

\begin{example}\label{ex:0inintRot-1}
We are going to build a map $F\in\Li$ such that $0\in\Int(\RotR(F))$
and $\Per(F)=\IN\setminus\{1\}$. Moreover, there is a large orbit of
period $n$ for some fixed $n\ge 3$, which shows that the existence of
a large orbit is not enough to imply all periods \modi.

We fix an integer $n\ge 3$. Let $a_0,a_1,\ldots, a_n\in [0,1]$ be such
that
$0=a_0<a_1<a_2< \cdots<a_{n-1}<a_n=1$.
We set $A_i=[a_{i-1},a_i]$ for all $1\le i\le n$.
We define $F\in \Li$ such that $F(a_i)=a_{i-1}$ for all $3\le i\le n$,
$F(a_2)=\max B_0$, $F(a_1)=0$, $F(\max B_0)=a_2+1$, and $F$ is affine
on $B_0$ and $A_i$ for all $1\le i\le n$. The map $F$ and its Markov
graph are illustrated in Figure~\ref{fig:0inintRot-1}.
\begin{figure}[htb]
\centerline{\includegraphics{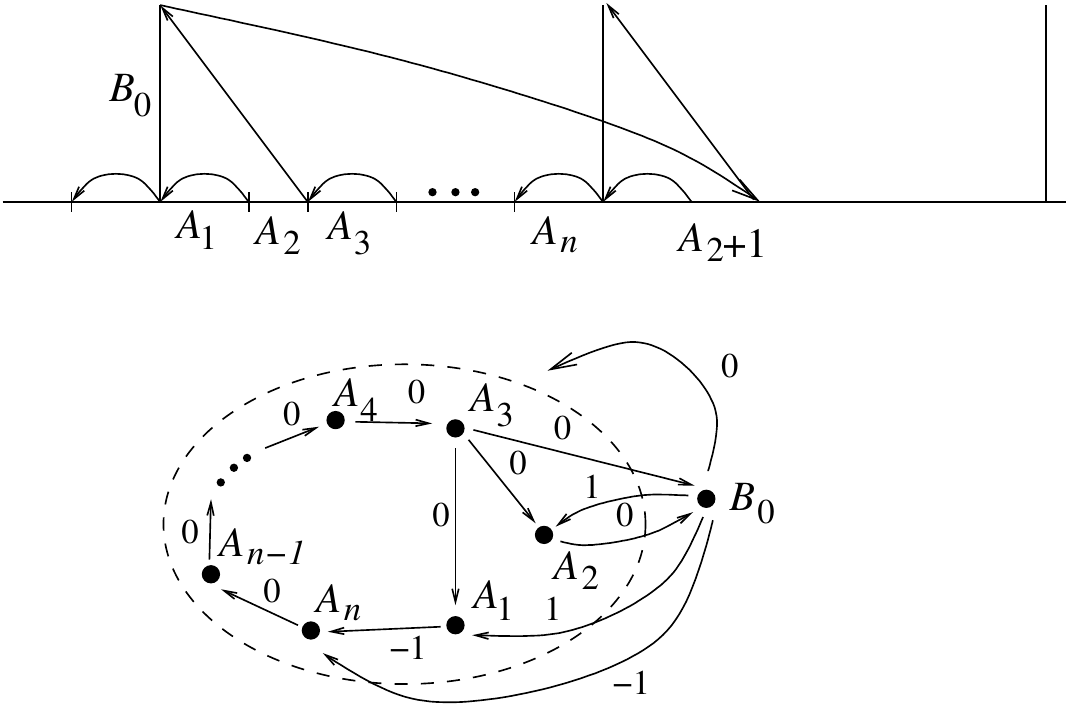}}
\caption{Above: the map $F$ of Example~\ref{ex:0inintRot-1}. Below:
its Markov graph. The arrow from $B_0$ to the dotted set means that
there are arrows $B_0\signedcover[0]{}A_i$ for all $1\le i\le
n$.}\label{fig:0inintRot-1}
\end{figure}

By using the tools from \cite[Subsection~6.1]{AlsRue2008}
one can compute from its Markov graph that $\Per(F)=\Per(0,F)=\{n\ge
2\}$ and $\RotR(F)=\left[-\frac1{n-1},\frac12\right]\ni 0$.
The loop
\[
B_0 \signedcover[1]{}A_1\signedcover[-1]{}A_n
\signedcover[0]{}A_{n-1}\signedcover[0]{}\cdots
\signedcover[0]{}A_3\signedcover[0]{}B_0
\]
gives a large orbit of period $n$.
\end{example}

\begin{example}\label{ex:0inintRot-2}
We are going to build a map $F\in\Li$ such that $0\in\Int(\RotR(F))$
and
$\Per(F)=\IN\setminus\{2\}$.

Let $t_0,t_1,t_2, z_0,z_1\in [0,1]$ be such that
$0<t_2<t_1<t_0<z_0<z_1<1$.
We set $I_2=[0,t_2]$, $I_1=[t_2,t_1]$, $I_0=[t_1,t_0]$, $C=[t_0,z_0]$,
$J_0=[z_0,z_1]$ and $J_1=[z_1,1]$.
We define $F\in \Li$ such that $F(t_0)=t_1$, $F(t_1)=t_2$,
$F(t_2)=t_0-1$,
$F(z_0)=z_1$, $F(z_1)=\max B_1$, $F(\max B_0)=z_0$, $F(0)=0$
and $F$ is affine on $B_0, I_0,I_1, I_2, J_0, J_1, C$.
The map $F$ and its Markov graph are illustrated in
Figure~\ref{fig:0inintRot-2}.
\begin{figure}[htb]
\centerline{\includegraphics{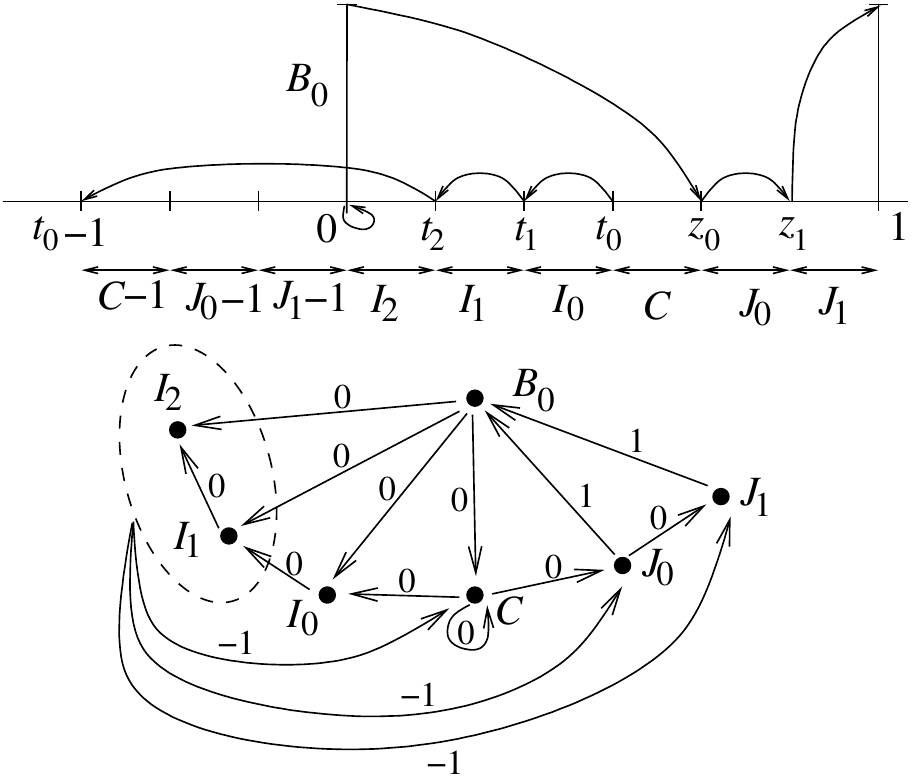}}
\caption{Above: the map $F$ of Example~\ref{ex:0inintRot-2}.
Below: its Markov graph.
The arrows from the dotted set mean that there are arrows
$I_i\signedcover[-1]{}C,J_0,J_1$ for $i=1,2$.}\label{fig:0inintRot-2}
\end{figure}

By using the tools from \cite[Subsection~6.1]{AlsRue2008}
and using the loops
\[
  C \signedcover[0]{}
  J_0 \signedcover[1]{}
  B_0 \signedcover[0]{} C,
  \quad
  C \signedcover[0]{} C
  \quad\text{and}\quad
  C \signedcover[0]{}
  I_0 \signedcover[0]{}
  I_0 \signedcover[-1]{} C,
\]
one can compute that $\Per(F)=\IN\setminus\{2\}$
and $\RotR(F)=\left[-\frac13,\frac13\right]$.
\end{example}

\subsection{Situations that imply periodic points of all periods
except 1}\label{subsec:OIntRot}

The aim of this subsection is to prove Lemmas \ref{lem:allperiods-1}
and \ref{lem:F(R)-left} below, both giving conditions to obtain
$\Per(F)\supset \N \setminus \{1\}$. They will be used in the proof of
Theorem~\ref{theo:0inInterior}.

There is a common idea in the hypotheses of both lemmas: some points
of $\IR$ go to the left whereas others go sufficiently to the right
and have an orbit passing through the branches.
In Lemma~\ref{lem:allperiods-1}, the assumption is that there is a
point $x\in\IR$ such that $F(x)$ is in the branch $B_0$ and $F^2(x)$
is much to the right (or much to the left) of $F(0)$.
In Lemma~\ref{lem:F(R)-left}, assumption (a) means that all points in
$\IR$ go rather to the left (or at least do not go much to the right)
under one iteration, whereas assumption (b) implies that there is one
point $x_0$ in $\IR$ whose orbit tends to $+\infty$; because of (a),
the orbit of $x_0$ must pass through the branches.

Intuitively, the fact that some points of the real line
go to the left whereas others go to the right is clearly related to
the fact that there exist points $x_,x'\in\IR$ such that $\rhos(x)<0$
and $\rhos(x')>0$, and hence $0\in\Int(\RotR(F))$.

\begin{lemma}\label{lem:allperiods-1}
Let $F\in\Li$. Suppose that there exists $y_0\in F(\IR)\cap B_0$
such that, either
$\Re(F(y_0))\ge \lceil \Re(F(0))\rceil +1$, or
$\Re(F(y_0))\le \lfloor  \Re(F(0))\rfloor -1$.
Then $\Per(F)\supset \N \setminus \{1\}$.
\end{lemma}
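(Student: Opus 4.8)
The plan is to produce a semi-horseshoe (or a signed semi-horseshoe) living on the real line together with a branch, and then invoke Proposition~\ref{prop:SemiHorseshoe}, Lemma~\ref{lem:SemiHorseshoe-mod1}, or Corollary~\ref{cory:+horseshoeFF-1} to conclude. The geometric picture is: there is a point $y_0\in\IR$ with $F(y_0)\in B_0$ and $\Re(F(F(y_0)))$ far from $\Re(F(0))$ (say to the right), while $\Re(F(0))$ is comparatively small. The first iterate maps part of $\IR$ into the branch $B_0$, and the second iterate spreads that branch far along $\IR$; this spreading, compared against the ``slow'' behaviour of $F$ on $0$, is what creates the horseshoe.

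First I would reduce to a normalized situation. Since $y_0\in F(\IR)$, pick $a\in\IR$ with $F(a)=y_0$. By considering $F+k$ for a suitable $k\in\IZ$ (which changes neither $\Per(F)$, by Lemma~\ref{lem:FF+k}(d), nor the structure of the hypothesis, since $\Re(F(\cdot))$ and $\Re(F(0))$ shift by the same integer) I may assume $\Re(F(0))\in\{0,1\}$ or some other convenient small range; in particular after such a shift the hypothesis $\Re(F(y_0))\ge\lceil\Re(F(0))\rceil+1$ (the other case being symmetric via $F\mapsto -F+\text{const}$ or a direct reflection argument) gives $\Re(F(y_0))\ge 2$ while $\Re(F(0))\le 1$. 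Write $m:=\Re(F(y_0))\ge 2$.

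Next I would build the two intervals. Let $L\subset\IR$ be a closed interval with endpoints $0$ and $a$ (or a subinterval of $\chull{0,a}$), chosen so that $F(L)$ contains an arc in $B_0$ running from near the base point $0$ up to $F_0(y_0)$; since $F$ is continuous and $F(0)$ has real part $\le 1$ while $F(a)=y_0\in B_0$, the image $F(L)$ necessarily crosses the integer $\Re(F(a))$'s worth of real line and then climbs into $B_0$, so by continuity $F(L)\supset\chull{F(0),y_0}$, which in particular contains a full sub-branch of $B_0$. Then choose $K\subset B_0$ to be a small interval near $y_0$ on which $F$ maps onto a long interval of $\IR$: because $\Re(F(y_0))=m\ge 2$ while $\Re(F(0))\le 1$ and $0\in\chull{P_0}$-type reasoning gives $F(0)$ close to the base, continuity of $F$ on the arc of $B_0$ from $0$ to $y_0$ forces $F$ of that arc to contain $\chull{F(0),F(y_0)}$, an interval of $\IR$ of diameter $\ge 1$ reaching both $L$ and (a translate of) itself. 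Concretely, I expect to get $F(K)\supset L\cup(L+j)$ for some $j\ne 0$ and $F(L)\supset K$; if the overlaps are clean this is exactly the hypothesis of Lemma~\ref{lem:SemiHorseshoe-mod1} (with the ``disjoint mod~1'' alternative, since $K\subset B_0$ and $L\subset\IR$ are disjoint mod~1) or of Corollary~\ref{cory:+horseshoeFF-1}, yielding $\Per(F)=\IN$, hence certainly $\Per(F)\supset\IN\setminus\{1\}$.

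The main obstacle will be the bookkeeping of branching points and of which translate of $L$ the interval $K$ covers, so that one genuinely lands in the hypotheses of one of the horseshoe lemmas and not in a degenerate configuration (e.g. $K$ covering only $L$, or covering $L$ and $L$ again mod~$1$ in a way that forces rotation number $0$ and only gives interval-type Sharkovsky periods missing $1$ or $2$). In the worst case one only obtains $\Per(F)\supset\IN\setminus\{1\}$ rather than all of $\IN$ — which is exactly what the statement claims — so I should be prepared for $K$ to $F$-cover a translate $L+j$ and $L$ itself with possibly a single point of overlap at a base point, in which case a signed-covering loop of length $n$ à la Corollary~\ref{cory:+horseshoeFF-1} gives periodic $\modi$ points of all periods $n\ge 2$, with period~$1$ possibly failing because the only fixed $\modi$ point of the relevant subgraph might force rotation number $0$ incompatibly. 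Thus the careful step is to verify that for every $n\ge 2$ the period $\modi$ of the point produced by Proposition~\ref{prop:signedcover} is exactly $n$, using that $K$ and $L$ are disjoint mod~$1$ to exclude any collapse $F^i(x)\in x+\IZ$ with $1\le i\le n-1$, precisely as in the proof of Lemma~\ref{lem:+-loop}.
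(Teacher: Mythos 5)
There is a genuine gap, and it is structural rather than a matter of bookkeeping. Your plan produces (at best) the covering data $L \arrowto K$ and $K \arrowto L \cup (L+j)$, a \emph{bipartite} graph on the two intervals $K\subset B_0$ and $L\subset\IR$: every loop in such a graph has even length, so it can never yield odd periods, and none of the results you invoke applies to it --- Proposition~\ref{prop:SemiHorseshoe}, Lemma~\ref{lem:SemiHorseshoe-mod1} and Corollary~\ref{cory:+horseshoeFF-1} all require one interval to cover \emph{itself} (mod~1), which your configuration lacks. Moreover the self-translate covering $F(K)\supset L\cup(L+j)$ with $j\ne 0$ is not justified: the hypothesis only gives that the branch segment between the preimages of the consecutive integers $\lceil\Re(F(0))\rceil$ and $\lceil\Re(F(0))\rceil+1$ maps onto a \emph{unit} interval of $\IR$, which contains one translate of $[0,a]$ and one of $[a,1]$ (two different intervals), not two integer translates of a single interval. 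The paper's proof therefore works with \emph{three} intervals: $D=[y_1,y_2]\subset B_0$ and the two real intervals $A_1=[0,a]$, $A_2=[a,1]$ split at the preimage $a$ of $\max(F(\IR)\cap B_0)$, and the essential extra ingredient is the arrow $A_1\arrowto A_2$ or $A_2\arrowto A_1$ (mod~1) between the two real pieces, coming from $F(A_i)\supset\chull{F(0),-q}$ or $\chull{-q,F(0)+1}$. This supplies a loop of length $3$ alongside the loops of length $2$, and concatenations $\CC_2'(\CC_2)^{m-1}$, $\CC_3(\CC_2)^{m-1}$ of suitably shrunk, mod-1-disjoint intervals $A_1,A_2',D'$ realize every period $n\ge 2$ exactly. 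Without that third vertex and connecting arrow, period $3$ (and all odd periods) is out of reach.

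A second omission: the case $F(0)\in B$ must be treated separately, since then the preimage $y_1$ of $\lceil\Re(F(0))\rceil$ may be the branching point $0$ and the interval $D$ degenerates or meets $\IR$; the paper handles this case with a different pair of intervals $L,D$ for which a genuine self-covering $L\maparrow[F-k]L$ does exist, so that Lemma~\ref{lem:SemiHorseshoe-mod1} applies and in fact gives $\Per(F)=\IN$ there. Your intuition that the conclusion might miss period $1$ is correct, but the reason is the absence of a length-one loop in the three-vertex graph of Case~1, not a rotation-number obstruction in a two-interval signed loop.
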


\begin{lemma}\label{lem:F(R)-left}
Let $F\in\Li$. Suppose that
\begin{enumerate}
\item $\forall x\in \IR, x<0\Longrightarrow \Re(F(x))<0$,
\item $\exists x_0\in\IR, \rho(x_0)>0$.
\end{enumerate}
Then $\Per(F)\supset\IN\setminus\{1\}$.
\end{lemma}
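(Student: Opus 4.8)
The plan is to reduce to Lemma~\ref{lem:allperiods-1} by producing a point of $F(\R)$ that lies strictly inside a branch and whose $F$-image is at least one unit to the right of the image of the corresponding branch point.

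I would begin by recording what~(a) gives. Since $F$ has degree~$1$, applying~(a) to $x-m$ shows that for every $x\in\R$ and every $m\in\Z$, $x<m$ implies $\Re(F(x))<m$; letting $x\to m^-$ gives $\Re(F(m))\le m$, in particular $\Re(F(0))\le 0$. Hence, along the orbit of any point, the integer part $\lfloor\Re(\,\cdot\,)\rfloor$ is non-increasing at every step whose starting point lies on $\R$, in particular at every step that enters a branch from $\R$. Now I invoke~(b): since $\rho(x_0)>0$ we have $\Re(F^n(x_0))\to+\infty$, so by the previous remark the orbit of $x_0$ cannot stay in $\R$; it meets some $\Bo_m$, and $\lfloor\Re(\,\cdot\,)\rfloor$ must increase somewhere along it, necessarily at a step starting strictly inside a branch. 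This yields a point $v$ in the orbit of $x_0$ with $v\in\Bo_m$ for some $m$ and $\Re(F(v))\ge m+1$. Conjugating $F$ by the deck transformation $z\mapsto z+m$ (this preserves $\Per(F)$, and Lemma~\ref{lem:allperiods-1} holds equally for the conjugated map) we may assume $m=0$: so $v\in\Bo_0$ with $\Re(F(v))\ge 1$, and since $\Re(F(0))\le 0$ we have $\lceil\Re(F(0))\rceil+1\le 1\le\Re(F(v))$, matching the hypothesis of Lemma~\ref{lem:allperiods-1}.

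If the orbit point mapped onto $v$ lies in $\R$, then $v\in F(\R)\cap\Bo_0$ and Lemma~\ref{lem:allperiods-1} applies at once, giving $\Per(F)\supset\N\setminus\{1\}$. The remaining case, where the orbit sits in the branches for several consecutive steps before this rightward jump, is the delicate one. There I would argue at the branch-entry point $e:=F^k(x_0)$ of the excursion containing the jump, chosen so that $F^{k-1}(x_0)\in\R$; thus $e\in F(\R)$, say $e\in\Bo_\mu$. The set $F(\R)$ is path-connected (a continuous image of $\R$), hence arcwise connected in the tree $S$; moreover it is $\Z$-periodic, and $\Re(F(\R))$, being a $\Z$-periodic interval, equals $\R$. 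Consequently $F(\R)$ meets all three components of $S\setminus\{\mu\}$, so it contains $\mu$ and therefore the whole arc $\chull{\mu,e}\subset B_\mu$; hence some compact interval of $\R$ $F$-covers $\chull{\mu,e}$. Following the excursion, whose accumulated effect on $\Re$ is positive, an appropriate power $F^j$ carries $\chull{\mu,e}$ over an interval of $\R$ which, after translating by the corresponding deck transformation, $F$-covers back the real interval we started from; this produces a loop of ($\bmod\,1$) coverings carrying a nonzero total translation, so by Proposition~\ref{prop:signedcover}, in the spirit of Corollary~\ref{cory:+horseshoeFF-1}, $F$ has periodic ($\bmod\,1$) points of every period $n\ge 2$ — period~$1$ being absent here because the closing covering shifts by a nonzero integer, i.e.\ the periodic orbits thus obtained have nonzero rotation number (as Example~\ref{ex:0inintRot-1} shows must sometimes happen).

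The main obstacle is precisely this last case: keeping track of the branch indices and of the deck translations accumulated along a possibly long branch excursion so that the covering loop genuinely closes up, and arranging every interval involved to lie either in a single branch $B_\mu$ or in $\R$, with pairwise disjoint interiors and no branching point inside, so that the ($\bmod\,1$) semi-horseshoe machinery of Section~\ref{sec:covering} (Proposition~\ref{prop:SemiHorseshoe}, Lemma~\ref{lem:SemiHorseshoe-mod1}) applies verbatim. A convenient uniform target, valid in both cases, is to produce compact intervals $I,J$ — both in $\R$ or both in one branch — with $I\maparrow[F-k_1]I$, $I\maparrow[F-k_2]J$, $J\maparrow[F-k_3]I$ and $k_2\neq k_1$, whence the counting argument already used for Lemma~\ref{lem:allperiods-1} and Corollary~\ref{cory:+horseshoeFF-1} gives all periods $n\ge 2$ and leaves $1$ genuinely in doubt.
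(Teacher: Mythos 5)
Your first half is sound and runs parallel to the paper's argument: from (a) and degree one you correctly get $\Re(F(x))<m$ for all real $x<m$, hence $\Re(F(0))\le 0$, and since $\rho(x_0)>0$ forces $\Re(F^n(x_0))\to+\infty$ while no step starting on $\IR$ can increase $\lfloor\Re(\cdot)\rfloor$, there must be a point $v\in\Bo_m$ on the orbit with $\Re(F(v))\ge m+1$. When $v\in F(\IR)$ this feeds Lemma~\ref{lem:allperiods-1} exactly as in the paper's case $n=1$. The problem is the remaining case, which you flag yourself as ``the main obstacle'' and then do not actually close. Two things go wrong in your sketch. First, the assertion that ``an appropriate power $F^j$ carries $\chull{\mu,e}$ over an interval of $\IR$'' is unjustified: you control only the orbit of the single point $e$ during the branch excursion, not the image of the whole arc $\chull{\mu,e}$, and nothing in the hypotheses tells you where $F^j(\mu)$ lands. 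Second, even granting such a loop of coverings with nonzero total translation, a single loop does not yield all periods $\ge 2$; Proposition~\ref{prop:signedcover} applied to one loop of length $n$ gives one periodic (mod~$1$) point whose period divides $n$, and to get every $n\ge 2$ you need the richer structure of Lemma~\ref{lem:SemiHorseshoe-mod1} or Corollary~\ref{cory:+horseshoeFF-1} (an interval covering both itself and a translate), which your construction does not supply.

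The ingredient you are missing is Lemma~\ref{twoarrowscrossingLargeOrbits}: in the hard case one must exhibit two points $x<y$ in $B_0$ with $y\le F_0(x)$ and $|\Re(F(x))-\Re(F(y))|\ge 1$. The paper gets this by iterating the connected sets $E_i:=F^i(\IR)$ and taking the least $n$ with $\max\Re(F(E_n\cap B_0))\ge 1$; if $n\ge 2$, a point $y\in E_n\cap\Bo_0$ with $F(y)=1$ cannot lie in $E_{n-1}$, while the point $\overline{x}\in E_{n-1}$ realizing $\max(E_n\cap B_0)$ must sit in a branch, and translating it back to $B_0$ produces exactly the crossing configuration (with $\Re(F(x))\le 0$ by minimality of $n$). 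Your orbit-based case split (is the specific jump point $v$ in $F(\IR)$ or not?) does not by itself localize such a pair, so as written the proof is incomplete in the case where the rightward jump occurs deep inside a branch excursion.
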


We also need two lemmas that, unfortunately, are rather technical.
Roughly speaking, the conclusion of Lemma~\ref{lem:PotImp:ToutVaBien}
is that, either we have a ``good'' point in $F(\IR)$ and we may hope
to apply Lemma~\ref{lem:allperiods-1}, or we are in a ``good''
situation in view of Lemmas~\ref{twoarrowscrossing} or
\ref{twoarrowscrossingLargeOrbits}. Lemma~\ref{lem:super} summarizes
the various conclusions we can obtain in this situation.

\begin{lemma}\label{lem:PotImp:ToutVaBien}
Let $F\in \Li$, $z \in \IR$ and $u \in \Orb(z,F) \setminus
\IR.$ Then there exists $y \in \Orb(z,F) \setminus \IR$ satisfying
\[
  y-\Re(y) \le u -\Re(u)
    \quad\text{and}\quad
  \Re(F(y)) - \Re(y) = \Re(F(u)) - \Re(u)
\]
and such that
\begin{itemize}
\item either $y \in F(\IR)$,
\item or there exists $x\in B_0$ such
that $x < y-\Re(y) \le F_0(x)$ and $\Re(F(x)) \ne \Re(F(y)) - \Re(y)$.
\end{itemize}
\end{lemma}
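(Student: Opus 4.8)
The plan is to follow the forward orbit of $z$ (which starts in $\IR$) up to the last instant before $u$ that it lies on $\IR$; this cuts out a \emph{branch excursion} ending at $u$ which, after being projected into the base branch $B_0$ by $x\mapsto x-\Re(x)$, becomes a finite orbit segment of the map $F_0$. The point $y$ will be one of the points of this excursion, chosen by a minimality argument, and the two alternatives in the conclusion will come from whether the first point of the excursion (which automatically lies in $F(\IR)$) already works or whether an intermediate value argument for $F_0$ is needed.

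First I would record the structure of $F(\IR)$. Since $F\in\Li$, the set $F(\IR)$ is connected and invariant under translation by $\IZ$; being a connected subset of the tree $S$ it contains the arc $\chull{F(0),F(0)+1}$, hence a unit subsegment of $\IR$, hence all of $\IR$. Together with the $\IZ$\nobreakdash-invariance this forces $F(\IR)\cap B_m=\chull{m,c+m}$ for a single point $c\in B_0$ (so every branch is entered up to the same height $\Im(c)$), and consequently $F_0(\IR)=\chull{0,c}$. Also, for $w$ in any branch the ``jump'' $\Re(F(w))-\Re(w)$ depends only on $w-\Re(w)$; I set $g(t):=\Re(F(t))$ for $t\in B_0$, a continuous function, and $\delta:=g(u-\Re(u))=\Re(F(u))-\Re(u)$. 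A preliminary reduction: if $\Im(u-\Re(u))\le\Im(c)$ then $u\in F(\IR)\cap B_{\Re(u)}=\chull{\Re(u),c+\Re(u)}$, and $y:=u$ satisfies the conclusion via the first alternative; so from now on I may assume $v:=u-\Re(u)$ has $\Im(v)>\Im(c)$. (Note the hypothesis $u\in\Orb(z,F)\setminus\IR$ already rules out $F(\IR)=\IR$, so $\Im(c)>0$.)

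Next I would build the excursion. Writing $\Orb(z,F)=\{z_0=z,z_1,z_2,\dots\}$ with $u=z_N$, let $j^{\star}$ be the largest index $\le N$ with $z_{j^{\star}-1}\in\IR$; it exists because $z_0\in\IR$ and $z_N\notin\IR$, and by maximality $z_{j^{\star}},\dots,z_N$ all lie in $B$, while $z_{j^{\star}}=F(z_{j^{\star}-1})\in F(\IR)$. Put $n:=N-j^{\star}$ and $w_i:=z_{j^{\star}+i}-\Re(z_{j^{\star}+i})\in B_0$; since these points lie in the branches and $F_0\in\Li[0]$ one gets $F_0(w_i)=w_{i+1}$, $w_n=v$, the jump of $z_{j^{\star}+i}$ equal to $g(w_i)$, and $0<w_0\le c$, hence $w_0<v$. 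Let $I:=\{\,i\in\{0,\dots,n\}:w_i\le v\text{ and }g(w_i)=\delta\,\}\ni n$ and set $i_0:=\min I$. If $i_0=0$, then $y:=z_{j^{\star}}\in F(\IR)$ does the job (it has $y-\Re(y)=w_0\le v$ and $\Re(F(y))-\Re(y)=g(w_0)=\delta$): first alternative. If $i_0\ge1$, take $y:=z_{j^{\star}+i_0}$, so $\widetilde y:=y-\Re(y)=w_{i_0}\le v$ and $\Re(F(y))-\Re(y)=\delta$; it remains to find $x\in B_0$ with $x<w_{i_0}\le F_0(x)$ and $g(x)\ne\delta$. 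The natural candidate is the predecessor height $w_{i_0-1}$, which satisfies $F_0(w_{i_0-1})=w_{i_0}$: if $w_{i_0-1}<w_{i_0}$, then $w_{i_0-1}<v$, so by minimality of $i_0$ necessarily $g(w_{i_0-1})\ne\delta$, and $x:=w_{i_0-1}$ works.

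The main obstacle will be the remaining case, where $w_{i_0-1}\ge w_{i_0}$ so the predecessor is useless. There I would argue on $F_0$ restricted to the arc $\chull{0,w_{i_0-1}}$: one has $F_0(w_{i_0-1})=w_{i_0}$ and $F_0(0)\in\chull{0,c}$, and by the intermediate value theorem $F_0$ takes on this arc every value between these; combined with a description of the level sets $\{g=\delta\}$ on it and with the structural fact that $F(\IR)$ reaches the height $\Im(c)$ in \emph{every} branch (which is what made the preliminary reduction available and also keeps various degenerate subcases from actually contradicting the statement), this should locate an $x$ strictly below $w_{i_0}$ mapping at or above $w_{i_0}$ with $g(x)\ne\delta$. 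Pinning this down needs a genuine case split — according to whether $w_{i_0-1}\le v$, whether some point of $\chull{0,w_{i_0}}$ already maps above $w_{i_0}$ under $F_0$, and where $g$ equals $\delta$ on $\chull{0,w_{i_0-1}}$ — and this is presumably exactly why the lemma is flagged as ``rather technical''.
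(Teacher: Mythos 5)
Your setup (last exit from $\IR$, projection of the branch excursion to $B_0$ via $F_0$, minimality of the index $i_0$) is sound and matches the paper's strategy, and your handling of the subcases $i_0=0$ and $w_{i_0-1}<w_{i_0}$ is correct. But the remaining subcase --- your ``Case B'', where $w_{i_0-1}\ge w_{i_0}$ --- is exactly the crux of the lemma, and you leave it as an unexecuted sketch (``this should locate an $x$\dots{} pinning this down needs a genuine case split''). That is a genuine gap, and the tool you reach for (the intermediate value theorem for $F_0$ on the arc $\chull{0,w_{i_0-1}}$, plus an analysis of the level set $\{g=\delta\}$) is not the right one: an IVT preimage of $w_{i_0}$ in that arc need not lie \emph{below} $w_{i_0}$, and nothing in a single application of $F_0$ controls where $g\ne\delta$.

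The correct resolution is discrete and uses the orbit itself. First note that you may assume $w_{i_0}>c$ (otherwise $z_{j^\star+i_0}\in F(\IR)$ and the first alternative applies to it), so $w_0\le c<w_{i_0}$. The finite $F_0$-orbit $w_0,w_1,\dots,w_{i_0}$ therefore starts strictly below $w_{i_0}$ and ends at $w_{i_0}$, so there is a crossing index $p<i_0$ with $w_p<w_{i_0}\le w_{p+1}=F_0(w_p)$. Since $w_p<w_{i_0}\le v$ and $p<i_0=\min I$, the minimality of $i_0$ forces $g(w_p)\ne\delta$, and $x:=w_p$ works --- no IVT, no level-set analysis. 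This crossing-index device is precisely what the paper uses (there phrased as: choose $p$ with $F_0^p(\widetilde z)<u-\Re(u)\le F_0^{p+1}(\widetilde z)$, and if the jump at $F_0^p(\widetilde z)$ accidentally equals $\delta$, replace $u$ by the lower orbit point $F^p(\overline z)$ and recurse; the recursion terminates because the excursion length strictly decreases). Your minimality of $i_0$ plays the role of the paper's recursion, so with the crossing index inserted in place of the predecessor $w_{i_0-1}$ your argument closes; as written, it does not.
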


\begin{lemma}\label{lem:super}
Let $F\in \Li$, $z \in \IR$ and $u \in \Orb(z,F) \cap B_0$.
Then, there exists $y \in \Orb(z,F) \cap B_0$ such that $y \le u$ and
$\Re(F(y)) = \Re(F(u))$, and one of the following situations occurs:
\begin{itemize}
\item $y \in F(\IR)$,
\item $\Per(F) = \IN$,
\item $y \notin F(\IR)$
and there exists a point $x\in B_0$ such that $x < y \le F_0(x),$
$F(0) \in (x+m, \max B_m]$ and
$F(y) \in (m-1, m+1)\setminus\{m\}\subset \R$,
where $m:=\Re(F(x)) \in \Z$.
\end{itemize}
\end{lemma}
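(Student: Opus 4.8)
The plan is to run the hypothesis through Lemma~\ref{lem:PotImp:ToutVaBien} and then sort the dichotomy it produces into the three announced alternatives, using Lemmas~\ref{twoarrowscrossing} and~\ref{twoarrowscrossingLargeOrbits} to swallow the ``bad'' configurations into the case $\Per(F)=\IN$. Since $u\in B_0$, either $u=0$ (a degenerate situation that we set aside; in the intended applications $u$ is a genuine branch point) or $u\in\Bo_0$. Assuming $u\in\Bo_0$, we have $u\in\Orb(z,F)\setminus\IR$, so Lemma~\ref{lem:PotImp:ToutVaBien} yields a point $y_\star\in\Orb(z,F)\setminus\IR$ with
\[
 y_\star-\Re(y_\star)\le u \qquad\text{and}\qquad \Re(F(y_\star))-\Re(y_\star)=\Re(F(u))
\]
(recall $\Re(u)=0$), and such that \emph{either} $y_\star\in F(\IR)$, \emph{or} there is $x\in B_0$ with $x<y_\star-\Re(y_\star)\le F_0(x)$ and $\Re(F(x))\ne\Re(F(y_\star))-\Re(y_\star)$. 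Put $y:=y_\star-\Re(y_\star)\in\Bo_0$. As $F$ has degree one, $F(y)=F(y_\star)-\Re(y_\star)$ and hence $\Re(F(y))=\Re(F(u))$, while $y\le u$; moreover, by Lemma~\ref{lem:FF+k}(a), $y$ is the relevant iterate of a point of $\IR$, so it plays the role of the point of $\Orb(z,F)\cap B_0$ demanded in the statement. If $y_\star\in F(\IR)$, then $y\in F(\IR)$ too (because $F(w-k)=F(w)-k$ for $w\in\IR$, $k\in\IZ$), and we are in the first alternative.

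Now suppose $y\notin F(\IR)$; then Lemma~\ref{lem:PotImp:ToutVaBien} provides $x\in B_0$ with $x<y\le F_0(x)$ and $\Re(F(x))\ne\Re(F(y))$. From $y\in\Bo_0$ and $x<y\le F_0(x)$ we get $F_0(x)\in\Bo_0$, hence $F(x)\notin\IR$; therefore $m:=\Re(F(x))\in\IZ$, $F(x)\in\Bo_m$, and $\Re(F(y))\ne m$ forces $F(y)\notin\Bo_m$. I then distinguish three cases. If $F(0)\notin(x+m,\max B_m]$, then $x,y,m$ meet all the hypotheses of Lemma~\ref{twoarrowscrossing}, so $\Per(F)=\IN$ (second alternative). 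If $F(0)\in(x+m,\max B_m]$ but $|\Re(F(x))-\Re(F(y))|\ge 1$, then $x,y,m$ meet the hypotheses of Lemma~\ref{twoarrowscrossingLargeOrbits}, and again $\Per(F)=\IN$. Finally, if $F(0)\in(x+m,\max B_m]$ and $|\Re(F(x))-\Re(F(y))|<1$, then $\Re(F(y))\in(m-1,m+1)\setminus\{m\}$, which contains no integer; hence $F(y)\notin B$, i.e.\ $F(y)=\Re(F(y))\in(m-1,m+1)\setminus\{m\}\subset\IR$, and together with $x<y\le F_0(x)$ and $F(0)\in(x+m,\max B_m]$ this is precisely the third alternative.

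The steps that are purely routine are the verifications that the hypotheses of Lemmas~\ref{twoarrowscrossing} and~\ref{twoarrowscrossingLargeOrbits} do hold in the respective cases (all of them are immediate from $F(x)\in\Bo_m$, $x<y\le F_0(x)$, and the inequalities defining the cases). The one place that needs care is the normalisation carried out in the first paragraph: transporting the conclusion of Lemma~\ref{lem:PotImp:ToutVaBien}, which is stated for a point living in an arbitrary branch $B_{\Re(y_\star)}$, down to the branch $B_0$ while retaining both the identity $\Re(F(y))=\Re(F(u))$ and the fact that $y$ belongs to (a deck translate of) $\Orb(z,F)$ — the latter being exactly what is exploited when Lemma~\ref{lem:super} is applied in the proof of Theorem~\ref{theo:0inInterior}.
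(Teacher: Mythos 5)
Your proof follows exactly the paper's route: feed the hypothesis into Lemma~\ref{lem:PotImp:ToutVaBien}, translate the resulting point down to $B_0$, and split case (b) of that lemma by means of Lemmas~\ref{twoarrowscrossing} and \ref{twoarrowscrossingLargeOrbits}, the surviving configuration being precisely the third bullet; all the verifications you make (that $F(x)\in\Bo_m$, that $\Re(F(y))\ne m$ gives $F(y)\notin\Bo_m$, and that $|m-\Re(F(y))|<1$ forces $F(y)\in(m-1,m+1)\setminus\{m\}\subset\IR$) are correct. Two remarks. First, you are not entitled to ``set aside'' the case $u=0$: it is part of the statement, but it is settled in one line, since $F\in\Li$ implies $F(\IR)\supset\IR$, so $u=0\in F(\IR)$ and $y=u$ realises the first alternative --- this is exactly how the paper opens its proof. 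Second, the normalisation issue you flag --- that $y:=y_\star-\Re(y_\star)$ lies a priori only in the lifted orbit $\LOrb(z,F)$ rather than literally in $\Orb(z,F)$ --- is shared by the paper's own proof, which simply asserts that Lemma~\ref{lem:PotImp:ToutVaBien} furnishes $y\in\Orb(z,F)\cap\Bo_0$; it is harmless because the two applications of Lemma~\ref{lem:super} (in Lemmas~\ref{lem:largegap} and \ref{lem:caseC2}) use only $y\in B_0$, $y\le u$, $\Re(F(y))=\Re(F(u))$ and the trichotomy, never the orbit membership.
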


Next we prove the above four lemmas.

\begin{proof}[Proof of Lemma~\ref{lem:allperiods-1}]
We assume that
$\Re(F(y_0)) \ge \lceil \Re(F(0)) \rceil + 1;$
the other case is symmetric.
In particular $0 \ne y_0 \in \Bo_0$.
By the continuity of $F$,
there exist $y_1,y_2 \in B_0$, $y_1 < y_2 \le y_0$,
such that
$F(y_1) = \lceil \Re(F(0)) \rceil$ and
$F(y_2) = \lceil \Re(F(0)) \rceil + 1.$
Let $D = [y_1,y_2] \subset B_0.$
We have $F(D) \supset [F(y_1), F(y_2)]$,
and hence $D \arrowto [0,1] + \lceil \Re(F(0)) \rceil.$
Let $\widetilde{a} \in \IR$ be such that
$F(\widetilde{a}) = \max  F(\IR) \cap B_0,$
$q = \lfloor \widetilde{a} \rfloor$ and
$a = \widetilde{a} - q \in [0,1).$
We have $F(a) \in B_{-q}$ and $F(a) + q \ge y_0$.
In the rest of the proof, all the coverings are for the map $F$
and the notation $I \arrowto J\ \modi$ means that $I \arrowto J+n$ for
some $n\in\IZ$.

\begin{case}{1} $F(0) \notin B$ (see Figure~\ref{fig:case1.1}). \end{case}
This assumption implies that $y_1 \neq 0$,
and thus $D \cap \IR = \emptyset$.
Set $A_1 = [0,a]$ and $A_2 = [a,1]$.
Since $F(a) \in B_{-q}$, the set $F(A_1)$ contains
$\chull{F(0),F(a)} \supset \chull{F(0),-q},$ and similarly
$F(A_2)$ contains $\chull{-q,F(1)} = \chull{-q, F(0)+1}.$
Thus, if $F(0) \le a-q-1$ then $A_1 \arrowto A_{2}-q-1,$
and if $F(0) \ge a-q-1$ then $A_2 \arrowto A_{1}-q$.
Moreover, we have
$A_1 \arrowto D-q$ and
$A_2 \arrowto D-q$ because $F(a) + q \ge y_0 \ge y_2$ and
$F(0), F(1)\notin B$.
Therefore we have one of the covering graphs of
Figure~\ref{fig:Markov-diagram}.
\begin{figure}[htb]
\centerline{\includegraphics{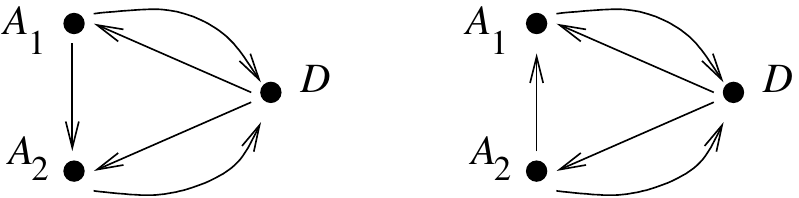}}
\caption{The two possible covering graphs in case~1 (arrows are
$\modi$).}\label{fig:Markov-diagram}
\end{figure}

Suppose that we are in the first case, i.e.
$A_1 \arrowto A_2\ \modi$ (see Figure~\ref{fig:case1.1}).
Since $A_2 \arrowto D\ \modi$,
there exists $c \in A_2$ such that  $F(c) = y_1\ \modi$.
Moreover $c\notin \{a,1\}$ because $F(a) \ge y_2$ and $F(1) \in \IR$.
Similarly, there exist $y_3 \in (y_1,y_2)$ such that $F(y_3) = c\ \modi$,
and $b \in (a,c)$ such that $F(b) = y_3\ \modi$.
\begin{figure}[htb]
\centerline{\includegraphics[width=\textwidth]{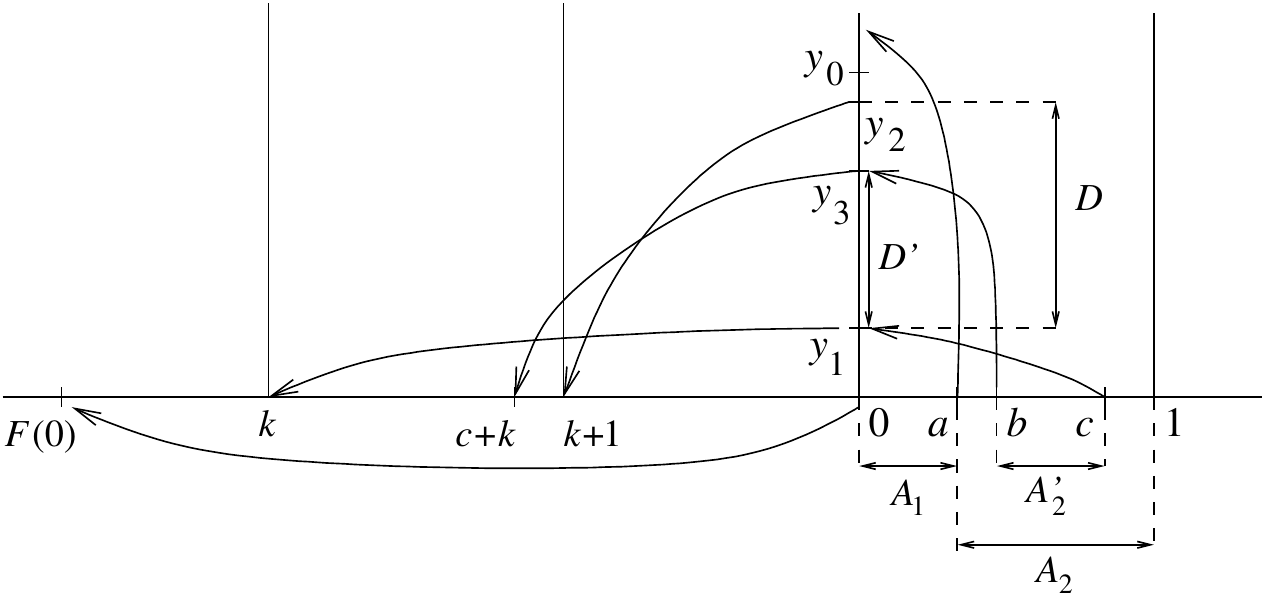}}
\caption{Positions of the different points in Case~1,
where $k = \lceil F(0) \rceil$ (the figure is drawn with $q=0$).
}\label{fig:case1.1}
\end{figure}
Let $D' = [y_1,y_3] \subset D$ and $A_2'= [b,c] \subset A_2$.
Then $D' \arrowto A_1 \cup A_2'\ \modi$ and $A_2 \arrowto D'\ \modi.$
That is, we have the covering graph shown on the left picture of
Figure~\ref{fig:Markov-diagram} by replacing
$A_2$ and $D$ by $A_2'$ and $D',$ respectively.
Moreover, the sets $A_1 + \IZ,\ A_2' + \IZ$ and $D' + \IZ$ are disjoint,
and $A_1, A_2', D'$ contain no branching point in their interior.
Therefore, to show that there exist periodic $\modi$ points of period $n$,
it is enough to show that there exists a non-repetitive loop of
length $n$ in the covering graph.
Consider the following loops in the covering graph:
\begin{align*}
\CC_2  & := D' \arrowto A_1 \arrowto D',\\
\CC_2' & := D' \arrowto A_2' \arrowto D',\text{ and}\\
\CC_3  & := D' \arrowto A_1 \arrowto A_2' \arrowto D',
\end{align*}
where the arrows are {\modi}.
Fix $n\ge 2$.
If $n$ is even, we write $n=2m$ and we consider the loop
$\CC_2'(\CC_2)^{m-1}$.
If $n$ is odd, we write $n=2m+1$ and we consider the loop
$\CC_3(\CC_2)^{m-1}$.
In both cases, we obtain a non-repetitive loop of length $n$.
By Proposition~\ref{prop:covering}, there exists a point $x\in D'$
such that $F^n(x)-x\in\IZ$ and
\begin{gather*}
\forall\, 0\le i\le m-1,\ F^{n-2i}(x)\in D'+\IZ,
\quad\forall\, 1\le i\le m-1,\
F^{n-2i+1}(x)\in A_1+\IZ,\\
F^{n-2m+1}(x)\in A_2'+\IZ\quad
\text{and, if $n$ is odd, } F(x)\in A_1+\IZ.
\end{gather*}
Thus $x$ is periodic {\modi} for $F$ and its period divides $n$.
Since the intervals $A_1,A_2', D'$ are disjoint {\modi}, one can show
that its period {\modi} is exactly $n$. Indeed, consider $1<d<n$.
Then $F^{n-2m+1}(x)\in A_2'+\IZ$ and $F^{n-2m+1+d}(x)$ belongs to,
either $A_1+\IZ$ , or $D'+\IZ$, and thus the period {\modi} of $x$ is
not $d$.

The second case (i.e. when $A_2 \arrowto A_1$) is similar: there
exist $c\in (0,a)$, $y_3\in (y_1,y_2)$ and $c\in (b,a)$ such that
$F(c)=y_1\ \modi$, $F(y_3)=c\ \modi$ and $F(b)=y_3\ \modi$.
If we let $A_1'=[c,b]$ and $D'=[y_1,y_3]$, then we have the
covering graph shown on the right picture of
Figure~\ref{fig:Markov-diagram} by replacing
$A_1$ and $D$ by $A_1'$ and $D',$ respectively.
The rest of the proof is the same as before by interchanging
the roles of $A_1, A_2$.
Therefore, $F$ has periodic $\modi$ points of period $n$ for all
$n\ge 2$.

\begin{figure}[htb]
\centerline{\includegraphics[width=\textwidth]{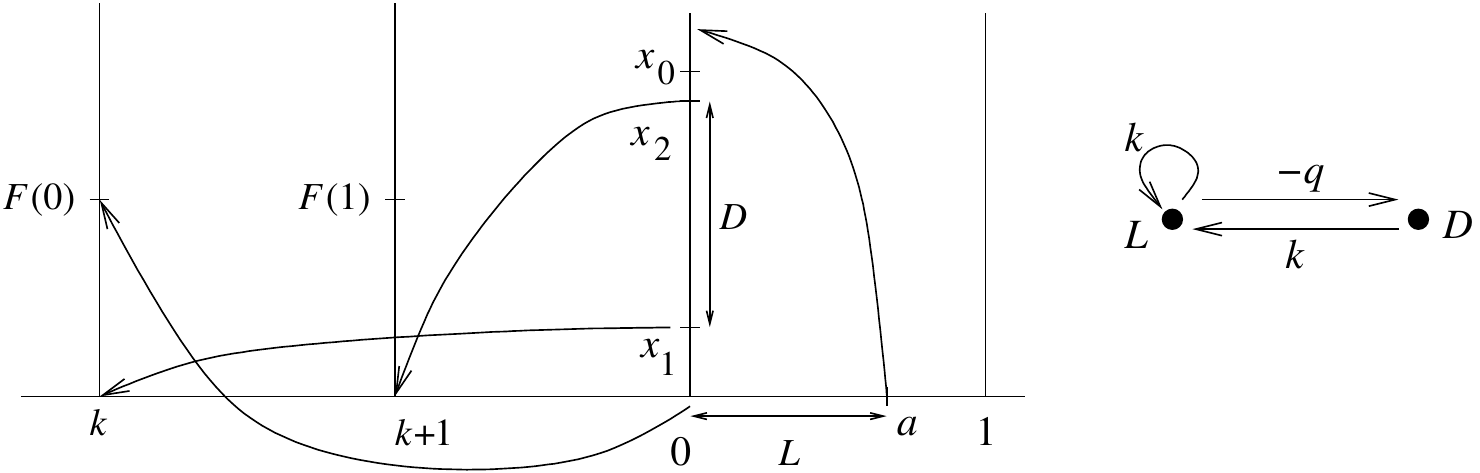}}
\caption{
Left side: positions of the different points in Case~2,
where $k=\lceil F(0)\rceil$ and $k<-q$
(the figure is drawn with $q=0$).
Right side: the covering graph in Case~2
(both when $k\ge -q$ and $k<-q$).}\label{fig:case2}
\end{figure}

\begin{case}{2} $F(0)\in B$.\end{case}
Let $k=\Re(F(0))\in \IZ$
(that is, $F(0) \in B_k$ and $F(1) \in B_{k+1}$).
Observe that the set $F([0,1])$ contains the points
$F(a), F(0), F(1)$, with $F(a)\in B_{-q}$ and $F(a)+q \ge y_0$.
When $k \ge -q$, we set $L=[a,1]$. Then,
\[
F(L) \supset \chull{F(a),F(1)} \supset \chull{y_0-q,k+1} \supset
\chull{y_0-q,-q} \cup \chull{k, k +1} \supset (D-q) \cup (L+k).
\]

When $k < -q$, we set $L=[0,a]$ (see Figure~\ref{fig:case2}).
Then,
\[
F(L) \supset \chull{F(0),F(a)} \supset \chull{k,y_0-q} \supset
\chull{k,k+1} \cup \chull{-q,y_0-q} \supset (D-q)\cup (L+k).
\]
Observe that, in both cases, $F(D) \supset [0,1] + k \supset L+k$
and, hence, $F$ has the covering graph on the right side of
Figure~\ref{fig:case2}.
Thus, $\Per(F)=\IN$ by Lemma~\ref{lem:SemiHorseshoe-mod1}.
\end{proof}

\begin{proof}[Proof of Lemma~\ref{lem:F(R)-left}]
We set
$E_0 := \R$ and $E_i := F(E_{i-1})$ for $i \ge 1.$
Since $F(\R) \supset \R$, $E_i$ is a non-decreasing sequence
of closed connected subsets of $S$.
Thus, $E_i \cap B_0$ is a closed subinterval of $B_0$ containing 0.

The sets $E_i$ are periodic {\modi},
i.e. $E_i = E_i + k$ for every $i \in \N$ and $k \in \Z.$
Indeed, $E_0$ is clearly periodic {\modi}.
If $E_i = E_i + k$ for some $i \in \N$ and every $k \in \Z,$ then
\[
E_{i} = F(E_i) = F(E_i + k) = F(E_i) + k = E_{i+1} + k.
\]

We claim that there exists $n \in \N$ such that
$\max \Re(F(E_n \cap B_0)) \ge 1.$
To prove the claim, set
$
\R_{<1} := \set{x\in S}{\Re(x) < 1}
         = (-\infty,1) \cup \bigcup_{k\le 0} B_k
$
and assume that $\Re(F(E_i \cap B_0)) < 1$ for every $i \in \N$.
By Lemma~\ref{lem:FF+k}(a) and assumption (a),
\[
 F(E_i \cap \R_{<1}) \subset E_{i+1} \cap \R_{<1}
\]
for every $i \in \N$.
Consequently,
\[
 F^i(E_0 \cap \R_{<1}) \subset E_i \cap \R_{<1} \subset \R_{<1}
\]
for every $i \in \N$.
Thus, for all $x \in (-\infty,1) = E_0 \cap \R_{<1},$
$\rho(x) \le 0.$
Since $\rhos(x+k)=\rhos(x)$ for every $x \in S$ and $k \in \Z$
we get $\rhos(x) \le 0$ for every $x \in \R;$
a contradiction with assumption (b).
This proves the claim.

Let $n \in \N$ be the smallest integer such that
$\max \Re(F(E_n \cap B_0)) \ge 1.$

Observe that the continuity of $F$ and the assumption (a) imply
that $\Re(F(0)) \le 0$ (in particular $\Re(F(E_0 \cap B_0)) < 1$).
Hence, $n \ge 1.$
If $n = 1$ then Lemma~\ref{lem:allperiods-1} applies and we have
$\Per(F) \supset \N \setminus \{1\}$.

So, in the rest of the proof we assume $n \ge 2.$
Since $E_n \cap B_0$ is a closed subinterval of $B_0$
containing 0, and $\Re(F(0)) \le 0,$
the continuity of $F$ implies that there exists
$y \in E_n \cap \Bo_0$ such that $F(y) = 1.$
By the minimality of $n,$ $y \notin E_{n-1}.$

Let $\overline{x} \in E_{n-1}$ be such that
$F(\overline{x}) = \max E_n \cap B_0 \ge y.$
If $\overline{x} \in E_{n-2}$ then
$E_{n-1} \cap B_0 \supset [0, F(\overline{x})] \ni y;$
a contradiction.
Consequently, $\overline{x} \in \Bo_{k}$ for some $k \in \Z$
because $\R \subset E_{n-2}.$
Set $x = \overline{x} - k \in E_{n-1} \cap \Bo_0.$
If $x \ge y$ then the connectedness of $E_{n-1}$ implies that
$y \in E_{n-1};$ a contradiction.
Hence, $x < y.$
On the other hand, $F_0(x) = F(\overline{x}) \ge y$ and
$F(x) = F(\overline{x}) - k \in B_{-k}.$
In particular $\Re(F(x)) \in \Z.$
The minimality of $n$ and the fact that $x \in E_{n-1} \cap B_0$
implies that $\Re(F(x)) < 1$ and, hence, $\Re(F(x)) \le 0.$ Therefore,
$
|\Re(F(x))-\Re(F(y))| = \Re(F(y)) - \Re(F(x)) = 1 - \Re(F(x)) \ge 1.
$
Then the lemma follows from Lemma~\ref{twoarrowscrossingLargeOrbits}.
\end{proof}

\begin{proof}[Proof of Lemma~\ref{lem:PotImp:ToutVaBien}]
If $u \in F(\IR)$ then we are done by taking $y=u$.
So, in what follows we assume that $u \notin F(\IR)$.
Then, since $z \in \IR$ and $u \in \Orb(z,F)$ there exists
$\overline{z} \in \Orb(z,F) \cap F(\IR)$ and $l \ge 1$ such that
\begin{equation}\label{eq:Fi(z)}
  F^l(\overline{z}) = u\text{ and }
  F^i(\overline{z}) \notin  F(\IR)\text{ for } i=1,2,\dots,l.
\end{equation}
Since $F(\overline{z}) \notin F(\IR),$ $\overline{z} \notin \IR$.
Also, since $F(\IR) \supset \IR,$
$F^i(\overline{z}) \in \cup_{j\in\IZ} \Bo_j$ for $i=1,2,\dots,l$.
Notice that
$\overline{z} - \Re(\overline{z}), u - \Re(u) \in \Bo_0$ and
$0 < \overline{z} - \Re(\overline{z}) < u - \Re(u)$.
Otherwise,
$\overline{z}-\Re(\overline{z}) \ge u - \Re(u)$ and,
since $F(\IR)$ contains $\IR\cup\{\overline{z}\}$ and is connected,
we obtain
$F(\IR) \supset \chull{0,\overline{z}-\Re(\overline{z})}+\IZ \ni u;$
a contradiction.

If $\Re(F(u)) - \Re(u) = \Re(F(\overline{z})) - \Re(\overline{z})$,
then we set $y = \overline{z}$ and the lemma follows.

So, in the rest of the proof, we set
$\widetilde{z} := \overline{z} - \Re(\overline{z})
              \in \Bo_0 \cap F(\IR)$ and we assume that
\[
  \Re(F(\widetilde{z})) = \Re(F(\overline{z})) - \Re(\overline{z})
                        \ne \Re(F(u)) - \Re(u).
\]
By Lemma~\ref{F0powern}(b) and the fact that $F_0$ has degree 0,
$F^i(\overline{z}) - F_0^i(\widetilde{z}) \in \Z$
for $i=0,1,2,\dots,l.$
Consequently, $F_0^i(\widetilde{z}) \in \Bo_0$ and
\begin{equation}\label{eq:displ}
F^i(\overline{z}) = F_0^i(\widetilde{z}) + \Re(F^i(\overline{z}))
\end{equation}
for $i=0,1,2,\dots,l.$
In particular $u = F^l(\overline{z}) = F_0^l(\widetilde{z}) + \Re(u).$
Hence $F_0^l(\widetilde{z}) = u-\Re(u) > \widetilde{z}$ and,
hence,
\begin{equation}\label{eq:p}
\text{there exists } p \in \{0,1,2,\dots,l-1\} \text{ such that }
  F_0^p(\widetilde{z}) < u - \Re(u) \le F_0^{p+1}(\widetilde{z}).
\end{equation}
If $\Re(F(F_0^p(\widetilde{z}))) \ne \Re(F(u)) - \Re(u)$, then we set
$x = F_0^p(\widetilde{z})$ and $y = u$ and the lemma follows.

Otherwise, we set $l_1 := p < l$ and
$u_1 := F^p(\overline{z}) \in \Orb(z,F) \setminus \IR$ and from
\eqref{eq:displ} and Lemma~\ref{lem:FF+k}(a) we obtain
\begin{align*}
u - \Re(u) &> F_0^p(\widetilde{z}) = u_1 - \Re(u_1) \text{ and}\\
\Re(F(u)) - \Re(u)
 &= \Re(F(F_0^p(\widetilde{z})))
  = \Re(F(F_0^p(\widetilde{z}) + \Re(u_1))) - \Re(u_1) \\
 &= \Re(F(u_1)) - \Re(u_1).
\end{align*}
As in \eqref{eq:p}, the first of these inequalities implies
that there exists
$p_1\in\{0,\ldots, p-1\}$ such that
\[
  F_0^{p_1}(\widetilde{z}) <
  u_1-\Re(u_1) \le
  F_0^{p_1+1}(\widetilde{z}).
\]
If $l_i = p = 0$ then
$u_1 = \overline{z},$
$\widetilde{z} = u_1 - \Re(u_1)$ and, hence,
$\Re(F(\widetilde{z})) = \Re(F(u_1)) - \Re(u_1).$
This contradicts the fact that
$\Re(F(\widetilde{z})) \ne \Re(F(u)) - \Re(u).$
Consequently, $l_1 = p > 0$ and
$u_1 \notin F(\IR)$ according to \eqref{eq:Fi(z)}.
As above, this implies that $u_1 - \Re(u_1) > \widetilde{z}.$
So we can replace $u$ by $u_1$ and $l$ by $l_1$ without modifying the
current assumptions and we can repeat iteratively the above process to
obtain a sequence
$0 < l_m < l_{m-1} < \dots < l_1 < l$ with $1 \le m < l$ and
$p_m \in \{0,1,2,\dots,l_m-1\}$ such that
\begin{itemize}
\item $u_i := F^{l_i}(\overline{z}) \in \Orb(z,F) \setminus \IR$ and
      $\Re(F(u)) - \Re(u) = \Re(F(u_i)) - \Re(u_i)$
      for $i=1,2,\dots,m;$

\item $u - \Re(u) > u_1 - \Re(u_1) > u_2 - \Re(u_2) > \dots
                  > u_m - \Re(u_m) > \widetilde{z};$

\item $F_0^{p_m}(\widetilde{z}) < u_m - \Re(u_m) \le
                 F_0^{p_m+1}(\widetilde{z})$ and
      $\Re(F(F_0^{p_m}(\widetilde{z}))) \ne \Re(F(u_m)) - \Re(u_m)$.
\end{itemize}
Notice that such a sequence exists because we are in the case when
$\Re(F(\widetilde{z})) \ne \Re(F(u)) - \Re(u).$
Then the lemma follows by taking
$x = F_0^{p_m}(\widetilde{z})$ and $y = u_m$.
\end{proof}

\begin{proof}[Proof of Lemma~\ref{lem:super}]
If $u=0$, then $u\in F(\IR)$ and we take $y=u$.
From now on, we assume that $u\in \Bo_0$.
By Lemma~\ref{lem:PotImp:ToutVaBien}, we know that there exists
$y \in \Orb(z,F) \cap \Bo_0$ satisfying
$y \le u$ and $\Re(F(y)) = \Re(F(u))$
and such that,
\begin{enumerate}
\item  either $y \in F(\IR)$,
\item or there exists $x\in B_0$ such that $x < y \le F_0(x)$ and
$m:=\Re(F(x)) \ne \Re(F(y))$.
\end{enumerate}

In case~(a), the lemma holds. So, assume that there exists a point $x$
as in case~(b). Observe that $m \in \Z$ and $F(y) \notin B_m$ because
$F_0(x) \notin \R$. So, by Lemma~\ref{twoarrowscrossing}, the lemma
holds unless $F(0) \in (x+m, \max B_m]$.

Assume that $F(0) \in (x+m, \max B_m]$.
In view of Lemma~\ref{twoarrowscrossingLargeOrbits}, we have
again that $\Per(F) = \IN$ unless $|m - \Re(F(y))| < 1$. Finally,
if  $|m - \Re(F(y))| <1$, then $F(y)\in (m-1,m+1)\setminus\{m\}$
because $F(y) \notin B_m$.
This ends the proof of the lemma.
\end{proof}


\subsection{Proof of Theorem~\ref{theo:0inInterior}}

The proof of Theorem~\ref{theo:0inInterior} is quite long. In the rest
of the section, we are going to assume that $\Int(\RotR(F))$ contains
$0$ (if it contains another integer $m$, we come down to $0$ by
considering the map $F-m$). The first step consists in exhibiting a
particular configuration of points. Then we shall split the proof into
several cases, depending of the positions of these points.

\subsubsection{A particular configuration of points}

We proceed along the lines of the proof of \cite[Lemma~3.9.1]{ALM}.
We first introduce some notation.

Since $0 \in \Int(\RotR(F))$,
there exist $a,b \in \Int(\RotR(F))$ such that $a < 0 < b$, and
there exist $x_a,x_b \in \R$ such that
$\rhos(x_a) = a < 0 < b = \rhos(x_b).$
We may assume that $x_b < x_a$ (by taking $x_b - k$ instead of $x_b$
with $k \in \Z$ appropriate).

\begin{remark}\label{rem:boundedcompactnumber}
Since $\rhos(x_a)<0$ (resp. $\rhos(x_b)>0$), the sequence
$\left(\Re(F^n(x_a))\right)_{n\ge 0}$ tends to $-\infty$
(resp. $\left(\Re(F^n(x_b))\right)_{n\ge 0}$ tends to $+\infty$).
Thus the orbits of both points have a finite number of
elements in each compact subset of $S$.
\end{remark}

Now we define
\begin{align*}
\overline{M} &:=
  \set{F^k(x_b)}{k \ge 0 \text{ and } \Re(F^l(x_b)) > \Re(F^k(x_b))
    \text{ for every } l>k},\text{ and}\\
\underline{M} &:=
  \set{F^k(x_a)}{k \ge 0 \text{ and } \Re(F^l(x_a)) < \Re(F^k(x_a))
    \text{ for every } l>k}.
\end{align*}
Observe that
$\overline{M} \subset \Orb(x_b,F)$ and
$\underline{M} \subset \Orb(x_a,F).$
Hence,
$\overline{M} \cap \underline{M} = \emptyset$
because $x_a$ and $x_b$ have different rotation numbers.

The next lemma summarizes the properties of
$\overline{M}$ and $\underline{M}.$

\begin{lemma}\label{lem:M&M}
The following statements hold for the sets
$\overline{M}$ and $\underline{M}.$
\begin{enumerate}
\item For every $x \in \R,$
$\Card(\Re^{-1}(x) \cap \overline{M}) \le 1$
and
$\Card(\Re^{-1}(x) \cap \underline{M}) \le 1.$

\item Let $L\in\IR$.
For every $w\in \Orb(x_b,F)$ there exists
a point $\overline{x}\in\overline{M}$ such that
$\Re(\overline{x})=\min (\Re(\Orb(w,F))\cap [L,+\infty))$ and
for every $w'\in \Orb(x_a,F)$,
there exists $\underline{x}\in\underline{M}$ such that
$\Re(\underline{x})=\max (\Re(Orb(w',F))\cap (-\infty,L])$.

\item $\min \Re(\overline{M}) = \min \Re(\Orb(x_b,F)) \le x_b,$
and\\
$\max \Re(\underline{M}) = \max \Re(\Orb(x_a,F)) \ge x_a.$

\item $\sup \Re(\overline{M}) = +\infty$
and
$\inf \Re(\underline{M}) = -\infty.$

\item If $x\in \overline M$,
there exists $x'\in \overline M\cap\Orb(x,F)$ such that
$\Re(x)<\Re(x')\le \Re(F(x))$.
The same holds with reverse inequalities with $x,x'\in \underline M$.

\item For any $x_0\in\IR$ and $x\in \overline M$ with
$\Re(x)\le x_0$, there exists $x'\in\overline M$ such
that $\Re(x)\le \Re(x')\le x_0<\Re(F(x'))$.
If $\Re(x')=\Re(x)$ then $x'=x$.
The same holds with reverse inequalities if $x\in \underline M$.
\end{enumerate}
\end{lemma}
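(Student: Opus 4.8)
\textbf{Proof proposal for Lemma~\ref{lem:M&M}.}

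The plan is to prove the six statements in order, exploiting throughout the elementary fact that $\overline{M}$ (resp. $\underline{M}$) records exactly the "record-low-from-the-right" (resp. "record-high-from-the-left") points of the forward orbit of $x_b$ (resp. $x_a$), together with Remark~\ref{rem:boundedcompactnumber}, which guarantees that each compact set of $S$ contains only finitely many points of $\Orb(x_b,F)$ and of $\Orb(x_a,F)$. I will only write out the arguments for $\overline{M}$; each statement about $\underline{M}$ follows by the symmetric argument applied to the map $x\mapsto -x$ composed appropriately, i.e. by reversing all inequalities and exchanging $\min\leftrightarrow\max$, $+\infty\leftrightarrow-\infty$.

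For (a): if $F^{k}(x_b)$ and $F^{j}(x_b)$ both lie in $\Re^{-1}(x)\cap\overline{M}$ with $k<j$, then by the defining property of $\overline{M}$ applied with the index $k$ we get $\Re(F^{j}(x_b))>\Re(F^{k}(x_b))$, i.e. $x>x$, a contradiction; so at most one element of $\overline{M}$ projects to each $x\in\IR$. For (b): fix $w\in\Orb(x_b,F)$, say $w=F^{j}(x_b)$, and fix $L$; since $\Re(F^{n}(x_b))\to+\infty$ and $\Orb(w,F)\subset\Orb(x_b,F)$, the set $\Re(\Orb(w,F))\cap[L,+\infty)$ is non-empty and (by Remark~\ref{rem:boundedcompactnumber}) its infimum is attained, say at $F^{k}(x_b)$ with $k\ge j$; I claim $F^{k}(x_b)\in\overline{M}$. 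Indeed, if some $l>k$ had $\Re(F^{l}(x_b))\le\Re(F^{k}(x_b))$, then $F^{l}(x_b)\in\Orb(w,F)$ would still satisfy $\Re(F^{l}(x_b))\ge L$ (as the only orbit points below the minimizer's level which lie in $[L,\infty)$... ) — here I must be slightly careful: I argue that $\Re(F^{l}(x_b))\ge L$ because $\Re(F^{k}(x_b))\ge L$ and minimality forces $\Re(F^{l}(x_b))=\Re(F^{k}(x_b))$, and then part (a)-type reasoning on the later orbit of $F^{k}(x_b)$ gives a contradiction with $\Re\to+\infty$; so no such $l$ exists and $F^{k}(x_b)\in\overline{M}$ with $\Re(F^{k}(x_b))=\min(\Re(\Orb(w,F))\cap[L,+\infty))$ as required. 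Statement (c) is the special case $w=x_b$, $L=\min\Re(\Orb(x_b,F))$ of (b) (or rather: $x_b=F^{0}(x_b)$ is itself a candidate, and the global minimum of $\Re$ over $\Orb(x_b,F)$, which exists and is $\le\Re(x_b)=x_b$ by Remark~\ref{rem:boundedcompactnumber}, is attained at a point with no strictly-lower later iterate, hence lies in $\overline{M}$). Statement (d) is immediate from $\Re(F^{n}(x_b))\to+\infty$: for every $N$ there is a first time $k$ with $\Re(F^{k}(x_b))>N$, and such an $F^{k}(x_b)$ — being the first to exceed $N$, though that alone is not enough — I instead take, for each $N$, the minimal $\Re$-value of $\Orb(x_b,F)$ on $[N,\infty)$, which by (b) is realized in $\overline{M}$; letting $N\to\infty$ gives $\sup\Re(\overline{M})=+\infty$.

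For (e): let $x=F^{k}(x_b)\in\overline{M}$. By (b) applied to $w=F^{k}(x_b)$ and $L=\Re(F^{k+1}(x_b))=\Re(F(x))$, there is $x'\in\overline{M}\cap\Orb(F^{k}(x_b),F)\subset\overline{M}\cap\Orb(x,F)$ with $\Re(x')=\min(\Re(\Orb(F(x),F)\cup\{F(x)\})\cap[\Re(F(x)),\infty))$; since $x\notin\Orb(F(x),F)$ would give $\Re(x)<\Re(F(x))$... here the point is that $F(x)\in\Orb(x,F)$ has $\Re(F(x))$ as a candidate, so $\Re(x')\le\Re(F(x))$, while $\Re(x')>\Re(x)$ because $x\in\overline{M}$ forces every strictly-later iterate to have strictly larger real part and $x'$ is such a later iterate (one checks $x'\ne x$ since $\Re(x')\ge\Re(F(x))>\Re(x)$). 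Finally (f): given $x_0\in\IR$ and $x\in\overline{M}$ with $\Re(x)\le x_0$, since $\Re(F^{n}(x))\to+\infty$ there is a first iterate of $x$ whose real part exceeds $x_0$; let $x'$ be the point of $\overline{M}$ furnished by (b) applied to $w=x$ and $L=\Re(x)$ that minimizes $\Re$ over $\Re(\Orb(x,F))\cap[\Re(x),x_0]$ — wait: I want $\Re(x')\le x_0<\Re(F(x'))$, so instead I take $x'$ to be (via (b) with $L=\Re(x)$) the element of $\overline{M}\cap\Orb(x,F)$ realizing $\min\Re$ on $[\Re(x),\infty)$ and then "walk forward" within $\overline{M}\cap\Orb(x,F)$ using (e): by (e), from each such point one moves to a strictly-larger-$\Re$ point of $\overline{M}$ not exceeding the next $F$-image, and since real parts tend to $+\infty$ this walk must cross the level $x_0$ in one step, producing $x'\in\overline{M}$ with $\Re(x)\le\Re(x')\le x_0$ and $\Re(F(x'))>x_0$; the refinement that $\Re(x')=\Re(x)\Rightarrow x'=x$ then follows from (a).

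The main obstacle is statement (f) (and to a lesser degree (e)): one has to combine the "record" structure with the divergence of real parts to land \emph{exactly} in the window $[\Re(x),x_0]$ with the $F$-image jumping out of it, and the natural first attempts (take the first iterate exceeding $x_0$; take the $\Re$-minimizer on $[\Re(x),x_0]$) are each slightly off — the first need not be in $\overline{M}$, and the second need not have its $F$-image exceed $x_0$. The clean fix is to run the monotone walk inside $\overline{M}\cap\Orb(x,F)$ provided by (e): each step strictly increases $\Re$ but by no more than one $F$-jump, real parts are unbounded above by (d), so the walk eventually has a member in $[\Re(x),x_0]$ whose successor (equivalently, whose own $F$-image, since $\overline{M}$-members satisfy $\Re(x')\le\Re(F(x'))$ with the next $\overline{M}$-member squeezed in between) exceeds $x_0$; that member is the desired $x'$. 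Once (e) and (f) are set up this way, (a)–(d) are routine and the symmetric claims for $\underline{M}$ are verbatim with reversed inequalities.
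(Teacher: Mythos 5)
Your overall route matches the paper's: parts (a)--(d) follow directly from the ``record point'' structure of $\overline{M}$ together with Remark~\ref{rem:boundedcompactnumber}, (e) is derived from (b), and (f) locates a member of $\overline{M}$ in the window $[\Re(x),x_0]$ whose image jumps past $x_0$. There is, however, a genuine gap in your part (e), rooted in the weak spot of (b) that you flag but never resolve. The conclusion of (b) is only guaranteed when every point of $\Orb(w,F)$ has real part $\ge L$: the minimality of $\xi=\min\bigl(\Re(\Orb(w,F))\cap[L,+\infty)\bigr)$ says nothing about later iterates whose real part is $<L$, and your attempted patch (``minimality forces $\Re(F^{l}(x_b))=\Re(F^{k}(x_b))$'') is circular, since that is forced only if one already knows $\Re(F^{l}(x_b))\ge L$. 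The paper's own proof of (b) has the same blind spot, but every application of (b) in the paper takes $L$ below the entire orbit of $w$, where the issue disappears. You, by contrast, invoke (b) in (e) with $L=\Re(F(x))$, which is typically \emph{not} below $\Orb(F(x),F)$, and there the asserted conclusion is false. Concretely, take $x=x_b\in\overline{M}$ with orbit real parts $0,5,1,2,6,7,\dots$ (increasing from then on): your argument asserts a point of $\overline{M}$ at level $\min\bigl(\Re(\Orb(x,F))\cap[5,\infty)\bigr)=5$, but the only orbit point at level $5$ is $F(x)$, whose later iterates drop to $1$ and $2$, so $F(x)\notin\overline{M}$. The correct choice, as in the paper, is $L=\min\Re\bigl(\set{F^i(x)}{i\ge 1}\bigr)$, which exists by Remark~\ref{rem:boundedcompactnumber} and exceeds $\Re(x)$ because $x\in\overline{M}$; then all of $\Orb(x,F)\setminus\{x\}$ lies above level $L$, (b) applies safely, and the resulting $x'$ satisfies $\Re(x)<\Re(x')\le\Re(F(x))$ because $F(x)$ is among the competitors. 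Your use of (b) in (d) with $L=N$ has the same defect, though there the conclusion is easily recovered by applying (b) to $w=F^{n_0}(x_b)$ with $n_0$ so large that the whole orbit of $w$ stays above $N$.

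Once (e) is repaired, your proof of (f) --- iterating (e) to produce a strictly increasing walk inside $\overline{M}\cap\Orb(x,F)$ whose consecutive real parts are separated by at most one $F$-jump, and stopping at the last member at or below level $x_0$ --- is correct and genuinely different from the paper's argument, which instead takes $x'$ with $\Re(x')=\max\bigl(\Re(\overline{M})\cap(-\infty,x_0]\bigr)$ and derives a contradiction from $\Re(F(x'))\le x_0$ via (b). Your version is arguably more transparent, but you should justify that the walk cannot stall below $x_0$: its real parts cannot accumulate at a finite value because $\Re^{-1}([\Re(x),c])$ is compact and hence, by Remark~\ref{rem:boundedcompactnumber}, contains only finitely many orbit points.
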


\begin{proof}
We prove the lemma for the set $\overline{M}.$
The proofs for the set $\underline{M}$ follow similarly.

Let $F^k(x_b),F^l(x_b) \in \overline{M}$ with $k < l.$
From the definition of the set $\overline{M}$, it follows that
$\Re(F^l(x_b)) > \Re(F^k(x_b)).$
So, (a) holds.

We have $\lim_{n\to+\infty}\Re(F^n(x_b))=+\infty$
(Remark~\ref{rem:boundedcompactnumber}) and thus,
for every $L \in \R$ and every $w\in\Orb(x_b,F)$,
the set $\Re(\Orb(w,F)) \cap [L,+\infty)$ contains infinitely many
elements.
We can define $\xi:=\min (\Re(\Orb(w,F))\cap [L,+\infty)).$
The set $\Re^{-1}(\xi) \cap \Orb(w,F)$ is finite
by Remark~\ref{rem:boundedcompactnumber}.
Thus we can define $i:=\max\set{n\ge 0}{\Re(F^n(w))=\xi}$.
It follows that, for every $j > i$,
$F^j(w) \notin \Re^{-1}(\xi)$
and hence, by the minimality of $\xi$,
$\Re(F^j(w)) > \xi = \Re(F^i(w)).$
So $F^i(w) \in \overline{M}.$
This proves (b) with $\overline{x}=F^i(w)$.
To prove (c) we repeat the proof of (b) by choosing
$w=x_b$ and $L \le \min \Re(\Orb(x_b,F)).$
Then, we obtain $\xi=\min (\Re(\Orb(x_b,F))$
by the definition of $\xi$.
Since $\overline{M}\subset \Orb(x_b,F)$ and $\xi\in \Re(\overline{M})$,
this implies that $\min \Re(\overline{M})=\min \Re(\Orb(x_b,F))$.
Moreover, it is obvious that $ \min \Re(\Orb(x_b,F))\le x_b$,
and thus we obtain (c).
To prove (d), it is enough to use (b) with $L$ tending to $+\infty$.

Suppose that $x\in\overline{M}$.
Consider the set $A=\set{F^i(x)}{i>0}$.
Then $\min A >x$ because $x\in \overline M$.
Applying (b) with $w=x$ and $L=\min A\in \Orb(x,F)$,
we see that there exists
$x'\in\overline M$ such that $\Re(x')=\min \Re(A)$.
By definition of $A$, we have $\Re(x')\le \Re(F(x))$ and this gives (e).

Let $x_0\in\IR$ and let $x\in\overline M$ be such that $\Re(x)\le x_0$.
The set $\Re(\overline M)\cap (-\infty, x_0]$ is non-empty because it
contains $\Re(x)$.
Thus there exists $x'\in\overline M$ such that $\Re(x')$ is
equal to the maximum of this set.
Clearly, $\Re(x)\le \Re(x')\le x_0$.
Suppose that $\Re(F(x'))\le x_0$ and consider the set
$A=\set{F^i(x')}{i>0}$.
Then $\min \Re(A)\le x_0$ and
there exists $x''\in \overline M$ with
$\Re(x'')= \min (\Re(A))$ by (b).
By the definitions of $A$ and $\overline M$, we have
$\min \Re(A)>x'$.
Thus the existence of $x''$ contradicts the definition of $x'$, and
hence $\Re(F(x'))>x_0$.
If $x'\neq x$, then $x'=F^i(x)$ for some $i>0$, and
thus $\Re(x')>\Re(x)$ by definition of $\overline M$. This proves (f).
\end{proof}

Lemma~\ref{lem:M&M}(c) states that
$\min \Re(\overline{M}) \le x_b < x_a \le \max \Re(\underline{M}).$
Consequently, by Lemma~\ref{lem:M&M}(d), there exist points
$z \in \overline{M}$ and $t\in \underline{M}$
such that
$\Re(z) < \Re(t)$
and there are no points of
$\Re(\overline{M} \cup \underline{M})$
in the interval $(\Re(z),\Re(t))$.
By Lemma~\ref{lem:M&M}(b), the inequality $\Re(F(z))<\Re(t)$
(resp. $\Re(F(t))>\Re(z)$) would contradict the definition of $z,t$.
Hence $\Re(F(t))\le \Re(z) < \Re(t) \le \Re(F(z))$.
Let $z'\in \overline M$ (resp. $t'\in\underline M$) be given by
Lemma~\ref{lem:M&M}(e) for $x=z$ (resp. $x=t$). The summary of the
properties of $z,t, z', t'$ is then:
\begin{equation}\label{eq:all-inequalities}
\begin{split}
\Re(F(t)) \le \Re(t') & \le \Re(z) < \Re(t) \le \Re(z') \le \Re(F(z))
              \text{ and}\\
\Re(F(t')) & < \Re(t') < \Re(z') < \Re(F(z')).
\end{split}
\end{equation}

We shall keep the notations $z,z',t,t'$ in the whole section.
Moreover, without loss of generality, we assume that $\Re(t) \in[0,1).$
The points $z$ and $t$ can have the following respective positions:
\begin{enumerate}[(A)]
\item $\Re(t)-\Re(z)\ge 1$,
\item $z,t\in\IR$ and $t-z<1$,
\item $z \in \Bo_0$ and $t \in (0,1),$
\item $t \in \Bo_0$ and $z \in (-1,0).$
\end{enumerate}
In the next three subsections, we shall consider
Cases~(A), (B) and (C) respectively.
Case~(D) follows symmetrically from Case~(C).

Before dealing with these three cases, we state some lemmas which
imply the existence of all periods {\modi}, except perhaps 1, when the
points $t,t',z,z'$ defined above and $F(0)$ satisfy some simple
conditions.

\begin{lemma}\label{lem:F(t)<t-1}
Suppose that $t\in\IR$ and $\Re(F(t))\le t-1$.
If either $z'\in\IR$ or $\Re(F(0))\ge 0$, then $\Per(F)=\IN$.
\end{lemma}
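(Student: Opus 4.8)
Since $t\in\IR$ and $\Re(t)\in[0,1)$ we have $t\in[0,1)$. The plan is to manufacture, out of the hypothesis together with the inequalities in~\eqref{eq:all-inequalities}, a semi-horseshoe {\modi}, and then to conclude by Lemma~\ref{lem:SemiHorseshoe-mod1} (equivalently, by Proposition~\ref{prop:SemiHorseshoe} applied to $F-k$ for a suitable $k\in\IZ$, or by Corollary~\ref{cory:+horseshoeFF-1}); a genuine horseshoe of that kind already produces \emph{all} periods {\modi}, including~$1$, which is exactly $\Per(F)=\IN$. The raw material is: on one side, $t$ is sent at least a full unit to the left, $\Re(F(t))\le t-1$, and since $F$ has degree one this propagates to integer translates, $\Re(F(t-j))=\Re(F(t))-j\le t-1-j$; on the other side, \eqref{eq:all-inequalities} gives the point $z\in\overline{M}$ with $\Re(z)<\Re(t)\le\Re(F(z))$ (left of $t$ but thrown to the right of $t$) and the point $z'\in\overline{M}$ with $\Re(t)\le\Re(z')<\Re(F(z'))$, while $z\in\overline{M}$ guarantees by Remark~\ref{rem:boundedcompactnumber} that the forward orbit of $z$ ultimately leaves every compact subset of $S$ to the right, a motion one can track by the chain of record points furnished by Lemma~\ref{lem:M&M}(e),(f).

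First I would treat the generic case $t\in(0,1)$. Take the real interval $L:=\chull{t-1,t}$; since $0\in\Int(L)$ one works with signed coverings, retracting onto $\IR$ by $\ret_{\IR}$ when convenient. The image $F(L)$ is connected and contains $F(t-1)$ and $F(t)$, of real parts $\le t-2$ and $\le t-1$, so $\Re(F(L))$ already reaches far left; the point is to force it (or an iterate) also past $0$ to the right, and this is where the dichotomy in the hypothesis is used. If $\Re(F(0))\ge0$, then, $0$ being interior to $L$, the set $F(L)$ contains a point of real part $\ge0$, so $\Re(F(L))$ is an interval of length $>1$ running from $\le t-2$ past $0$; retracted onto $\IR$, $L$ then positively (or negatively) $F$-covers both a translate of $L$ and a translate $L+k$ with $k\le-1$, and the translates involved are pairwise disjoint {\modi} because their real parts sit in $[t-2,0]$. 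If instead $z'\in\IR$, I would use $z'$ as a real right-hand anchor (so that no retraction is needed on that side) and the record chain of Lemma~\ref{lem:M&M}(e),(f) to build the return covering from a translate of $L$ back to $L$ along an appropriate iterate of $F$, arranging the translation numbers so that the condition $k_3=k_1$ of Lemma~\ref{lem:SemiHorseshoe-mod1} (or its disjointness-{\modi} alternative) holds. Either way we reach $\Per(F)=\IN$. The boundary case $t=0$ is handled by the same construction, noting that then $\Re(F(0))=\Re(F(t))\le-1$, so the second hypothesis fails and $z'\in\IR$.

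The step I expect to be the real obstacle is not the real-part arithmetic but the verification that the image of the covering interval contains the required translate \emph{as an interval} of $S$, and not merely in real part: the hazard is that $F$ folds $L$ upward into the branch $B_0$, so that the straight part $F(L)\cap\IR$ is too short to cover a translate. Ruling this out is precisely the purpose of the two alternative hypotheses — $\Re(F(0))\ge0$ keeps the image of the part of $L$ near $0$ on the real line to the right of $0$ rather than climbing $B_0$, and $z'\in\IR$ provides a real anchor so the difficulty does not arise on that side — and it is why the lemma is stated with this dichotomy. Once the coverings are fixed, the remaining verifications (correctness of the signs via Lemma~\ref{lem:make-coverings-positive}, absence of branching points in the relevant interiors after retraction, and disjointness {\modi}) are routine, and Lemma~\ref{lem:SemiHorseshoe-mod1} (or Corollary~\ref{cory:+horseshoeFF-1}) closes the argument.
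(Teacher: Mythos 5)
Your overall strategy --- manufacture a loop of coverings {\modi} and conclude via Corollary~\ref{cory:+horseshoeFF-1} or Lemma~\ref{lem:SemiHorseshoe-mod1} --- is the right one (it is also the paper's), but neither branch of your dichotomy is actually carried out. In the case $\Re(F(0))\ge 0$, your interval $L=\chull{t-1,t}$ has length exactly $1$, so the translates $(L+n)_{n\in\IZ}$ are not pairwise disjoint and Corollary~\ref{cory:+horseshoeFF-1} cannot be applied to $L$; moreover the image you control only has real parts filling $[t-2,\Re(F(0))]\supset[t-2,0]$, an interval of length $2-t<2$, which contains exactly one integer translate of $L$ (namely $L-1$) and contains $L$ itself only if $\Re(F(0))\ge t$, which is not part of the hypothesis. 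So the claimed pair of coverings ``$L$ covers a translate of $L$ and a translate $L+k$ with $k\le -1$'' is not established, and the single covering $L\signedcover{F}(L-1)$ forces nothing beyond a fixed point {\modi}. (A repair exists --- e.g.\ shrink to $[t-1,0]$, of length $1-t<1$, whose real-part image contains $[t-2,0]=[t-1,0]\cup([t-1,0]-1)$ --- but that is not what you wrote; the paper instead works on the other side of $t$: since $\Re(F(1))=\Re(F(0))+1\ge 1$ and $t\ne 0$ here, the interval $I=[t,1]$ has length $<1$ and positively covers $I$ and $I-1$.)

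In the case $z'\in\IR$ you only state an intention (``build the return covering from a translate of $L$ back to $L$ along an appropriate iterate, arranging the translation numbers so that $k_3=k_1$''): no coverings are exhibited, and arranging those constants is precisely what would require proof. It is also unnecessary: translating $z'$ by an integer to the point $x\in(t,t+1)$ of $z'+\IZ$ (one has $x\ne t$ because $z'$ and $t$ have different rotation numbers), Lemma~\ref{lem:FF+k}(a) and \eqref{eq:all-inequalities} give $\Re(F(x))>x$, so the single interval $[t,x]$, of length $<1$, positively covers $[t-1,x]\supset[t,x]\cup([t,x]-1)$ and Corollary~\ref{cory:+horseshoeFF-1} applies with one iterate of $F$ and no record points. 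Finally, your diagnosis of the role of the dichotomy is off: excursions of the image into branches are harmless, since positive coverings are read through the retraction onto the target interval; the hypotheses $z'\in\IR$ or $\Re(F(0))\ge 0$ are there solely to supply a point $x\in(t,t+1)$ with $\Re(F(x))\ge x$, i.e.\ a right-moving real anchor just above $t$ to pair with $\Re(F(t))\le t-1$.
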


\begin{proof}
If $z'\in\IR$, we have $z'<\Re(F(z'))$ by \eqref{eq:all-inequalities}.
Let $x$ be the point in $z'+\IZ$ such that $t< x<t+1$
(the case $x=t$ is not possible because $x$ and $t$ have different
rotation numbers). By Lemma~\ref{lem:FF+k}(a) we also have $x <
\Re(F(x)).$

When $\Re(F(0))\ge 0$ we set $x=1$ and, as above, $x < \Re(F(x)).$
Since $t\in \R$, $0 \le t < 1$.
If $t=0$ then, $0 \le \Re(F(t)) \le -1;$
a contradiction.
Hence, as in the previous case, $t < x < t+1.$

Thus the interval $I=[t,x]$ is of length less than 1 and we have
$I\signedcover{F}[t-1,x]$ and hence
$I\signedcover{F} I \cup (I-1)$.
Then $\Per(F)=\IN$ by Corollary~\ref{cory:+horseshoeFF-1}.
\end{proof}

\begin{lemma}\label{lem:F(0)>t}
Suppose that $z\in B_0$, $t,t'\in\IR$ and $\Re(F(0))\ge t$.
Then $\Per(F)=\IN$.
\end{lemma}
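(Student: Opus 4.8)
The plan is to exhibit a positive semi-horseshoe for a suitable translate of $F$ living inside the branch $B_0$, and then invoke Corollary~\ref{cory:+horseshoeFF-1} or Proposition~\ref{prop:SemiHorseshoe}. Recall that under the hypotheses we have $z\in B_0$ (so $\Re(z)=0$, and since $z\ne 0$ is forced — $z\in\overline M\subset\Orb(x_b,F)$ has positive rotation number — actually $z\in\Bo_0$), $t,t'\in\IR$, $\Re(F(0))\ge t$, and the chain of inequalities \eqref{eq:all-inequalities}, which in this situation reads $\Re(F(t))\le t'\le 0<\Re(t)\le \Re(z')\le \Re(F(z))$ together with $\Re(F(t'))<t'<\Re(z')<\Re(F(z'))$. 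Since $\Re(t)\in[0,1)$ and $\Re(t)>\Re(z)=0$, we have $t\in(0,1)$.

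First I would record the two key coverings. From $\Re(F(0))\ge t>0$ and $F(0)\in B$ or $F(0)>0$ real — in any case $\Re(F(0))\ge t$ — together with $F(z)\in S$ with $\Re(F(z))\ge \Re(z')\ge \Re(t)>0$: the point is that the interval $[0,z]\subset B_0$ (from the branching point $0$ up to $z$) maps under $F$ across a range whose real parts run from $\Re(F(0))\ge t$ down through $0$ (continuity forces $F([0,z])$ to be connected and to contain a point with real part $0$ if $\Re(F(z))$ and $\Re(F(0))$ straddle an integer, but more simply $\chull{F(0),F(z)}\supset\chull{0, \text{something}}$ once we know one of the images sits over a nonpositive real part). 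The cleanest route: I expect to show $[0,z]\, F$-covers an interval of the form $[t,t+1]\subset\IR$ or a sub-branch $B_0+k$, and that $[t,t+1]$ (or the relevant real interval near $t$) $F$-covers back over $[0,z]$, using $\Re(F(t))\le t'\le 0$ and $\Re(F(t+1))=\Re(F(t))+1\le 1$, while $z'\in\Orb(x_b,F)$ with $\Re(z')\ge\Re(t)$ gives a point in the orbit mapping far to the right.

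Concretely I would set $G:=F$ (no translate needed if things land in $B_0$) or $G:=F-k$ for the appropriate $k$, and produce two disjoint-mod-$\IZ$ intervals $K,L$ with $K\subset B_0$, $L$ real, such that $K\,\xrightarrow{G}\,K\cup L$ and $L\,\xrightarrow{G}\,K$; or better, find a single interval $I$ with $I\signedcover{F}I$ and $I\signedcover{F}I+k$ for some $k\ne 0$, which is exactly the hypothesis of Corollary~\ref{cory:+horseshoeFF-1} and yields $\Per(F)=\IN$ at once. The candidate for $I$ is a sub-interval of $B_0$ anchored at $0$: since $\Re(F(0))\ge t>0$ the image $F(0)$ sits at real part $\ge t$, so $[0,z]$ reaches (in real part) from $\Re(F(0))\ge t$ at one end to $\Re(F(z))$ (also $\ge t$) — hmm, both ends are large, so instead I would use that $t\in\Orb(x_b,F)$ returns: the subtlety is to get an interval in $B_0$ that $F$-covers a copy of itself shifted by a nonzero integer. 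The honest mechanism is a longer loop $[0,z]\to[\text{real interval near }t]\to[0,z]+k$, using $F(t)$ far left (real part $\le t'\le 0$) on one pass and $F(z)$ far right on another, exactly as in the proof of Lemma~\ref{lem:F(t)<t-1} but now with the branch playing the role of the extra room.

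The main obstacle will be bookkeeping the integer shifts and verifying the images genuinely cover the claimed intervals without running into the branching point $0$ in the interior of a covering interval — one must keep $0$ as an endpoint throughout, which is why $z\in\Bo_0$ and the anchoring at $0$ matter. A second delicate point is ruling out that the periodic point produced has period dividing $n$ but smaller than $n$; this is handled, as in Corollary~\ref{cory:+horseshoeFF-1} and Lemma~\ref{lem:SemiHorseshoe-mod1}, by a rotation-number argument: the loop accumulates a nonzero total translation over a length-$n$ circuit, forcing $\rhos$ of the periodic $\modi$ point to be $k/n\ne 0$, incompatible with any proper divisor returning to the same interval. Once the semi-horseshoe (with positive coverings and disjoint-mod-$\IZ$ intervals, or with $k_1=k_3$) is in place, Proposition~\ref{prop:SemiHorseshoe} together with Lemma~\ref{PeriodsAndPeriodsmodiAreFriends}, or directly Corollary~\ref{cory:+horseshoeFF-1}, gives $\Per(F)=\IN$, completing the proof.
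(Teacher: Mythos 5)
Your overall strategy (produce a horseshoe and quote Proposition~\ref{prop:SemiHorseshoe} or Corollary~\ref{cory:+horseshoeFF-1}) is the right one, but the proposal never actually produces the coverings, and the concrete candidates you do sketch would not work. Your main candidate is an interval $[0,z]\subset B_0$ anchored at the branching point; however the only information available about $F$ on that interval is $\Re(F(0))\ge t$ and $\Re(F(z))\ge \Re(z')\ge t$, so both endpoint images lie to the right of $t$ and nothing forces $F([0,z])$ to cover any interval ``near $t$'' from below, let alone a shifted copy $[0,z]+k$. You notice this (``both ends are large'') but the fallback you propose, a loop $[0,z]\to[\text{real interval near }t]\to[0,z]+k$, suffers from exactly the same defect: the first covering is unjustified. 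So the central construction is missing. (Two minor slips: $t\in\underline M\subset\Orb(x_a,F)$, not $\Orb(x_b,F)$; and $z\ne 0$ is not forced by the rotation number, since the branching point $0$ can itself have positive rotation number.)

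The idea you are missing is that the branch is not needed at all for the covering intervals: the hypothesis $\Re(F(0))\ge t$ is used only through the value of $F$ at the branching point $0$, and the horseshoe lives in $\IR$ on the two sides of $0$. From \eqref{eq:all-inequalities} and $\Re(z)=0$ one has $\Re(F(t))\le t'\le 0<t$ and $\Re(F(t'))<t'$. Let $t''\in (t'+\IZ)\cap(-1,0]$; by degree one, $\Re(F(t''))<t''$. Then
\[
F([0,t])\supset\chull{F(0),F(t)}\supset[t',t]\supset[t'',t],
\qquad
F([t'',0])\supset\chull{F(t''),F(0)}\supset[t'',t]=[t'',0]\cup[0,t],
\]
and $[t'',0]$, $[0,t]$ have disjoint interiors containing no branching point. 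Proposition~\ref{prop:SemiHorseshoe} then gives $\Per(0,F)=\IN$, hence $\Per(F)=\IN$ — with genuine (not {\modi}) coverings, so none of the rotation-number bookkeeping you anticipate is needed. Without identifying these two real intervals (in particular, without bringing $t'$ and its translate $t''$ into the construction), the proof does not go through.
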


\begin{proof}
The fact that $z\in B_0$ and \eqref{eq:all-inequalities} imply
that $t'\le 0=\Re(z)<t$.
Let $t''\in t'+\IZ$ be such that $t''\in (-1,0]$.
Necessarily, $t'\le t''$.
Using \eqref{eq:all-inequalities}, we obtain
$F([t'',0])\supset [t'',t]=[t'',0]\cup [0,t]$ and
$F([0,t])\supset [t',t]\supset [t'',t]$.
Since $[t'',0]$ and $[0,t]$ contain no branching points in their
interior, Proposition~\ref{prop:SemiHorseshoe} applies
to the intervals $[t'',0]$ and $[0,t]$, and $\Per(0,F)=\IN$.
This clearly implies that $\Per(F)=\IN$.
\end{proof}

\begin{lemma}\label{lem:bigF(0)}
Suppose that $z\in B_0$, $t\in\IR$  and $|\Re(F(0))|\ge 1$.
Then $\Per(F)\supset\IN\setminus\{1\}$.
\end{lemma}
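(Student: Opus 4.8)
The plan is to exploit the configuration of points $z,z',t,t'$ supplied by \eqref{eq:all-inequalities}. Since $z\in B_0$ we have $\Re(z)=0$, so the first line of \eqref{eq:all-inequalities} together with the standing normalisation $\Re(t)\in[0,1)$ gives $\Re(F(t))\le\Re(t')\le 0$, $\ 0<\Re(t)<1$, and $\Re(F(z))\ge\Re(z')\ge\Re(t)>0$. Thus $t$, viewed as a point of $\IR$, lies strictly between the branching points $0$ and $1$, its image has non-positive real part, while $\Re(F(z))>0$. I would then split according to the sign of $\Re(F(0))$, which by hypothesis has absolute value at least $1$.

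\emph{Case $\Re(F(0))\ge 1$.} Here I would build a semi-horseshoe directly on $\IR$. Put $K:=[0,t]$ and $L:=[t,1]$; these are non-degenerate intervals of $\IR$, and their interiors $(0,t)$ and $(t,1)$ are disjoint and contain no branching point. Since $F$ has degree $1$, $\Re(F(1))=\Re(F(0))+1\ge 2$. As $F(K)\supset\chull{F(0),F(t)}$ and $\chull{p,q}\cap\IR$ is the real interval spanned by $\Re(p),\Re(q)$, the inequalities $\Re(F(t))\le 0$ and $\Re(F(0))\ge 1$ yield $F(K)\supset[0,1]=K\cup L$; similarly $F(L)\supset\chull{F(t),F(1)}\supset[0,1]=K\cup L$. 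Hence $F(K)\supset L$ and $F(L)\supset K\cup L$, so Proposition~\ref{prop:SemiHorseshoe} produces, for every $n$, a true periodic point of $F$ of period $n$ in $K\cup L$, which by Lemma~\ref{PeriodsAndPeriodsmodiAreFriends}(a) is periodic {\modi} of period $n$. Thus $\Per(F)=\IN\supset\IN\setminus\{1\}$.

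\emph{Case $\Re(F(0))\le -1$.} Now I would feed $z$ into Lemma~\ref{lem:super}, applied to the real point $x_b$ and the point $z\in\Orb(x_b,F)\cap B_0$ (note $z\in\overline M\subset\Orb(x_b,F)$). This gives $y\in\Orb(x_b,F)\cap B_0$ with $\Re(F(y))=\Re(F(z))>0$ satisfying one of three alternatives. If $\Per(F)=\IN$ we are done. If $y\in F(\IR)$, then $y\in F(\IR)\cap B_0$, and since $\Re(F(0))\le -1$ forces $\lceil\Re(F(0))\rceil+1\le 0<\Re(F(y))$, Lemma~\ref{lem:allperiods-1} applies and gives $\Per(F)\supset\IN\setminus\{1\}$. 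The third alternative is impossible: it would supply $x\in B_0$ and $m:=\Re(F(x))\in\IZ$ with $F(0)\in(x+m,\max B_m]$, whence $\Re(F(0))=m\le -1$, and $F(y)\in(m-1,m+1)\setminus\{m\}$, whence $\Re(F(y))<m+1\le 0$, contradicting $\Re(F(y))>0$. This exhausts the cases and proves the lemma.

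The two sign regimes genuinely need different tools — a plain real-line semi-horseshoe when $F(0)$ points far to the right, but the ``orbit escapes through the branch $B_0$'' mechanism encapsulated in Lemma~\ref{lem:super} when $F(0)$ points far to the left — and recognising this dichotomy (rather than hoping for one uniform argument) is the main conceptual step; the only verification requiring a little care is that the third alternative of Lemma~\ref{lem:super} cannot occur once $\Re(F(0))\le -1$. Everything else reduces to routine checks of coverings between intervals of $S$.
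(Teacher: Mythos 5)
Your proof is correct. The first case ($\Re(F(0))\ge 1$) is exactly the paper's argument: the horseshoe $F([0,t])\supset[0,1]$, $F([t,1])\supset[0,1]$ followed by Proposition~\ref{prop:SemiHorseshoe}. The second case ($\Re(F(0))\le -1$) is handled by a genuinely different route. The paper splits according to whether every $x<0$ satisfies $\Re(F(x))<0$: if so it invokes Lemma~\ref{lem:F(R)-left} (with $x_0=x_b$); otherwise it produces $b\in(0,1)$ with $\Re(F(b))\ge 1$, obtains $[0,b]\signedcover{F}[0,b]$ and $[0,b]\signedcover{F}[0,b]-1$, and concludes $\Per(F)=\IN$ by Corollary~\ref{cory:+horseshoeFF-1}. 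You instead run the orbit of $x_b$ through Lemma~\ref{lem:super} at the point $z\in B_0$ and feed the resulting $y\in F(\IR)\cap B_0$, with $\Re(F(y))=\Re(F(z))>0\ge\lceil \Re(F(0))\rceil+1$, into Lemma~\ref{lem:allperiods-1}; your elimination of the third alternative of Lemma~\ref{lem:super} (it would force $\Re(F(0))=m\le -1$ and $\Re(F(y))<m+1\le 0$, contradicting $\Re(F(y))>0$) is correct. Both routes work and are of comparable length: the paper's is more self-contained, avoiding the heavier Lemma~\ref{lem:super} (whose proof rests on Lemmas~\ref{lem:PotImp:ToutVaBien}, \ref{twoarrowscrossing} and \ref{twoarrowscrossingLargeOrbits}), whereas yours reuses machinery the paper deploys anyway in Lemmas~\ref{lem:largegap} and \ref{lem:caseC2}; there is no circularity, since neither Lemma~\ref{lem:super} nor Lemma~\ref{lem:allperiods-1} depends on the present lemma.
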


\begin{proof}
The fact that $z\in B_0$ and \eqref{eq:all-inequalities} imply
that $\Re(F(t))\le 0=\Re(z)<t$.
First we suppose that $\Re(F(0))\ge 1$.
Then $F([0,t])\supset [0,1]$ and $F([t,1])\supset [0,1]$.
Moreover, the two intervals $[0,t]$ and $[t,1]$ contain no branching
point in their interior.
Thus Proposition~\ref{prop:SemiHorseshoe} applies and $\Per(F)=\IN$.

Secondly we suppose that $\Re(F(0))\le -1$.
If, for all $x\in (-\infty, 0)$, $\Re(F(x))<0$, then
Lemma~\ref{lem:F(R)-left} applies (with $x_0=x_b$) and
$\Per(F)\supset\IN\setminus\{1\}$.
Otherwise there exists $x\in (-\infty, 0)$ such that $\Re(F(x))\ge 0$.
Let $b$ be the unique point in $x+\IZ\cap [0,1)$.
Thus $b\ge x+1$ and $\Re(F(b))\ge 1$.
Set $I=[0,b]$.
Then $I\signedcover{F} I\cup (I-1)$ and $\Per(F)=\IN$ by
Corollary~\ref{cory:+horseshoeFF-1}.
\end{proof}

\subsubsection{Case (A): $\Re(t) - \Re(z) > 1$.}
This case is solved in the next lemma.

\begin{lemma}\label{lem:largegap}
Assume that $\Re(t) -\Re(z) \ge 1$.
Then $\Per(F) \supset \N\setminus\{1\}$.
\end{lemma}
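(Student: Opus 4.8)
The plan is to turn the hypothesis $\Re(t)-\Re(z)\ge 1$ into a self‑covering of a single interval which, together with a covering of one of its integer translates, yields $\Per(F)=\IN$ via Corollary~\ref{cory:+horseshoeFF-1} (or Lemma~\ref{lem:SemiHorseshoe-mod1}); and, when that is not immediately possible, to reduce to one of the helper Lemmas~\ref{lem:F(t)<t-1},~\ref{lem:F(R)-left},~\ref{lem:allperiods-1} or to the branch Lemmas~\ref{twoarrowscrossing} and~\ref{twoarrowscrossingLargeOrbits}. First I would record the basic consequences of the hypothesis: since $\Re(t)\in[0,1)$, the inequality $\Re(t)-\Re(z)\ge 1$ forces $\Re(z)<0$, so $z\ne 0$ and $z\notin B_0$; moreover, by~\eqref{eq:all-inequalities}, $\Re(F(t))\le\Re(z)\le\Re(t)-1$ while $\Re(F(z))\ge\Re(z')\ge\Re(t)$, and $\chull{z,t}$ is an arc whose real‑part projection contains the real‑axis segment $[\Re(z),\Re(t)]$.

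I would then split according to whether $t\in\IR$. If $t\in\IR$, then $t=\Re(t)\in[0,1)$ and $\Re(F(t))\le t-1$, so the hypothesis of Lemma~\ref{lem:F(t)<t-1} is met and $\Per(F)=\IN$ follows as soon as $z'\in\IR$ or $\Re(F(0))\ge 0$. In the remaining subcase $z'\notin\IR$ and $\Re(F(0))<0$, I would first test the dichotomy behind Lemma~\ref{lem:F(R)-left}: if $\Re(F(x))<0$ for every $x\in\IR$ with $x<0$, then, using $\rhos(x_b)=b>0$, that lemma gives $\Per(F)\supset\IN\setminus\{1\}$; otherwise fix $x<0$ with $\Re(F(x))\ge 0$, translate it into $[0,1)$ to obtain a point $w$ with $\Re(F(w))\ge 1$, and build a covering $I\signedcover{F}I$, $I\signedcover{F}I\pm 1$ on a suitable non‑degenerate subinterval $I$ of $(0,1)$ with endpoints chosen among $\{0,w,t\}$: the controlled values $\Re(F(0))<0$, $\Re(F(w))\ge 1$ and $\Re(F(t))\le\Re(z)\le t-1$ between them cover a long enough real‑axis segment. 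The genuinely fiddly subcase is $-1<\Re(F(0))<0$ together with $-1<\Re(z)<0$, where the only escape is to pass from $z$ to a later point of $\overline{M}$, or from $t$ to an earlier point $t',t'',\dots$ of $\underline{M}$ (available by Lemma~\ref{lem:M&M}(d,e)), whose image reaches further out.

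If instead $t\in\Bo_0$, then $\Re(t)=0$, hence $\Re(z)\le -1$ and $\Re(F(t))\le -1$, so $|\Re(F(t))-\Re(t)|\ge 1$. Since $z\in\overline{M}$ has $\Re(z)<0$ and the real parts along $\Orb(z,F)$ tend to $+\infty$, Lemma~\ref{lem:M&M}(b,e) lets me locate a point of $\Orb(z,F)$ lying (after a deck translation) in $B_0$ whose image has real part $\ge 1$; retracting that point to $B_0$ and comparing its height against $\Im(t)$, I expect to obtain points $x<y$ in $B_0$ with $y\le F_0(x)$ and $|\Re(F(x))-\Re(F(y))|\ge 1$, so that Lemma~\ref{twoarrowscrossingLargeOrbits} gives $\Per(F)=\IN$. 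When the correct configuration does not appear along the orbit, the obstruction is again caused by $F(0)$, and a direct semi‑horseshoe inside $B_0$ (Proposition~\ref{prop:SemiHorseshoe} together with Lemma~\ref{PeriodsAndPeriodsmodiAreFriends}) or Lemma~\ref{twoarrowscrossing} applies.

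The main obstacle, felt in every subcase not disposed of by a direct reduction, is that the branching point $0$ always lies inside the gap $[\Re(z),\Re(t)]$: one cannot simply use a unit subinterval of the gap as a covering interval, so the argument is pushed into a bookkeeping on the position of $F(0)$ — which is precisely why Lemmas~\ref{lem:F(t)<t-1},~\ref{lem:F(0)>t},~\ref{lem:bigF(0)} are phrased in terms of $F(0)$ — and, occasionally, into following the $\overline{M}$‑ and $\underline{M}$‑orbits further until some interval strictly inside a single component $(m,m+1)$ of $\IR\setminus\IZ$ is spread by $F$ far enough to overlap one of its integer translates.
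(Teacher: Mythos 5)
Your reduction of the case $t\in\IR$ (apply Lemma~\ref{lem:F(t)<t-1} when $z'\in\IR$ or $\Re(F(0))\ge 0$, otherwise test hypothesis (a) of Lemma~\ref{lem:F(R)-left} and, if it fails, build coverings inside $[0,1)$ from a point $w$ with $\Re(F(w))\ge 1$) can indeed be completed, but you leave it as a sketch: the ``fiddly subcase'' you flag is never resolved, and the single-interval covering $I\signedcover{F}I$, $I\signedcover{F}I\pm 1$ with endpoints in $\{0,w,t\}$ does not exist when $w<t$ (one needs instead the two-interval horseshoe $[0,w]$, $[w,t]$ and Proposition~\ref{prop:SemiHorseshoe}). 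Note that the paper sidesteps this entire discussion by the symmetry of \eqref{eq:all-inequalities}: it assumes $\Re(F(0))\ge 0$ and works with $t$, treating $\Re(F(0))\le 0$ by the mirror argument with $z$, so the case $\Re(F(0))<0$ never has to be analysed directly.

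The genuine gap is in the branch case $t\in\Bo_0$. You claim that, since the real parts along $\Orb(z,F)$ tend to $+\infty$, Lemma~\ref{lem:M&M}(b,e) ``lets me locate a point of $\Orb(z,F)$ lying (after a deck translation) in $B_0$ whose image has real part $\ge 1$''. Lemma~\ref{lem:M&M} says nothing of the sort: the forward orbit of $z$ may never enter the branches at all, and even if it did, Lemma~\ref{twoarrowscrossingLargeOrbits} requires a very specific configuration ($x<y\le F_0(x)$ in $B_0$ with $F(x)$ itself lying in a branch $B_m$ and $|\Re(F(x))-\Re(F(y))|\ge 1$), which ``retracting that point to $B_0$ and comparing its height against $\Im(t)$'' does not produce; your fallback ``a direct semi-horseshoe inside $B_0$ or Lemma~\ref{twoarrowscrossing} applies'' is not an argument. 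The paper's mechanism here is different and essential: it applies Lemma~\ref{lem:super} (built on Lemma~\ref{lem:PotImp:ToutVaBien}) to the real point $x_a$ and the branch point $u=t\in\Orb(x_a,F)\cap B_0$ --- i.e.\ to the orbit that is \emph{known} to pass through $B_0$, namely at $t$ --- obtaining $y\in B_0$ with $\Re(F(y))=\Re(F(t))\le -1$; the normalization $\Re(F(0))\ge 0$ then rules out the third alternative of Lemma~\ref{lem:super} (since $m\ge 0$ forces $\Re(F(y))\le m-1$, contradicting $F(y)\in(m-1,m+1)$), leaving $y\in F(\IR)$, after which Lemma~\ref{lem:allperiods-1} applies because $\Re(F(y))\le\lfloor\Re(F(0))\rfloor-1$. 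Without invoking Lemma~\ref{lem:super}/\ref{lem:PotImp:ToutVaBien} on the orbit of $x_a$ and without the normalization of $F(0)$, your outline does not yield either a suitable point of $F(\IR)\cap B_0$ for Lemma~\ref{lem:allperiods-1} or the crossing configuration of Lemmas~\ref{twoarrowscrossing}--\ref{twoarrowscrossingLargeOrbits}, so this half of the lemma remains unproved.
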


\begin{proof}
We assume that $\Re(F(0)) \ge 0$ and we shall use the point $t$.
If $\Re(F(0)) \le 0$, the proof is similar by using the point $z$
instead
of $t$.

By \eqref{eq:all-inequalities} and our assumption,
\[
\Re(F(t)) \le \Re(t') \le \Re(z) \le \Re(t) -1.
\]
So, when $t \in B_0$, then $\Re(F(t))<\Re(t)=0$.
Therefore, $t \ne 0$ because $\Re(F(0)) \ge 0.$
Consequently, either $t \in \Bo_0$, or $t \in (0,1)$.

When $t \in (0,1)$, we have $\Re(t) = t$ and, hence,
$\Re(F(t)) \le t -1$.  Thus, the lemma follows from
Lemma~\ref{lem:F(t)<t-1}.

Assume now that $t \in \Bo_0$ (and, hence, $\Re(F(t)) \le \Re(t)-1=-1$).
By Lemma~\ref{lem:super} (applied with $x_a$ and $t$ instead of $z$ and $u$),
we know that, either $Per(F)=\IN$, and the lemma holds; or
there exists $y \in B_0$ satisfying
$y \le t$ and $\Re(F(y)) = \Re(F(t))$ and such that
\begin{itemize}
\item either $y \in F(\IR)$,
\item or there exists a point
$x\in B_0$ such that $x < y \le F_0(x),$
$F(0) \in (x+m, \max B_m]$ and
$F(y) \in (m-1, m+1)\setminus\{m\}\subset \R$,
where $m:=\Re(F(x)) \in \Z$.
\end{itemize}
In the second case, $m \ge 0$ because $\Re(F(0)) \ge 0.$
But $\Re(F(y))=\Re(F(t))\le -1.$
Hence, $\Re(F(y)) \le m-1$, and thus
the second case is not possible.
Consequently, $y \in F(\IR).$
Since
$\Re(F(y)) \le -1 \le \lfloor  \Re(F(0)) \rfloor -1$, we can use
Lemma~\ref{lem:allperiods-1}. Hence, $\Per(F) \supset
\N\setminus\{1\}$ in this case.
\end{proof}

\subsubsection{Case (B): $z,t\in\IR$ and $t-z<1$}
This case is dealt by the next lemma.

\begin{lemma}\label{lem:sortgapR}
Assume that $t,z \in \R$ and $t -z < 1$.
Then $\Per(F) = \N$.
\end{lemma}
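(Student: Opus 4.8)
First, since the inclusion $\Per(F)\subseteq\IN$ is automatic, it is enough to prove $\IN\subseteq\Per(F)$. Note that all the horseshoe-type tools I shall invoke (Proposition~\ref{prop:SemiHorseshoe}, Corollary~\ref{cory:+horseshoeFF-1}, Lemma~\ref{lem:SemiHorseshoe-mod1}) produce periodic points of \emph{every} period, in particular of period~$1$, so I will not need a separate argument for the fixed point mod~$1$.

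Throughout I keep the points $z,t,z',t'$ and the inequalities \eqref{eq:all-inequalities}. In Case~(B) we have $z,t\in\IR$, $z<t$, $t-z<1$, and $\Re(t)=t\in[0,1)$; reading off \eqref{eq:all-inequalities} gives the two inequalities that govern the argument,
\[
 \Re(F(t))\le \Re(t')\le z\qquad\text{and}\qquad t\le \Re(z')\le \Re(F(z)),
\]
so $F$ pushes the real part of $z$ up to at least $t$ and that of $t$ down to at most $z$. By Remark~\ref{rem:boundedcompactnumber} the forward orbit of $z$ (resp.\ of $t$) escapes to $+\infty$ (resp.\ to $-\infty$) in real part, and since $z\in\overline{M}$ (resp.\ $t\in\underline{M}$) it stays strictly to the right of $z$ (resp.\ weakly to the left of $z$). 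Finally, by the symmetry exchanging the roles of $\overline{M}$ and $\underline{M}$ (and of ``right'' and ``left''), I may assume $\Re(F(0))\ge 0$.

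I would first clear the borderline configurations with the lemmas already at hand. If $\Re(F(t))\le t-1$, then Lemma~\ref{lem:F(t)<t-1} applies (using $t\in\IR$ and $\Re(F(0))\ge 0$) and gives $\Per(F)=\IN$. If $z=0$ — the only way $z$ can lie in $B_0$ here — then Lemmas~\ref{lem:F(0)>t} and~\ref{lem:bigF(0)} (together with Lemma~\ref{lem:F(t)<t-1}) dispose of the possible positions of $F(0)$. So from now on I assume $z\ne 0$ and $t-1<\Re(F(t))\le z$; in particular $t\ne 0$ (otherwise $\Re(F(0))=\Re(F(t))\le z<0$), hence $t\in(0,1)$ and $z\in(-1,1)$, so $[z,t]\subset\IR$ contains no branching point in its interior unless $0\in(z,t)$, in which case I work with the two halves $[z,0]$ and $[0,t]$.

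The core of the proof, and its main obstacle, is then to build a horseshoe on the real line. By continuity of $\Re\circ F$ and the displayed inequalities there are $c<d$ in $[z,t]$ with $\Re(F(c))=t$, $\Re(F(d))=z$ and $z<\Re(F(x))<t$ on $(c,d)$; as $t,z$ are non-integers we get $F(c)=t$, $F(d)=z$ in $\IR$, so the arc of $S$ joining them is $[z,t]$ and hence $F([c,d])\supseteq[z,t]$, whence $[c,d]$ $F$-covers each of $[z,c]$, $[c,d]$, $[d,t]$. The difficulty is that these inequalities alone do \emph{not} yield the overshoot needed to close a \emph{positive} loop of \emph{every} length: $F$ acts in an orientation-reversing way on $[z,t]$ (its left endpoint moves right, its right endpoint moves left), so loops built only from $[z,t]$ and its integer translates are forced to have even length. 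To overcome this I would exploit the genuine escape of the orbits of $z$ and $t$ — a consequence of $0$ being \emph{interior} to $\RotR(F)$, i.e.\ of having chosen $a<0<b$ strictly with points $x_a,x_b$ satisfying $\rho(x_a)<0<\rho(x_b)$. Concretely, Lemma~\ref{lem:M&M}(f) applied on $\overline{M}$ (resp.\ on $\underline{M}$), together with the fact that the orbit of $z$ (resp.\ $t$) stays on one side of $z$ and tends to $\pm\infty$, produces a point of $\overline{M}$ whose $F$-image has real part $\ge t+1$ and a point of $\underline{M}$ whose $F$-image has real part $\le z-1$. Inserting these ``long jumps'' into the covering picture above and choosing suitable short subintervals of $[z,t]$, I expect to obtain either a short interval $I$ with $(I+n)_{n\in\IZ}$ pairwise disjoint, $I\signedcover{F}I$ and $I\signedcover{F}I+k$ for some $k\ne 0$ — whence $\Per(F)=\IN$ by Corollary~\ref{cory:+horseshoeFF-1} — or a pair of short intervals $I,J\subset\IR$ with disjoint branching-point-free interiors and coverings $I\arrowto I$, $I\arrowto J$, $J\arrowto I$ with compatible shifts (all zero, or $k_3=k_1$), to which Lemma~\ref{lem:SemiHorseshoe-mod1} (or Proposition~\ref{prop:SemiHorseshoe}) applies, again giving $\Per(F)=\IN$. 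The delicate part is precisely this last step: keeping the intervals short while honouring the branching point $0$ and the position of $F(0)$, which is exactly what the preliminary Lemmas~\ref{lem:F(t)<t-1}, \ref{lem:F(0)>t} and~\ref{lem:bigF(0)} are designed to absorb in the borderline cases.
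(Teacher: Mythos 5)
Your proof has a genuine gap at exactly the point you flag as ``delicate'': in the main case ($z\ne 0$, $t-1<\Re(F(t))\le z$) you never actually construct the horseshoe. You correctly identify that $F$ acts in an orientation-reversing way on $[z,t]$, so that loops built from this interval alone have even length, but your proposed fix --- extracting ``long jumps'' of size $\ge 1$ from Lemma~\ref{lem:M&M}(f) and ``inserting'' them into the covering picture --- is only announced (``I expect to obtain\dots''), not carried out, and it is not clear it can be: Lemma~\ref{lem:M&M}(f) gives a point $x'$ with $\Re(F(x'))>x_0$ but no control on where $x'$ sits relative to $[z,t]$, so the required disjointness and shift-compatibility conditions of Corollary~\ref{cory:+horseshoeFF-1} or Lemma~\ref{lem:SemiHorseshoe-mod1} are not established. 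The paper's resolution is much simpler and needs no long jumps: the auxiliary point $t'$ already fixed in \eqref{eq:all-inequalities} satisfies $t-1<\Re(F(t))\le t'<z<0$ and $\Re(F(t'))<t'$, while $\Re(F(z))\ge t$; hence on $L:=[t',z]$ the left endpoint moves left and the right endpoint moves right (an orientation-\emph{preserving} expansion), giving $F(L)\supset[t',t]\supset L\cup[0,t]$ and $F([0,t])\supset[t',0]\supset L$, so Proposition~\ref{prop:SemiHorseshoe} applies directly with $K=[0,t]$ and $L=[t',z]$, neither of which contains a branching point in its interior. (The case $z\ge 0$ is handled similarly with $I=[z,t]$ and $J=[t,z']$ or $[t,1]$, using $\Re(F(z'))>\Re(z')$.)

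Two secondary problems. First, for the case $z=0$ you invoke Lemma~\ref{lem:bigF(0)}, which only yields $\Per(F)\supset\IN\setminus\{1\}$, whereas the statement to be proved is $\Per(F)=\IN$; and Lemma~\ref{lem:F(0)>t} requires $t'\in\IR$, which is not guaranteed by the hypotheses of Case~(B) (only $z,t\in\IR$ is assumed; $t'$ could a priori lie in a branch). So these lemmas do not ``dispose of the possible positions of $F(0)$'' as claimed; the paper avoids this entirely by treating $z\ge 0$ with the direct horseshoe above. Second, your remark that $[z,t]$ might contain the branching point $0$ in its interior, to be handled by ``working with the two halves,'' is left unresolved; in the paper this is precisely where the interval $[t',z]\subset(-1,0)$ replaces $[z,t]$.
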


\begin{proof}
We assume that $\Re(F(0)) \ge 0$ and we shall use the point $t$.
If $\Re(F(0)) \le 0$, the proof is similar by using the point $z$
instead
of $t$.

Assume first that $z \ge 0$. From \eqref{eq:all-inequalities},
it follows that
\[
 \Re(F(t)) \le z < t < \Re(z')\le \Re(F(z))\quad\text{and}\quad \Re(z') <
\Re(F(z')).
\]
Let $I=[z,t]$. There is no branching point in the interior of
$I$ since we have assumed
$z \ge 0$.
If $\Re(F(z))<1$, then $z'\in (0,1)$ and we set $J=[t,z']$ (see the
left part of Figure~\ref{fig:caseB1}). If
$\Re(F(z))\ge 1$, we set $J=[t,1]$ (see the right part of
Figure~\ref{fig:caseB1}). In both cases,
there is no branching point in $J$, $F(I)\supset I\cup J$ and
$F(J)\supset I\cup J$. Then Proposition~\ref{prop:SemiHorseshoe}
applies and
$\Per(0,F) = \N$, and hence $\Per(F)=\IN$.

\begin{figure}[htb]
\includegraphics{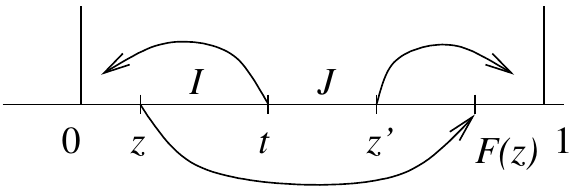}$\qquad$
\includegraphics{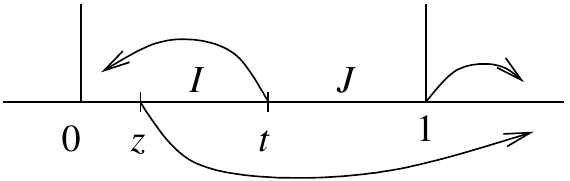}
\caption{When $z\ge 0$, the two possible locations of the intervals
$I,J$,  forming a horseshoe in both cases.}\label{fig:caseB1}
\end{figure}

When $\Re(F(t)) \le t -1$, the lemma follows from
Lemma~\ref{lem:F(t)<t-1}. So, in the rest of the
proof we can we assume that $t-1 < \Re(F(t)) \le z < 0$.
From
\eqref{eq:all-inequalities}, it follows that
\[
 t-1 < \Re(F(t)) \le t' < z < 0, \
 \Re(F(t')) < t'\quad
 \text{and}\quad
 \Re(F(z)) \ge t.
\]
This configuration is depicted in Figure~\ref{fig:caseB2}. Then
\begin{align*}
 F([t', z]) & \supset [t', t] \supset [t',z] \cup [0,t],
              \text{ and}\\
 F([0,t]) & \supset [t',0] \supset [t',z].
\end{align*}

\begin{figure}[htb]
\centerline{\includegraphics{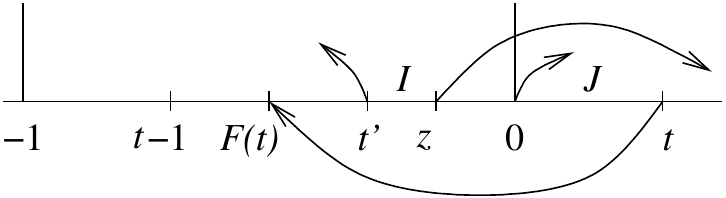}}
\caption{When $t-1 < \Re(F(t)) \le z < 0$, the intervals $I=[t',z']$ and
$J=[0,t]$ form a horseshoe.}\label{fig:caseB2}
\end{figure}

Since the intervals $(t',z)$ and $(0,t)$ contain no branching points,
Proposition~\ref{prop:SemiHorseshoe} applies and $\Per(F)=\IN$.
\end{proof}

\subsubsection{Case (C): $z\in \Bo_0$ and $t\in (0,1)$}

We want to show that, in this situation,
either $\Per(F)\supset\IN\setminus \{1\}$ or
$\Per(F)\supset\IN\setminus \{2\}$. This is the most difficult case.
To deal with it we need some additional points.

Lemma~\ref{lem:PotImp:ToutVaBien} applied with
$z\in\Orb(x_b) \setminus\IR$ instead of $u\in\Orb(z) \setminus\IR$
gives a point $y$ such that $y_0:=y-\Re(y) \in \Bo_0,$
$\Re(F(y_0)) = \Re(F(y)) - \Re(y) = \Re(F(z))$ and, either
\begin{equation}\label{eq:Cx}
\begin{split}
& y \in F(\IR),\text{ or}\\
& \text{$\exists$ $x\in B_0$ such that
      $x< y_0\le F_0(x)$ and $\Re(F(x))\neq \Re(F(y_0))$.}
\end{split}
\end{equation}
Observe that, since $F$ has degree one, $F(\IR)$ is periodic {\modi} and,
hence, $y \in F(\IR)$ implies $y_0 \in F(\IR).$
Also, $z\in \Bo_0$ implies
$\Re(F(y_0)) = \Re(F(z)) > \Re(z) = 0$
by \eqref{eq:all-inequalities}.

Let $a\in [0,1)$ be such that $F_0(a)=\max (F(\IR)\cap B_0)$, and let
$q\in\IZ$ be such that $F(a)\in B_q$. In the rest of this subsection, we
shall keep the notations $y_0,a, q$ to refer to these objects.

We are going to consider three subcases, depending on the positions of
$y_0$ and $t'$:
\begin{enumerate}[(C1)]
\item $y_0\not\in F(\IR)$,
\item $y_0\in F(\IR)$ and $t'\in B_0$,
\item $y_0\in F(\IR)$ and $t'\not\in B_0$.
\end{enumerate}
Cases~(C1), (C2) and (C3) are respectively proved in Lemmas
\ref{lem:caseC1},
\ref{lem:caseC2} and \ref{lem:caseC3}. Altogether, they give Case~(C).

\begin{lemma}\label{lem:caseC1}
If $y_0\notin F(\IR)$ then, either $\Per(F)\supset \IN\setminus\{2\}$
or $\Per(F)\supset \IN\setminus\{1\}$.
\end{lemma}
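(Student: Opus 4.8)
The plan is to exploit the hypothesis $y_0\notin F(\IR)$ by running the dichotomy in \eqref{eq:Cx}: since $y_0\notin F(\IR)$, we are in the second alternative, so there exists $x\in B_0$ with $x<y_0\le F_0(x)$ and $\Re(F(x))\neq\Re(F(y_0))=\Re(F(z))$. I would set $m:=\Re(F(x))\in\IZ$; note $F(x)\in B_m$ and $F(y_0)\notin B_m$ (because $\Re(F(y_0))\neq m$, while in fact $F(y_0)=F_0(y_0)+\Re(F(y_0))$ need not lie in $B_m$). This puts us squarely in the setting of Lemma~\ref{twoarrowscrossing}: we have $x<y_0\le F_0(x)$, $F(x)\in B_m$ and $F(y_0)\notin\Bo_m$. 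That lemma gives $\Per(F)=\IN$ outright, \emph{unless} $F(0)\in(x+m,\max B_m]$. So the only case that requires work is when $F(0)$ sits in this ``wrong place''. In that case I would next invoke Lemma~\ref{twoarrowscrossingLargeOrbits} (via Remark~\ref{RelatingTheTwoLemmas}): it again yields $\Per(F)=\IN$ unless $|\Re(F(x))-\Re(F(y_0))|<1$, i.e. unless $|m-\Re(F(z))|<1$.

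So the residual case is: $F(0)\in(x+m,\max B_m]$ and $|m-\Re(F(z))|<1$, with $m\neq\Re(F(z))$; hence $\Re(F(z))\in(m-1,m+1)\setminus\{m\}$. Recall from \eqref{eq:all-inequalities} that $\Re(F(z))\ge\Re(z')>\Re(z)=0$ and $\Re(F(t))\le 0$. The idea now is to build a $+/-$ loop of signed coverings and apply Lemma~\ref{lem:+-loop}, which gives $\Per(F)\supset\IN\setminus\{2\}$, or else show enough to get $\Per(F)\supset\IN\setminus\{1\}$ via Lemma~\ref{lem:allperiods-1} or Lemma~\ref{lem:bigF(0)}. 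Concretely, I expect to split on the sign/size of $\Re(F(0))$: if $|\Re(F(0))|\ge 1$ then Lemma~\ref{lem:bigF(0)} immediately gives $\Per(F)\supset\IN\setminus\{1\}$ (using $z\in B_0$, $t\in\IR$). Otherwise $\Re(F(0))\in(-1,1)$, and combined with $F(0)\in(x+m,\max B_m]$ this forces $m\in\{-1,0\}$ (the branch $B_m$ containing $F(0)$ must have $\Re=m$, but $\Re(F(0))\in(-1,1)\cap\IZ$... actually $F(0)\in B_m$ forces $m=\Re(F(0))$ only if $F(0)\in B$, so here $m=0$ if $F(0)\in B_0$, and the interval condition $(x+m,\max B_m]\subset B_m$ pins down $m=0$). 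With $m=0$ and $\Re(F(z))\in(-1,1)\setminus\{0\}$, i.e. $0<\Re(F(z))<1$, together with $x<y_0\le F_0(x)$ in $B_0$ and $F(x)\in B_0$, $F(0)\in(x,\max B_0]$, I can set up intervals $K\subset[x,y_0]$ and $L\subset[0,x]$ in $B_0$ and a fourth point to witness the four signed coverings $L\xrightarrow{+}L$, $L\xrightarrow{+}K$, $K\xrightarrow{-}L$, $K\xrightarrow{-}K$ required by Lemma~\ref{lem:+-loop}, with the separation point $e$ being $x$ and $F(e)=F(x)\in B_0$ lying outside $L+\IZ$. Then $\Per(F)\supset\IN\setminus\{2\}$.

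The main obstacle I anticipate is the bookkeeping in this last residual case: verifying the exact signs of the four coverings needed for Lemma~\ref{lem:+-loop} and checking the side conditions $(K+\IZ)\cap(L+\IZ)\subset\{e\}+\IZ$ and $F(e)\notin L+\IZ$. Everything is forced by the inequality chain $\Re(F(t))\le 0=\Re(z)<\Re(z')\le\Re(F(z))<1$ together with $0<x<y_0\le F_0(x)$ and $x<F(0)\le\max B_0$, but one must be careful about whether $t'$ lands in $B_0$ or in $\IR$ (the hypotheses of this lemma do not constrain $t'$), which is exactly why Cases~(C2) and~(C3) are handled separately; here I would only use the consequences of \eqref{eq:all-inequalities} that hold regardless, namely $\Re(F(t))\le 0$ and $\Re(F(z))>0$. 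A secondary subtlety is confirming $F(y_0)\notin\Bo_m$: this follows since $\Re(F(y_0))\neq m$, so $F(y_0)$ is not an interior point of $B_m$, and if $F(y_0)$ were the base point $m$ it would lie in $\IR$, which is compatible with, and does not obstruct, the application of Lemma~\ref{twoarrowscrossing}.
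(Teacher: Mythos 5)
Your reduction to the residual case matches the paper: from \eqref{eq:Cx} you get $x\in B_0$ with $x<y_0\le F_0(x)$ and $m:=\Re(F(x))\ne\Re(F(y_0))$, and then Lemma~\ref{twoarrowscrossingLargeOrbits}, Lemma~\ref{twoarrowscrossing} and Lemma~\ref{lem:bigF(0)} dispose of everything except the case $m=0$, $F(0)\in(x,\max B_0]$ and $\Re(F(y_0))=\Re(F(z))\in(0,1)$. (A small remark: the worry about $F(y_0)$ being the base point $m$ is vacuous, since that would give $\Re(F(y_0))=m=\Re(F(x))$, contradicting the choice of $x$.)

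The genuine gap is in that residual case. You propose to apply Lemma~\ref{lem:+-loop} with $K\subset[x,y_0]$ and $L\subset[0,x]$, both inside $B_0$, with $e=x$. But the residual case is characterized precisely by $F(0)\in(x,\max B_0]$, so \emph{both} endpoints of $[0,x]$ are mapped strictly above $x$ in $B_0$ (indeed $F(x)=F_0(x)\ge y_0>x$): nothing in the hypotheses prevents $F([0,x])$ from being entirely contained in $\set{w\in B_0}{w>x}$, in which case no subinterval $L\subset[0,x]$ covers $L$ or $K$ and none of the four signed coverings $L\signedcover{}L$, $L\signedcover{}K$, $K\signedcover[-]{}L$, $K\signedcover[-]{}K$ can be established. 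This is exactly the obstruction that made Lemma~\ref{twoarrowscrossing} inapplicable in the first place, so it cannot be circumvented using only $F(0)$, $F(x)$, $F(y_0)$. The paper's proof supplies the missing ingredient through Lemma~\ref{lem:yinB}: since $z\in\overline M$, Lemma~\ref{lem:M&M}(e) produces a point $w'\in\overline M$ with $0=\Re(z)<\Re(w')\le\Re(F(z))<1$, hence $w'\in(0,1)\subset\IR$ and $\Re(F(w'))>w'$; one then takes $I=\chull{w',x}$ (an interval crossing the branching point, partly in $\IR$ and partly in $B_0$) together with $J=[x,y_0]$, and the positive coverings of $I$ and negative coverings of $J$ follow because $F(w')$ lies to the right of $w'$ in $\IR$, $F(x)$ lies above $y_0$ in $B_0$, and $w'\le\Re(F(y_0))$ guarantees $F(J)\supset I$; Lemma~\ref{lem:+-loop} with $e=x$ then yields $\Per(F)\supset\IN\setminus\{2\}$. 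Your outline never brings the sets $\overline M$, $\underline M$ (or any point of $\IR$ with controlled image) into the residual case, and without such a point the required loop of signed coverings has no justification.
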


We first state a part of the proof as a separate lemma
because it will be used again in Case~(C2).

\begin{lemma}\label{lem:yinB}
Suppose that there exist points $w,x,y\in B_0$ and $m\in\IZ$ such that
$|\Re(F(w))|<1$, $\Re(F(w))=\Re(F(y))$,
$F(x)\in B_m$, $x<y\le F_0(x)$, $w\in\overline M$ (resp.
$w\in\underline M$)
and $m\le 0$ (resp. $m\ge 0$). Then $\Per(F)\supset
\IN\setminus\{2\}$.
\end{lemma}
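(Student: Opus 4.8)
The plan is to apply Lemma~\ref{lem:+-loop} with $K:=\chull{x,y}\subset B_0$, an auxiliary interval $L\subset\R$ having the branching point $0$ as one endpoint, and $e:=x$; I treat the case $w\in\overline M$, $m\le 0$, the other being symmetric. First I would record the consequences of $w\in\overline M$: since $\Re(w)=0$, Lemma~\ref{lem:M&M}(e) applied to $w$ yields $w'\in\overline M\cap\Orb(w,F)$ with $0<\Re(w')\le\Re(F(w))$, so $|\Re(F(w))|<1$ forces $\Re(F(w))\in(0,1)$; hence $F(w),F(y)\in(0,1)\subset\R$ (their real parts lie in $(0,1)$ and are not integers), $w'\in(0,1)\subset\R$ with $w'\le\Re(F(y))$, and $w'\in\overline M$ gives $\Re(F(w'))>\Re(w')=w'$. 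Two easy reductions then remove most of the difficulty: if $m\le-1$ then $|\Re(F(x))-\Re(F(y))|=|m-\Re(F(w))|\ge 1$, so Lemma~\ref{twoarrowscrossingLargeOrbits} gives $\Per(F)=\IN$; hence we may assume $m=0$, so $F_0(x)=F(x)$ and $x<y\le F(x)$ in $B_0$. Since $F(y)\notin\Bo_0$, Lemma~\ref{twoarrowscrossing} gives $\Per(F)=\IN$ unless $F(0)\in(x,\max B_0]$; so we may also assume $F(0)\in(x,\max B_0]\subset\Bo_0$.

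Now put $K:=\chull{x,y}$ with $x=\min K$ and $L:=\chull{0,w'}$ with $w'=\min L$, $0=\max L$; their interiors contain no branching point. From $F(x)\ge y$ in $B_0$ and $F(y)\in\R$ one gets, retracting onto $K$, that $\ret_K(F(x))=\max K$ and $\ret_K(F(y))=\min K$, i.e.\ $K\signedcover[-]{F}K$; retracting onto $L$ one gets $\ret_L(F(x))=0=\max L$ (as $F(x)\in\Bo_0$) and $\ret_L(F(y))=w'=\min L$ (as $F(y)\ge w'$), i.e.\ $K\signedcover[-]{F}L$. Likewise $F(0)\in\Bo_0$ gives $\ret_L(F(0))=0=\max L$ and $\Re(F(w'))>w'$ gives $\ret_L(F(w'))=w'=\min L$, so $L\signedcover{F}L$; by Proposition~\ref{prop:signedcover} this already produces a genuine fixed point of $F$ (this is where period $1$ comes from). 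For the fourth arrow $L\signedcover{F}K$ it suffices, when $F(0)\ge y$ in $B_0$, to note that then $\ret_K(F(0))=\max K$ while $\ret_K(F(w'))=\min K$. Finally the separation hypothesis of Lemma~\ref{lem:+-loop} holds with $e=x$: $(K+\Z)\cap(L+\Z)\subset\{x\}+\Z$ (in fact $K\subset\Bo_0$ and $L\subset\R$ are disjoint modulo $\Z$ when $x\ne0$, while if $x=0$ then $F(0)=F(x)\ge y$ and we are in the previous sub-case), and $F(x)\in\Bo_0$ does not lie in $L+\Z\subset\R$. So in this sub-case Lemma~\ref{lem:+-loop} yields $\Per(F)\supset\IN\setminus\{2\}$.

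The remaining configuration, $F(0)\in(x,y)\subset\Bo_0$ (which forces $x\ne 0$), is the crux and the step I expect to be the main obstacle: here $F(0)$ sits strictly inside $K$, so $F(L)$ no longer reaches the top endpoint $y$ of $K$, and among the controlled points $0,x,y,w,w'$ the only one whose $F$-image climbs high in $B_0$ is $x$ itself, which already belongs to $K$. To break this I would exploit $\overline M$ once more: applying Lemma~\ref{lem:M&M}(f) with $x_0=1$, starting from $w'$, and using that $\sup\Re(\overline M)=+\infty$ while $\overline M\cap(0,1)$ is finite, one obtains a point of $\overline M$ whose $F$-image has real part $\ge 1$; with such a point one can relocate and enlarge $L$ inside $\R$ so that $F(L)$ genuinely enters a branch and covers $K$ positively, while the negative arrows $K\signedcover[-]{F}K$, $K\signedcover[-]{F}L$ and the positive self-covering $L\signedcover{F}L$ are preserved and the separation hypothesis with $e=x$ still holds. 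Once this last point is made precise, Lemma~\ref{lem:+-loop} again gives $\Per(F)\supset\IN\setminus\{2\}$, which completes the proof.
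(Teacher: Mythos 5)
Your reductions (the case $m\le-1$ via Lemma~\ref{twoarrowscrossingLargeOrbits}, the case $F(0)\notin(x,\max B_0]$ via Lemma~\ref{twoarrowscrossing}) and your sub-case $F(0)\ge y$ are correct, but the configuration you yourself flag as unresolved --- $m=0$ and $F(0)\in(x,y)$ --- is a genuine gap, and the repair you sketch cannot work. To obtain $L\signedcover{F-k}K$ with $L\subset\IR$ and $K=[x,y]\subset B_0$ you need some point of $L$ whose image under $F-k$ retracts onto $K$ at its top endpoint, i.e.\ some point of $\IR$ must be sent into a branch at height at least $\Im(y)$ (and then, by connectedness, exactly onto $y+k$). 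Producing, via Lemma~\ref{lem:M&M}(f), a point of $\overline M$ whose image has real part $\ge1$ only pushes $F(L)$ far to the right along $\IR$; it does not make it climb a branch, and nothing in the hypotheses guarantees that any point of $\IR$ is mapped that high. Worse: in both places the paper invokes this lemma (Lemma~\ref{lem:caseC1}, and case~(iii) in the proof of Lemma~\ref{lem:caseC2}) one has $y\notin F(\IR)$, and since $F(\IR)+\IZ=F(\IR)$ this means that \emph{no} interval of $\IR$ can positively or negatively cover $[x,y]$ under any $F-k$; so ``relocating and enlarging $L$ inside $\IR$'' is impossible precisely in the situation the lemma is designed for.

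The missing idea is to let the interval through the branching point reach up to $x$ itself: the paper takes $I=\chull{w',x}$ (ordered with $\min I=w'$) together with $J=[x,y]$. Then both controlled points $w'$ and $x$ lie in $I$: $F(w')$, whose real part exceeds $w'$, retracts to the lower endpoints $w'+m$ of $I+m$ and $x+m$ of $J+m$, while $F(x)\in B_m$ at height $\ge\Im(y)$ retracts to their upper endpoints, giving $I\signedcover{F-m}I$ and $I\signedcover{F-m}J$; and $F(x)$, $F(y)$ give $J\signedcover[-]{F-m}I$ and $J\signedcover[-]{F-m}J$. Since $(I+\IZ)\cap(J+\IZ)=\{x\}+\IZ$ and $F(x)\notin I+\IZ$, Lemma~\ref{lem:+-loop} applies at once, with no case distinction on $m$ or on the position of $F(0)$ --- so the point whose image climbs the branch must be $x$, placed inside the ``real-side'' interval, and once this is done your preliminary case analysis becomes unnecessary.
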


\begin{proof}
We prove the lemma in the case $w\in\overline M.$ The other one is
symmetric. According to Lemma~\ref{lem:M&M}(e), there is a point
$w'\in \overline M$ such that $\Re(w)<\Re(w')\le \Re(F(w))$. Since $w\in B_0$ and
$|\Re(F(w))|<1$, the point $w'$ belongs to $(0,1)$. Moreover,
$\Re(F(w'))>w'$ because $w'\in \overline M$.
Let $I=\chull{w',x}$, endowed with the order for which $\min I=w'$, and
$J=[x,y]\subset B_0$ (with the order of $B_0$); see
Figure~\ref{fig:C1}.
\begin{figure}[htb]
\centerline{\includegraphics{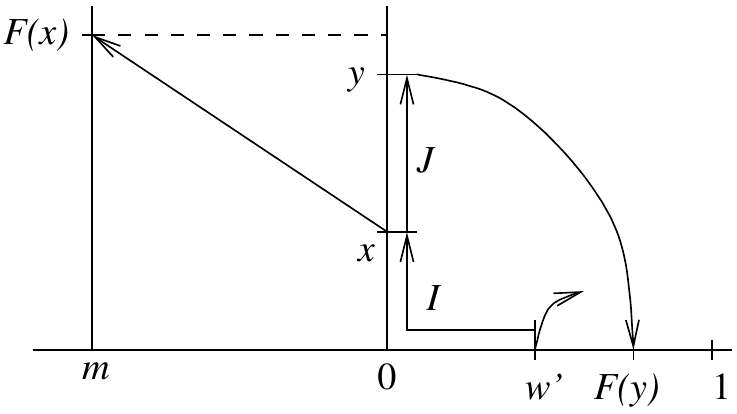}}
\caption{Intervals $I$ and $J$, with arrows indicating their order.
Though not needed in the proof, it can be noticed that the assumptions
imply $\Re(F(w))\in (0,1)$, and hence
$F(y)=F(w)\in (0,1)$.}\label{fig:C1}
\end{figure}
Then $I$ positively covers $I+m$ and $J+m$,
and $J$ negatively covers $I+m$ and $J+m$.
Moreover, $(I+\IZ)\cap (J+\IZ)=\{x\}+\IZ$, and $F(x)\notin I+\IZ$.
Thus Lemma~\ref{lem:+-loop} applies and gives
$\Per(F)\supset \IN\setminus\{2\}$.
\end{proof}

\begin{proof}[Proof of Lemma~\ref{lem:caseC1}]
Since $y_0\notin F(\IR)$,
there exists $x\in B_0$ such that $x< y_0\le F_0(x)$
and $\Re(F(x))\neq \Re(F(y_0))$ by \eqref{eq:Cx}.
Set $m:=\Re(F(x))\in\IZ$ (thus, $F(x) \in B_m$).
If $|\Re(F(y_0))-m| \ge 1$, Lemma~\ref{twoarrowscrossingLargeOrbits}
applies and $\Per(F)=\IN$.
Since $\Re(F(y_0)) > 0,$ the condition $|\Re(F(y_0))-m| \ge 1$ is satisfied,
in particular, when $m \le -1$ or $F(y_0) \in B$.
On the other hand, if $\Re(F(0))\ge 1$, then
$\Per(F)\supset\IN\setminus\{1\}$ by Lemma~\ref{lem:bigF(0)}.
So, in the rest of the proof we can assume that $F(y_0) \notin B,$
$m \ge 0$ and $\Re(F(0)) < 1.$
If $m \ge 1,$ Lemma~\ref{twoarrowscrossing} gives $\Per(F)=\IN.$
Therefore, we are left with the case $m=0$, $F(0)\in (x,\max B_0]$ and
$\Re(F(y_0))<m+1=1$. Then $\Re(F(z))=\Re(F(y_0))\in (0,1)$,
and finally
Lemma~\ref{lem:yinB}, applied to $w=z\in \Bo_0$, $x$, $y=y_0$, gives
$\Per(F)\supset \IN\setminus\{2\}$.
\end{proof}

Now we study Case~(C2).

\begin{lemma}\label{lem:caseC2}
Assume that $y_0\in F(\IR)$ and $t'\in B_0$. Then, either
$\Per(F)\supset\IN\setminus\{2\}$, or
$\Per(F)\supset\IN\setminus\{1\}$.
\end{lemma}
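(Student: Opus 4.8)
The plan is to run a case analysis on the position of $F(0)$ and on the size of $\Re(F(y_0))$, exploiting that $y_0\in F(\IR)\cap\Bo_0$ is a point whose image lies far to the right. First I would record what the hypotheses give: since $t'\in B_0$ we have $\Re(t')=0$, so, together with $\Re(z)=0$ and $\Re(t)=t\in(0,1)$, \eqref{eq:all-inequalities} becomes $\Re(F(t))\le0$, $\Re(F(t'))<0$, $t\le\Re(z')\le\Re(F(z))$ and $\Re(z')<\Re(F(z'))$; recall also $y_0\le z$ in $\Bo_0$ and $\Re(F(y_0))=\Re(F(z))\ge t>0$. Since $F$ has degree one, $F(\IR)\supset\IR$, and as $F(\IR)$ is connected and contains $y_0\in B_0$, it contains $[0,y_0]$; more precisely $F(\IR)\cap B_0=[0,F_0(a)]$ with $F_0(a)\ge y_0$ and $F(a)\in B_q$.

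I would then dispose of the extreme positions of $F(0)$. If $|\Re(F(0))|\ge1$, Lemma~\ref{lem:bigF(0)} (applicable since $z\in B_0$ and $t\in\IR$) gives $\Per(F)\supset\IN\setminus\{1\}$, so assume $-1<\Re(F(0))<1$, whence $\lceil\Re(F(0))\rceil\in\{0,1\}$. As $y_0\in F(\IR)\cap B_0$ and $\Re(F(y_0))>0$, if $\Re(F(y_0))\ge\lceil\Re(F(0))\rceil+1$ then Lemma~\ref{lem:allperiods-1} already gives $\Per(F)\supset\IN\setminus\{1\}$ (this covers in particular every case with $F(y_0)\in B$ and $\Re(F(0))\le0$). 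It therefore remains to treat the regime $0<\Re(F(y_0))=\Re(F(z))<\lceil\Re(F(0))\rceil+1\le 2$, where $\Re(F(z))$ is small and, when $\Re(F(0))\le0$, one even has $F(y_0),F(z)\in\IR$.

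In this remaining regime I would build the periodic points directly by coverings. The geometry is that a compact interval of $\IR$ positively $F$-covers the branch interval $L:=[0,F_0(a)]\subset B_0$ (because $L\subset F(\IR)$ there are real points mapping onto the two endpoints of $L$), while $L$ itself $F$-covers — through $y_0\in L$, and using $\Re(F(t))\le0$ together with the control on $F(0)$ — a real interval reaching past $t$ on the right and past $0$ on the left. One then splits according to whether, after reorienting the intervals by Lemma~\ref{lem:make-coverings-positive}, the resulting loop can be made positive: if so, Proposition~\ref{prop:SemiHorseshoe} or Proposition~\ref{prop:signedcover} together with Lemma~\ref{relationF_Fqmp} gives $\Per(F)=\IN$; if instead the passage through $B_0$ forces a genuine sign change, one is exactly in the situation of Lemma~\ref{lem:yinB}, applied with $w=z$ (legitimate because now $|\Re(F(z))|<1$ and $z\in\overline M\cap B_0$), $y=y_0$, and $x$ the distinguished point of $B_0$ producing the covering into the branch (coming from $F(0)$ when $F(0)\in\Bo_0$, and otherwise from $t'$ or from the real interval via the retraction $\ret_{_L}$), yielding $\Per(F)\supset\IN\setminus\{2\}$. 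When $\Re(F(0))\in(0,1)$ one additionally uses the point $1$, for which $\Re(F(1))=\Re(F(0))+1$, to realise the rightward half of the horseshoe, and the sub-cases where $F(0)$ sits ``in the wrong place'' (blocking Lemma~\ref{twoarrowscrossing} or Lemma~\ref{lem:yinB}) are either excluded by combining $\Re(F(y_0))<\lceil\Re(F(0))\rceil+1$ with $\Re(F(t))\le0$, or handled by Lemma~\ref{twoarrowscrossingLargeOrbits} or Lemma~\ref{lem:F(R)-left}.

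The delicate point — and the reason the conclusion is $\IN\setminus\{1\}$ or $\IN\setminus\{2\}$ rather than $\IN$ — is precisely this orientation bookkeeping for coverings crossing between $\IR$ and the branch $B_0$: a twist there obstructs forcing a fixed point (when a nonzero rotation number is involved) or a point of period $2$ inside the loop, so the bulk of the work is to verify, in each sub-case, that the distinguished points $t,t',z,z',y_0$ and $F(0)$ really occupy the positions needed to invoke Lemma~\ref{lem:yinB}, Proposition~\ref{prop:SemiHorseshoe}, or Lemma~\ref{lem:allperiods-1}.
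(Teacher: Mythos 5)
Your opening reduction (dispose of $|\Re(F(0))|\ge 1$ by Lemma~\ref{lem:bigF(0)}, then try Lemma~\ref{lem:allperiods-1} on $y_0$) matches the spirit of the paper, but the core of your argument has a genuine gap. You propose to conclude via Lemma~\ref{lem:yinB} applied with $w=z$ and $y=y_0$. That lemma requires an auxiliary point $x\in B_0$ with $F(x)\in B_m$, $x<y_0\le F_0(x)$ and $m\le 0$. Such a point is supplied by \eqref{eq:Cx} only when $y_0\notin F(\IR)$ — that is Case~(C1), handled in Lemma~\ref{lem:caseC1}. Under the hypothesis of the present lemma, $y_0\in F(\IR)$, and none of the candidates you name ($F(0)$ when it lies in $\Bo_0$, or $t'$, or ``the real interval via the retraction'') is shown to satisfy $x<y_0\le F_0(x)$ with $F(x)$ in a branch; in general no such $x$ exists for $y_0$. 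The paper sidesteps this by applying Lemma~\ref{lem:super} to $u=t'\in\Orb(x_a)\cap B_0$ (not to $y_0$), which either yields the Lemma~\ref{lem:yinB} configuration with $w=t'$ and a \emph{new} point $y$ satisfying $\Re(F(y))=\Re(F(t'))$, or yields $y\in F(\IR)\cap B_0$ mapping to the left; in the latter case the decisive object is the interval $J=\chull{y,y_0}\subset B_0$, whose image stretches from $\Re(F(t'))\le 0$ to $\Re(F(y_0))>0$.

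Beyond that, the remaining regime $0<\Re(F(y_0))<1$ is where most of the work lies, and your sketch does not engage with it: the paper first rules out $\Re(F(x))\ge 0$ for some $x<0$ versus Lemma~\ref{lem:F(R)-left}, then handles $\Re(F(y_0))\ge 1$ with a {\modi} horseshoe between $J$ and $[0,1]$, and when $\Re(F(y_0))<1$ it must invoke Lemma~\ref{lem:3M} (built on the minimality structure of $\overline{M}$ and $\underline{M}$) or, when $(\underline M+1)\cap[z',1]=\emptyset$, split into three subcases on the position of $a$ relative to $z'$ and the sign of $q$; one of these subcases genuinely produces only $\IN\setminus\{2\}$ via the mixed positive/negative loop of Lemma~\ref{lem:+-loop} on $I=[\widetilde t,a]$ and $K=\chull{a,y+1}$. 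Your closing remark that ``the bulk of the work is to verify, in each sub-case, that the distinguished points really occupy the positions needed'' is accurate, but those sub-cases are precisely the content of the proof, and the ones you anticipate do not match the ones that actually arise.
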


Again, we state a part of the proof as a lemma,
in order to use it again in Case~(C3).

\begin{lemma}\label{lem:3M}
If there exist $z_0, t_1, t_2\in \IR$ such that $0\le t_1\le z_0\le
t_2\le 1$,
$z_0\in \overline M$ and $t_1,t_2\in \underline M+\IZ$, then
$\Per(0,F)=\IN$.
\end{lemma}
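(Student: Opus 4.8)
The plan is to manufacture a one‑dimensional horseshoe inside the unit interval $[0,1]\subset\IR$ and invoke Proposition~\ref{prop:SemiHorseshoe}; the periodic orbits it produces then lie in $[0,1]\subset\IR$, so they have rotation number $0$ and witness $\Per(0,F)=\IN$.

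First I would normalize the configuration. Since $\overline M\subset\Orb(x_b,F)$ and $\underline M+\IZ\subset\LOrb(x_a,F)$ consist of points of different rotation numbers, $\overline M\cap(\underline M+\IZ)=\emptyset$; hence $z_0\ne t_1,t_2$, and in fact $0\le t_1<z_0<t_2\le1$ with $z_0\in(0,1)$. Next, Lemma~\ref{lem:M&M}(f) applied to $x=z_0$ with $x_0=t_2$ gives $z_1\in\overline M$ with $z_0\le\Re(z_1)\le t_2<\Re(F(z_1))$, and $z_1\in(t_1,t_2)$ by disjointness; replacing $z_0$ by $z_1$, I may assume $\Re(F(z_0))>t_2$. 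Also $t_1\in\underline M+\IZ$ forces $\Re(F(t_1))<t_1$. Writing $t_2=s_2+k_2$ with $s_2\in\underline M$, the fact that every forward iterate of $s_2$ has smaller real part than $s_2$ gives $\Re(F^j(t_2))<t_2\le1$ for all $j\ge1$, while $\rho(t_2)=\rho(x_a)<0$ makes these real parts tend to $-\infty$. Letting $\ell\ge1$ be minimal with $\Re(F^\ell(t_2))\le t_1$, one has $F^j(t_2)\in(t_1,1)\subset\IR$ for $1\le j\le\ell-1$, and none of $F^0(t_2),\dots,F^{\ell-1}(t_2)$ equals $z_0$.

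In the easy (and typical) case one finishes with a two‑interval horseshoe. Put $j^\ast:=\max\{\,j\le\ell-1:\Re(F^j(t_2))>z_0\,\}$; if $j^\ast+1=\ell$, set $w:=F^{j^\ast}(t_2)\in(z_0,t_2]\subset\IR$, so that $\Re(F(w))=\Re(F^\ell(t_2))\le t_1$. Sharpening $z_0$ once more via Lemma~\ref{lem:M&M}(f) with $x_0=w$, I may assume $z_0\in(t_1,w)$ with $\Re(F(z_0))>w$; then $L:=[t_1,z_0]$ and $K:=[z_0,w]$ are subintervals of $[0,1]$ with disjoint interiors free of branching points, and from $\Re(F(t_1))<t_1$, $\Re(F(w))\le t_1$, $\Re(F(z_0))>w$ one checks $F(L)\supseteq\chull{F(t_1),F(z_0)}\supseteq[t_1,w]=L\cup K$ and $F(K)\supseteq\chull{F(z_0),F(w)}\supseteq[t_1,z_0]=L$. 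Proposition~\ref{prop:SemiHorseshoe} then yields periodic orbits of all periods inside $L\cup K\subset[0,1]$, whence $\Per(0,F)=\IN$.

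The remaining case, $j^\ast+1<\ell$ — in which $F^{j^\ast+1}(t_2),\dots,F^{\ell-1}(t_2)$ all lie in $(t_1,z_0)$ while $\Re(F^\ell(t_2))\le t_1$ — is the technical heart, and the step I expect to be the main obstacle: a single iteration no longer moves a point of $(z_0,1]$ far enough to the left, so the right endpoint of the horseshoe is not immediately available. Instead I would pass to the finite set $Q:=\{t_1,z_0\}\cup\{\Re(F^j(t_2)):0\le j\le\ell-1\}$ of pairwise distinct reals (with $\min Q=t_1$, $\max Q=t_2$) and work in the $F$‑graph of the $Q$‑basic intervals contained in $[t_1,t_2]\subset[0,1]$, none of which contains a branching point in its interior. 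In this graph the interval adjacent to $z_0$ $F$‑covers a whole rightward block reaching $\max Q$ (since $\Re(F(z_0))>t_2$), the leftmost interval $F$‑covers a leftward block past $t_1$ (since $\Re(F(t_1))<t_1$), and the arrows $\Re(F^j(t_2))\to\Re(F^{j+1}(t_2))$ trace the descending excursion of the orbit of $t_2$ from $\max Q$ down below $\min Q$. Splicing the spreading arrow with this trace should give, for every $n\ge1$, a non‑repetitive loop of length $n$, and Proposition~\ref{prop:covering} then produces a period‑$n$ point whose orbit remains in $[t_1,t_2]\subset\IR$; hence $\Per(0,F)=\IN$. The part that really needs care — and where the bookkeeping properties (a), (e), (f) of $\overline M$ and $\underline M$ in Lemma~\ref{lem:M&M} come in — is verifying that loops of all lengths, and non‑repetitive ones, are genuinely available in this Markov‑type graph.
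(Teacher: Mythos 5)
Your ``easy'' case ($j^\ast+1=\ell$) is fine and is essentially the paper's horseshoe, but the ``hard'' case $j^\ast+1<\ell$ is a genuine gap, and you say so yourself: the claim that the $F$-graph of the $Q$-basic intervals contains non-repetitive loops of every length is exactly the point that is not established. Worse, that graph is not really controllable: the points $F^{j^\ast+1}(t_2),\dots,F^{\ell-1}(t_2)$ sit in $(t_1,z_0)$ in an arbitrary order, $Q$ is not $F$-invariant, and the only covering information you have for a $Q$-basic interval $I$ is $F(I)\supset\chull{F(\partial I)}$, which need not contain any full $Q$-basic interval when both endpoints of $I$ are interior orbit points of $t_2$. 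So the ``descending excursion'' does not by itself produce arrows you can splice with the spreading arrow at $z_0$, and the case you identify as the technical heart is left unproved.

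The paper avoids the case split altogether by choosing the horseshoe endpoints differently, and this is the idea you are missing: the image of the right endpoint only has to cross $z_0$, not $t_1$, because the \emph{left} endpoint can be moved rightward. Concretely, Lemma~\ref{lem:M&M}(f) (applied to $t_2$ with $x_0=z_0$, modulo the integer translation) gives $t_2'\in\underline M+\IZ$ with $z_0\le\Re(t_2')\le t_2$ and $\Re(F(t_2'))<z_0$, chosen with $\Re(t_2')$ minimal; Lemma~\ref{lem:M&M}(e) then gives $t_2''\in\underline M+\IZ$ with $\Re(F(t_2'))\le \Re(t_2'')<\Re(t_2')$, and minimality forces $t_2''<z_0$. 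Setting $t_1':=\max(t_1,t_2'')$ and taking $z_0'\in\overline M$ from Lemma~\ref{lem:M&M}(f) with $z_0\le z_0'\le t_2'<\Re(F(z_0'))$, one gets $\max\bigl(\Re(F(t_1')),\Re(F(t_2'))\bigr)\le t_1'<z_0'<t_2'<\Re(F(z_0'))$, so $[t_1',z_0']$ and $[z_0',t_2']$ form a horseshoe in $[0,1]$ and Proposition~\ref{prop:SemiHorseshoe} finishes in one stroke. Your easy case is the special instance where $\Re(F(t_2'))$ already lands at or below $t_1$; to repair your proof, replace the forward iteration of $t_2$ by this single application of Lemma~\ref{lem:M&M}(f) and (e) and adjust the left endpoint accordingly.
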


\begin{proof}
Let $k_1,k_2\in\IZ$ be such that $t_1\in\underline M+k_1$ and $t_2\in
\underline M+k_2$. The points $t_1, t_2$ cannot be equal to $z_0$
because $\rhos(z_0)>0$ and $\rhos(t_1)=\rhos(t_2)<0$.
According to Lemma~\ref{lem:M&M}(f) (applied with $x_0=z_0-k_2$ and
$x=t_2-k_2$), there exists $t_2'\in \underline M+k_2$ such that
$\Re(F(t_2'))<z_0\le \Re(t_2')\le t_2$.
We choose this point so that $\Re(t_2')$ is minimal.
Since $0<z_0<t_2\le 1$ then, either $t_2'$ is
in $(0,1)$, or $\Re(t_2')=1=t_2$, in which case $t_2'=t_2$. Thus $t_2'$
is in $(0,1]\subset \IR$.
Similarly, there exists $z_0'\in (0,1) \cap \overline{M}$ such
that $z_0\le z_0'\le t_2' <\Re(F(z_0'))$.
Since $z_0' \in \overline{M}$ and $t_2'\in \underline{M}+k_2,$
$z'_0 < t'_2$ because they have different rotation numbers.
By Lemma~\ref{lem:M&M}(e), there exists $t_2'' \in \underline{M}+k_2$
such that $\Re(F(t_2'))\le t_2''< t_2'$.
Moreover, $t_2''<z_0$ by the minimality of $\Re(t_2')$.
We set $t_1':=\max (t_1, t_2'')$.
Then $t_1'\in(\underline M+k_1)\cup (\underline M+k_2)$
and $\max(t_1, \Re(F(t_2'))\le t_1' < z_0$.
\begin{figure}[htb]
\centerline{\includegraphics{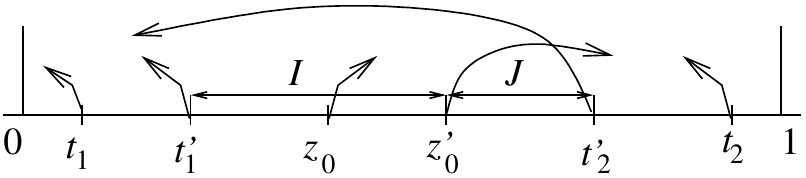}}
\caption{Positions of the points in Lemma~\ref{lem:3M}; the intervals
$I=[t_1',z_0']$ and
$J=[z_0,t_2']$ form a horseshoe.}\label{fig:C2}
\end{figure}
Thus $t_1'\in [0,1)$
and $\Re(F(t_1'))\le t_1'$ because $t_1'\in\underline M+\IZ$. Then the
points have the following positions (see Figure~\ref{fig:C2}):
\[
   \max(\Re(F(t_2'), \Re(F(t_1')) \le t_1'< z_0' < t_2' < \Re(F(z_0')).
\]
So, Proposition~\ref{prop:SemiHorseshoe} with
$[t_1',z_0']$ and $[z_0', t_2']$ applies.
Thus, $\Per(0,F)=\IN$.
\end{proof}

\begin{proof}[Proof of Lemma~\ref{lem:caseC2}]
If $\Re(F(0))\notin (-1,1),$ the result follows from
Lemma~\ref{lem:bigF(0)}.
So, we can assume that  $\Re(F(0))\in (-1,1).$

We apply Lemma~\ref{lem:super} with $z=x_a$ and
$u=t'\in\Orb(x_a)\cap B_0$, to obtain a point $y\in B_0$
such that $\Re(F(y))=\Re(F(t'))$, and:
\begin{enumerate}[(i)]
\item either $\Per(F)=\IN$ (and we are over),
\item or $y\in F(\IR)$,
\item or there exists $x'\in B_0$ such that $x'<y\le F_0(x')$ and $F(0)\in
B_{m'}$, where $m':=\Re(F(x'))\in\IZ$ and $F(y) \in (m'-1,m'+1)\setminus\{m'\}.$
\end{enumerate}
In the last case, necessarily $m'=0$
because we have assumed $\Re(F(0))\in (-1,1)$.
Hence, $\Re(F(t')) = \Re(F(y)) \in(-1,1),$ and we can
apply Lemma~\ref{lem:yinB} with
$w=t'$, $x'$, $y$ to obtain $\Per(F)\supset\IN\setminus\{2\}$.

From now on, we suppose that we are in case~(ii), that is, $y \in
F(\IR)$.
Since we have assumed that $y_0\in F(\IR)$, we have $F_0(a)\ge
\max(y,y_0)$ (in $B_0$). Let $J = \chull{y,y_0};$
this interval is included in $B_0$ and thus
contains no branching in its interior.
If, for every $x \in (-\infty,0)$, $\Re(F(x))<0$, then
Lemma~\ref{lem:F(R)-left} applies (with $x_0=x_b$) and
$\Per(F)\supset \IN\setminus\{1\}$. Otherwise, there
exists a point $x\in (-\infty,0)$ such that $\Re(F(x))\ge 0$.
Let $b$ be the unique point in $(x+\IZ)\cap
[0,1)$. Then $b\ge x+1$ and $\Re(F(b))\ge 1$.
Since $t,b\in [0,1]$ and $\Re(F(t))\le \Re(z)=0$ by
\eqref{eq:all-inequalities},
we have $F([0,1])\supset [0,1]$.
Moreover, since $a\in [0,1]$ and $F(a)\in B_q$, we have
$F([0,1])\supset [0, F_0(a)]+q$ because,
either $F(0)\notin B_q$ or $F(1)\notin B_q$.
Thus $F([0,1])\supset J+q$.
On the other hand, $F(J)\supset [\Re(F(y)),\Re(F(y_0))]$
and $\Re(F(y))=\Re(F(t'))\le \Re(z)=0$. Thus, if
\begin{equation}\label{eq:Fy0}
\Re(F(y_0))\ge 1,
\end{equation}
then $F(J)\supset [0,1]$
and we have the situation and the coverings represented in
Figure~\ref{fig:C22}.
Then $\Per(F)=\IN$ by Lemma~\ref{lem:SemiHorseshoe-mod1}.
\begin{figure}[htb]
\centerline{\includegraphics{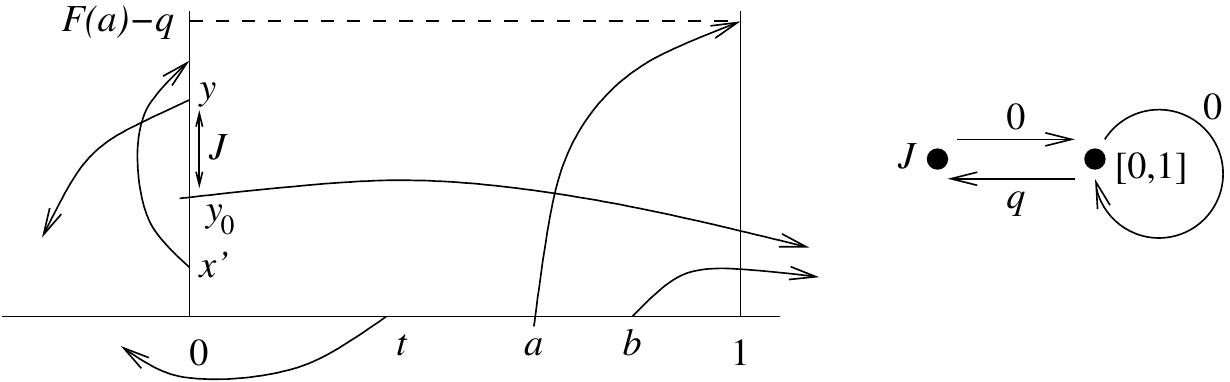}}
\caption{Left side: points $t,a,b$ are in $[0,1]$ but maybe not in
this order;
point $y$ may be below $y_0$ in $B_0$. In all cases, we have the
coverings on the right.}\label{fig:C22}
\end{figure}

From now on, we assume that \eqref{eq:Fy0} does not hold, that is,
$\Re(F(y_0))<1.$
This implies that
$z'\in (0,1)$ and $\Re(F(y_0)) \ge z'$
by \eqref{eq:all-inequalities}
(recall that $\Re(F(y_0))=\Re(F(z))$).
If there exists $t_2\in (\underline M+1)\cap [z',1]$, then
Lemma~\ref{lem:3M} applies (with $z_0=z'$, $t_1=t$ and $t_2$)
and $\Per(F)=\IN$.
So, in the rest of the proof we assume that
\begin{equation}\label{eq:Mz'}
(\underline M+1)\cap [z',1]=\emptyset.
\end{equation}
Lemma~\ref{lem:M&M}(f), applied with $x_0=z'-1$ and $x=t'\in\underline{M}$,
implies that $\Re(F(t'+1))< z'$ (otherwise, there would exist
$t''\in \underline{M}$ such that $z'\le \Re(t'')+1\le \Re(t')+1$,
which would contradict \eqref{eq:Mz'} since $\Re(t')+1\le 1$).
Since $F$ has degree one,
$\Re(F(y)) = \Re(F(t')) < z'-1$ and, hence,
$F(J)\supset [z'-1,z'].$

Now we split the proof of this remaining case into three subcases,
depending on the values of $a$ and~$q$.
\begin{itemize} 
\item If $a\le z'$, we have the situation represented in
Figure~\ref{fig:C23}.
\begin{figure}[htb]
\centerline{\includegraphics[width=\textwidth]{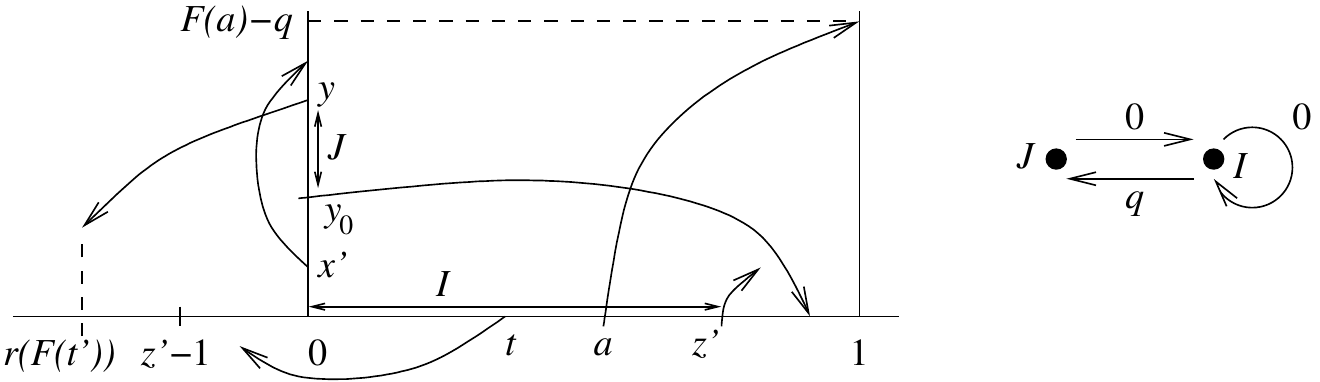}}
\caption{Left side: points $t,a$ are in $[0,z']$ but maybe not in this
order; point $y$ may be below $y_0$ in $B_0$. In all cases, we have the
coverings on the right.}\label{fig:C23}
\end{figure}
We set $I=[0,z']$ and there is no branching point in
$(0,z')$ because $z'\in (0,1)$. The interval $I$ contains
$t,z'$ and $a$, with $\Re(F(t))\le 0$ and $\Re(F(z'))>z'>0$. Either
$F(t)\notin B_q$, or $F(z')\notin B_q$, and thus $F(I)$ contains
$[q,F(a)] \subset B_q$.
Hence $I \arrowto J+q$.
Moreover, $I \arrowto I$ and $J \arrowto I$.
Thus $\Per(F)=\IN$ by Lemma~\ref{lem:SemiHorseshoe-mod1}.

\item Suppose that $a>z'$ and $q\ge 1$.
By Lemma~\ref{lem:M&M}(e), there exists $t''\in \underline M+1$ such that
\begin{equation}\label{eq:t't''}
\Re(F(t'+1))\le \Re(t'')<\Re(t'+1)=1.
\end{equation}
We have $\Re(F(t''))<\Re(t'')$ because $t''\in \underline M+1$.
Moreover, $\Re(t'')<z'$ by \eqref{eq:Mz'}.
We set $\widetilde{t}= \max (\Re(t''),t) \in (0,z')$;
then we have $\Re(F(\widetilde{t})) < \widetilde{t}$
(see Figure~\ref{fig:C24}).
\begin{figure}[htb]
\centerline{\includegraphics[width=\textwidth]{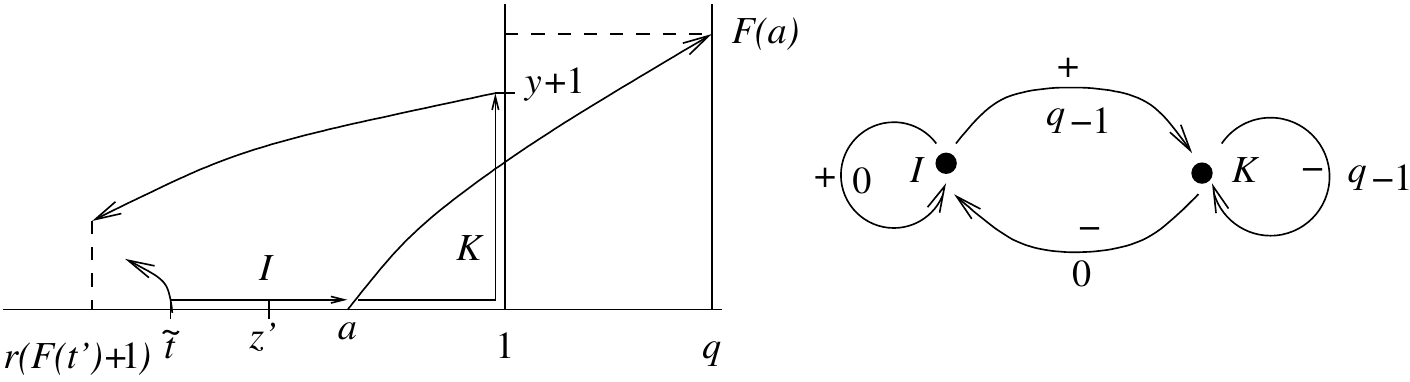}}
\caption{Positions of points and covering graph of
$I,K$.}\label{fig:C24}
\end{figure}
Let $I=[\widetilde{t},a]\subset \IR$ and $K=\chull{a,y+1}$
endowed with the order such that $\min K=a$.
Then $I$ positively covers $I$ and $K+q-1$
(because $F(a)\in B_q$ with $q \ge 1$) and
$K$ negatively covers $I$ and $K+q-1$
(because $q\ge 1$ and $\Re(F(y'))=\Re(F(t'))$
and $\Re(F(t'+1))\le \Re(t'') \le \widetilde{t}$ by \eqref{eq:t't''}).
Moreover, $(I+\IZ)\cap (K+\IZ)=\{a\}+\IZ$, and $F(a)\notin I+\IZ$.
Thus Lemma~\ref{lem:+-loop} applies and gives
$\Per(F)\supset \IN\setminus\{2\}$.

\item Suppose that $a > z'$ and $q \le 0$.
Let $I = \chull{a,b} \subset [0,1)$.
If $0\le b\le t$, then $[b,t]$ and $[t,z']$ form a horseshoe; and
if $t\le b\le a$, then $[t,b]$ and $I$ form a horseshoe
(see Figure~\ref{fig:C25}).
In both cases, Proposition~\ref{prop:SemiHorseshoe} applies
and $\Per(F)=\IN$.
\begin{figure}[htb]
\centerline{\includegraphics{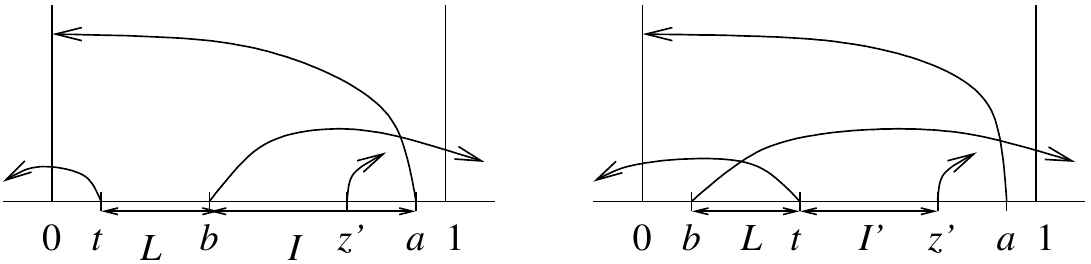}}
\caption{The two possibilities when $b<z'$. In both cases, there is a
horseshoe (either $L,I$ or $L,I'$).}\label{fig:C25}
\end{figure}

It remains to consider the case when $b>a$, which implies that $b> z'$;
see Figure~\ref{fig:C26}.
Then $J$ covers $I-1$ (recall that $\Re(F(y))=\Re(F(t'))\le z'-1$) and $I$
covers
$I$ and $J+q$.
Notice that $I\subset (0,1)$ because $b\ge z'>\Re(z)=0$, which implies that
the sets $I+\IZ$ and $J+\IZ$ are disjoint.
Then $\Per(F)=\IN$ by Lemma~\ref{lem:SemiHorseshoe-mod1}.
\end{itemize} 
We have covered all the possible cases, and thus
Lemma~\ref{lem:caseC2} is proved.
\end{proof}
\begin{figure}[htb]
\centerline{\includegraphics[width=0.95\textwidth]{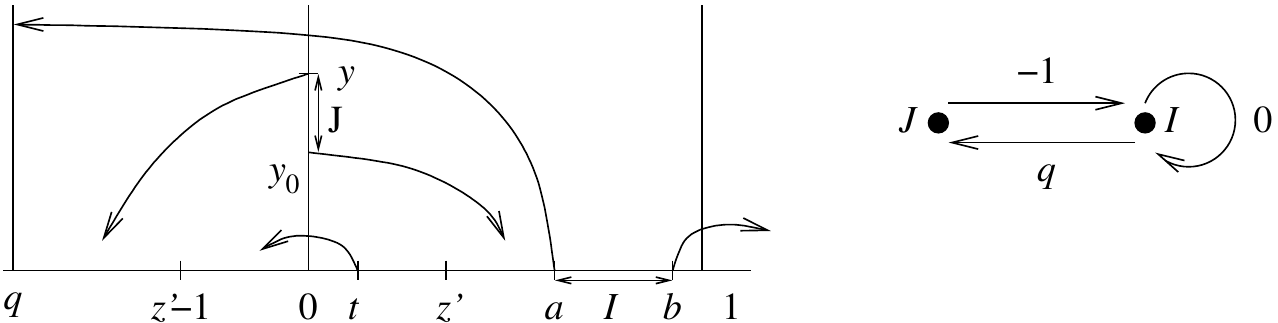}}
\caption{When $b\ge a$.}\label{fig:C26}
\end{figure}

Finally, in the next lemma we study Case~(C3).

\begin{lemma}\label{lem:caseC3}
Suppose that $y_0\in F(\IR)$ and $t'\notin B_0$.
Then, either $\Per(F) \supset\IN\setminus\{1\}$, or
$\Per(F)\supset\IN\setminus\{2\}$.
\end{lemma}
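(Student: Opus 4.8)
The plan is to follow the scheme of the proof of Lemma~\ref{lem:caseC2}, adapting it to the fact that now $t'$ lies outside $B_0$. First I would dispose of the case $|\Re(F(0))|\ge 1$: since $z\in B_0$ and $t\in\R$, Lemma~\ref{lem:bigF(0)} gives $\Per(F)\supset\IN\setminus\{1\}$, so from now on $\Re(F(0))\in(-1,1)$. By \eqref{eq:all-inequalities} we have $\Re(t')\le\Re(z)=0$, and since $t'\notin B_0$ exactly one of the following holds: $\Re(t')\le -1$ (so $t'$ is real with $t'\le -1$, or $t'\in B_m$ for some $m\le -1$), or $t'$ is a real point of $(-1,0)$.

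In the first situation, $\Re(F(t))\le\Re(t')\le -1<t-1$ because $t\in(0,1)$, so Lemma~\ref{lem:F(t)<t-1} yields $\Per(F)=\IN$ whenever $z'\in\R$ or $\Re(F(0))\ge 0$. In the remaining subcase $z'\notin\R$ and $\Re(F(0))<0$: then $z'\in\overline{M}$ lies in some $B_k$ with $k\ge 1$, whence $\Re(F(y_0))=\Re(F(z))\ge\Re(z')\ge 1=\lceil\Re(F(0))\rceil+1$, and Lemma~\ref{lem:allperiods-1} applied to $y_0\in F(\R)\cap B_0$ gives $\Per(F)\supset\IN\setminus\{1\}$. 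This settles the first situation.

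Now suppose $t'\in(-1,0)\subset\R$; then $\Re(F(t))\le t'<0$ and $\Re(F(t'))<t'$. If $\Re(F(0))\ge t$, Lemma~\ref{lem:F(0)>t} (with $z\in B_0$ and $t,t'\in\R$) gives $\Per(F)=\IN$. If $\Re(F(t))\le t-1$, Lemma~\ref{lem:F(t)<t-1} applies as above, the leftover $z'\notin\R$, $\Re(F(0))<0$ being again handled by Lemma~\ref{lem:allperiods-1} via $y_0$. Hence we may assume $\Re(F(0))\in(-1,1)$, $\Re(F(0))<t$ and $t-1<\Re(F(t))\le t'$, so that $t-t'<1$, a configuration analogous to Case~(B) but with $z\in\Bo_0$. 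In this last case I would build horseshoes (or signed loops) out of the intervals $\chull{t',0}$, $[0,t]$, $[t,a]$ and $\chull{a,y_0+q}$, exploiting that $\chull{t',0}$ maps leftward (since $\Re(F(t'))<t'$ and $\Re(F(t))\le t'$), that $[0,t]$ or $[0,1]$ crosses $0$ under $F$, that $y_0\in F(\R)\cap B_0$ together with the points $a$ and $q$ provides a return from $\R$ into $B_0$, and that $\Re(F(y_0))=\Re(F(z))\ge\Re(z')\ge t$. Exactly as in Lemma~\ref{lem:caseC2}, one then separates the cases $z'\in(0,1)$ versus $\Re(F(z))\ge 1$, then $a\le z'$ versus $a>z'$, and the sign of $q$, invoking Proposition~\ref{prop:SemiHorseshoe}, Lemma~\ref{lem:SemiHorseshoe-mod1}, Corollary~\ref{cory:+horseshoeFF-1} or Lemma~\ref{lem:+-loop} according to the configuration; when all real points to the left of $0$ map leftward one falls back on Lemma~\ref{lem:F(R)-left} (giving $\IN\setminus\{1\}$), while the borderline configurations producing only a plus/minus loop give $\IN\setminus\{2\}$.

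The hard part will be precisely this final bookkeeping: one must track the simultaneous positions of $t'$, $t$, $0$, $F(0)$, $z'$, $y_0$, $a$ and $q$ and, in each resulting configuration, exhibit the correct covering loop. Everything else reduces to a short application of lemmas already proved, so the remaining work is essentially a careful re-run of the sub-subcase analysis of Lemma~\ref{lem:caseC2} in the present setting, where $t'$ has been pushed out of $B_0$.
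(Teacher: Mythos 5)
Your preliminary reductions are correct and essentially reproduce the paper's opening moves: Lemma~\ref{lem:bigF(0)} forces $\Re(F(0))\in(-1,1)$; your case $\Re(t')\le -1$ is exactly the case $\Re(F(t))\le t-1$ handled by Lemma~\ref{lem:F(t)<t-1}, and your treatment of the leftover $z'\notin\R$, $\Re(F(0))<0$ via Lemma~\ref{lem:allperiods-1} applied to $y_0$ (using $\Re(F(y_0))=\Re(F(z))\ge\Re(z')\ge 1$) is sound; Lemma~\ref{lem:F(0)>t} covers $\Re(F(0))\ge t$. So far this matches the paper.

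The genuine gap is the remaining configuration $t'\in(-1,0)$, $t-1<\Re(F(t))\le t'$, $\Re(F(0))<t$, which you leave as ``a careful re-run of the sub-subcase analysis of Lemma~\ref{lem:caseC2}''. That is precisely where all the content of the lemma lies, and it does not reduce to Case~(C2): there the standing hypothesis is $t'\in B_0$, and the argument is driven by Lemma~\ref{lem:super} applied to $t'$, by points of $\underline M+1$ and auxiliary points $t''$, $\widetilde t$ — none of which is available here. What the paper actually needs are new constructions specific to this configuration: when $a<t$, $0<F(0)\le t$ and $q\le 0$, auxiliary points $c,d$ and a three-interval loop on $I=[a,t]$, $J=[0,y_0]$, $K=[c,d]$, where the pairwise disjointness $\modi$ of $I,J,K$ is what pins the period down to $n$ for every $n\ge 3$ (yielding only $\IN\setminus\{2\}$); when $a>z'$, either Lemma~\ref{lem:3M} (if $z'\le t'+1$) or the ad hoc Lemma~\ref{lem:t'+1}, whose loop runs through $[t',t]$, an interval containing the branching point $0$ in its interior, and again gives $\IN\setminus\{2\}$ by a bespoke argument rather than by any of the generic tools you cite (Proposition~\ref{prop:SemiHorseshoe}, Lemma~\ref{lem:SemiHorseshoe-mod1}, Corollary~\ref{cory:+horseshoeFF-1}, Lemma~\ref{lem:+-loop}); you never mention Lemma~\ref{lem:3M} or any substitute for Lemma~\ref{lem:t'+1}. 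Moreover, one of your proposed building blocks, $\chull{a,y_0+q}$, may contain branching points in its interior, so it cannot be fed directly into those covering lemmas. Without exhibiting explicit loops for each configuration of $a$, $q$, $z'$, $F(0)$ and $\Re(F(y_0))$, the proof is not complete, and the claim that it follows ``exactly as in Lemma~\ref{lem:caseC2}'' is unsubstantiated.
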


In order to make the proof easier to read, we first deal with a
special
configuration of points.

\begin{lemma}\label{lem:t'+1}
Suppose that $y_0\in F(\IR)$, $t', z'\in\IR$, $\Re(F(0))\le t$ and
$t'+1\le z'\le a<1$. Then $\Per(F)\supset \IN\setminus\{2\}$.
\end{lemma}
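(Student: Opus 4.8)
To obtain the conclusion $\Per(F)\supset\IN\setminus\{2\}$ it suffices to produce every period except possibly~$2$. The plan is to first dispose of the period~$1$ and of two degenerate configurations, and then to manufacture either a loop of signed coverings of the special $(+,+,-,-)$ shape required by Lemma~\ref{lem:+-loop}, or an honest horseshoe.

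\textbf{Preliminary reductions.} Since $z\in\Bo_0$ and, by hypothesis, $z'\in\IR$, relation~\eqref{eq:all-inequalities} gives $0=\Re(z)<z'$ and $\Re(t)\le z'$; since $t$ and $z'$ have different rotation numbers, $\Re(t)<z'$, so $[t,z']\subset(0,1)\subset\IR$ contains no branching point and $\Re(F(t))\le 0<t<z'<\Re(F(z'))$. Hence $[t,z']\signedcover{}[t,z']$ and Proposition~\ref{prop:signedcover} produces a fixed point modulo~$1$, i.e.\ $1\in\Per(F)$; it is therefore now enough to prove $\Per(F)\supset\IN\setminus\{1\}$. The hypothesis $\Re(F(0))\le t<1$ excludes $\Re(F(0))\ge 1$; if $\Re(F(0))\le -1$ then Lemma~\ref{lem:bigF(0)} (valid here since $z\in B_0$ and $t\in\IR$) already gives $\Per(F)\supset\IN\setminus\{1\}$; and if $\Re(F(x))<0$ for every $x\in(-\infty,0)$ then Lemma~\ref{lem:F(R)-left}, applied with $x_0=x_b$, gives the same. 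So we may assume $\Re(F(0))\in(-1,t]$, and we fix $x_\ast\in(-\infty,0)$ with $\Re(F(x_\ast))\ge 0$; letting $b$ be the point of $(x_\ast+\IZ)\cap[0,1)$, degree~$1$ yields $\Re(F(b))\ge 1$, and $b\ne 0$ because $\Re(F(0))<1$, so $b\in(0,1)$.

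\textbf{The loop.} From~\eqref{eq:all-inequalities}, $z\in\Bo_0$ and the hypotheses of the lemma we record: $t'\le 0$, hence $\Re(F(t'+1))<t'+1$; $t'+1\le z'\le a<1$; $F(a)\in B_q$, and since $y_0\in F(\IR)\cap B_0$ while $F_0(a)=\max(F(\IR)\cap B_0)$ we get $F(a)\ge y_0+q$ in $B_q$ (so $F(a)\in\Bo_q$); finally $y_0\in\Bo_0$ with $\Re(F(y_0))=\Re(F(z))\ge z'$. The aim is to take $L$ a real interval with right endpoint~$a$ and left endpoint one of $t'+1,\,t,\,b$ --- so that $F(L)$ runs from a point of small real part, across $[0,z']$, and up into $B_q$ as far as $y_0+q$ --- and $K$ a segment meeting $B_q$ near $y_0+q$; since $L+\IZ\subset\IR$ meets no integer, if $K$ is a pure branch segment the condition on $e$ in Lemma~\ref{lem:+-loop} is automatically satisfied. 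One then splits according to the sign of~$q$ and the positions of~$b$ relative to $t'+1,z',a$, treating the case $t'\le -1$ separately (the relevant real interval then contains the branching point $0$, and one argues with the retracted map). In the favourable subcases the data produce a loop of the $(+,+,-,-)$ type of Lemma~\ref{lem:+-loop} --- the negative sign appearing, as in Lemmas~\ref{lem:yinB} and~\ref{lem:caseC2}, on the interval that descends from a branch to $\IR$ and along it --- so that $\Per(F)\supset\IN\setminus\{2\}$; in the remaining subcases one finds two real intervals forming a horseshoe (whence $\Per(F)=\IN$ by Proposition~\ref{prop:SemiHorseshoe}) or a covering graph to which Lemma~\ref{lem:SemiHorseshoe-mod1} or Corollary~\ref{cory:+horseshoeFF-1} applies, again giving $\Per(F)=\IN$.

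\textbf{Main obstacle.} The heart of the argument is not the existence of individual coverings but the simultaneous control of orientations and integer translations: the left endpoint of $L$ must be chosen among $t'+1,t,b$, and the orders on $L$ and $K$ fixed, so that $L$ and $K$ meet in at most one point modulo~$1$ with $F$ of that point outside $L+\IZ$, \emph{and} so that the resulting loop has sign pattern exactly $(+,+,-,-)$ --- the only pattern compatible with the absence of period~$2$. Carrying this out in every subcase determined by the sign of $q$, by $t'\le -1$ versus $t'>-1$, and by the location of~$b$ is what makes the proof long; it runs closely parallel to the case analysis already performed in the proof of Lemma~\ref{lem:caseC2}, with the branch $B_q$ now in the role played there by $B_0$.
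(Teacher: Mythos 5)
There is a genuine gap: the entire core of the argument is deferred rather than carried out. After the (correct but largely unnecessary) preliminary reductions, the proposal says only that one should choose $L$ with right endpoint $a$ and left endpoint ``one of $t'+1,\,t,\,b$'', split on the sign of $q$ and the position of $b$, and that ``in the favourable subcases'' one gets a $(+,+,-,-)$ loop for Lemma~\ref{lem:+-loop} while ``in the remaining subcases'' one finds a horseshoe. No covering is actually verified, no subcase is actually treated, and the claim that the plan ``runs closely parallel to Lemma~\ref{lem:caseC2}'' is not a substitute for doing it. Worse, the proposed device is doubtful on its face: Lemma~\ref{lem:+-loop} requires in particular a negative self-covering $K \signedcover[-]{F-k_4} K$ of the branch interval $K$ near $y_0+q$, and nothing in the hypotheses forces $F(K)$ to return into a branch at all --- one only controls $\Re(F(0))\le t$ and $\Re(F(y_0))=\Re(F(z))\ge z'$, which places $F(K)$ across $[t,z']+q$ but gives no second excursion into $B$.

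For comparison, the paper's proof needs neither Lemma~\ref{lem:+-loop} nor the point $b$ nor any case split on $q$. It takes the three intervals $I=[t,z']$, $J=[t',t]$ (which contains the branching point $0$ in its interior) and $K=[0,y_0]\subset B_0$, and verifies directly from \eqref{eq:all-inequalities} and the hypothesis $t'+1\le z'\le a<1$ the coverings $I\arrowto I$, $I\arrowto J$, $K\arrowto I$ and $J\arrowto K+q-1$ (the last because $t'$ and $z'-1$ both lie in $J$, $a-1\in J$, and one of $F(t'),F(z'-1)$ is not in $B_{q-1}$). For $n\ge 3$ it then concatenates the length-$3$ loop $I-q+1\arrowto J-q+1\arrowto K\arrowto I$ with $n-3$ copies of $I\arrowto I$; the resulting periodic {\modi} point $x$ has period exactly $n$ because any proper divisor $p$ of $n\ge 3$ satisfies $p\le n-2$, which would force $F^2(x)\in K$ and $F^{2+p}(x)\in I$ to differ by an integer, impossible since $I\subset(0,1)$ and $K\subset B_0$ are disjoint {\modi}. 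If you want to salvage your write-up, this three-interval loop is the construction you need to supply explicitly; the two-interval $(+,+,-,-)$ scheme does not obviously close up.
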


\begin{proof}
Let $I:=[t,z']$, $J:=[t',t]$ and $K:=[0,y_0]$. Notice that these three
intervals have disjoint
interiors, and $\Int(J)$ contains the
branching point $0$ (see Figure~\ref{fig:C31}).
\begin{figure}[htb]
\centerline{\includegraphics{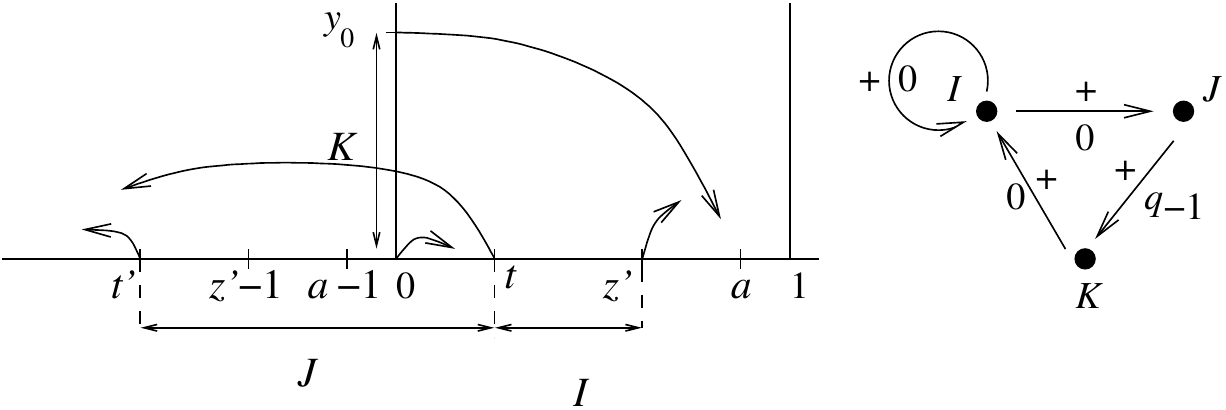}}
\caption{The intervals $I,J,K$ and their covering graph
in Lemma~\ref{lem:t'+1}.}\label{fig:C31}
\end{figure}

It is clear that $I\signedcover{F}I$, $I\signedcover{F}J$
and $K\signedcover{F}I$. By assumption, $t'<z'-1< a-1<0.$
Thus, all these points belong to $J$.
Moreover, either $F(t')\notin B_{q-1}$, or
$F(z'-1)\notin B_{q-1}$ (because $\Re(F(t'))<t'$ and $\Re(F(z'))>z'$).
Hence $J\signedcover{F}K+q-1$. Now, we are going to show that these
coverings imply that $\Per(F)\supset \IN\setminus\{2\}$.
We set
\[
\CC:=I\signedcover{F}I\quad\text{and}\quad
\CC':=I-q+1\signedcover{F}J-q+1\signedcover{F}K\signedcover{F}I.
\]
Proposition~\ref{prop:signedcover}, applied to the loop $\CC$, shows
that there exists a fixed point. We fix $n\ge 3$ and
we consider the chain of coverings
$\CC' \CC^{n-3}$. This gives a loop of length $n$ from $I-q+1$ to $I$.
According to Proposition~\ref{prop:signedcover},
there exists a point $x\in I-q+1$ such that $F^n(x)=x+q-1$,
$F(x)\in J-q+1$, $F^2(x)\in K$ and $F^i(x)\in I$ for all $3\le i\le
n$. It remains to prove that the period {\modi} of $x$ is exactly $n$.
Let $p$ be the period $\modi$ of $x$.
If $p< n$, then  $p\le n-2$ because $p$ divides $n\ge 3$.
Thus $F^2(x)\in K$, $F^{2+p}(x)\in I$ and $F^{2+p}(x)- F^2(x)\in\IZ$.
But this is impossible
because $I\subset (0,1)$, and hence $(I+\IZ)\cap (K+\IZ)=\emptyset$.
This  proves that $p=n$. Therefore, $\Per(F)\supset \IN\setminus\{2\}$.
\end{proof}

\begin{proof}[Proof of Lemma~\ref{lem:caseC3}]
We can assume that  $\Re(F(0))\in (-1,1)$ since, otherwise,
Lemma~\ref{lem:bigF(0)} gives the conclusion.
Then, applying Lemma~\ref{lem:allperiods-1} to
$y_0$ (knowing that $\Re(F(y_0))>0$), we see that,
either $\Per(F)\supset\IN\setminus\{1\}$, or we
are in one of the following cases:
\begin{enumerate}[(I)]
\item $F(0)\in (-1,0)\cup B_0$ and $F(y_0)\in (0,1)$,
\item $F(0)\in (0,1)$ and $F(y_0)\in (0,1)$,
\item $F(0)\in (0,1)$ and $\Re(F(y_0))\in [1,2)$.
\end{enumerate}
Notice that in Cases~(I) and (II), we have $z'\in (0, 1)$ because
$t<\Re(z')\le \Re(F(y_0))<1$.
In addition, we can assume that $\Re(F(t))\ge t-1$, otherwise
Lemma~\ref{lem:F(t)<t-1} gives the result
(using $z'\in\IR$ in Cases~(I) and (II), and $\Re(F(0))\ge 0$ in
Case~(III)).
Recall that $\Re(F(t))\le \Re(t')\le \Re(z)=0$, $t\in (0,1)$ and
$t'\notin B_0$ by assumption. Thus
\[
   -1 < t-1 \le \Re(F(t)) \le \Re(t')) < 0
\]
and both points $t'$ and $F(t)$ belong to $(-1, 0)$.
Now we consider several cases.
\begin{enumerate}
\item If $\Re(F(0))\ge t$, then $\Per(F)=\IN$ by Lemma~\ref{lem:F(0)>t}.

\item Suppose that $a<t$ and $0<F(0)\le t$. If $q\ge 1$, then
we are in the situation depicted in Figure~\ref{fig:C3b1} and we can
apply Proposition~\ref{prop:SemiHorseshoe} to $[a,t]$ and $[t,1]$.
\begin{figure}[htb]
\centerline{\includegraphics{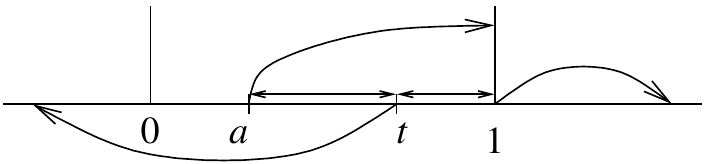}}
\caption{Case (b) with $q\ge 1$ ($q=1$ in the picture): the intervals
$[a,t]$ and $[t,1]$ form a horseshoe.}\label{fig:C3b1}
\end{figure}

Now assume that $q\le 0$, which implies that $a\neq 0$.
Let $I=[a,t]$ and $J=[0,y_0]$. Since $F(1)>1$, there exists $d'\in (t,1)$
such that $F(d')>1$. If  $\Re(z')\ge 1$, we set $d=d'$;
otherwise $z'\in (0,1)$ and  we set $d=z'$.
In both cases, $t<d< 1$ and $\Re(F(d))\ge d$.
Since $\Re(F(t))\le 0<a$, there exists $c\in (t,d)$ such
that $F(c)=a$. Let $K=[c,d]$. Then the three intervals $I,J,K$ contain
no branching point in their interior and they are disjoint $\modi$ (that is,
the sets $I+\IZ, J+\IZ,K+\IZ$ are disjoint). Moreover we have
$F(I)\supset J+q$, $F(J)\supset K$
(because $F(0)\le t$ and $\Re(F(y_0))=\Re(F(z))\ge \Re(z')\ge d$)
and $F(K)\supset I\cup K$ (see Figure~\ref{fig:C3b2}).
\begin{figure}[htb]
\centerline{\includegraphics[width=0.9\textwidth]{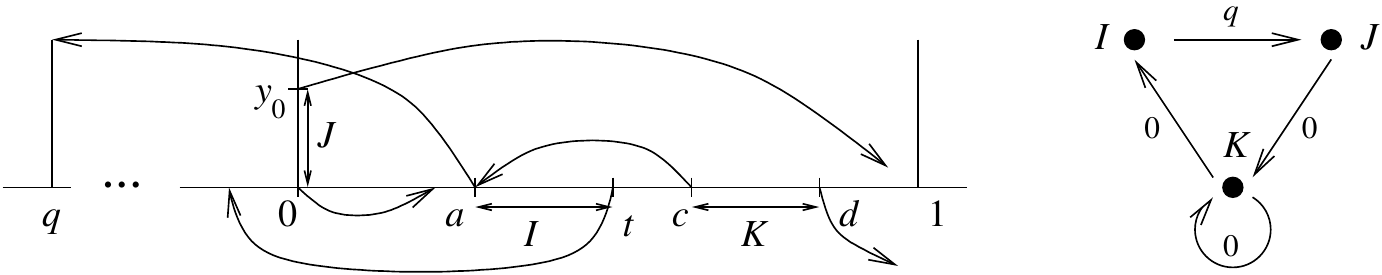}}
\caption{Case (b) with $q\le 0$; on the right: covering graph of
$I,J,K$.}\label{fig:C3b2}
\end{figure}
We define the loops of coverings
\[
  \CC:=K \arrowto K
     \quad \text{and} \quad
  \CC':= K-q \arrowto I-q \arrowto J \arrowto K.
\]
The loop $\CC$ gives a fixed point.
For $n\ge 3$, we consider $\CC' \CC^{n-3}$, which is a loop of length $n$.
According to Proposition~\ref{prop:covering},
there exists a periodic $\modi$ point $x\in K-q$ such that
$F^n(x)=x+q$, $F(x)\in I-q$, $F^2(x)\in J$ and
$F^i(x)\in K$ for all $3\le i\le n$.
It remains to prove that the period {\modi} of $x$ is exactly $n$.
Let $p$ be the period $\modi$ of $x$.
If $p< n$, then  $p\le n-2$ because $p$ divides $n\ge 3$.
Thus $F^2(x)\in J$, $F^{2+p}(x)\in K$ and $F^{2+p}(x)- F^2(x)\in\IZ$.
But this is impossible
because $(J+\IZ)\cap (K+\IZ)=\emptyset$.
This  proves that $p=n$.
Therefore, $\Per(F)\supset\IN\setminus\{2\}$.

\item If $0<F(0)<t\le a$ and $\Re(F(y_0))\ge 1$, we set $I=[t,1]$ and
$J=[0,y_0]\subset B_0$ (see Figure~\ref{fig:C3c}).
We have
$F(I)\supset I$ (because $F(t)<t$ and $F(1)>1$),
$F(I)\supset J=q$ (because $a\in I$ and $F(1)\notin B$),
$F(J)\supset I$ (because $F(0)<t$ and $\Re(F(y_0))\ge 1$ by assumption).
Hence $\Per(F)=\IN$ by Lemma~\ref{lem:SemiHorseshoe-mod1}.
\begin{figure}[htb]
\centerline{\includegraphics[width=0.9\textwidth]{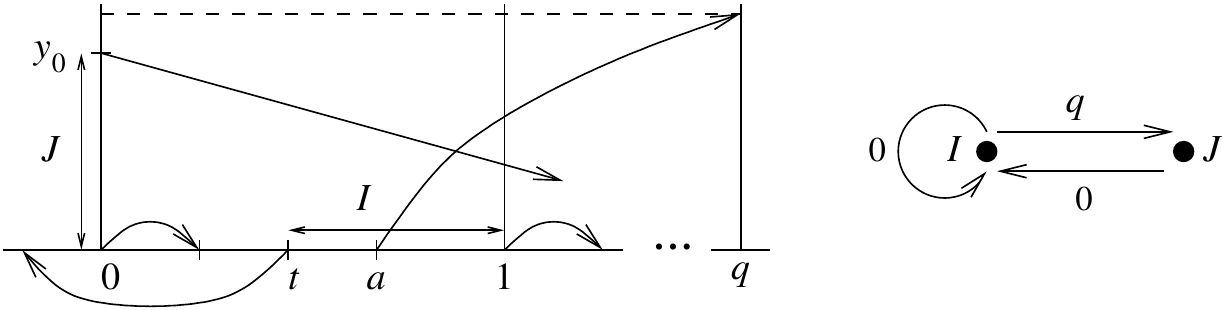}}
\caption{Case (c); on the right: covering graph of
$I,J$.}\label{fig:C3c}
\end{figure}

\item If $z'\in (0,1)$ and $\Re(F(0))\le t\le a\le z'$, we set
$I=[t,z']\subset \IR$ and $J=[0,y_0]\subset B_0$ (see Figure~\ref{fig:C3d}).
Then $F(J) \supset I$ (because $\Re(F(0))\le t$ and $\Re(F(y_0))=\Re(F(z))\ge z'$),
$F(I)\supset I$ (because $\Re(F(t))\le t$ and $\Re(F(z'))\ge z'$)
and $F(I)\supset J+q$ (because $a\in I$ and either $F(t)\notin B_q$
or $F(z')\notin B_q$).
\begin{figure}[htb]
\centerline{\includegraphics{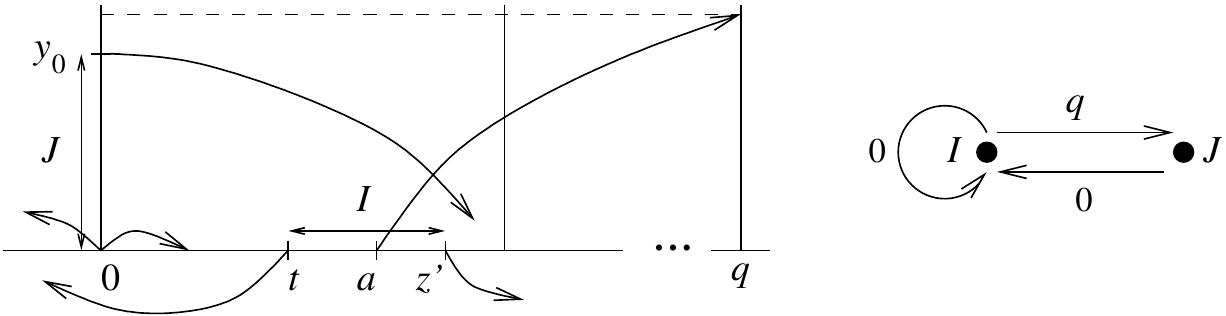}}
\caption{Case (d): the two arrows starting from $0$ mean that it is only
known that $\Re(F(0))\le t$; on the left: covering graphs of
$[0,y_0]$ and $J=[t,z']$.}\label{fig:C3d}
\end{figure}
Hence $\Per(F)=\IN$ by Lemma~\ref{lem:SemiHorseshoe-mod1}.

\item Suppose that $z'\in (0,1)$, $\Re(F(0))\le t$ and $a>z'$.
If $z'\le t'+1$, we apply Lemma~\ref{lem:3M} with $t_1=t$,
$t_2=t'+1$, $z_0=z'$ and we obtain $\Per(F)=\IN$. If $z'\ge t'+1$, we apply
Lemma~\ref{lem:t'+1} and we obtain $\Per(F)\supset\IN\setminus\{2\}$.
\end{enumerate}
Case~(III) is covered by items (a), (b) and (c). Case~(II) is covered
by items (a), (b), (d) and (e), and Case~(I) is covered by items (d)
and (e). This concludes the proof.
\end{proof}

\subsubsection{Conclusion of the proof}

Suppose that $m\in\Int(\RotR(F))$ with $m\in\IZ$. We may assume that
$0\in\Int(\RotR(F))$ by considering $F-m$ instead of $F$, which has the
same set of periods.
Lemmas~\ref{lem:caseC1}, \ref{lem:caseC2} and \ref{lem:caseC3} give
the conclusion in Case~(C).
In a similar but symmetric way Case~(D) holds.
This, together with Lemmas~\ref{lem:largegap} and \ref{lem:sortgapR},
gives at last Theorem~\ref{theo:0inInterior}.

\section{The set of periods of rotation number 0 --- some surprises}
\label{sec:0suprises}

For a lifting of a circle map $F \in \LL_1(\IR)$, the strategy to determine
$\Per(F)$ is to characterize $\Per(p/q,F)$ for every rational rotation
number $p/q$ (see \cite{ALM}).
The situation is different depending whether $p/q$
belongs to the interior of the rotation interval or to its boundary.
Assume that $p,q$ are coprime. If $p/q\in \Int(\Rot(F))$, it is known
that $\Per(p/q,F)=q\IN$. If $p/q\in\Bd(\Rot(F))$, there exists
$s\in\IN\cup\{\tinf\}$ such that $\Per(p/q,F)=q\cdot\Shs(s)$. In
both cases, the strategy is to prove the result for $0$ (i.e.
$p/q=0/1$) and then apply it to $G:=F^q-p$ to obtain the result for
$\Per(p/q,F)$. When one deals with the set of periods of a map
$F\in\Li$, the first, natural idea is to adopt the same strategy and,
first, (try to) characterize $\Per(0,F)$. However, this idea does
not work as expected, neither for $\Per(0,F)$, nor for the step
relating $\Per(p/q,F)$ to what can occur for $0$.

The aim of this section is to show the problems that can arise for
the rotation number $0$.
Recall that Theorem~\ref{theo:0inInterior} states that, if
$0\in\Int(\RotR(F))$, then $\Per(F)$ contains all integers except
maybe $1$ or $2$. Notice that this result
deals with all periods {\modi} and not true periods.
The conditions $p/q\in\Int(\RotR(F))$ and $0\in\Int(\RotR(F^q-p))$
are equivalent; but, whereas it is straightforward to deduce $\Per(p/q,F)$ from
$\Per(0, F^q-p)$, there is no easy way to determine $\Per(F)$ when one knows
$\Per(F^q-p)$.
On the other hand, Theorem~\ref{ConverseEndInteger} deals with
a difficulty arising for rotation numbers $p/q\in \Bd(\RotR(F))$ when
$p/q\notin\IZ$.

In all examples of this section, the map $F\in\Li$ will satisfy
$F(\IR)=S$,
and hence $\RotR(F)=\Rot(F)$ by \cite[Proposition~3.4]{AlsRue2008}.

\subsection{Per(0, F) when 0 is in the interior of the rotation interval}

The general rotation theory for a degree 1 map on an infinite tree
states that, if $0\in\Int(\RotR(F))$, there exists $n$ such that
$\Per(0,F)\supset\set{k\in\IN}{k\ge n}$
\cite[Theorem~3.11]{AlsRue2008}.
Unfortunately, the integer $n$ can be arbitrarily large, even for
the space $S$, as shown by the next example.

\begin{example}\label{ex:Per(0,F)}
\textbf{A map such that $0\in\Int(\RotR(F))$ and
$\Per(0,F)=\{k\in\IN\mid k\ge n\}$.}

We fix $n\ge 3$.
Let $b=\max B_0$ and choose $a\in (-1,0)$.
We define $F\in\Li$ such that $F(0)=-1$, $F(b)=
b+1$, $F(a)=b-n-1$ and $F$ is affine on $B_0$, $[-1,a]$ and $[a,0]$.
The map $F$ is illustrated in Figure~\ref{fig:Per(0,F)}.

\begin{figure}[htb]
\centerline{\includegraphics{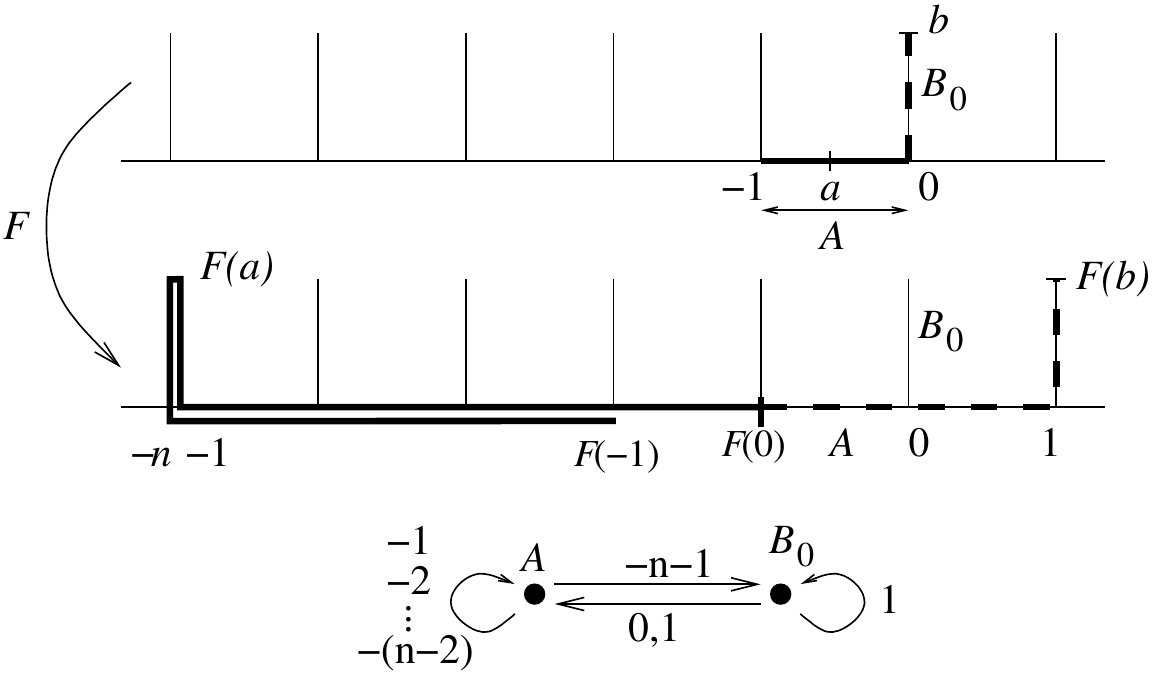}}
\caption{The map $F$ of Example~\ref{ex:Per(0,F)} and the covering
graph of $B_0$ and $A=[-1,0]$. The Markov graph can be easily deduced
from this graph by splitting $A$ into $[-1,a]$ and
$[a,0]$.}\label{fig:Per(0,F)}
\end{figure}

Using the Markov graph of $F$ and the tools from
\cite[Subsection~6.1]{AlsRue2008}, one can compute that
$\RotR(F)=\Rot(F)=[-(n-2),1]$ (which contains $0$ in its interior for
every $n\ge 3$)
and $\Per(0,F)=\{k\in\IN\mid k\ge n\}$.
\end{example}

\subsection{Sets of periods living in complicated trees
can be obtained for rotation number~0}

Although the whole space $S$ is an infinite tree, a periodic orbit
of rotation number $0$ is a true periodic orbit, and thus it
is compact and lives in a finite subtree of $S$. This makes possible
to study $\Per(0,F)$ by using the works on periodic orbits for finite
trees \cite{AGLMM, AJM4}.
In Section~\ref{sec:Y}, we saw that the sets $\Per(0,F)$ can display all
possible sets of periods of maps in $\Cstar$.
In this subsection, we show that the converse is not true: there
exist maps in $\Li$ with $0\in\RotR(F)$ and
such that $\Per(0,F)$ is not the set of periods
of a map in $\Cstar$. We are going to exhibit examples in which
$\Per(0,F)$ can be deduced from the set of periods of a tree map,
where the tree is more complicated than a $3$-star.

Let us introduce some notation.
Let $P$ be a true periodic orbit of $F\in\Li$.
We will denote by $T_P\subset S$ the finite tree defined by
\[
  T_P := \chull{\Re(P)} \cup
         \bigcup_{i \in \chull{r\circ P} \cap \Z} B_i.
\]
Observe that $T_P$ and the closure of $S \setminus T_P$ have at most
two points  in common:
$\min \Re(P) \in \IR$ and $\max \Re(P) \in \IR.$
Moreover, $\min \Re(P)$ and $\max \Re(P)$ are either points
of $P$ or branching points.

We also define the map {\map{F_P}{T_P}} by
$F_P := \ret_{T_P} \circ F\evalat{T_P},$
where $\ret_{T_P}$ is the standard retraction from $T$ to $T_P.$
More precisely, for every $x\in T_P,$
\[
F_P(x) = \begin{cases}
       F(x) & \text{if $F(x)\in T_P$,}\\
       \min \Re(P) & \text{if $\Re(F(x)) < \min \Re(P)$,}\\
       \max \Re(P) & \text{if $\Re(F(x)) > \max \Re(P)$.}
    \end{cases}
\]

Let $x\in T_P$. If $F^n(x)\in T_P$ for all $n\ge 0$, then the orbits
of $x$ under $F$ and $F_P$ coincide. In particular, $x$ is
$F$-periodic of period $k$ if and only if it is $F_P$-periodic of
period $k$. When the orbits of $x$ under $F$ and $F_P$ do not
coincide, it follows that $x$ is eventually mapped by $F_P$ either to
$\min \Re(P)$ or $\max \Re(P)$.
Therefore, these are the only points that may be periodic for  $F_P$
but not for $F$.
This leads to the next lemma, showing that it is worth studying
the set of periods of $F_P$.

\begin{lemma}\label{lem:Fp-F}
There exists $E\subset \IN$ with $\#E\le 2$ such that
$\TPer(F_P)\setminus E\subset \Per(0,F)$.
\end{lemma}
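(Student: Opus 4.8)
The plan is to exploit the fact that $F_P$ differs from $F$ only through the retraction $\ret_{T_P}$, which collapses everything outside $T_P$ onto the two points $\min\Re(P)$ and $\max\Re(P)$. First I would make precise the dichotomy already sketched in the paragraph preceding the lemma: for $x\in T_P$, either $F^n(x)\in T_P$ for all $n\ge 0$, in which case the $F$-orbit and the $F_P$-orbit of $x$ coincide (since on $T_P\cap F^{-1}(T_P)$ we have $F_P=F$), or there is a first $n_0$ with $F^{n_0}(x)\notin T_P$, whence $\Re(F^{n_0}(x))<\min\Re(P)$ or $\Re(F^{n_0}(x))>\max\Re(P)$ and therefore $F_P^{\,n_0}(x)\in\{\min\Re(P),\max\Re(P)\}$. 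Set $a:=\min\Re(P)$ and $b:=\max\Re(P)$, and let $E\subset\IN$ be the set of $F_P$-periods of $a$ and of $b$ (taking the period of a point only if that point is actually $F_P$-periodic); then $\#E\le 2$.

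The key claim is then: every $q\in\TPer(F_P)\setminus E$ is a period of a true (hence rotation number $0$) periodic point of $F$. Fix such a $q$ and a point $x\in T_P$ of exact $F_P$-period $q$. I would argue that $x$ falls into the first alternative above, i.e.\ $F^n(x)\in T_P$ for all $n\ge 0$. Indeed, if not, then by the dichotomy the $F_P$-orbit of $x$ eventually hits $\{a,b\}$; but the $F_P$-orbit of $x$ is a periodic cycle containing $x$, so it would then contain $a$ or $b$, forcing $a$ or $b$ to be $F_P$-periodic of period $q$ — contradicting $q\notin E$. Hence the $F_P$-orbit of $x$ stays in $T_P$, so it coincides with the $F$-orbit of $x$, and consequently $x$ is a true $F$-periodic point of period $q$. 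Since $F$ has degree $1$, a true periodic orbit has rotation number $0$ (as noted in the excerpt, e.g.\ by Lemma~\ref{lem:FF+k}(e) with $F^q(x)=x$), so $q\in\Per(0,F)$. This gives $\TPer(F_P)\setminus E\subset\Per(0,F)$, which is exactly the assertion.

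A couple of points deserve care. One must check that $F_P$ is well defined and continuous as a self-map of $T_P$: this follows because $\ret_{T_P}$ is continuous ($T_P$ being a compact subtree of the uniquely arcwise connected space $S$, so the retraction of the taxicab metric is well defined) and because $F(T_P)$ is a connected subset of $S$ meeting $T_P$ along a connected piece, so retracting lands consistently on $a$ when $\Re$ drops below $a$ and on $b$ when $\Re$ exceeds $b$; here one uses that $\min\Re(P),\max\Re(P)\in\IR$ and are endpoints of the arc $\chull{\Re(P)}$, so that leaving $T_P$ in the $\Re$-direction can only occur past these two points. One also needs that "exact $F_P$-period $q$" combined with the orbit staying in $T_P$ genuinely transfers to "exact $F$-period $q$": this is immediate once the two orbits coincide as sets and as cycles. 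The main obstacle, and the only genuinely nontrivial step, is the verification that when the $F_P$-orbit of $x$ escapes $T_P$ it must pass through $a$ or $b$ rather than through some other collapsed point — in other words, that $\ret_{T_P}$ sends all of $S\setminus T_P$ into $\{a,b\}$. This is where the specific shape of $T_P$ (an arc in $\IR$ together with all intermediate branches $B_i$) is used: the complement $S\setminus T_P$ has exactly the two "exit" points $a$ and $b$, since all branches over integers in $[\,a,b\,]$ are included in $T_P$, so a point of $S\setminus T_P$ is separated from $T_P$ by $a$ (if its real part is $<a$) or by $b$ (if its real part is $>b$), and the retraction of such a point is precisely $a$ or $b$ respectively.
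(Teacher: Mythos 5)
Your argument is correct and is essentially the paper's own: the only points that can be $F_P$-periodic without being $F$-periodic are $\min\Re(P)$ and $\max\Re(P)$, so excluding their (at most two) periods, every $F_P$-period is realized by a true periodic orbit of $F$, which has rotation number $0$. The only tiny citation point is that passing from ``true period $q$'' to ``period {\modi} $q$'' (needed for membership in $\Per(0,F)$) is Lemma~\ref{PeriodsAndPeriodsmodiAreFriends}(a) rather than Lemma~\ref{lem:FF+k}(e) alone, but this is already established in the paper.
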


Now we briefly define (in a slightly restricted case)
the notions of  patterns and linear models introduced in \cite{AGLMM}
to study the sets of periods of tree maps. Let $T$ be a (finite) tree,
$P$ a finite subset of $T$ with at least two elements and
$\varphi$ a cyclic permutation of $P$.
The \emph{discrete components} of $P$ are the sets
$\overline{C_i}\cap P, i=1, \ldots, n$, where $C_1,\ldots, C_n$ are
the connected components of $\chull{P}\setminus P$.
If $x,y$ are two distinct elements of the same discrete component,
$\chull{x,y}$ is called a \emph{$P$-basic path}.
If $T'$ (resp. $P$, $\varphi'$) is also a  tree (resp. a finite subset
of $T'$ with at least two elements, a cyclic permutation of $P'$), we
write $(T,P,\varphi)\sim_{pat}(T',P',\varphi')$ if there exists a
bijection {\map{h}{P}[P']} such that $h\circ \varphi=\varphi'\circ h$
and $h$ preserves the discrete components. This gives an equivalence
relation; the equivalence class of $(T,P,\varphi)$ is denoted
$[T,P,\varphi]$ and is called a \emph{periodic pattern}.
If {\map{f}{T}} is a tree map, $P$ a periodic orbit of $f$ and $A$ a
periodic pattern, we say that $f$ \emph{exhibits $A$} over $P$ if
$[T,P,f\evalat{P}]=A$.
The set of periods \emph{forced} by a pattern $A$ is the maximal
subset $E_A\subset \IN$ such that every tree map exhibiting the
pattern $A$ also has periodic orbits of period $n$ for all $n\in E_A$.

The triple $(T,f,P)$ is called an \emph{$A$-linear model} if
\begin{itemize}
\item $f$ exhibits $A$ over $P$,
\item $f$ is monotone on all $P$-basic paths,
\item for every connected component $I$ of $T\setminus (P\cup V(T))$
(where $V(T)$ denotes the  set of vertices of $T$),
$f\evalat{\overline{I}}$ is affine.
\end{itemize}
Notice that the monotonicity on $P$-basic paths implies that the image
of each vertex $v$ is uniquely determined and belongs to $P\cup V(T)$
(consider three $P$-basic paths containing $v$ and their images in
order to find $f(v)$ -- see also \cite[Proposition~4.2]{AGLMM}). Thus
an $A$-linear model is Markov with respect to the partition
generated by $P\cup V(T)$. The $A$-linear model is the analogous of
the ``connect-the-dots'' map associated to a periodic orbit of an
interval map, but the difficulty for tree maps is that the linear
model may live in a different tree than the original one --- some of
the vertices may collapse or explode.

The key results are the following ones.
For every periodic pattern $A$, there exists an $A$-linear model (and
it is unique up to isomorphism) \cite[Theorem~A]{AGLMM}. Moreover, if
a tree map $f$ exhibits the periodic pattern $A$, then the set of
periods of significant periodic points of an $A$-linear model is
included in $\TPer(f)$ \cite[Corollary~B]{AJM4}. A periodic point is
called \emph{significant} if its orbit is not equivalent, by iteration
of the map, to the orbit of a vertex, see e.g. \cite{{AJM4}} for the
precise definition. Significant periodic points essentially correspond
to loops in the Markov graph, therefore the set of periods forced by a
periodic pattern $A$ can be computed using the Markov graph of an
$A$-linear model.

The characterization of the whole set of periods of a tree map uses
the $p$-orderings of Baldwin, where $p$ ranges in a finite set of
integers depending on the tree, in particular on the valences of the
vertices.  When the tree is a $k$-star, one may need the $p$-orderings
$\leso{p}$ for  $2\le p\le k$.

Let us come back to the map $F_P$ coming from a periodic orbit $P$ of
$F\in\Li$.  Although all the vertices of $T_P$ have valence 3, the
linear model of $[T_P,P,F_P\evalat{P}]$ may have vertices of
arbitrarily large valence. In Example~\ref{ex:n-star-model}, we show
that, for all $k\ge 3$, there exist $F\in\Li$ and $P$ a periodic orbit
of $F$ such that the linear model of $[T_P,P,F_P]$ lives in a $k$-star
and the $k$-th partial ordering of Baldwin is needed to express the
set of periods of $F_P$.  More complicated trees than stars can even
be obtained, as shown in Example~\ref{ex:2gluedstars-model}.

\begin{example}\label{ex:n-star-model}
Fix an integer $k\ge 3$. Choose $a\in (0,1)$ and
$b_0,b_1,\ldots, b_{k-1}\in B_0$ such that
$1=b_0>b_1>\cdots>b_{k-1}>0$. We set
$x_i=i+b_i\in B_i$ for all $0\le i\le k-1$ and $x_k=a+k-2\in\IR$.
In addition, we set
\[
 A_i = [b_{i+1}, b_i] \text{ for all } 0\le i\le k-2,\
 A_{k-1} = [0,b_{k-1}], \
 L= [0,a] \text{ and }
 R=[a,1].
\]
We define the map $F\in\Li$ such that $F(x_i)=x_{i+1}$ for all $0\le
i\le k-1$, $F(x_k)=x_0$, $F(1)=0$, $F$ is affine in restriction to
each of the intervals $L$, $R$ and $A_i, 0\le i\le k-1$, and the map
is defined on the rest of $S$ using degree 1. Then
$P=(x_0,x_1,\ldots,x_k)$ is a true periodic orbit of period $k+1$ for
$F$, and $F$ is linear Markov. The map $F$ and its Markov graph are
represented in Figure~\ref{fig:Fk}.

The map $F_P$ is defined on $T_P=B_0\cup\cdots\cup B_{k-1}\cup [0, k-1]$.
If $F_P(x)\neq F(x)$ then, either $F_P(x)=k-1$,
or $F_P(x)=0$. The point $0$ is fixed under $F_P$ and
$F_P^{k-1}(k-1)=0$
Thus $\TPer(F_P)\setminus\{1\}\subset \Per(0,F)$.

The linear model of $F_P$ is supported by a $k$-star; it is
represented in Figure~\ref{fig:model-Fk}.
To prove this fact, the easiest (but not most convincing) way is to
see that the map in Figure~\ref{fig:model-Fk} does exhibit the right
pattern, then the uniqueness of the linear model gives the
conclusion.
We leave to the interested readers the checking that the only way to
realize a linear model of $F_P$ is to collapse the $k-2$ vertices of
$T_P$. This can be done by looking at all basic paths and their
images.

\begin{figure}[!tb]
\centerline{\includegraphics[width=0.95\textwidth]{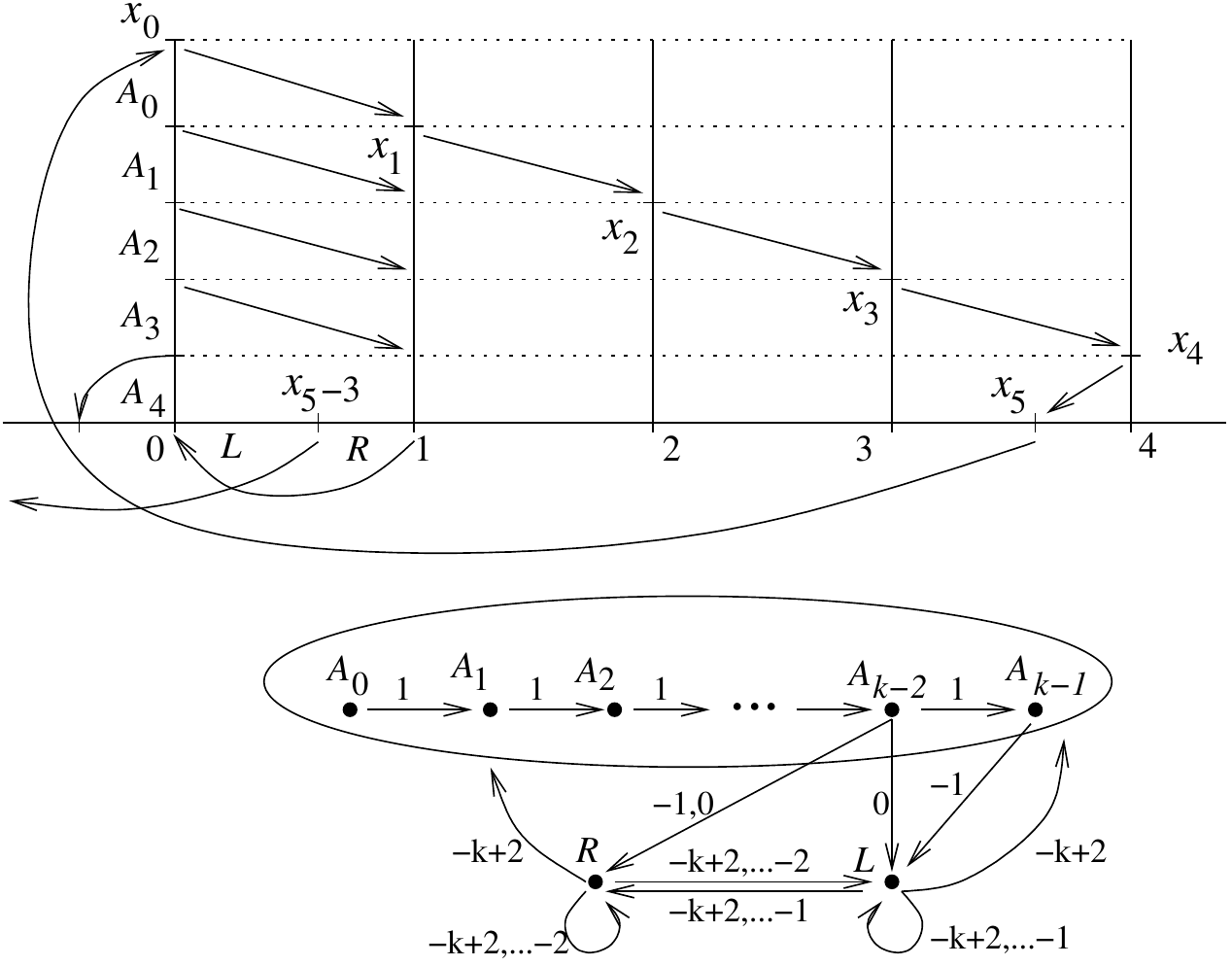}}
\caption{Above: the map $F$ from Example~\ref{ex:n-star-model}, which
is defined by its action on $x_0,\ldots,x_k$ and
$1$, and is piecewise linear on the partition generated by these
points $\modi$; picture is for $k=5$. Below: the Markov graph of $F$;
several integers on the
same arrow, as well as an arrow pointing to the ellipse containing
$A_0,\ldots, A_{k-1}$, are short-cuts indicating several
arrows.}\label{fig:Fk}

\vspace*{5.7ex}

\centerline{\includegraphics[width=0.95\textwidth]{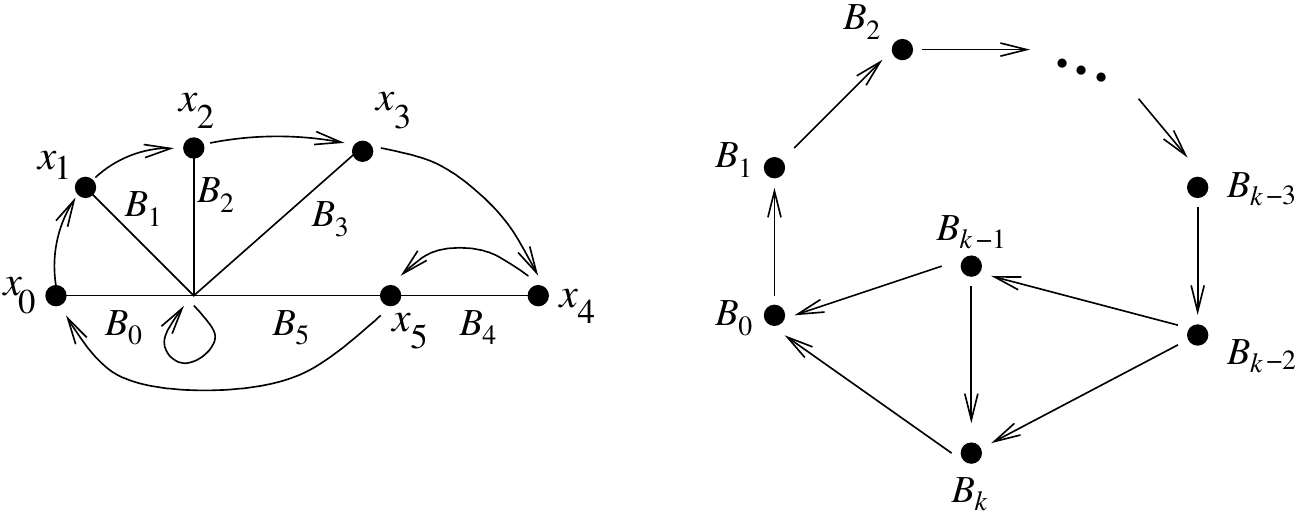}}
\caption{On the right: the linear model of $[T_P,P,F_P\evalat{P}]$,
the map being affine on each of the intervals $B_0,\ldots, B_k$
(picture is for $k=5$).
On the left: its Markov graph.}\label{fig:model-Fk}
\end{figure}

From the linear model, one can show that
the pattern $[T_P,P,F_P\evalat{P}]$ forces all the periods $n$ for
$n\le_k k+1$, where $\le_k$ is the $k$-ordering of Baldwin.
A direct computation from the Markov graph of $F$ gives
$\Rot(F)=[-k+2,0]$ and
\[
    \Per(0,F) = \{k,k+1\} \cup
        \set{ik+j(k+1)}{i,j \ge 1}=
    \set{n\in\IN}{n \le_k k+1} \setminus \{1\}.
\]
Therefore, the inclusions $\set{n\in\IN}{n\le_k k+1}\subset \TPer(F)$
and $\TPer(F)\setminus\{1\}\subset \Per(0,F)$ are equalities.
\end{example}

\begin{example}\label{ex:2gluedstars-model}
Given $p,q\ge 3$, it is possible to build a map $G\in\Li$ with a true
periodic orbit $P$ of period $p+2q-4$ such that the linear model of
$G_P$ lives in a tree consisting in a $p$-star glued to a $q$-star.
To remain readable, we illustrate the construction for $p=6$ and $q=7$
(hence the period of $P$ is $16$)
instead of giving the definition for arbitrary $p,q$. We choose
points $x_0\in (0,1)$ and $x_1,\ldots, x_{15}\in B$ as in
Figure~\ref{fig:2gluedstars}.
\begin{figure}[!tb]
\centerline{\includegraphics[width=0.95\textwidth]{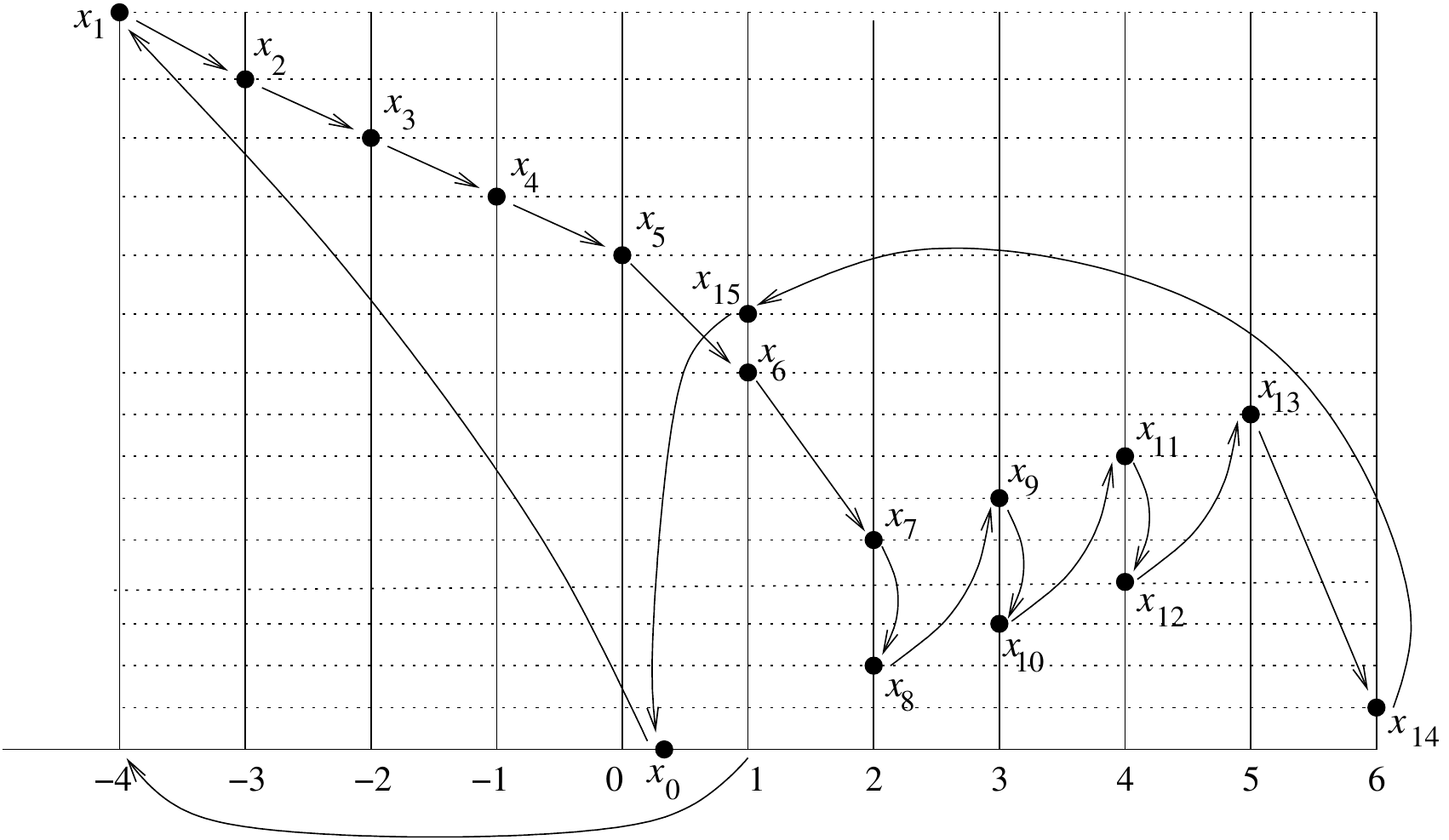}}
\caption{The map $G$ from Example~\ref{ex:2gluedstars-model}
and its periodic orbit $P=\{x_0,\ldots, x_{15}\}$; $G$
is of degree 1 and affine on each interval of the partition generated
by $(P\cup\{0\})+\IZ$.}\label{fig:2gluedstars}
\end{figure}
Then $G$ is defined by $G(x_i)=x_{i+1}$
for all $0\le i\le 15$, $G(x_{15})=x_0$, $G(0)=-5$ and $G$ is of
degree $1$ and affine on each interval of the partition generated by
these points $\modi$.
We do no
draw the Markov graph of $G$, which is rather big, but one may check
that
$\Rot(G)=[-5,1]$ (in the Markov graph, the endpoints of $\Rot(G)$ are
reached by the loops $[0,x_0]\signedcover[-5]{}[0,x_0]$ and, e.g.,
$[x_{7}+2,x_{12}]\signedcover[1]{}[x_{7}+2,x_{12}]$). The tree $T_P$
is
equal to $[-4,6]\cup\bigcup_{-4\le i\le 6}B_i$. The point $-4$
is fixed for $G_P$ and the point $6$ is sent to $-4$ by $G_P^2$.
Therefore, as in
Example~\ref{ex:n-star-model},
$\TPer(G_P)\setminus\{1\}\subset\Per(0,G)$.
The linear model of $G_P$ is represented in
Figure~\ref{fig:2gluedstars-model};
the $p-2$ vertices of $T_P$ less than or equal
to $0$ collapse into a fixed vertex, and the $q-2$ vertices greater
than or equal to $1$ collapse to another fixed vertex.
It is possible to compute
that the set of (significant) periods of the linear model is
$\{1\}\cup\{n\ge 6\}$ and that $\Per(0,F)=\{n\ge 6\}$.
\begin{figure}[!htb]
\centerline{\includegraphics{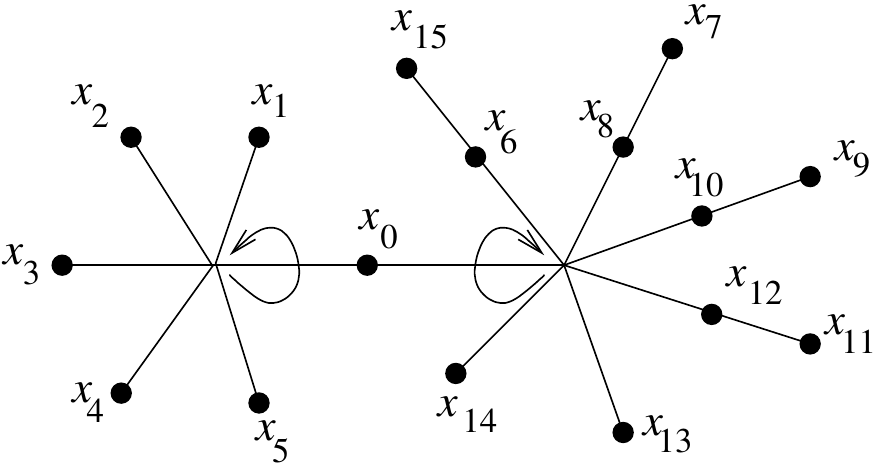}}
\caption{The linear model of $G_P$ (from
Example~\ref{ex:2gluedstars-model}): the points $x_0,\ldots, x_{15}$
are mapped cyclically, the two vertices are fixed and the map is
affine on each interval generated by this
partition.}\label{fig:2gluedstars-model}
\end{figure}
\end{example}

\clearpage
\providecommand{\href}[2]{#2}
\providecommand{\arxiv}[1]{\href{http://arxiv.org/abs/#1}{arXiv:#1}}
\providecommand{\url}[1]{\texttt{#1}}
\providecommand{\urlprefix}{URL }

\end{document}